\definecolor{mygreen}{rgb}{0,0.6,0}
\definecolor{mygray}{rgb}{0.5,0.5,0.5}
\definecolor{mymauve}{rgb}{0.58,0,0.82}
\tiny\color{mygray}, 
\newcommand{\locIdeal}[2]{\tikz[baseline=.1ex]{
	\draw[gray!40, thick, fill=gray!40, domain=-45:225] plot ({cos(\x)}, {sin(\x)}) to[out=45, in=130] (0.71, -0.71);
	\draw[thick] (-0.71, -0.71) to[out=45, in=135] (0.71, -0.71);
	\node[draw, circle, inner sep=0pt, minimum size=4pt, fill=white] (p1) at (0,-0.41) {};
	\draw[thick] (p1) -- (-0.71,0.71);
	\draw[thick] (0.71,0.71) -- (p1);
	\node (s1) at (-0.6,-0.1) {$#1$};
	\node (s2) at (0.6,-0.1) {$#2$};}
}
\newcommand{\locTangle}[2]{\tikz[baseline=.1ex]{
	\draw[gray!40, thick, fill=gray!40, domain=-45:225] plot ({cos(\x)}, {sin(\x)}) to[out=45, in=130] (0.71, -0.71);
	\draw[thick] (-0.71, -0.71) to[out=45, in=135] (0.71, -0.71);
	\node[draw, circle, inner sep=0pt, minimum size=4pt, fill=white] (p1) at (0,-0.41) {};
	\draw[thick] (-0.15, -0.173) -- (-0.71,0.71);
	\draw[thick] (0.71,0.71) -- (p1);
	\node (s1) at (-0.55,-.15) {$#1$};
	\node (s2) at (0.55,-0.15) {$#2$};}
}
\newcommandx{\MarkedTorusBackground}[6][1=, 2=, 3=, 4=, 5=, 6=]{
    \draw[draw=none, fill=gray!40] (0,1) -- (2,1) -- (2,-1) -- (0,-1) -- (1, -0.15) to[out=0, in=-90] (1.15, 0) to[out=90, in=0] (1, 0.15) to[out=180, in=90] (0.85, 0) to[out=-90, in=180] (1, -0.15) -- (0, -1) -- (0,1);
    \draw[thick, ->] (0,1) -- (0.9,1);
    \draw[thick, ->] (0.85,1) -- (1.2,1);
    \draw[thick] (1.2,1) -- (2,1);
    \draw[thick, ->] (0,-1) -- (0.9,-1);
    \draw[thick, ->] (0.85,-1) -- (1.2,-1);
    \draw[thick] (1.2,-1) -- (2,-1);
    \draw[thick, ->] (0,-1) -- (0,0);
    \draw[thick] (0,-1) -- (0,1);
    \draw[thick, ->] (2,-1) -- (2,0);
    \draw[thick] (2,-1) -- (2,1);
    \draw[thick, black] (1, 0) circle (0.15);
    \node at (0.1, -1.2) {#1};
    \node at (0.45, -1.2) {#2};
    \node at (0.8, -1.2) {#3};
    \node at (1.2, -1.2) {#4};
    \node at (1.55, -1.2) {#5};
    \node at (1.9, -1.2) {#6};
}
\newcommand{\dee}{\partial}
\newcommand{\mf}[1]{\mathfrak{#1}}
\newcommand{\ms}[1]{\mathscr{#1}}
\theoremstyle{definition}
\newtheorem{theorem}{Theorem}[section]
\newtheorem{thm}[theorem]{Theorem}
\newtheorem{conj}[theorem]{Conjecture}
\newtheorem{definition}[theorem]{Definition}
\newtheorem{example}[theorem]{Example}
\newtheorem{lemma}[theorem]{Lemma}
\newtheorem{prop}[theorem]{Proposition}
\newtheorem{remark}[theorem]{Remark}
\begin{document}


\title{Stated Skein Theory and Double Affine Hecke Algebra Representations}
\author{Raymond Alexander Dzintars Matson}
\degreemonth{September}
\degreeyear{2024}
\degree{Doctor of Philosophy}
\chair{Dr. Peter Samuelson}
\othermembers{Dr. Jacob Greenstein\\
Dr. Wee Liang Gan}
\numberofmembers{3}
\field{Mathematics}
\campus{Riverside}

\maketitle

\degreesemester{Summer}

\begin{frontmatter}

\begin{acknowledgements}

First and foremost, I would like to express my deepest gratitude to my advisor, Peter Samuelson, for his immense kindness, remarkable guidance, and continuous patience throughout my doctoral journey. Thank you for being a wonderful mentor, a phenomenal mathematician, and most importantly, a great friend. I have no doubt that your future students will look up to you the way I do.

I would especially like to thank Jacob Greenstein, Chris Grossack, Melody Molander, Shane Rankin, Alex Space, Elliott Vest, and Stefano Vidussi for their incredibly helpful conversations and constructive comments over the years. I am also deeply thankful to Neima Ghandian, Will Hoffer, and Rahul Rajkumar for always engaging with my relentless algebra nonsense and their consistently helpful feedback.

I owe a tremendous thank you to my WeierstrHouse roommates for all of the unforgettable adventures and lifelong memories we created together. My time in Riverside was made so much better by your companionship, and I could not have asked for better friends.

A major thank you to Margarita Roman for her instrumental role in keeping our department running smoothly and for all the unwavering support she provides us. We all love and deeply appreciate all that you do for us. I would also like to thank all of the UCR math graduate students that I had the privilege to meet during my time here. I'll never forget the frustration, laughter, passion, tears, and growth we all experienced together. Know that I appreciate each and every one of you!
\end{acknowledgements}

\begin{abstract}

In this thesis, we explore the representation theory of double affine Hecke algebras (DAHAs) through the lens of stated skein theory. Over the past decade, there have been several works establishing robust connections between skein algebras and DAHAs. Particularly, Samuelson proved that a spherical subalgebra of the type $A_1$ DAHA can be realized as a quotient of the Kauffman bracket skein algebra of the torus with boundary, $K_q(T^2 \setminus D^2)$. Since the $A_1$ double affine Hecke algebra is Morita equivalent to its spherical subalgebra, discovering modules for $K_q(T^2 \setminus D^2)$ immediately provides us with modules for the $A_1$ DAHA.

Stated skein theory enhances traditional Kauffman bracket skein theory by incorporating the boundary components of manifolds, thereby offering additional properties such as excision that enrich the algebraic structure. Furthermore, Kauffman bracket skein algebras embed into their stated counterparts, showing that stated skein algebras are extensions of Kauffman bracket skein algebras. We use this extended framework to further develop the representation theory of the $A_1$ DAHA.

After identifying generators for the stated skein algebra of $T^2 \setminus D^2$, we embed this algebra into a quantum $6$-torus and leverage the nice representation-theoretic properties of quantum tori to construct a module of Laurent polynomials. Additionally, as  $T^2$ is the boundary of any knot complement, we discuss how to construct a more topologically-defined module from various knots and provide an explicit example for the unknot. This approach builds upon the ideas of Berest and Samuelson, who showed that there exists a natural DAHA action on the Kauffman bracket skein module of knot complements.
\end{abstract}

\tableofcontents
\listoffigures
\end{frontmatter}

\chapter{Introduction}

The inception of double affine Hecke algebras (DAHAs) is credited to Ivan Cherednik \cite{MR2133033}, who used certain properties of DAHAs to solve the Macdonald conjectures. Macdonald polynomials, which are intrinsically related to quantum physics, have led to DAHAs finding applications across numerous fields, including string theory, operator theory, homological algebra, Harish-Chandra theory, gauge theory, Fourier analysis, mirror symmetry, quantum topology, and even the Langlands program.

In 1962, Freeman Dyson conjectured that the constant term of a certain Laurent polynomial in $n$ variables is given by $$\mathrm{CT}\left( \prod_{i \neq j} (1 - x_i x_j^{-1})^k \right) = \frac{(nk)!}{(k!)^n}$$
for $k \in \mathbb{Z}_{\geq 0}$ (Conjecture B in \cite{10106311703773}). Here, ``CT'' refers to the function that extracts the constant term of the Laurent polynomial.

Ian Macdonald later generalized this conjecture in 1982 \cite{MR674768} to a root system generalization. Specifically, let $\mathfrak{g}$ be a finite-dimensional (real or complex) reductive Lie algebra, $\mathfrak{h} \subset \mathfrak{g}$ its Cartan subalgebra, and $R \subset \mathfrak{h}^\ast$ a corresponding reduced root system. Macdonald conjectured that
$$\mathrm{CT}\left( \prod_{\alpha \in R_{+}} \prod_{i=1}^k\left(1-q^{i-1} e^{-\alpha}\right)\left(1-q^i e^\alpha\right) \right) = \prod_{i=1}^l \genfrac[]{0pt}{0}{k d_i}{k},$$
where $l = \operatorname{dim}(\mathfrak{h})$, $e^\alpha$ is the formal exponential corresponding to $\alpha \in R$, $\{ d_1, d_2, \cdots, d_l \}$ are the fundamental degrees of $R$ (by this we mean the fundamental invariants of $W$, the Weyl group of $R$), and $\genfrac[]{0pt}{0}{n}{k}$ are the Gaussian binomial coefficients (see Section \ref{section:QuantumGroups} for the definition of Gaussian binomial coefficients).

Furthermore, Macdonald extended this conjecture to affine root systems by introducing an additional parameter $t \in \mathbb{C}^*$, leading to his famous Constant Term $(q, t)$-Conjecture:
$$
\frac{1}{|W|} \mathrm{CT}\left(\prod_{n \geq 0} \prod_{\alpha \in R} \frac{1-q^n e^\alpha}{1-q^n t e^\alpha}\right) = \prod_{n \geq 0} \prod_{i=1}^l \frac{\left(1-q^n t\right)\left(1-q^{n+1} t^{d_i-1}\right)}{\left(1-q^{n+1}\right)\left(1-q^n t^{d_i}\right)}.
$$
This was proved in 1995 by Ivan Cherednik \cite{MR1314036}, using the representation theory of double affine Hecke algebras.

Since then, extensive work has been done to better understand double affine Hecke algebras from various perspectives. However, much remains to be explored regarding the representation theory of DAHAs. This thesis approaches DAHAs through the lens of quantum topology, particularly further exploring the connections between the representation theory of DAHAs and the representation theory of skein algebras of tori. When viewed appropriately, the Kauffman bracket skein algebra of a torus with boundary (or simply a punctured torus) is Morita equivalent to the $A_1$ double affine Hecke algebra.

In the 1980s, Vaughan Jones discovered what is now known as the Jones polynomial, a knot invariant derived from von Neumann algebras, which was further simplified by Louis Kauffman's introduction of the Kauffman bracket. This polynomial invariant for links in $3$-dimensional space paved the way for the development of skein modules, first introduced by J\'{o}zef Przytycki in 1987 \cite{MR1194712} and independently by Vladimir Turaev in 1988 \cite{MR964255}. These modules serve as $3$-manifold invariants, capturing information about the manifold based on the kind of knot theory and algebraic structures that manifold admits. For example, the Kauffman bracket skein algebra uses links and crossing relations to forge a comprehensive algebraic framework. These algebras have become a cornerstone in quantum topology, connecting knot theory with quantum gravity and furthering our understanding of quantum field theories. For a far better explanation of the history and development of skein theory than I could ever give, see \cite{MR4297592}.

In recent years, Thang T. Q. L\^{e} and others have laid substantial groundwork for a particular generalization of Kauffman bracket skein algebras, known as stated skein algebras. These algebras coincide with Kauffman bracket skein algebras when the corresponding manifold lacks a boundary. One of the main benefits this generalization offers is an excision property that enables gluing and splitting by leveraging the boundary components. Stated skein algebras encompass their corresponding Kauffman bracket skein algebras and are, in general, significantly larger objects to work with. The primary question explored in this thesis is whether we can extract more interesting representations of DAHAs through the framework of stated skein algebras.

Chapter 2 of this thesis provides a thorough background of the setting, including a discussion on the origin of diagrammatically-defined modules, their utility, and the representation-theoretic data they offer. We also define the $A_1$ double affine Hecke algebra along with its spherical subalgebra, and examine its relationship with the Kauffman bracket skein algebra of $T^2 \setminus D^2$, as well as the corresponding stated skein algebra. While our definition of stated skein algebras differs slightly from the conventional definition found in the literature, we demonstrate that both models are naturally isomorphic.

Given that stated skein algebras are larger than Kauffman bracket skein algebras, we review common algebraic techniques to convert modules over the stated skein algebra into modules over the spherical $A_1$ DAHA. In Chapter 3, we first get our hands dirty and classify all simple closed curves based at the boundary of $T^2 \setminus D^2$. Using this classification, we then explicitly compute the generators for the main algebra we care about, the stated skein algebra of the torus with boundary and one marking. Once the generators for this algebra have been established, we then proceed to define modules over it.

Chapter 4 briefly explains and employs an embedding technique introduced by L\^{e} and Yu in \cite{MR4431131}, where we map our stated skein algebra into a quantum $6$-torus. Since the representation theory of quantum tori is well-known and behaves quite nicely, this approach allows us to compute representations for our algebra and, consequently, our double affine Hecke algebra. Notably, we uncover how our algebra acts non-trivially on the vector space of complex Laurent polynomials in four variables.

Chapter 5 expands upon a well-known module structure for Kauffman bracket skein algebras. For a 3-manifold $M$ with boundary $\partial M$, the Kauffman bracket skein module of $M$ naturally becomes a module over the Kauffman bracket skein algebra of $\partial M$. The module action is defined by gluing $\partial M \times [0,1]$ into the boundary and “pushing” any curves in $\partial M \times [0,1]$ into $M$. We begin by exploring a more geometrically-centered understanding of skein algebras, particularly in relation to knot theory and quantizations of character varieties, and discuss how knot complements induce modules over the Kauffman bracket skein algebra of the torus. We then define and explore how these structures can be upgraded to stated skein algebras and stated skein modules.

\chapter{Background}

\section{Categorical Framework}

We begin by discussing the categorical setting for our categories of interest. We will introduce a more concrete example of these definitions in section \ref{section:RibbonHopfAlg}. However, it will be beneficial to introduce the proper language of these objects first.

\begin{definition}
    A \textit{monoidal category}, $\mathcal{C}$, is a $\mathbb{K}$-linear category equipped with
    \begin{enumerate}
        \item a bifunctor $\otimes: \mathcal{C} \times \mathcal{C} \longrightarrow \mathcal{C}$ written as $(a, b) \mapsto a \otimes b$ called the monoidal product or tensor product,
        \item an object $1 \in \mathcal{C}$ called the monoidal unit,
        \item a natural isomorphism $\alpha:\left( -\otimes - \right) \otimes - \longrightarrow - \otimes\left( - \otimes - \right)$ called the associator, with components $\alpha_{A, B, C}:(A \otimes B) \otimes C \longrightarrow A \otimes(B \otimes C)$,
        \item natural isomorphisms $\lambda: 1 \otimes - \longrightarrow -$ and $\rho: - \otimes 1 \longrightarrow -$ called the left unintor and right unitor, with respective components $\lambda_A : 1 \otimes A \longrightarrow A$ and $\rho_A : A \otimes 1 \longrightarrow A$
    \end{enumerate}
    such that for all $A,B,C,D \in \mathcal{C}$, the following diagrams commute.
    \begin{center}\resizebox{0.8\width}{!}{
        \begin{tikzpicture}
            \node (PT) at (0, 2.5) {$A \otimes ( B \otimes ( C \otimes D ) )$};
            \node (ML) at (-3.5, 0) {$A \otimes ((B \otimes C) \otimes D)$};
            \node (MR) at (3.5, 0) {$(A \otimes B) \otimes (C \otimes D)$};
            \node (BL) at (-3, -3) {$(A \otimes (B \otimes C)) \otimes D$};
            \node (BR) at (3, -3) {$((A \otimes B) \otimes C) \otimes D$};
            \draw[->] (PT) -- (ML) node[midway, above left] {\footnotesize{$\operatorname{id}_{A} \otimes \alpha_{B,C,D}$}};
            \draw[->] (PT) -- (MR) node[midway, above right] {\footnotesize{$\alpha_{A,B,C \otimes D}$}};
            \draw[->] (ML) -- (BL) node[midway, left] {\footnotesize{$\alpha_{A, B \otimes C, D}$}};
            \draw[->] (MR) -- (BR) node[midway, right] {\footnotesize{$\alpha_{A \otimes B, C, D}$}};
            \draw[->] (BL) -- (BR) node[midway, above] {\footnotesize{$\alpha_{A,B,C} \otimes \operatorname{id}_{D}$}};
            \node (TTL) at (7, 1.2) {$(A \otimes 1) \otimes B$};
            \node (TTR) at (12, 1.2) {$A \otimes (1 \otimes B)$};
            \node (TBR) at (12, -2.2) {$A \otimes B$};
            \draw[->] (TTL) -- (TTR) node[midway, above] {\footnotesize{$\alpha_{A, 1, B}$}};
            \draw[->] (TTL) -- (TBR) node[midway, below left] {\footnotesize{$\operatorname{id}_A \otimes \lambda_B$}};
            \draw[->] (TTR) -- (TBR) node[midway, right] {\footnotesize{$\rho_A \otimes \operatorname{id}_{B}$}};
        \end{tikzpicture}}
    \end{center}
    If the $\alpha$, $\lambda$, and $\rho$ are all identity maps, then we say that $\mathcal{C}$ is \emph{strict}.
\end{definition}
If $\mathcal{D}$ is a category enriched over the monoidal category $\mathcal{C}$, then we require that $\otimes$ be a $\mathcal{C}$-enriched functor, and $\alpha$ and $\lambda$ be $\mathcal{C}$-enriched natural transformations. When this happens we say the monoidal structure is \emph{compatible} with the enrichment.

An effective way of to think about the following categorical definitions is by using a diagrammatic interpretation, which will be discussed in greater detail later in section \ref{section:SkeinMods}. This interpretation is typically reserved for strict categories, however, it's a useful tool to better understand the properties trying to be expressed in these definitions. For now, just imagine these morphisms as (oriented) strings connecting their corresponding sources and targets. If we want to consider a map between monoidal product of objects, we can instead consider multiple strings next to each other, possibly being weaved together in some way, and composition is attaching strings on top of each other. For example, a map $A \otimes B \to C \otimes D$ might look something like
$$\begin{tikzpicture}
    \node (A) at (0,-1) {$A$};
    \node (B) at (1,-1) {$B$};
    \node (C) at (0, 1) {$C$};
    \node (D) at (1, 1) {$D$};
    \node at (0.5, -1) {$\otimes$};
    \node at (0.5, 1) {$\otimes$};
    \draw[->] (A) -- (0, -0.5) to[out=90, in=270] (1, 0.5) -- (D);
    \draw[->] (B) -- (1, -0.5) to[out=90, in=270] (0, 0.5) -- (C);
\end{tikzpicture}$$
I will incorporate pictures of the diagrammatic interpretations throughout this section and use the convention that morphisms move upwards.

\begin{definition}
    An object $X^*$ in $\mathcal{C}$ is said to be a \textit{left dual} of $X$ if there exist morphisms $\operatorname{ev}_X: X^* \otimes X \rightarrow 1$ and $\operatorname{coev}_X: 1 \rightarrow X \otimes X^*$, called the evaluation and coevaluation respectively, such that the following diagrams commute.
    \begin{center}
        \begin{tikzcd}
            X \arrow[rr, "\operatorname{coev} \otimes \operatorname{id}_X"] \arrow[dd, "\operatorname{id}_X"'] & & (X \otimes X^*) \otimes X \arrow[dd, "{\alpha_{X, X^*, X}}"'] & & X^* \arrow[rr, "\operatorname{id}_{X^*} \otimes \operatorname{coev}"] \arrow[dd, "\operatorname{id}_{X^*}"'] & & X^* \otimes (X \otimes X^*) \arrow[dd, "{\alpha_{X^*, X, X^*}^{-1}}"'] \\
            & & & & & & \\
            X & & X \otimes (X^* \otimes X) \arrow[ll, "\operatorname{id}_X \otimes \operatorname{ev}"'] & & X^* & & (X^* \otimes X) \otimes X^* \arrow[ll, "\operatorname{ev} \otimes \operatorname{id}_{X^*}"]
        \end{tikzcd}
    \end{center}
\end{definition}

\begin{definition}
    Similarly, an object $X^*$ in $\mathcal{C}$ is said to be a \textit{right dual} of $X$ if there exist morphisms $\operatorname{ev}_X: X \otimes X^* \rightarrow 1$ and $\operatorname{coev}_X: 1 \rightarrow X^* \otimes X$ such that the following diagrams commute.
    \begin{center}
        \begin{tikzcd}
            X \arrow[rr, "\operatorname{id}_X \otimes \operatorname{coev}"] \arrow[dd, "\operatorname{id}_X"'] & & X \otimes (X^* \otimes X) \arrow[dd, "{\alpha_{X, X^*, X}^{-1}}"'] & & X^* \arrow[rr, "\operatorname{coev} \otimes \operatorname{id}_{X^*}"] \arrow[dd, "\operatorname{id}_{X^*}"'] & & (X^* \otimes X) \otimes X^* \arrow[dd, "{\alpha_{X^*, X, X^*}}"'] \\
            & & & & & & \\
            X & & (X \otimes X^*) \otimes X \arrow[ll, "\operatorname{ev} \otimes \operatorname{id}_X"'] & & X^* & & X^* \otimes (X \otimes X^*) \arrow[ll, "\operatorname{id}_{X^*} \otimes \operatorname{ev}"]
        \end{tikzcd}
    \end{center}
\end{definition}
These commuting diagrams are often called the ``zigzag identites''.

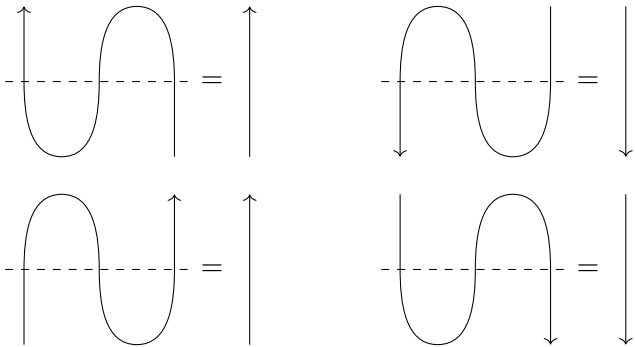
\begin{SCfigure}[][h]
    \centering
    \begin{tikzpicture}
        \draw[->] (2, 0.5) -- (2, 1.5) to[out=90, in=0] (1.5, 2.5) to[out=180, in=90] (1, 1.5) to[out=270, in=0] (0.5, 0.5) to[out=180, in=270] (0, 1.5) -- (0, 2.5);
        \draw[dashed] (-0.25, 1.5) -- (2.25, 1.5);
        \node at (2.5, 1.5) {$=$};
        \draw[->] (3, 0.5) -- (3, 2.5);
        \draw[->] (7, 2.5) -- (7, 1.5) to[out=270, in=0] (6.5, 0.5) to[out=180, in=270] (6, 1.5) to[out=90, in=0] (5.5, 2.5) to[out = 180, in=90] (5, 1.5) -- (5, 0.5);
        \draw[dashed] (4.75, 1.5) -- (7.25, 1.5);
        \node at (7.5, 1.5) {$=$};
        \draw[->] (8, 2.5) -- (8, 0.5);
        \draw[->] (0, -2) -- (0, -1) to[out=90, in=180] (0.5, 0) to[out=0, in=90] (1, -1) to[out=270, in=180] (1.5, -2) to[out=0, in=270] (2, -1) -- (2, 0);
        \draw[dashed] (-0.25, -1) -- (2.25, -1);
        \node at (2.5, -1) {$=$};
        \draw[->] (3, -2) -- (3, 0);
        \draw[->] (5, 0) -- (5, -1) to[out=270, in=180] (5.5, -2) to[out=0, in=270] (6, -1) to[out=90, in=180] (6.5, 0) to[out=0, in=90] (7, -1) -- (7,-2);
        \draw[dashed] (4.75, -1) -- (7.25, -1);
        \node at (7.5, -1) {$=$};
        \draw[->] (8, 0) -- (8, -2);
    \end{tikzpicture}
    \caption[The zigzag identities for rigid categories]{The zigzag identities can be understood as curved strings being straightened out.}
    \label{fig:ZigzagId}
\end{SCfigure}
When an object has a dual, evaluation and coevaluation maps can be pictorially represented using cups and caps, as the notion of duals corresponds to reversing the orientation of our strings. In figure \ref{fig:ZigzagId}, the top left picture corresponds to the identity $(\operatorname{id}_X \otimes \operatorname{ev}) \circ \alpha_{X, X^*, X} \circ (\operatorname{coev} \otimes \operatorname{id}_X) = \operatorname{id}_X$ for left duals, while the bottom right corresponds to $(\operatorname{id}_{X^*} \otimes \operatorname{ev}) \circ \alpha_{X^*,X,X} \circ (\operatorname{coev} \otimes \operatorname{id}_{X^*}) = \operatorname{id}_{X^*}$ for right duals.

\begin{remark}
    It's hopefully clear that if $X^*$ is a left dual of an object $X$, then $X$ is a right dual of $X^*$ and in any monoidal category, $1$ is equal to its left and right duals. Moreover, left and right duals are unique up to a unique isomorphism.
\end{remark}

\begin{remark}
    Changing the order of tensor products, when possible, switches left duals and right duals. Therefore, for any statement concerning right duals there corresponds a symmetric statement about left duals.
\end{remark}

\begin{remark}
    Some texts use the notation ${}^*X$ for right duals to distinguish between the two. However, we won't need to worry about this distinction too much in this thesis and so we will not use this notation.
\end{remark}

\begin{definition}
    An object in a monoidal category is called \textit{rigid} if it has left and right duals. A monoidal category $\mathcal{C}$ is called \textit{rigid} if every object of $\mathcal{C}$ is rigid.
\end{definition}

For those that have never seen the definition of a rigid category before, you should think of this as merely saying each object has a well-defined dual that acts exactly as we expect it to. A quick example is the category of finite dimensional complex vector spaces $\operatorname{FdVect}_{\mathbb{C}}$, where $V^\ast := \operatorname{Hom}_{\mathbb{C}}(V, \mathbb{C})$ and
\begin{align*}
    \operatorname{ev} : V \otimes V^\ast &\to \mathbb{C} & \operatorname{coev} : \mathbb{C} &\to V^\ast \otimes V\\
    (v, \varphi) &\mapsto \varphi(v) & k &\mapsto k \cdot \sum_{i \in I} \varphi_i \otimes v_i
\end{align*}
where $\{ v_i \}_{i \in I}$ is a basis for $V$ and $\{ \varphi_i \}_{i \in I}$ is the dual basis such that $\varphi_i(v_j) = \delta_{i,j}$ (the usual evaluation map that we all know and love).

\begin{definition}
    A monoidal category, $\mathcal{C}$, is \textit{braided} if for every pair of objects $X, Y \in \mathcal{C}$, there is a natural isomorphism $B_{X,Y}: X \otimes Y \to Y \otimes X$ such that for all $X,Y,Z \in \mathcal{C}$, the following hexagonal diagrams commute.
    \begin{center}
        \begin{tikzcd}
            & X \otimes (Y \otimes Z) \arrow[rr, "{B_{X, Y \otimes Z}}"] & & (Y \otimes Z) \otimes X \arrow[rd, "{\alpha_{Y,Z,X}}"] & \\
            (X \otimes Y) \otimes Z \arrow[ru, "{\alpha_{X,Y,Z}}"] \arrow[rd, "{B_{X,Y} \otimes \operatorname{id}_Z}"'] & & & & Y \otimes (Z \otimes X) \\
            & (Y \otimes X) \otimes Z \arrow[rr, "{\alpha_{Y,X,Z}}"] & & Y \otimes (X \otimes Z) \arrow[ru, "{\operatorname{id}_Y \otimes B_{X,Z}}"'] & 
        \end{tikzcd}
    \end{center}
    \begin{center}
        \begin{tikzcd}
            & (X \otimes Y) \otimes Z \arrow[rr, "{B_{X \otimes Y, Z}}"] & & Z \otimes (X \otimes Y) \arrow[rd, "{\alpha_{Z,X,Y}^{-1}}"] & \\
            X \otimes (Y \otimes Z) \arrow[ru, "{\alpha_{X,Y,Z}^{-1}}"] \arrow[rd, "{\operatorname{id}_X \otimes B_{Y,Z}}"'] & & & & (Z \otimes X) \otimes Y \\
            & X \otimes (Z \otimes Y) \arrow[rr, "{\alpha_{X,Z,Y}^{-1}}"] & & (X \otimes Z) \otimes Y \arrow[ru, "{B_{X,Z} \otimes \operatorname{id}_Y}"'] &
        \end{tikzcd}
    \end{center}
\end{definition}

\begin{figure}[H]
    \centering
    \begin{tikzpicture}
        \draw[->] (1, 0) -- (1, 1) -- (0, 2) -- (0, 3);
        \draw[->] (2, 0) -- (2, 1) -- (1, 2) -- (1, 3);
        \draw[line width=3mm, white] (0, 1) -- (2, 2);
        \draw[->] (0, 0) -- (0, 1) -- (2, 2) -- (2, 3);
        \node at (2.5, 1.5) {$=$};
        \draw[->] (4, 0) -- (4, 0.5) -- (3, 1) -- (3, 3);
        \draw[->] (5, 0) -- (5, 2) -- (4, 2.5) -- (4, 3);
        \draw[line width=3mm, white] (3, 0.5) -- (4, 1);
        \draw[line width=3mm, white] (4, 2) -- (5, 2.5);
        \draw[->] (3, 0) -- (3, 0.5) -- (4, 1) -- (4, 2) -- (5, 2.5) -- (5, 3);
        \draw[dashed] (2.8, 1.5) -- (5.2, 1.5);
        \node at (0, -0.3) {$X$};
        \node at (1, -0.3) {$Y$};
        \node at (2, -0.3) {$Z$};
        \node at (3, -0.3) {$X$};
        \node at (4, -0.3) {$Y$};
        \node at (5, -0.3) {$Z$};
        \draw[->] (9, 0) -- (9, 1) -- (7, 2) -- (7,3);
        \draw[line width=3mm, white] (8, 1) -- (9, 2);
        \draw[->] (8, 0) -- (8, 1) -- (9, 2) -- (9, 3);
        \draw[line width=3mm, white] (7, 1) -- (8, 2);
        \draw[->] (7, 0) -- (7, 1) -- (8, 2) -- (8, 3);
        \node at (9.5, 1.5) {$=$};
        \draw[->] (12, 0) -- (12, 0.5) -- (11, 1) -- (11, 2) -- (10, 2.5) -- (10, 3);
        \draw[line width=3mm, white] (11, 0.5) -- (12, 1);
        \draw[->] (11, 0) -- (11, 0.5) -- (12, 1) -- (12, 3);
        \draw[line width=3mm, white] (10, 2) -- (11, 2.5);
        \draw[->] (10, 0) -- (10, 2) -- (11, 2.5) -- (11, 3);
        \draw[dashed] (9.8, 1.5) -- (12.2, 1.5);
        \node at (7, -0.3) {$X$};
        \node at (8, -0.3) {$Y$};
        \node at (9, -0.3) {$Z$};
        \node at (10, -0.3) {$X$};
        \node at (11, -0.3) {$Y$};
        \node at (12, -0.3) {$Z$};
    \end{tikzpicture}
    \caption[A graphical interpretation of the hexagonal identities]{A graphical interpretation of the hexagonal identities.}
    \label{fig:HexId}
\end{figure}
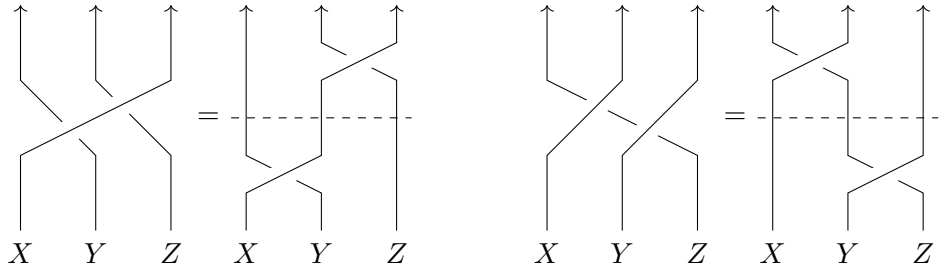

These isomorphisms are called \emph{braidings} and should be thought of as a commutivity constraint. When $B_{Y,X} \circ B_{X,Y} = \operatorname{id}_{X \otimes Y}$ for every $X,Y \in \mathcal{C}$, we call $\mathcal{C}$ a \textit{symmetric monoidal category}. When understanding a symmetric monoidal category diagrammatically, we often drop the ``depth'' to the braidings as $B_{X,Y}$ and $B_{Y,X}$ are inverse to each other.
\begin{SCfigure}[][h]
    \centering
    \begin{tikzpicture}
        \node (A1) at (0,-1) {$X$};
        \node (B1) at (1,-1) {$Y$};
        \node (A2) at (0, 1) {};
        \node (B2) at (1, 1) {};
        \node (A3) at (0, 2) {};
        \node (B3) at (1, 2) {};
        \draw (B1) -- (1, -0.5) to[out=90, in=270] (0, 0.5) -- (A2);
        \draw[line width=3mm, white] (0, -0.5) to[out=90, in=270] (1, 0.5);
        \draw (A1) -- (0, -0.5) to[out=90, in=270] (1, 0.5) -- (B2);
        \draw[->] (1, 0.8) to[out=90, in=270] (0, 2) -- (A3);
        \draw[line width=3mm, white] (0.2, 0.9) to[out=90, in=270] (1, 2);
        \draw[->] (0, 0.8) to[out=90, in=270] (1, 2) -- (B3);
        \node at (1.5, 0.5) {$=$};
        \node (A3) at (2,-1) {$X$};
        \node (B3) at (3,-1) {$Y$};
        \draw[->] (A3) -- (2, 2);
        \draw[->] (B3) -- (3, 2);
        \node at (3.5, 0.5) {$=$};
        \node (A4) at (4, -1) {$X$};
        \node (B4) at (5, -1) {$Y$};
        \draw[->] (B4) -- (5, -0.5) to[out=90, in=270] (4, 0.5) -- (4,0.8) to[out=90, in=270] (5, 1.9) -- (5,2);
        \draw[->] (A4) -- (4, -0.5) to[out=90, in=270] (5, 0.5) -- (5,0.8) to[out=90, in=270] (4, 1.9) -- (4,2);
    \end{tikzpicture}
    \caption[Symmetric braidings]{A graphical interpretation of a symmetric braiding.}
    \label{fig:SymmIdentity}
\end{SCfigure}
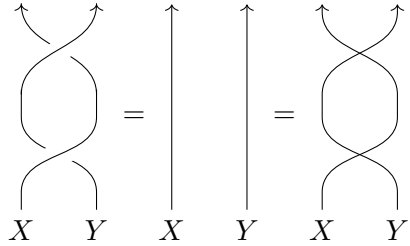\\
As the setting of this work revolves around quantum groups, you can reasonably expect our braidings to lack symmetric properties.

\begin{definition}
    A \textit{twist} on a rigid braided monoidal category is a natural isomorphism from the identity functor to itself, with components $\theta_X : X \to X$ such that for all $X, Y \in \mathcal{C}$ the following identities hold.
    \begin{align}
        \theta_{X \otimes Y} &= \left( \theta_X \otimes \theta_Y \right) \circ B_{Y,X} \circ B_{X, Y}\label{eq:Twist&BraidRel}
    \end{align}
    $$\theta_{X^*} = (\theta_X)^*$$
\end{definition}

\begin{figure}[H]
    \centering
    \begin{tikzpicture}
        \draw[->] (-0.8, -1) -- (-0.8, 1);
        \node at (0, 0) {$\mapsto$};
        \draw[thick] (1, -1) to[out=90, in=270] (0.7, -0.4) to[out=90, in=270] (1.3, 0.4) to[out=90, in=270] (1, 1);
        \draw[line width=2.5mm, white] (1.3, -0.4) to[out=90, in=270] (0.7, 0.4);
        \draw[thick] (1, -1) to[out=90, in=270] (1.3, -0.4) to[out=90, in=270] (0.7, 0.4) to[out=90, in=270] (1, 1);
        \draw[pattern=north east lines, draw=none] (1, -1) to[out=90, in=270] (1.3, -0.4) to[out=90, in=0] (1, -0.1) to[out=180, in=90] (0.7, -0.4) to[out=270, in=90] (1, -1);
        \draw[pattern=north west lines, draw=none] (1, 1) to[out=270, in=90] (1.3, 0.4) to[out=270, in=0] (1, 0.1) to[out=180, in=270] (0.7, 0.4) to[out=90, in=270] (1, 1);
        \node at (2, 0) {$=$};
        \draw[thick] (2.7, -1) to[out=90, in=180] (3, 0.2) to[out=0, in=90] (3.3, 0);
        \draw[line width=2.5mm, white] (3, -0.2) to[out=180, in=270] (2.7, 1);
        \draw[thick] (3.3, 0) to[out=270, in=0] (3, -0.2) to[out=180, in=270] (2.7, 1);
        \draw[thick] (2.9, -1) to[out=90, in=180] (3.2, 0.2) to[out=0, in=90] (3.5, 0);
        \draw[line width=2.5mm, white] (3.2, -0.2) to[out=180, in=270] (2.9, 1);
        \draw[thick] (3.5, 0) to[out=270, in=0] (3.2, -0.2) to[out=180, in=270] (2.9, 1);
    \end{tikzpicture}
    \caption[A graphical interpretation of a twist]{A graphical interpretation of a twist.}
\end{figure}
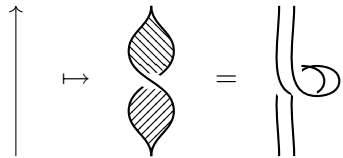
The graphical interpretation of a twist involves thickening up the string and creating a full twist in the now ``ribbon'', distinguishing it from the identity map. This is graphically equivalent to creating a loop instead.\footnote{You can verify this for yourself by grabbing the ends of a belt, giving it a full twist, slowly bringing the ends together, and watching your prop unravel into a loop.}

\begin{remark}
    Given any two objects $X,Y \in \mathcal{C}$, we also have the alternative equality
    $$\theta_{X \otimes Y} = B_{Y,X} \circ B_{X, Y} \circ \left( \theta_X \otimes \theta_Y \right).$$
    Figure \ref{fig:TwistEqualities} sketches a diagrammatic proof of how these identities are equivalent. These equalities follow from the naturality of the braidings and of the twists. For clarity, note that the first two pictures correspond to the compositions $B_{Y,X} \circ B_{X,Y} \circ \left( \theta_X \otimes \theta_Y \right)$ and $B_{Y,X} \circ \left( \operatorname{id}_Y \otimes \theta_X \right) \circ B_{X,Y} \circ \left( \operatorname{id}_X \otimes \theta_Y \right)$, respectively. The last picture corresponds to the right hand side of equation \ref{eq:Twist&BraidRel}. For more details see section $14.3.1$ in \cite{MR1321145}.
\end{remark}
\begin{figure}[H]
    \centering
    \begin{tikzpicture}
        \draw[thick] (0.7, 0) -- (0.7, 0.3) to[out=90, in=270] (1.3, 0.9);
        \draw[line width=2.5mm, white] (1.3, 0.3) to[out=90, in=270] (0.7, 0.9);
        \draw[thick] (1.3, 0) -- (1.3, 0.3) to[out=90, in=270] (0.7, 0.9) to[out=90, in=270] (1.3, 1.7) -- (1.3, 2);
        \draw[line width=2.5mm, white](1.3, 0.9) to[out=90, in=270] (0.7, 1.7);
        \draw[thick] (1.3, 0.9) to[out=90, in=270] (0.7, 1.7) -- (0.7, 2);
        \draw[pattern=north east lines, draw=none] (0.7, 0) -- (0.7, 0.3) to[out=90, in=225] (1, 0.6) to[out=315, in=90] (1.3, 0.3) -- (1.3, 0) -- (0.7, 0);
        \draw[pattern=north west lines, draw=none] (1, 0.6) to[out=135, in=270] (0.73, 0.9) to[out=90, in=225] (1, 1.3) to[out=315, in=90] (1.27, 0.9) to[out=270, in=45] (1, 0.6);
        \draw[pattern=north east lines, draw=none] (0.7, 2) -- (0.7, 1.7) to[out=270, in=135] (1, 1.3) to[out=45, in=270] (1.3, 1.7) -- (1.3, 2) -- (0.7, 2);
        \draw[thick] (1.7, 0) -- (1.7, 0.3) to[out=90, in=270] (2.3, 0.9);
        \draw[line width=2.5mm, white] (2.3, 0.3) to[out=90, in=270] (1.7, 0.9);
        \draw[thick] (2.3, 0) -- (2.3, 0.3) to[out=90, in=270] (1.7, 0.9) to[out=90, in=270] (2.3, 1.7) -- (2.3, 2);
        \draw[line width=2.5mm, white](2.3, 0.9) to[out=90, in=270] (1.7, 1.7);
        \draw[thick] (2.3, 0.9) to[out=90, in=270] (1.7, 1.7) -- (1.7, 2);
        \draw[pattern=north east lines, draw=none] (1.7, 0) -- (1.7, 0.3) to[out=90, in=225] (2, 0.6) to[out=315, in=90] (2.3, 0.3) -- (2.3, 0) -- (1.7, 0);
        \draw[pattern=north west lines, draw=none] (2, 0.6) to[out=135, in=270] (1.73, 0.9) to[out=90, in=225] (2, 1.3) to[out=315, in=90] (2.27, 0.9) to[out=270, in=45] (2, 0.6);
        \draw[pattern=north east lines, draw=none] (1.7, 2) -- (1.7, 1.7) to[out=270, in=135] (2, 1.3) to[out=45, in=270] (2.3, 1.7) -- (2.3, 2) -- (1.7, 2);
        \draw[thick] (1.7, 3) to[out=90, in=270] (0.7, 4);
        \draw[thick] (2.3, 3) to[out=90, in=270] (1.3, 4);
        \draw[pattern=north east lines, draw=none] (1.7, 3) to[out=90, in=270] (0.7, 4) -- (1.3, 4) to[out=270, in=90] (2.3, 3) -- (1.7, 3);
        \draw[line width=3.7mm, white] (1, 3) to[out=90, in=270] (2, 4);
        \draw[thick] (0.7, 3) to[out=90, in=270] (1.7, 4);
        \draw[thick] (1.3, 3) to[out=90, in=270] (2.3, 4);
        \draw[pattern=north east lines, draw=none] (0.7, 3) to[out=90, in=270] (1.7, 4) -- (2.3, 4) to[out=270, in=90] (1.3, 3) -- (0.7, 3);
        \draw[thick] (1.7, 5) to[out=90, in=270] (0.7, 6);
        \draw[thick] (2.3, 5) to[out=90, in=270] (1.3, 6);
        \draw[pattern=north east lines, draw=none] (1.7, 5) to[out=90, in=270] (0.7, 6) -- (1.3, 6) to[out=270, in=90] (2.3, 5) -- (1.7, 5);
        \draw[line width=3.7mm, white] (1, 5) to[out=90, in=270] (2, 6);
        \draw[thick] (0.7, 5) to[out=90, in=270] (1.7, 6);
        \draw[thick] (1.3, 5) to[out=90, in=270] (2.3, 6);
        \draw[pattern=north east lines, draw=none] (0.7, 5) to[out=90, in=270] (1.7, 6) -- (2.3, 6) to[out=270, in=90] (1.3, 5) -- (0.7, 5);
        \draw[thick] (0.7, 2) -- (0.7, 3);
        \draw[thick] (1.3, 2) -- (1.3, 3);
        \draw[thick] (1.7, 2) -- (1.7, 3);
        \draw[thick] (2.3, 2) -- (2.3, 3);
        \draw[thick] (0.7, 4) -- (0.7, 5);
        \draw[thick] (1.3, 4) -- (1.3, 5);
        \draw[thick] (1.7, 4) -- (1.7, 5);
        \draw[thick] (2.3, 4) -- (2.3, 5);
        \draw[pattern=north east lines, draw=none] (0.7, 2) -- (0.7, 3) -- (1.3, 3) -- (1.3, 2) -- (0.7, 2);
        \draw[pattern=north east lines, draw=none] (1.7, 2) -- (1.7, 3) -- (2.3, 3) -- (2.3, 2) -- (1.7, 2);
        \draw[pattern=north east lines, draw=none] (0.7, 4) -- (0.7, 5) -- (1.3, 5) -- (1.3, 4) -- (0.7, 4);
        \draw[pattern=north east lines, draw=none] (1.7, 4) -- (1.7, 5) -- (2.3, 5) -- (2.3, 4) -- (1.7, 4);
    \end{tikzpicture}
    \begin{tikzpicture}
        \node at (0, 3.3) {$=$};
        \draw[thick] (0.7, 0) -- (0.7, 2);
        \draw[thick] (1.3, 0) -- (1.3, 2);
        \draw[pattern=north east lines, draw=none] (0.7, 0) -- (0.7, 2) -- (1.3, 2) -- (1.3, 0) -- (0.7, 0);
        \draw[thick] (1.7, 0) -- (1.7, 0.3) to[out=90, in=270] (2.3, 0.9);
        \draw[line width=2.5mm, white] (2.3, 0.3) to[out=90, in=270] (1.7, 0.9);
        \draw[thick] (2.3, 0) -- (2.3, 0.3) to[out=90, in=270] (1.7, 0.9) to[out=90, in=270] (2.3, 1.7) -- (2.3, 2);
        \draw[line width=2.5mm, white](2.3, 0.9) to[out=90, in=270] (1.7, 1.7);
        \draw[thick] (2.3, 0.9) to[out=90, in=270] (1.7, 1.7) -- (1.7, 2);
        \draw[pattern=north east lines, draw=none] (1.7, 0) -- (1.7, 0.3) to[out=90, in=225] (2, 0.6) to[out=315, in=90] (2.3, 0.3) -- (2.3, 0) -- (1.7, 0);
        \draw[pattern=north west lines, draw=none] (2, 0.6) to[out=135, in=270] (1.73, 0.9) to[out=90, in=225] (2, 1.3) to[out=315, in=90] (2.27, 0.9) to[out=270, in=45] (2, 0.6);
        \draw[pattern=north east lines, draw=none] (1.7, 2) -- (1.7, 1.7) to[out=270, in=135] (2, 1.3) to[out=45, in=270] (2.3, 1.7) -- (2.3, 2) -- (1.7, 2);
        \draw[thick] (1.7, 2) to[out=90, in=270] (0.7, 3);
        \draw[thick] (2.3, 2) to[out=90, in=270] (1.3, 3);
        \draw[pattern=north east lines, draw=none] (1.7, 2) to[out=90, in=270] (0.7, 3) -- (1.3, 3) to[out=270, in=90] (2.3, 2) -- (1.7, 2);
        \draw[line width=3.7mm, white] (1, 2) to[out=90, in=270] (2, 3);
        \draw[thick] (0.7, 2) to[out=90, in=270] (1.7, 3);
        \draw[thick] (1.3, 2) to[out=90, in=270] (2.3, 3);
        \draw[pattern=north east lines, draw=none] (0.7, 2) to[out=90, in=270] (1.7, 3) -- (2.3, 3) to[out=270, in=90] (1.3, 2) -- (0.7, 2);
        \draw[thick] (0.7, 3) -- (0.7, 5);
        \draw[thick] (1.3, 3) -- (1.3, 5);
        \draw[pattern=north east lines, draw=none] (0.7, 3) -- (0.7, 5) -- (1.3, 5) -- (1.3, 3) -- (0.7, 3);
        \draw[thick] (1.7, 3) -- (1.7, 3.3) to[out=90, in=270] (2.3, 3.9);
        \draw[line width=2.5mm, white] (2.3, 3.3) to[out=90, in=270] (1.7, 3.9);
        \draw[thick] (2.3, 3) -- (2.3, 3.3) to[out=90, in=270] (1.7, 3.9) to[out=90, in=270] (2.3, 4.7) -- (2.3, 5);
        \draw[line width=2.5mm, white](2.3, 3.9) to[out=90, in=270] (1.7, 4.7);
        \draw[thick] (2.3, 3.9) to[out=90, in=270] (1.7, 4.7) -- (1.7, 5);
        \draw[pattern=north east lines, draw=none] (1.7, 3) -- (1.7, 3.3) to[out=90, in=225] (2, 3.6) to[out=315, in=90] (2.3, 3.3) -- (2.3, 3) -- (1.7, 3);
        \draw[pattern=north west lines, draw=none] (2, 3.6) to[out=135, in=270] (1.73, 3.9) to[out=90, in=225] (2, 4.3) to[out=315, in=90] (2.27, 3.9) to[out=270, in=45] (2, 3.6);
        \draw[pattern=north east lines, draw=none] (1.7, 5) -- (1.7, 4.7) to[out=270, in=135] (2, 4.3) to[out=45, in=270] (2.3, 4.7) -- (2.3, 5) -- (1.7, 5);
        \draw[thick] (1.7, 5) to[out=90, in=270] (0.7, 6);
        \draw[thick] (2.3, 5) to[out=90, in=270] (1.3, 6);
        \draw[pattern=north east lines, draw=none] (1.7, 5) to[out=90, in=270] (0.7, 6) -- (1.3, 6) to[out=270, in=90] (2.3, 5) -- (1.7, 5);
        \draw[line width=3.7mm, white] (1, 5) to[out=90, in=270] (2, 6);
        \draw[thick] (0.7, 5) to[out=90, in=270] (1.7, 6);
        \draw[thick] (1.3, 5) to[out=90, in=270] (2.3, 6);
        \draw[pattern=north east lines, draw=none] (0.7, 5) to[out=90, in=270] (1.7, 6) -- (2.3, 6) to[out=270, in=90] (1.3, 5) -- (0.7, 5);
    \end{tikzpicture}
    \begin{tikzpicture}
        \node at (0 , 3.1) {$=$};
        \draw[thick] (1.7, -0.2) -- (1.7, 0) to[out=90, in=270] (0.7, 1);
        \draw[thick] (2.3, -0.2) -- (2.3, 0) to[out=90, in=270] (1.3, 1);
        \draw[pattern=north east lines, draw=none] (1.7, -0.2) -- (1.7, 0) to[out=90, in=270] (0.7, 1) -- (1.3, 1) to[out=270, in=90] (2.3, 0) -- (2.3, -0.2) -- (1.7, -0.2);
        \draw[line width=3.7mm, white] (1, 0) to[out=90, in=270] (2, 1);
        \draw[thick] (0.7, -0.2) -- (0.7, 0) to[out=90, in=270] (1.7, 1);
        \draw[thick] (1.3, -0.2) -- (1.3, 0) to[out=90, in=270] (2.3, 1);
        \draw[pattern=north east lines, draw=none] (0.7, -0.2) -- (0.7, 0) to[out=90, in=270] (1.7, 1) -- (2.3, 1) to[out=270, in=90] (1.3, 0) -- (1.3, -0.2) -- (0.7, -0.2);
        \draw[thick] (0.7, 2) -- (0.7, 2.3) to[out=90, in=270] (1.3, 2.9);
        \draw[line width=2.5mm, white] (1.3, 2.3) to[out=90, in=270] (0.7, 2.9);
        \draw[thick] (1.3, 2) -- (1.3, 2.3) to[out=90, in=270] (0.7, 2.9) to[out=90, in=270] (1.3, 3.7) -- (1.3, 4);
        \draw[line width=2.5mm, white](1.3, 2.9) to[out=90, in=270] (0.7, 3.7);
        \draw[thick] (1.3, 2.9) to[out=90, in=270] (0.7, 3.7) -- (0.7, 4);
        \draw[pattern=north east lines, draw=none] (0.7, 2) -- (0.7, 2.3) to[out=90, in=225] (1, 2.6) to[out=315, in=90] (1.3, 2.3) -- (1.3, 2) -- (0.7, 2);
        \draw[pattern=north west lines, draw=none] (1, 2.6) to[out=135, in=270] (0.73, 2.9) to[out=90, in=225] (1, 3.3) to[out=315, in=90] (1.27, 2.9) to[out=270, in=45] (1, 2.6);
        \draw[pattern=north east lines, draw=none] (0.7, 4) -- (0.7, 3.7) to[out=270, in=135] (1, 3.3) to[out=45, in=270] (1.3, 3.7) -- (1.3, 4) -- (0.7, 4);
        \draw[thick] (1.7, 2) -- (1.7, 2.3) to[out=90, in=270] (2.3, 2.9);
        \draw[line width=2.5mm, white] (2.3, 2.3) to[out=90, in=270] (1.7, 2.9);
        \draw[thick] (2.3, 2) -- (2.3, 2.3) to[out=90, in=270] (1.7, 2.9) to[out=90, in=270] (2.3, 3.7) -- (2.3, 4);
        \draw[line width=2.5mm, white](2.3, 2.9) to[out=90, in=270] (1.7, 3.7);
        \draw[thick] (2.3, 2.9) to[out=90, in=270] (1.7, 3.7) -- (1.7, 4);
        \draw[pattern=north east lines, draw=none] (1.7, 2) -- (1.7, 2.3) to[out=90, in=225] (2, 2.6) to[out=315, in=90] (2.3, 2.3) -- (2.3, 2) -- (1.7, 2);
        \draw[pattern=north west lines, draw=none] (2, 2.6) to[out=135, in=270] (1.73, 2.9) to[out=90, in=225] (2, 3.3) to[out=315, in=90] (2.27, 2.9) to[out=270, in=45] (2, 2.6);
        \draw[pattern=north east lines, draw=none] (1.7, 4) -- (1.7, 3.7) to[out=270, in=135] (2, 3.3) to[out=45, in=270] (2.3, 3.7) -- (2.3, 4) -- (1.7, 4);
        \draw[thick] (1.7, 5-0.2) to[out=90, in=270] (0.7, 6-0.2);
        \draw[thick] (2.3, 5-0.2) to[out=90, in=270] (1.3, 6-0.2);
        \draw[pattern=north east lines, draw=none] (1.7, 5-0.2) to[out=90, in=270] (0.7, 6-0.2) -- (1.3, 6-0.2) to[out=270, in=90] (2.3, 5-0.2) -- (1.7, 5-0.2);
        \draw[line width=3.7mm, white] (1, 5-0.2) to[out=90, in=270] (2, 6-0.2);
        \draw[thick] (0.7, 5-0.2) to[out=90, in=270] (1.7, 6-0.2);
        \draw[thick] (1.3, 5-0.2) to[out=90, in=270] (2.3, 6-0.2);
        \draw[pattern=north east lines, draw=none] (0.7, 5-0.2) to[out=90, in=270] (1.7, 6-0.2) -- (2.3, 6-0.2) to[out=270, in=90] (1.3, 5-0.2) -- (0.7, 5-0.2);
        \draw[thick] (0.7, 1) -- (0.7, 2);
        \draw[thick] (1.3, 1) -- (1.3, 2);
        \draw[thick] (1.7, 1) -- (1.7, 2);
        \draw[thick] (2.3, 1) -- (2.3, 2);
        \draw[thick] (0.7, 4) -- (0.7, 4.8);
        \draw[thick] (1.3, 4) -- (1.3, 4.8);
        \draw[thick] (1.7, 4) -- (1.7, 4.8);
        \draw[thick] (2.3, 4) -- (2.3, 4.8);
        \draw[pattern=north east lines, draw=none] (0.7, 1) -- (0.7, 2) -- (1.3, 2) -- (1.3, 1) -- (0.7, 1);
        \draw[pattern=north east lines, draw=none] (1.7, 1) -- (1.7, 2) -- (2.3, 2) -- (2.3, 1) -- (1.7, 1);
        \draw[pattern=north east lines, draw=none] (0.7, 4) -- (0.7, 4.8) -- (1.3, 4.8) -- (1.3, 4) -- (0.7, 4);
        \draw[pattern=north east lines, draw=none] (1.7, 4) -- (1.7, 4.8) -- (2.3, 4.8) -- (2.3, 4) -- (1.7, 4);
    \end{tikzpicture}
    \begin{tikzpicture}
        \node at (0 , 3.3-0.2) {$=$};
        \draw[thick] (1.7, -0.2) -- (1.7, 0) to[out=90, in=270] (0.7, 1);
        \draw[thick] (2.3, -0.2) -- (2.3, 0) to[out=90, in=270] (1.3, 1);
        \draw[pattern=north east lines, draw=none] (1.7, -0.2) -- (1.7, 0) to[out=90, in=270] (0.7, 1) -- (1.3, 1) to[out=270, in=90] (2.3, 0) -- (2.3, -0.2) -- (1.7, -0.2);
        \draw[line width=3.7mm, white] (1, 0) to[out=90, in=270] (2, 1);
        \draw[thick] (0.7, -0.2) -- (0.7, 0) to[out=90, in=270] (1.7, 1);
        \draw[thick] (1.3, -0.2) -- (1.3, 0) to[out=90, in=270] (2.3, 1);
        \draw[pattern=north east lines, draw=none] (0.7, -0.2) -- (0.7, 0) to[out=90, in=270] (1.7, 1) -- (2.3, 1) to[out=270, in=90] (1.3, 0) -- (1.3, -0.2) -- (0.7, -0.2);
        \draw[thick] (0.7, 1) -- (0.7, 3);
        \draw[thick] (1.3, 1) -- (1.3, 3);
        \draw[pattern=north east lines, draw=none] (0.7, 1) -- (0.7, 3) -- (1.3, 3) -- (1.3, 1) -- (0.7, 1);
        \draw[thick] (1.7, 1) -- (1.7, 1.3) to[out=90, in=270] (2.3, 1.9);
        \draw[line width=2.5mm, white] (2.3, 1.3) to[out=90, in=270] (1.7, 1.9);
        \draw[thick] (2.3, 1) -- (2.3, 1.3) to[out=90, in=270] (1.7, 1.9) to[out=90, in=270] (2.3, 2.7) -- (2.3, 3);
        \draw[line width=2.5mm, white](2.3, 1.9) to[out=90, in=270] (1.7, 2.7);
        \draw[thick] (2.3, 1.9) to[out=90, in=270] (1.7, 2.7) -- (1.7, 3);
        \draw[pattern=north east lines, draw=none] (1.7, 1) -- (1.7, 1.3) to[out=90, in=225] (2, 1.6) to[out=315, in=90] (2.3, 1.3) -- (2.3, 1) -- (1.7, 1);
        \draw[pattern=north west lines, draw=none] (2, 1.6) to[out=135, in=270] (1.73, 1.9) to[out=90, in=225] (2, 2.3) to[out=315, in=90] (2.27, 1.9) to[out=270, in=45] (2, 1.6);
        \draw[pattern=north east lines, draw=none] (1.7, 3) -- (1.7, 2.7) to[out=270, in=135] (2, 2.3) to[out=45, in=270] (2.3, 2.7) -- (2.3, 3) -- (1.7, 3);
        \draw[thick] (1.7, 3) to[out=90, in=270] (0.7, 4);
        \draw[thick] (2.3, 3) to[out=90, in=270] (1.3, 4);
        \draw[pattern=north east lines, draw=none] (1.7, 3) to[out=90, in=270] (0.7, 4) -- (1.3, 4) to[out=270, in=90] (2.3, 3) -- (1.7, 3);
        \draw[line width=3.7mm, white] (1, 3) to[out=90, in=270] (2, 4);
        \draw[thick] (0.7, 3) to[out=90, in=270] (1.7, 4);
        \draw[thick] (1.3, 3) to[out=90, in=270] (2.3, 4);
        \draw[pattern=north east lines, draw=none] (0.7, 3) to[out=90, in=270] (1.7, 4) -- (2.3, 4) to[out=270, in=90] (1.3, 3) -- (0.7, 3);
        \draw[thick] (0.7, 4) -- (0.7, 6-0.2);
        \draw[thick] (1.3, 4) -- (1.3, 6-0.2);
        \draw[pattern=north east lines, draw=none] (0.7, 4) -- (0.7, 6-0.2) -- (1.3, 6-0.2) -- (1.3, 4) -- (0.7, 4);
        \draw[thick] (1.7, 4) -- (1.7, 4.3) to[out=90, in=270] (2.3, 4.9);
        \draw[line width=2.5mm, white] (2.3, 4.3) to[out=90, in=270] (1.7, 4.9);
        \draw[thick] (2.3, 4) -- (2.3, 4.3) to[out=90, in=270] (1.7, 4.9) to[out=90, in=270] (2.3, 5.7) -- (2.3, 6-0.2);
        \draw[line width=2.5mm, white](2.3, 4.9) to[out=90, in=270] (1.7, 5.7);
        \draw[thick] (2.3, 4.9) to[out=90, in=270] (1.7, 5.7) -- (1.7, 6-0.2);
        \draw[pattern=north east lines, draw=none] (1.7, 4) -- (1.7, 4.3) to[out=90, in=225] (2, 4.6) to[out=315, in=90] (2.3, 4.3) -- (2.3, 4) -- (1.7, 4);
        \draw[pattern=north west lines, draw=none] (2, 4.6) to[out=135, in=270] (1.73, 4.9) to[out=90, in=225] (2, 5.3) to[out=315, in=90] (2.27, 4.9) to[out=270, in=45] (2, 4.6);
        \draw[pattern=north east lines, draw=none] (1.7, 6-0.2) -- (1.7, 5.7) to[out=270, in=135] (2, 5.3) to[out=45, in=270] (2.3, 5.7) -- (2.3, 6-0.2) -- (1.7, 6-0.2);
    \end{tikzpicture}
    \begin{tikzpicture}
        \node at (0 , 3.1) {$=$};
        \draw[thick] (1.7, -0.2) -- (1.7, 0) to[out=90, in=270] (0.7, 1);
        \draw[thick] (2.3, -0.2) -- (2.3, 0) to[out=90, in=270] (1.3, 1);
        \draw[pattern=north east lines, draw=none] (1.7, -0.2) -- (1.7, 0) to[out=90, in=270] (0.7, 1) -- (1.3, 1) to[out=270, in=90] (2.3, 0) -- (2.3, -0.2) -- (1.7, -0.2);
        \draw[line width=3.7mm, white] (1, 0) to[out=90, in=270] (2, 1);
        \draw[thick] (0.7, -0.2) -- (0.7, 0) to[out=90, in=270] (1.7, 1);
        \draw[thick] (1.3, -0.2) -- (1.3, 0) to[out=90, in=270] (2.3, 1);
        \draw[pattern=north east lines, draw=none] (0.7, -0.2) -- (0.7, 0) to[out=90, in=270] (1.7, 1) -- (2.3, 1) to[out=270, in=90] (1.3, 0) -- (1.3, -0.2) -- (0.7, -0.2);
        \draw[thick] (1.7, 2) to[out=90, in=270] (0.7, 3);
        \draw[thick] (2.3, 2) to[out=90, in=270] (1.3, 3);
        \draw[pattern=north east lines, draw=none] (1.7, 2) to[out=90, in=270] (0.7, 3) -- (1.3, 3) to[out=270, in=90] (2.3, 2) -- (1.7, 2);
        \draw[line width=3.7mm, white] (1, 2) to[out=90, in=270] (2, 3);
        \draw[thick] (0.7, 2) to[out=90, in=270] (1.7, 3);
        \draw[thick] (1.3, 2) to[out=90, in=270] (2.3, 3);
        \draw[pattern=north east lines, draw=none] (0.7, 2) to[out=90, in=270] (1.7, 3) -- (2.3, 3) to[out=270, in=90] (1.3, 2) -- (0.7, 2);
        \draw[thick] (0.7, 2+1.8) -- (0.7, 2.3+1.8) to[out=90, in=270] (1.3, 2.9+1.8);
        \draw[line width=2.5mm, white] (1.3, 2.3+1.8) to[out=90, in=270] (0.7, 2.9+1.8);
        \draw[thick] (1.3, 2+1.8) -- (1.3, 2.3+1.8) to[out=90, in=270] (0.7, 2.9+1.8) to[out=90, in=270] (1.3, 3.7+1.8) -- (1.3, 4+1.8);
        \draw[line width=2.5mm, white](1.3, 2.9+1.8) to[out=90, in=270] (0.7, 3.7+1.8);
        \draw[thick] (1.3, 2.9+1.8) to[out=90, in=270] (0.7, 3.7+1.8) -- (0.7, 4+1.8);
        \draw[pattern=north east lines, draw=none] (0.7, 2+1.8) -- (0.7, 2.3+1.8) to[out=90, in=225] (1, 2.6+1.8) to[out=315, in=90] (1.3, 2.3+1.8) -- (1.3, 2+1.8) -- (0.7, 2+1.8);
        \draw[pattern=north west lines, draw=none] (1, 2.6+1.8) to[out=135, in=270] (0.73, 2.9+1.8) to[out=90, in=225] (1, 3.3+1.8) to[out=315, in=90] (1.27, 2.9+1.8) to[out=270, in=45] (1, 2.6+1.8);
        \draw[pattern=north east lines, draw=none] (0.7, 4+1.8) -- (0.7, 3.7+1.8) to[out=270, in=135] (1, 3.3+1.8) to[out=45, in=270] (1.3, 3.7+1.8) -- (1.3, 4+1.8) -- (0.7, 4+1.8);
        \draw[thick] (1.7, 2+1.8) -- (1.7, 2.3+1.8) to[out=90, in=270] (2.3, 2.9+1.8);
        \draw[line width=2.5mm, white] (2.3, 2.3+1.8) to[out=90, in=270] (1.7, 2.9+1.8);
        \draw[thick] (2.3, 2+1.8) -- (2.3, 2.3+1.8) to[out=90, in=270] (1.7, 2.9+1.8) to[out=90, in=270] (2.3, 3.7+1.8) -- (2.3, 4+1.8);
        \draw[line width=2.5mm, white](2.3, 2.9+1.8) to[out=90, in=270] (1.7, 3.7+1.8);
        \draw[thick] (2.3, 2.9+1.8) to[out=90, in=270] (1.7, 3.7+1.8) -- (1.7, 4+1.8);
        \draw[pattern=north east lines, draw=none] (1.7, 2+1.8) -- (1.7, 2.3+1.8) to[out=90, in=225] (2, 2.6+1.8) to[out=315, in=90] (2.3, 2.3+1.8) -- (2.3, 2+1.8) -- (1.7, 2+1.8);
        \draw[pattern=north west lines, draw=none] (2, 2.6+1.8) to[out=135, in=270] (1.73, 2.9+1.8) to[out=90, in=225] (2, 3.3+1.8) to[out=315, in=90] (2.27, 2.9+1.8) to[out=270, in=45] (2, 2.6+1.8);
        \draw[pattern=north east lines, draw=none] (1.7, 4+1.8) -- (1.7, 3.7+1.8) to[out=270, in=135] (2, 3.3+1.8) to[out=45, in=270] (2.3, 3.7+1.8) -- (2.3, 4+1.8) -- (1.7, 4+1.8);
        \draw[thick] (0.7, 1) -- (0.7, 2);
        \draw[thick] (1.3, 1) -- (1.3, 2);
        \draw[thick] (1.7, 1) -- (1.7, 2);
        \draw[thick] (2.3, 1) -- (2.3, 2);
        \draw[thick] (0.7, 3) -- (0.7, 3.8);
        \draw[thick] (1.3, 3) -- (1.3, 3.8);
        \draw[thick] (1.7, 3) -- (1.7, 3.8);
        \draw[thick] (2.3, 3) -- (2.3, 3.8);
        \draw[pattern=north east lines, draw=none] (0.7, 1) -- (0.7, 2) -- (1.3, 2) -- (1.3, 1) -- (0.7, 1);
        \draw[pattern=north east lines, draw=none] (1.7, 1) -- (1.7, 2) -- (2.3, 2) -- (2.3, 1) -- (1.7, 1);
        \draw[pattern=north east lines, draw=none] (0.7, 3) -- (0.7, 3.8) -- (1.3, 3.8) -- (1.3, 3) -- (0.7, 3);
        \draw[pattern=north east lines, draw=none] (1.7, 3) -- (1.7, 3.8) -- (2.3, 3.8) -- (2.3, 3) -- (1.7, 3);
    \end{tikzpicture}
    \caption[Equivalent identities for categorical twists]{Equivalent identities for categorical twists.}
    \label{fig:TwistEqualities}
\end{figure}
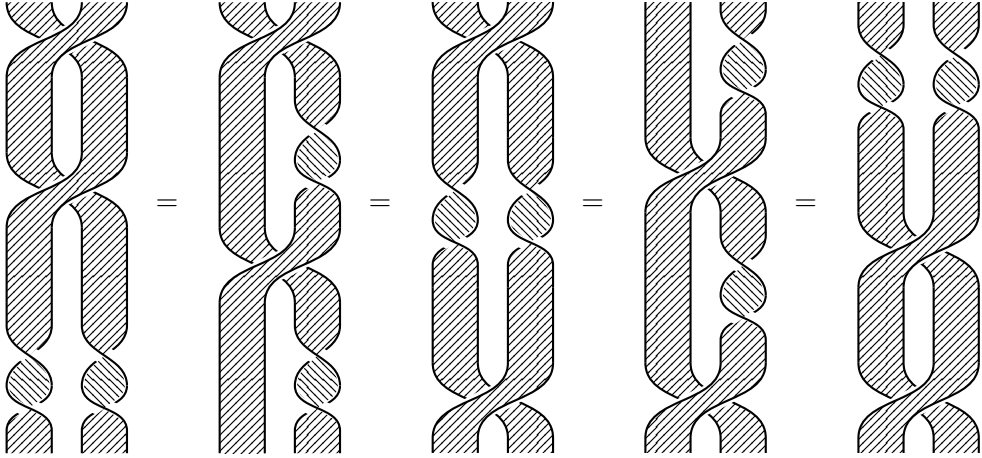

We can now provide the definition of a ribbon category.
\begin{definition}
    A \textit{ribbon category}, $\mathcal{C}$, is a rigid braided monoidal category equipped with a twist  such that for all $X \in \mathcal{C}$,
    $$\left( \theta_X \otimes \operatorname{Id}_{X^*} \right) \circ \operatorname{coev} = \left( \operatorname{Id}_X  \otimes \theta_{X^*} \right) \circ \operatorname{coev}.$$
\end{definition}
You can read more about these kinds of categories in detail in \cite{MR1300632, MR3242743, MR1321145, MR1381692}.

Lastly, we descend from the level of categories to the level of algebras in order to provide a few other useful definitions.

\begin{definition}
    A \textit{bialgebra} over a field $\mathbb{K}$ is a tuple $(A, \mu, \iota, \Delta, \varepsilon)$ such that $(A, \mu, \iota)$ is an algebra over $\mathbb{K}$, $(A, \Delta, \varepsilon)$ is a coalgebra over $\mathbb{K}$, and the following diagrams commute.
    \begin{center}
        \begin{tikzcd}
            A \otimes A \arrow[rr, "\varepsilon \otimes \varepsilon"] \arrow[dd, "\mu"'] & & \mathbb{K} \otimes \mathbb{K} \arrow[dd, "\sim" style={anchor=south, rotate=90, inner sep=.5mm}] \arrow[rr, "\iota \otimes \iota"] & & A \otimes A \\
            & & & & \\
            A \arrow[rr, "\varepsilon"] & & \mathbb{K} \arrow[rr, "\iota"] & & A \arrow[uu, "\Delta"']
        \end{tikzcd}
    \end{center}
    \begin{center}
        \begin{tikzcd}
            A \otimes A \arrow[rr, "\Delta \otimes \Delta"] \arrow[dd, "\Delta \circ \mu"'] & & A \otimes A \otimes A \otimes A \arrow[dd, "\operatorname{id} \otimes \tau \otimes \operatorname{id}"] \\
            &  & \\
            A \otimes A &  & A \otimes A \otimes A \otimes A \arrow[ll, "\mu \otimes \mu"']
        \end{tikzcd}
        \quad\quad\quad
        \begin{tikzcd}
            \mathbb{K} \arrow[rd, "\iota"] \arrow[dd, "\operatorname{id}"'] & \\
            & A \arrow[ld, "\varepsilon"] \\
            \mathbb{K} &
        \end{tikzcd}
    \end{center}
\end{definition}

\begin{definition}
    A coalgebra is said to be \textit{cocommutative} if $\tau \circ \Delta = \Delta$, where $\tau$ is the transposition map $\tau: a \otimes b \mapsto b \otimes a$.
\end{definition}

Incorporating specific additional structure upgrades a bialgebra to a Hopf algebra, which is a primary object of interest throughout this work.
\begin{definition}
    A \textit{Hopf algebra}, $A$, is a bialgebra equipped with a $\mathbb{K}$-linear map, $S: A \to A$, called the antipode map, such that the following diagrams commute.
    \begin{center}
        \begin{tikzcd}
            A \arrow[rr, "\Delta"] \arrow[dd, "\iota \circ \varepsilon"'] & & A \otimes A \arrow[dd, "\operatorname{id} \otimes S"] \\
            & & \\
            A & & A \otimes A \arrow[ll, "\mu"']  
        \end{tikzcd}\quad\quad
        \begin{tikzcd}
            A \arrow[rr, "\Delta"] \arrow[dd, "\iota \circ \varepsilon"'] & & A \otimes A \arrow[dd, "S \otimes \operatorname{id}"] \\
            & & \\
            A & & A \otimes A \arrow[ll, "\mu"']  
        \end{tikzcd}
    \end{center}
\end{definition}
Although not every bialgebra is a Hopf algebra, it's a simple and quick exercise to show that these commuting diagrams force $S$ to be unique, when it does exist.

\begin{definition}
    Let $A$ be a Hopf algebra. We define \textit{$A$-Rep} to be the category whose objects are finite dimensional $A$-modules and morphisms are $A$-module homomorphisms.
\end{definition}

\section{Quantum Groups}\label{section:QuantumGroups}

When we first start learning about multiplication in mathematics, everything we concerned ourselves with would always commute. For example $7 \cdot 3$ is the same as $3 \cdot 7$. As we progress in our mathematical journeys, we eventually learn about certain objects that don't commute at all, like matrices or free groups. When attempting to multiply matrices of different dimensions, $AB$ may not even be well-defined where as $BA$ might be. Quantum commuting lies somewhere in-between. Roughly speaking, we say $A$ and $B$ $q$-commute if $AB=qBA$ for some $q \in \mathbb{C}^\times$. The idea is as we limit $q \to 1$, we get back full commutivity.

Using this notion, we can take an algebra and throw in some $q$'s to try to give it a \emph{quantum deformation}. For example, take the free algebra in two variables $\mathbb{C}\langle X,Y \rangle$. If we want $X$ and $Y$ to commute in this algebra, we need to additionally quotient out by the ideal generated by $XY - YX$. We often refer to this algebra as a \emph{torus} to indicate that both $X$ and $Y$ are commutative and invertible. In order to ``deform'' or ``quantize'' this object, we slightly change this quotient to be $XY - qYX$ for some $q \in \mathbb{C}^\times$ instead and can now say that $X$ and $Y$ \textit{$q$-commute}.

Unfortunately, there is no universally accepted definition for the term \emph{quantum group}. However, there are some generally agreed upon properties that that these objects should contain. In particular, quantum groups should include deformations of particular objects that relate to algebraic groups. The main examples most mathematicians tend to care about are quantum Lie groups.

Unless specified otherwise, this thesis will exclusively use $\mathbb{C}$ as the underlying ring and $q \in \mathbb{C}^\times$. However, it should be noted that much of this story can be adapted for any Noetherian domain, $\mathcal{R}$, and using the ring $\mathcal{R}[q^{\pm 1}]$ or a localization of it over polynomials in $q$.

A \textit{quantum Lie group} is a non-commutative, non-cocommutative Hopf algebra, $U_q(\mathfrak{g})$, which is a deformation of the universal enveloping algebra, $U(\mathfrak{g})$. We will primarily be working with the Lie algebra $\mathfrak{g} = \mathfrak{sl}_2(\mathbb{C})$ in this thesis. Fix an element $q \in \mathbb{C}^\times$ and suppose it's not a root of unity.\footnote{Many of the results throughout this work fail when $q$ is a root of unity. Although many people study these kinds of problems at roots of unity, we will not consider this case as it is quite a different beast to work with.} The corresponding quantized enveloping algebra is defined as

$$U_q\left(\mathfrak{sl}_2\right) = \frac{\mathbb{C} \left[E, F, K^{\pm 1}\right]}{\left( \begin{gathered} KEK^{-1} = q^2E\\ KFK^{-1} = q^{-2}F\\ [E,F] = \frac{K - K^{-1}}{q - q^{-1}}\end{gathered} \right)}$$
The set of monomials $\{ F^s K^n E^r \}$ with $r,s \in \mathbb{N}$ and $n \in \mathbb{Z}$ forms a PBW basis for $U_q(\mathfrak{sl}_2)$ (see Theorem 1.5 in \cite{MR1359532} for details). As we can grade a free algebra using arbitrary degrees for each generator, we typically consider $\operatorname{deg}(E)=1$, $\operatorname{deg}(F)=-1$, and $\operatorname{deg}(K) = \operatorname{deg}(K^{-1}) = 0$, giving this algebra a $\mathbb{Z}$-grading. In particular, a monomial $F^sK^nE^r$ would have degree $r-s$.

In the theory of quantized enveloping algebras, there are many parallels to formulas found in the theory of universal enveloping algebras. In particular, we often replace the traditional binomial coefficients with their quantum counterparts, known as Gaussian binomial coefficients. For any $m \in \mathbb{N}$, we first define
$$[m] = \frac{q^{m} - q^{-m}}{q-q^{-1}} \quad\quad\quad \text{ and } \quad\quad \quad [m]! = [1] [2] \cdots [m]$$
with $[0]! = 1$. The \textit{Gaussian binomial coefficients} are then defined as:
$$\genfrac[]{0pt}{0}{m}{k} = \displaystyle\frac{[m]!}{[k]![m-k]!}.$$
This notation will be used throughout the remainder of this section.

As is the general trend for $\mathfrak{sl}_2$-modules, we can think of $F$ and $E$ as raising and lowering operators, while $K$ can be thought of, in some sense, as a diagonalizable operator since it's invertible. It turns out that when $M$ is a finite dimensional $U_q(\mathfrak{sl}_2)$-module, $E$ and $F$ \emph{must} act as raising and lowering operators and so $M$ must be fully torsion. Thus, when $M$ is finite dimensional, there are always integers $r, s > 0$ such that $E^rM = F^sM = 0$. Moreover, any finite dimensional $U_q(\mathfrak{sl}_2)$-module is a direct sum of all of its weight spaces and all the weights are of the form $\pm q^{n}$ for some $n \in \mathbb{Z}$. For further details, see \cite{MR1359532}.

\begin{theorem}[2.6 in \cite{MR1359532}]\label{theorem:Uqsl2FinModClassification}
    For every integer $n \geq 0$, there exists a simple $U_q(\mathfrak{sl}_2)$-module, $L(n,+)$, with basis $\{ m_0, \cdots, m_n \}$ such that for all $0 \leq i \leq n$
    \begin{align*}
        Km_i &= q^{n-2i}m_i, \\
        Fm_i &= \begin{cases}
            m_{i+1}, & \text{ if } i < n \\
            0,\phantom{-[i][n+1-i]m_{i-1},} & \text{ if } i = n
        \end{cases}\\
        Em_i &= \begin{cases}
            [i][n+1-i]m_{i-1},\phantom{-0,} & \text{ if } i > 0 \\
            0, & \text{ if } i = 0
        \end{cases}
    \end{align*}
    as well as a simple $U_q(\mathfrak{sl}_2)$-module, $L(n,-)$, with basis $\{ m_0^\prime, \cdots, m_n^\prime \}$ such that
    \begin{align*}
        Km_i^\prime &= -q^{n-2i}m_i^\prime, \\
        Fm_i^\prime &= \begin{cases}
            m_{i+1}^\prime, & \text{ if } i < n \\
            0,\phantom{-[i][n+1-i]m_{i-1},} & \text{ if } i = n
        \end{cases}\\ 
        Em_i^\prime &= \begin{cases}
            -[i][n+1-i]m_{i-1}^\prime,\phantom{0,} & \text{ if } i > 0 \\
            0, & \text{ if } i = 0
        \end{cases}
    \end{align*}
\end{theorem}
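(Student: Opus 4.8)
The plan is to build each module by hand on its prescribed basis, check the three defining relations of $U_q(\mathfrak{sl}_2)$ directly on basis vectors, and then read off simplicity from the hypothesis that $q$ is not a root of unity.

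First I would let $V$ be the vector space with basis $\{m_0, \dots, m_n\}$ and simply \emph{declare} $K, E, F$ to act by the formulas in the statement. Since $K$ acts diagonally with the nonzero eigenvalues $q^{n-2i}$, it is invertible, so these formulas do define an action of the free algebra on $E, F, K^{\pm 1}$, with $K^{-1} m_i = q^{-(n-2i)} m_i$. To see the action descends to $U_q(\mathfrak{sl}_2)$ I would verify the three relations on an arbitrary $m_i$. The relations $KEK^{-1} = q^2 E$ and $KFK^{-1} = q^{-2}F$ reduce to bookkeeping with exponents of $q$. For $[E,F] = \tfrac{K - K^{-1}}{q - q^{-1}}$, computing $EFm_i$ and $FEm_i$ (with the convention that out-of-range terms vanish — consistent with $[0] = 0$ and $[m][0] = 0$, which makes the endpoints $i=0$ and $i=n$ behave correctly) reduces the identity to the scalar statement
$$[i+1][n-i] - [i][n+1-i] = [n-2i],$$
which I would prove by expanding each quantum integer as $[a] = \tfrac{q^a - q^{-a}}{q - q^{-1}}$; after cancellation the left-hand side simplifies to $\tfrac{(q^{n-2i} - q^{-(n-2i)})(q - q^{-1})}{(q - q^{-1})^2}$, as desired.

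Next I would establish simplicity. Because $q$ is not a root of unity, the eigenvalues $q^n, q^{n-2}, \dots, q^{-n}$ of $K$ are pairwise distinct, so every nonzero submodule $W \subseteq V$ is a sum of $K$-eigenlines and hence contains some $m_i$. Applying $F$ repeatedly yields $m_{i+1}, \dots, m_n$, and applying $E$ repeatedly yields $m_{i-1}, \dots, m_0$, since each scalar $[j][n+1-j]$ with $1 \le j \le i$ is a product of nonzero quantum integers (again using $[m] \ne 0$ for $m \ge 1$, which holds precisely because $q$ is not a root of unity). Therefore $W = V$, so $L(n,+) := V$ is simple.

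Finally, for $L(n,-)$ I would use the algebra automorphism $\sigma$ of $U_q(\mathfrak{sl}_2)$ with $\sigma(E) = -E$, $\sigma(F) = F$, $\sigma(K) = -K$ (a one-line check shows the three relations are preserved), and take $L(n,-)$ to be the pullback $\sigma^\ast L(n,+)$; unwinding the twisted action on the basis $\{m_i\}$ reproduces exactly the displayed formulas, and simplicity is inherited since pullback along an automorphism preserves it. The only mildly delicate point in the whole argument is the quantum-integer identity above together with the careful handling of the boundary cases $i = 0$ and $i = n$; beyond that I expect no genuine obstacle, as every step is a routine verification.
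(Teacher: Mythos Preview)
Your proof is correct and follows the standard approach. However, there is nothing to compare against: the paper does not prove this theorem at all. It is stated as a citation (Theorem 2.6 in Jantzen's \emph{Lectures on Quantum Groups}) and simply used as background input for the discussion of $U_q(\mathfrak{sl}_2)$-Rep. Your argument---defining the action explicitly, verifying the defining relations via the quantum-integer identity $[i+1][n-i] - [i][n+1-i] = [n-2i]$, deducing simplicity from the distinctness of the $K$-eigenvalues when $q$ is not a root of unity, and obtaining $L(n,-)$ by twisting along the automorphism $E \mapsto -E$, $F \mapsto F$, $K \mapsto -K$---is exactly the proof one finds in Jantzen, so you have essentially reconstructed the cited source.
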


Every simple $U_q(\mathfrak{sl}_2)$-module of dimension $n+1$ is isomorphic to either $L(n,+)$ or $L(n,-)$, giving us a full classification of $U_q(\mathfrak{sl}_2)$-Rep (see \cite{MR1300632, MR1359532}). These two modules are isomorphic if and only if the underlying base field has characteristic $2$. Although $L(n,+)$ is not isomorphic to $L(n,-)$ in our case as we're working over $\mathbb{C}$, we will only consider $L(n,+)$ and just keep a note in the back untouched recesses of our minds that there are actually two of them. Furthermore, we will condense and abuse notation by using $L(n)$ in place of $L(n,+)$ throughout. $L(n)$ are analogues of highest-weight modules of $\mathfrak{sl}_2$.

A quick observation shows us that the smallest module where $E$, $F$, and $K$ don't act trivially is $L(1)$. The two dimensional module $L(1)$ is often called the \emph{standard representation} or \emph{fundamental representation} of $U_q(\mathfrak{sl}_2)$ as it comes from the particular two dimensional representation $\rho : U_q(\mathfrak{sl}_2) \to \operatorname{End}(V)$ defined by
\begin{align*}
    \rho(K) &= \begin{pmatrix} q & 0\\ 0 & q^{-1} \end{pmatrix} & \rho(F) &= \begin{pmatrix} 0 & 0\\ 1 & 0 \end{pmatrix} & \rho(E) &= \begin{pmatrix} 0 & 1\\ 0 & 0 \end{pmatrix},
\end{align*}
which matches up with the module structure of $L(1)$ defined above.

\begin{align*}
    Km_0 &= qm_0 & Km_1 &= q^{-1}m_1\\
    Fm_0 &= m_1 & Fm_1 &= 0\\
    Em_0 &= 0 & Em_1 &= m_0
\end{align*}
Clearly, this representation is closely related to the standard representation of $U(\mathfrak{sl}_2)$, i.e. $\mathfrak{sl}_2(\mathbb{C})$.
\begin{align*}
    \rho(h) &= \begin{pmatrix} 1 & 0\\ 0 & -1 \end{pmatrix} & \rho(f) &= \begin{pmatrix} 0 & 0\\ 1 & 0 \end{pmatrix} & \rho(e) &= \begin{pmatrix} 0 & 1\\ 0 & 0 \end{pmatrix}
\end{align*}

It follows from the fact that $U_q(\mathfrak{sl}_2)$ is a Hopf algebra that the category of finite-dimensional $U_q(\mathfrak{sl}_2)$ representations is closed under tensor products. In particular, we have that
\begin{align}\label{eq:Ln}
    L(n) \otimes L(m) \cong \bigoplus_{k=0}^{\min \{ m,n \}} L(m+n-2k).
\end{align}
By equation \ref{eq:Ln}, we get that $L(2) \subset L(2) \oplus L(0) \cong L(1) \otimes L(1)$ and $L(m+1) \subset L(m+1) \oplus L(m-1) \cong L(m) \otimes L(1)$. By induction, every finite dimensional $U_q(\mathfrak{sl}_2)$-module can be embedded inside a sufficiently high enough tensor power of $L(1)$. Specifically, for any non-negative integer $n$, $L(n) \subset L(1)^{\otimes n}$. With the appropriate projector, we can always take this tensor power and project onto the corresponding component we are interested in. For this reason, $L(1)$ is called a \textit{tensor generator} in this category. Specifically, we have particular inclusion and projection maps that intertwine the action of $U_q(\mathfrak{sl}_2)$. Recall that $\{ m_0, m_1, \cdots, m_{n} \}$ is a basis for $L(n)$. These inclusions and projections are
\begin{gather}
    \label{eq:JWP1}\iota_n : L(n) \hookrightarrow L(1)^{\otimes n} \\
    m_{k} \mapsto \genfrac[]{0pt}{0}{n}{k}^{-1} \sum_{\substack{(\varepsilon_1, \cdots, \varepsilon_{k}) \in \{ 0, 1 \}^n,\\ \sum_{\ell = 1}^{n} \varepsilon_\ell = k}} q^{\displaystyle\sum_{i<j} \delta_{\varepsilon_{i}, \varepsilon_{j}+1}} m_{\varepsilon_{1}} \otimes \cdots \otimes m_{\varepsilon_{n}} \notag
\end{gather}
\begin{gather}
    \label{eq:JWP2}\pi_n : L(1)^{\otimes n} \twoheadrightarrow L(n) \\
    (m_{\varepsilon_1} \otimes \cdots \otimes m_{\varepsilon_n}) \mapsto q^{\displaystyle\sum_{i<j} -\delta_{\varepsilon_{i}+1, \varepsilon_{j}}} m_{\sum_{k=1}^n \varepsilon_k}. \notag
\end{gather}
One can check that the operator $p_n = \iota_n \circ \pi_n$, which is often called the \textit{Jones-Wenzl projector}, satisfies $p_n \circ p_n = p_n$, and so this is indeed a projector.\footnote{The definitions provided here for $\pi_n$ and $\iota_n$ were translated from Frenkel and Khovanov's work in \cite{MR1446615} to better fit our notation.}

\section{Ribbon Hopf Algebras}\label{section:RibbonHopfAlg}

We mentioned before that $U_q(\mathfrak{sl}_2)$ is a non-commutative non-cocommutative Hopf algebra. The algebra structure is clear and the rest of its Hopf structure is as follows:
\begin{align*}
    \Delta(E) &= E \otimes 1 + K \otimes E & \varepsilon(E) &= 0 & S(E) &= K^{-1}E\\
    \Delta(F) &= F \otimes K^{-1} + 1 \otimes F & \varepsilon(F) &= 0 & S(F) &= -FK\\
    \Delta(K) &= K \otimes K & \varepsilon(K) &= 1 & S(K) &= K^{-1}
\end{align*}

Throughout this section, we will be using \emph{sumless Sweedler notation}: $\Delta(u) = u_{(1)} \otimes u_{(2)}$. This also means that we may also write something like $u_{(3)}$ without further explanation as $U_q(\mathfrak{sl}_2)$ is coassociative and so $\left( \operatorname{id} \otimes \Delta \right) \circ \Delta = \left( \Delta \otimes \operatorname{id} \right) \circ \Delta$.

Let $M$ and $N$ be $U_q(\mathfrak{sl}_2)$-modules, $u \in U_q(\mathfrak{sl}_2)$, $m \in M$, and $n \in N$. $U_q(\mathfrak{sl}_2)$ has a natural action on the tensor product, $M \otimes N$, defined using the coproduct:
$$u \cdot (m \otimes n) := \Delta(u) \cdot (m \otimes n),$$
making $M \otimes N$ a $U_q(\mathfrak{sl}_2)$-module as well.
Additionally, $M^* := \operatorname{Hom}_{\mathbb{C}}(M, \mathbb{C})$, can also be made into a (left) $U_q(\mathfrak{sl}_2)$-module by the action
$$(u \cdot f)(m) = f\left(S(u) \cdot m\right).$$

We can look at $M^*$ as the left and right dual of $M$. However, both corresponding evaluation homomorphisms can't be as simple as plugging $m \in M$ into $f \in M^*$ as $U_q(\mathfrak{sl}_2)$ is not cocommutative. In particular, the map $f \otimes m \to f(m)$ is a homomorphism of $U_q(\mathfrak{sl}_2)$-modules, however, $m \otimes f \mapsto f(m)$ is not a homomorphism in general. 

For a moment, let's suppose this is a $U_q(\mathfrak{sl}_2)$-module homomorphism and prod the situation a bit to see what happens. When we combine these actions and consider the evaluation map corresponding to the left dual, we see that
\begin{align*}
    u \cdot \operatorname{ev}\left( f \otimes m \right) &= \operatorname{ev}\left( u \cdot f \otimes m ) \right)\\
    &= \operatorname{ev}\left( u_{(1)}f \otimes u_{(2)}m \right)\\
    &= u_{(1)} f(u_{(2)} \cdot m)\\
    &= f( S(u_{(1)}) u_{(2)} m)\\
    &= f\left( (\mu \circ ( S \otimes \operatorname{id} ) \circ \Delta)(u) \cdot m \right).
\end{align*}
However, when we repeat this for the right dual we get
\begin{align*}
    u \cdot \operatorname{ev}\left( m \otimes f \right) &= \operatorname{ev}\left( u \cdot m \otimes f ) \right)\\
    &= \operatorname{ev}\left( u_{(1)}m \otimes u_{(2)}f \right)\\
    &= u_{(2)} f(u_{(1)} \cdot m)\\
    &= f( S(u_{(2)}) u_{(1)} m)\\
    &= f\left( (\mu \circ ( S \otimes \operatorname{id} ) \circ \tau \circ \Delta)(u) \cdot m \right).
\end{align*}

Since $\tau \circ \Delta \neq \Delta$, we encounter a contradiction. Furthermore, the uniqueness of $S$ means that the complications arising from the lack of cocommutativity in $U_q(\mathfrak{sl}_2)$ cannot be easily remedied. However, this should be viewed as a feature rather than a bug, as quantum groups were intentionally created to exhibit this defect. Moreover, if we instead define our evaluation map to be $m \otimes f \mapsto f( K^{-1} m)$, this does work as a proper evaluation map compatible with the $U_{q}(\mathfrak{sl}_2)$ action. Although the map $\tau : M^* \otimes M \to M \otimes M^*$ is not a proper $U_q(\mathfrak{sl}_2)$-module homomorphism, there must exist an isomorphism when $M$ is finite dimensional by a simple dimension counting argument. This observation indicates that $U_q(\mathfrak{sl}_2)$-Rep must be braided but not symmetric.

\begin{definition}
    A \textit{quasitriangular Hopf algebra} is a is a pair $(A, R)$ where $A$ is a Hopf algebra and $R \in A \otimes A$ such that $R$ is invertible and
    \begin{align*}
        (\Delta \otimes \operatorname{id}) R &= R_{13} R_{23}\\
        (\operatorname{id} \otimes \Delta) R &= R_{13} R_{12}\\
        (\tau \circ \Delta)(a) &= R (\Delta(a)) R^{-1}
    \end{align*}
    for all $a \in A$ where $\tau$ denotes the transposition map $\tau: a \otimes b \mapsto b \otimes a$ and $R_{ij}$ is the tensor triple with $R$ in the $i$th and $j$th factors and $1$ in the other.
\end{definition}

A Hopf algebra being quasitriangular corresponds to the ``braiding" in the notion of a braided monoidal category. In particular, $R$ is one of the isomorphisms that provides this braiding. While $R$ is not unique in general, we will demonstrate how to compute a family of examples in $U_q(\mathfrak{sl}_2)$.

First define $\theta_n = (-1)^n q^{-n(n-1)/2} \frac{(q-q^{-1})^n}{[n]!} F^n \otimes E^n$, where $[n]! = \left( \frac{q^n - q^{-n}}{q - q^{-1}} \right) \cdots \left( \frac{q - q^{-1}}{q - q^{-1}} \right)$ and suppose $M$ and $N$ are finite dimensional $U_q(\mathfrak{sl}_2)$-modules. Define the linear transformation $\theta : M \otimes N \to M \otimes N$ as the finite sum\footnote{Although this is not technically a finite sum, we consider it as such as $E$ and $F$ must act nilpotently on finite dimensional modules.} of operators $\theta = \sum_{n \geq 0} \theta_n$.
For example, if $M = N = L(1)$, then $M \otimes N$ is $4$ dimensional. Since $E^2 M = F^2 M = 0$, we get that $\theta = 1 \otimes 1 - (q - q^{-1}) F \otimes E$.
Furthermore, we can express $\theta$ as the following matrix with respect to the basis described in theorem \ref{theorem:Uqsl2FinModClassification}, $\left\{ m_0 \otimes m_0, m_0 \otimes m_1, m_1 \otimes m_0, m_1 \otimes m_1 \right\}$.
$$\theta = \begin{bmatrix} 1 & 0 & 0 & 0\\ 0 & 1 & 0 & 0\\ 0 & q^{-1} - q & 1 & 0\\ 0 & 0 & 0 & 1 \end{bmatrix}.$$

All weights of finite dimensional $U_q(\mathfrak{sl}_2)$-modules are of the form $\pm q^{n}$ for $n \in \mathbb{Z}$. Therefore, all weights for all objects in $U_q(\mathfrak{sl}_2)$-Rep lie in $\Tilde{\Lambda} = \{ \pm q^{n} \, \mid n \in \mathbb{Z} \}$. Define a map $f : \Tilde{\Lambda} \times \Tilde{\Lambda} \to \mathbb{C}^\times$ such that $f(\lambda_1, \lambda_2) = \lambda_1 f(\lambda_1, q^2 \lambda_2) = \lambda_2 f(q^2 \lambda_1, \lambda_2)$ for all $\lambda_1, \lambda_2 \in \Tilde{\Lambda}$ and consider the bilinear map $\Tilde{f}: M \otimes N \to M \otimes N$ defined as $\Tilde{f}(m \otimes n) = f(\lambda_1, \lambda_2) m \otimes n$ where $K m = \lambda_1 m$ and $Kn = \lambda_2 n$ (see chapters $2$ and $3$ in \cite{MR1359532} for more details).

Suppose $f(q, q) = q^{-1}$. Then our formulas for $f$ imply that $f(q, q^{-1}) = qf(q,q) = 1 = f(q^{-1},q)$ and $f(q^{-1}, q^{-1}) = q^{-1}$. Thus, $\Tilde{f}(m_0 \otimes m_0) = \Tilde{f}(m_1 \otimes m_1) = q^{-1}$ and $\Tilde{f}(m_0 \otimes m_1) = \Tilde{f}(m_1 \otimes m_0) = 1$ and so we get a matrix representation
$$\theta \circ \Tilde{f} = \begin{bmatrix} q^{-1} & 0 & 0 & 0\\ 0 & 1 & 0 & 0\\ 0 & q^{-1} - q & 1 & 0\\ 0 & 0 & 0 & q^{-1} \end{bmatrix}.$$

The map $R = \theta \circ \Tilde{f} \circ \tau : M \otimes N \to N \otimes M$ is a nontrivial $U_q(\mathfrak{sl}_2)$-module isomorphism. This is called an $R$-matrix and is a braiding that we've been looking for, making $U_q(\mathfrak{sl}_2)$-Rep a rigid braided monoidal category.

\begin{definition}
    A \textit{ribbon Hopf algebra} is a triple $(A, R, v)$ consisting of a quasitriangular Hopf algebra $(A, R)$ and a central invertible $v \in A$ such that
    \begin{align*}
        S(v) &= v\\
        \varepsilon(v) &= 1\\
        v^2 &= \mu (S \otimes \operatorname{id})(R_{21}) \cdot S\left(\mu (S \otimes \operatorname{id})(R_{21}) \right)\\
        \Delta(v) &= (R_{21} R_{12})^{-1} (v \otimes v)
    \end{align*}
\end{definition}

We call $v$ the \textit{universal twist} of $A$.
For any $X \in A$-Rep, the twist $\theta_v : X \to X$ is defined as $\theta_v(x) = v \cdot x$ and can be thought of as multiplication by $v$. Since $a$ is invertible, $\theta_v$ is an isomorphism and thus is clearly compatible with $B_{Y, X} \circ B_{X, Y} : X \otimes Y \to X \otimes Y$ in the sense of equation \ref{eq:Twist&BraidRel}. This brings us to the following theorem

\begin{theorem}[3.2 in \cite{MR2654259}]
    Let $A$ be a ribbon Hopf algebra. Then the category of finite dimensional $A$-modules, $A$-Rep, is a ribbon category.
\end{theorem}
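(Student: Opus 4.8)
The plan is to verify the defining properties of a ribbon category for $A$-Rep one at a time, in each case reading off the required categorical compatibility from one of the structural identities in the ribbon Hopf algebra $A$; concretely these are the same constructions used above for $U_q(\mathfrak{sl}_2)$-Rep, now deduced from the abstract axioms. First I would set up the monoidal structure: $M\otimes N$ carries the action $u\cdot(m\otimes n)=\Delta(u)\cdot(m\otimes n)$, the monoidal unit is $\mathbf{1}=\mathbb{K}$ with $A$ acting through $\varepsilon$, and the associator and unitors are the vector-space ones; coassociativity of $\Delta$ shows the associator is an $A$-module map and the counit axioms do the same for the unitors, so $A$-Rep is a $\mathbb{K}$-linear monoidal category. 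For rigidity, set $X^{*}=\operatorname{Hom}_{\mathbb{K}}(X,\mathbb{K})$ with $(u\cdot f)(x)=f(S(u)x)$, take $\operatorname{ev}(f\otimes x)=f(x)$ and $\operatorname{coev}(1)=\sum_i e_i\otimes e^i$ for a basis $\{e_i\}$ with dual basis $\{e^i\}$; the two antipode axioms $\mu\circ(S\otimes\operatorname{id})\circ\Delta=\iota\circ\varepsilon=\mu\circ(\operatorname{id}\otimes S)\circ\Delta$ are exactly what makes $\operatorname{ev}$ and $\operatorname{coev}$ $A$-module maps, and the zigzag identities are ordinary finite-dimensional linear algebra. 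Invoking the standard fact that a quasitriangular Hopf algebra has bijective antipode, the mirror-image construction with $S^{-1}$ in place of $S$ exhibits a right dual of each $X$, so $A$-Rep is rigid.

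Next I would install the braiding. Define $B_{X,Y}\colon X\otimes Y\to Y\otimes X$ by $B_{X,Y}(m\otimes n)=\tau\bigl(R\cdot(m\otimes n)\bigr)$; the axiom $(\tau\circ\Delta)(a)=R\,\Delta(a)\,R^{-1}$ is precisely the statement that $B_{X,Y}$ intertwines the two $A$-module structures, so each $B_{X,Y}$ is an isomorphism in $A$-Rep, while $(\Delta\otimes\operatorname{id})R=R_{13}R_{23}$ and $(\operatorname{id}\otimes\Delta)R=R_{13}R_{12}$ translate term by term into the two hexagon diagrams. This makes $A$-Rep a rigid braided monoidal category, recovering abstractly the structure observed for $A=U_q(\mathfrak{sl}_2)$.

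It then remains to produce the twist and check the ribbon axiom. Put $\theta_X(m)=v\cdot m$ (with the $v$-versus-$v^{-1}$ convention fixed as in the discussion above, so that the signs come out right): naturality is automatic since morphisms of $A$-modules commute with the $A$-action, $\theta_X$ is invertible because $v$ is, and $\varepsilon(v)=1$ gives $\theta_{\mathbf{1}}=\operatorname{Id}_{\mathbf{1}}$. A short computation identifies $B_{Y,X}\circ B_{X,Y}$ with multiplication by $R_{21}R_{12}$ on $X\otimes Y$; combining this with $\Delta(v)=(R_{21}R_{12})^{-1}(v\otimes v)$ and the fact that $v\otimes v$ is central in $A\otimes A$ (because $v$ is central in $A$) yields the braid--twist relation $\theta_{X\otimes Y}=(\theta_X\otimes\theta_Y)\circ B_{Y,X}\circ B_{X,Y}$ of equation \ref{eq:Twist&BraidRel}. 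For the duality axiom $\theta_{X^{*}}=(\theta_X)^{*}$ and the defining ribbon identity $(\theta_X\otimes\operatorname{Id}_{X^{*}})\circ\operatorname{coev}=(\operatorname{Id}_X\otimes\theta_{X^{*}})\circ\operatorname{coev}$, I would apply the elementary identity $\sum_i e_i\otimes(e^i\circ T)=\sum_i(Te_i)\otimes e^i$ in $X\otimes X^{*}$, valid for any $T\in\operatorname{End}(X)$: both statements then collapse to the claim that $v$ and $S(v)$ act by the same operator on every finite-dimensional $X$, which is exactly the axiom $S(v)=v$. The last axiom $v^2=\mu(S\otimes\operatorname{id})(R_{21})\cdot S\bigl(\mu(S\otimes\operatorname{id})(R_{21})\bigr)$ is not logically required for the bare categorical statement, but it pins $v$ down as the correct ``square root'' of the Drinfeld element and so makes $\theta$ compatible with the canonical isomorphism $X\cong X^{**}$.

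I expect the real work here to be bookkeeping rather than conceptual depth: the one genuinely non-formal input is the bijectivity of the antipode of a quasitriangular Hopf algebra, needed for honest two-sided duals, and everything else is a line-by-line dictionary between a Hopf-algebra axiom and a categorical diagram. The step most likely to cause trouble is keeping all the conventions mutually consistent --- left versus right dual, the explicit $A$-action on $\operatorname{coev}(1)$ in the presence of the non-cocommutative $\Delta$, and the placement of $v$ versus $v^{-1}$ --- since a single wrong choice flips a sign in a hexagon or in equation \ref{eq:Twist&BraidRel} and the diagrams no longer close.
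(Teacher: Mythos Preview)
The paper does not prove this theorem; it is quoted as a result from the literature (Theorem~3.2 in \cite{MR2654259}) and used as a black box, with the preceding discussion of $U_q(\mathfrak{sl}_2)$ serving only as an illustrative example rather than a proof. Your proposal therefore goes well beyond what the paper does: you supply the standard verification that each ribbon Hopf algebra axiom translates into the corresponding categorical axiom (coassociativity $\Rightarrow$ associator, antipode axioms $\Rightarrow$ rigidity, quasitriangularity axioms $\Rightarrow$ hexagon identities, $\Delta(v)=(R_{21}R_{12})^{-1}(v\otimes v)$ $\Rightarrow$ braid--twist relation, $S(v)=v$ $\Rightarrow$ compatibility with duals). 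This is the correct and canonical argument, and your caveats about the bijectivity of $S$ and the $v$ versus $v^{-1}$ convention are exactly the points where care is needed.
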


Once again using $U_q(\mathfrak{sl}_2)$ as our example, we have (partially) outlined that $U_q(\mathfrak{sl}_2)$-Rep possesses the following properties:
\begin{itemize}
    \item Monoidal: The tensor product of $U_q(\mathfrak{sl}_2)$-modules is itself a $U_q(\mathfrak{sl}_2)$-module.
    \item Rigid: Every object in $U_q(\mathfrak{sl}_2)$-Rep has a left and right dual equipped with proper evaluation and coevaluation maps.
    \item Braided: We can construct $R$-matrices on these tensor products.
    \item Twist: There exists an element $\theta \in U_q(\mathfrak{sl}_2)$, that acts on the irreducible representation of highest weight $\lambda$ as scaling by the constant $-q^{-\langle \lambda, \lambda \rangle/2 - \langle \lambda, \rho \rangle}$ where $\rho \in \mathfrak{h}$ such that $\left\langle\alpha_i, \rho\right\rangle = (\alpha_i, \alpha_i)/2$, where $\{\alpha_i\}$ is the usual basis for $\mathfrak{h}^\ast$.\footnote{We'll eventually only care about the scaling factor $-q^{-3/2}$, which will be reparameterized to $-q^{-3}$. Computing $\theta$ becomes messy and so this computation was left out. However, it's not too much work to find this $\theta$ using SageMath.}
\end{itemize}
Therefore, $U_q(\mathfrak{sl}_2)$-Rep is actually a ribbon category! For more in-depth information about quantum groups and their representations, see \cite{MR1359532, MR1321145, MR1381692}.

\section{Skein Modules}\label{section:SkeinMods}

Skein modules arose from the search for particular invariants in low-dimensional topology and representation theory. We construct these objects by considering embeddings of links and tangles in a manifold and imposing certain algebraic conditions on them, known as skein relations. The idea is to form a module or algebra where the basis elements are isotopy classes of knots and links inside the manifold, and the relations reflect specific topological or algebraic properties. In this way, skein modules can be thought of as a generalization of knot polynomials. They arise naturally in the study of the representation theory of quantum groups and provide a rich, unifying framework for investigating knots, character varieties, and representations.

Before we define a skein module, we will first explain the foundations of these relations and the algebraic properties we aim to better understand.

\begin{definition}
    A \textit{ribbon}, or \textit{coupon}, in a $3$-manifold, $M$, is a smooth embedding of $[0,1] \times [0,1]$ into $M$. The image of $\left\{\frac{1}{2}\right\} \times [0,1]$ is called the \textit{core} of the ribbon and the images of $[0,1] \times \{0\}$ and $[0,1] \times \{1\}$ are the \textit{bases} of the ribbon.
\end{definition}

We often do not draw the ``thickness'' of these ribbons since it often suffices to only draw the core. We are able to avoid losing this extra information as long as there are no half-twists in the ribbon. In particular, we are able to observe and distinguish any full twists in the tangle by using loops in their place.
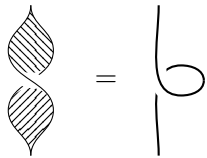
\begin{figure}[h]
    \centering
    \begin{tikzpicture}
        \draw (1, -1) to[out=90, in=270] (0.7, -0.4) to[out=90, in=270] (1.3, 0.4) to[out=90, in=270] (1, 1);
        \draw[line width=2.5mm, white] (1.3, -0.4) to[out=90, in=270] (0.7, 0.4);
        \draw (1, -1) to[out=90, in=270] (1.3, -0.4) to[out=90, in=270] (0.7, 0.4) to[out=90, in=270] (1, 1);
        \draw[pattern=north east lines, draw=none] (1, -1) to[out=90, in=270] (1.3, -0.4) to[out=90, in=0] (1, -0.1) to[out=180, in=90] (0.7, -0.4) to[out=270, in=90] (1, -1);
        \draw[pattern=north west lines, draw=none] (1, 1) to[out=270, in=90] (1.3, 0.4) to[out=270, in=0] (1, 0.1) to[out=180, in=270] (0.7, 0.4) to[out=90, in=270] (1, 1);
        \node at (2, 0) {$=$};
        \draw[thick] (2.7, -1) to[out=90, in=180] (3, 0.2) to[out=0, in=90] (3.3, 0);
        \draw[line width=2.5mm, white] (3, -0.2) to[out=180, in=270] (2.7, 1);
        \draw[thick] (3.3, 0) to[out=270, in=0] (3, -0.2) to[out=180, in=270] (2.7, 1);
    \end{tikzpicture}
    \caption[A blackboard framing of a tangle]{A blackboard framing of a tangle.}
    \label{fig:BBFraming}
\end{figure}
Thus, we will equivalently say that these tangles are equipped with a \textit{framing} in these situations. A \textit{framing} of a tangle, $T$, is a continuous assignment of a vector to each point of $T$, which is not tangent at that point. We additionally impose the condition that the framings may not include half-twists. We say a framed tangle in $\Sigma \times I$ has a \textit{blackboard framing} if the entire framing is embedded orthogonally to $\Sigma$. Every framed link in $\Sigma \times I$ is isotopic to one with a blackboard framing (see figure \ref{fig:BBFraming}). Thus, we may represent ribbons or framed links in $\Sigma \times I$ as link diagrams in $\Sigma$.

A \textit{ribbon graph} (sometimes called \textit{fat graph}) is a directed graph whose edges are ribbons and vertices are coupons. Given a category, $\mathcal{C}$, a \textit{$\mathcal{C}$-colored graph} of a ribbon graph is an assignment of objects in $\mathcal{C}$ to each ribbon and an assignment of morphisms in $\mathcal{C}$ to each coupon, compatible with the assigned objects on the adjacent ribbons.\footnote{Technically, objects are assigned to beginning and ends of ribbons, and the identity map is assigned to (untwisted) ribbons.}

Let $\mathcal{C}$ be a $\mathbb{C}$-linear ribbon category and let $M$ be the filled-in cylinder depicted in figure \ref{fig:RibbonGraphEx}. Consider the oriented $\mathcal{C}$-colored ribbon graph, $\Gamma$, where $A$, $B$, $U$, $V$, $W$, $X$, $Y$ are all objects in $\mathcal{C}$.
\begin{figure}
    \begin{center}
        \begin{tikzpicture}
            \node (u) at (-1.2, 4) {};
            \node (v) at (0, 4.2) {};
            \node (w) at (1.2, 4) {};
            \node (x) at (-0.6, -0.2) {};
            \node (y) at (0.6, -0.2) {};
            \node (f) at (0.6, 1.3) {};
            \node (g) at (1, 2.65) {};
            \node (h) at (-1.2, 3) {};
            \draw (0.5, 1.3) to[out=90, in=0] (-0.3, 3) to[out=180, in=270] (-1.2, 2) -- (h) -- (u);
            \draw[line width=2.5mm, white] (-0.6, 2.3) to[out=90, in=270] (h);
            \draw (y) to[out=90, in=270] (-0.6, 2.3) to[out=90, in=270] (h);
            \draw (h) -- (-1.2, 2.15);
            \draw[line width=2.5mm, white] (x) to[out=90, in=270] (f);
            \draw (x) to[out=90, in=270] (f);
            \draw (f) to[out=90, in=270] (g) to[out=80, in=270] (w);
            \draw (g) to[out=100, in=270] (v);
            \path[thick, tips, ->] (-0.4, 0.3) -- (0.2, 0.65);
            \path[thick, tips, ->] (0.5, 0.5) -- (-0.27, 1.45);
            \path[thick, tips, ->] (h) -- (-0.22, 3);
            \path[thick, tips, ->] (u) -- (-1.2, 3.4);
            \path[thick, tips, ->] (0.6, 1.3) -- (0.86, 2.1);
            \path[thick, tips, ->] (1.2, 2.8) -- (0.54, 3.35);
            \path[thick, tips, ->] (g) -- (1.14, 3.35);
            \draw (0,4) ellipse (2 and 0.5);
            \draw[dashed] (2,0) arc(0:180:2 and 0.5);
            \draw (2,0) arc(0:-180:2 and 0.5);
            \draw (-2,0) -- (-2,4);
            \draw (2,0) -- (2,4);
            \node[circle, inner sep=0.5pt, minimum size=3pt, fill=white] at (-1.2, 4.1) {$U$};
            \node[circle, inner sep=0.5pt, minimum size=3pt, fill=white] at (0, 4.2) {$V$};
            \node[circle, inner sep=0.5pt, minimum size=3pt, fill=white] at (1.2, 4.1) {$W$};
            \node[circle, inner sep=0.5pt, minimum size=3pt, fill=white] at (-0.6, -0.2) {$X$};
            \node[circle, inner sep=0.5pt, minimum size=3pt, fill=white] at (0.6, -0.2) {$Y$};
            \node at (0, 2.6) {$A$};
            \node at (1.1, 2) {$B$};
            \node[draw, fill=white] at (0.6, 1.4) {$f$};
            \node[draw, fill=white] at (1, 2.75) {$g$};
            \node[draw, fill=white] at (-1.2, 2.95) {$h$};
        \end{tikzpicture}
        \caption{An example of a $\mathcal{C}$-colored ribbon graph embedded into a cylinder}
        \label{fig:RibbonGraphEx}
    \end{center}
\end{figure}
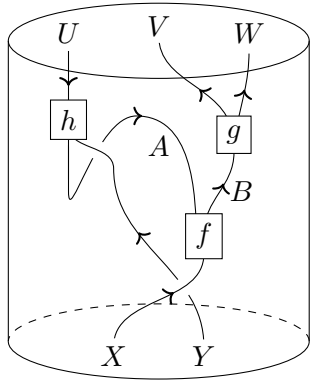
As we introduce maps in $\mathcal{C}$ diagrammatically, we will use the convention that everything is read as moving upwards. Note that we could have just as easily chosen downwards instead, but we need to make some choice so that we may talk about composition of Homs as stacking cylinders. Since we're using the convention of moving upwards, composition in $\mathcal{C}$ is seen as stacking compatible cylinders vertically on top of one another.
At every introduced vertex/coupon, we have a morphism, which we've labeled as $f$, $g$, and $h$, that are, once again, moving upwards. The domains of these morphisms are based on the objects immediately below their vertices and codomains as objects immediately above, both read left to right (the induced orientation of the coupon's bases, $[0,1] \times \{0\}$ and $[0,1] \times \{1\}$). If a ribbon's orientation doesn't flow in the upwards direction, then we consider the object coloring the ribbon as the corresponding dual object instead. This is well-defined as every ribbon category is also rigid. Therefore, the morphisms in our example are defined as

\begin{align*}
    f &: X \to A^\ast \otimes B\\
    g &: B \to V \otimes W\\
    h &: A^\ast \otimes Y \to U^\ast.
\end{align*}

With this particular labeling, we are identifying our ribbon graph $\Gamma$ as an element of $\operatorname{Hom}\left( X \otimes Y, U^\ast \otimes V \otimes W \right)$ via the following theorem. The point of this is to allow us to use a diagrammatic approach to better understand $\mathcal{C}$. In particular, diagrams allow us to express some very complicated formulas in a way that can be intuitively easier to understand.
\begin{theorem}[2.5 in \cite{MR1036112}]\label{theorem:RTFunctor}
    Let $A = (A, R, v)$ be a ribbon Hopf algebra over a field of characteristic $0$ and let $\mathcal{E}$ be the corresponding category of ($A$-Rep)-colored ribbon graphs, where $A$-Rep is the category of finite dimensional representations of $A$. There exists a unique covariant functor
    $$RT: \mathcal{E} \to A\text{-Rep}$$
    such that
    \begin{enumerate}
        \item $RT$ transforms any decoration in $\mathcal{E}$ into the corresponding $A$-module,
        \item $RT$ transforms each graph $\Gamma$ into the corresponding homomorphism,
        \item $RT$ preserves tensor products, i.e. $RT(\Gamma_1 \otimes \Gamma_2) = RT(\Gamma_1) \otimes RT(\Gamma_2)$,
        \item $RT$ maps
        \begin{tikzpicture}[baseline=3]
            \draw[thick, ->] (0,0) to[out=90, in=180] (0.5, 0.5) to[out=0, in=90] (1, 0);
            \node at (0.4, 0.2) {$V$};
        \end{tikzpicture} to the homomorphism $V^\ast \otimes V \to \mathbb{C}$ defined by $f \otimes x \mapsto f(x)$,
        \item $RT$ maps
        \begin{tikzpicture}[baseline=3]
            \draw[thick, <-] (0,0) to[out=90, in=180] (0.5, 0.5) to[out=0, in=90] (1, 0);
            \node at (0.6, 0.2) {$V$};
        \end{tikzpicture} to the homomorphism $V \otimes V^\ast \to \mathbb{C}$ defined by $x \otimes f \mapsto f\left(v^{-1} \mu (S \otimes \operatorname{id})(R_{21})x\right)$,
        \item $RT$ maps
        \begin{tikzpicture}[baseline=10]
            \draw[thick, ->] (1, 0) -- (0, 1);
            \draw[white, line width=2mm] (0.3, 0.3) -- (0.7, 0.7);
            \draw[thick, ->] (0, 0) -- (1, 1);
        \end{tikzpicture} to the $R$-matrix homomorphism.
    \end{enumerate}
    See \cite{MR1036112} for more details.
\end{theorem}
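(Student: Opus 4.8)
The plan is to prove uniqueness and existence separately, with uniqueness essentially formal and existence amounting to a verification of topological invariance. For uniqueness, the structural input is that any $(A\text{-Rep})$-coloured ribbon graph $\Gamma$ admits, after a generic isotopy, a diagram whose projection is Morse with respect to the height function; slicing at regular values decomposes $\Gamma$ into a vertical stack of elementary pieces, each containing exactly one of a local maximum (cap), a local minimum (cup), a positive or negative crossing, or a coupon, with all other strands vertical. Since $RT$ is required to be a covariant functor preserving tensor products, its value on such a stack is the composite of tensor products of its values on the elementary pieces, and conditions (1)--(6) together with the prescribed image of a coupon and the image of the inverse crossing (forced by functoriality from (6)) pin $RT$ down on every elementary piece. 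Hence $RT$ is determined on all of $\mathcal{E}$.

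For existence, one defines $RT(\Gamma)$ by choosing such a Morse diagram, assigning to each slice the prescribed morphism, and composing; the content is independence of all choices. By a Reidemeister-type theorem for framed tangles and ribbon graphs, two diagrams present isotopic ribbon graphs if and only if they are related by a finite sequence of local moves: the framed Reidemeister moves RII and RIII (the first Reidemeister move being replaced by the ``curl equals twist'' move), the zigzag straightening moves for cups and caps, moves sliding a cup, cap or crossing past a coupon, moves rotating a coupon, and the moves pulling a maximum or minimum through a crossing. It therefore suffices to check that each such move corresponds to an identity of morphisms in $A\text{-Rep}$, and functoriality under composition and compatibility with tensor products are then immediate from the slice description.

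These identities are exactly where the ribbon Hopf algebra axioms enter. Invertibility of $R$ gives RII. The Yang--Baxter equation for the $R$-matrix, hence RIII, follows from $(\Delta \otimes \operatorname{id})R = R_{13}R_{23}$, $(\operatorname{id} \otimes \Delta)R = R_{13}R_{12}$, and $(\tau \circ \Delta)(a) = R\,\Delta(a)\,R^{-1}$. Rigidity of $A\text{-Rep}$, built from the antipode with the turn-back morphisms of (4)--(5) using $u = \mu(S \otimes \operatorname{id})(R_{21})$, gives the zigzag moves. The fact that the $R$-matrix is an $A$-module map (again a consequence of quasitriangularity) lets crossings slide past coupons, and the defining property $(u \cdot f)(m) = f(S(u)\cdot m)$ of the dual module lets cups, caps, and coupon rotations pass. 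Finally the ribbon element $v$ is engineered precisely so that the ``curl equals full twist'' move and the compatibility of the twist with caps hold; this is where $S(v)=v$, $\varepsilon(v)=1$, the formula for $v^2$, and $\Delta(v) = (R_{21}R_{12})^{-1}(v\otimes v)$ are used.

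The main obstacle is the bookkeeping around the ``wrong-way'' cup and cap of conditions (4)--(5): one must check that the asymmetric choice $x \otimes f \mapsto f(v^{-1}u\,x)$ is consistent, i.e. that it genuinely makes $V$ a two-sided dual compatible with both braiding and twist, and that every move mixing extrema, crossings, and the twist closes up. Concretely this reduces to a handful of standard identities relating $u$, $v$, $S$, and $R$ in $A$ (for instance, that $u$ implements $S^2$ by conjugation, that $uv^{-1}$ is grouplike, and that $v^{-2} = uS(u)$), each a consequence of the ribbon Hopf algebra axioms but whose correct placement in the diagrammatic calculus requires care. Once these are in hand, the remaining moves are routine, and the functor is well defined, completing existence.
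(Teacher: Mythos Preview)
The paper does not actually prove this theorem; it is quoted from Reshetikhin and Turaev's original paper \cite{MR1036112} with the remark ``See \cite{MR1036112} for more details,'' and is used as black-box input for the construction of skein modules. Your outline is essentially the standard argument one finds there (and in expositions such as \cite{MR1300632} or \cite{MR1321145}): put the ribbon graph in Morse position, define $RT$ slice by slice using the prescribed values on elementary pieces, and verify independence of the presentation by checking invariance under the framed Reidemeister and coupon moves, each of which reduces to an identity in the ribbon Hopf algebra. So there is nothing to compare against in this paper, but your sketch is correct and matches the approach of the cited source.
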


The Reshetikhin-Turaev functor, $RT$, was originally defined only for the case where $M = I \times I \times I$, a cylinder. However, we can attempt to use this definition on an arbitrary 3-manifold by imposing the local relations between morphisms that are in the kernel of the RT functor. Specifically, let $M$ be any $3$-manifold and consider all possible $\mathcal{C}$-colored ribbon graphs that can be embedded into $M$. By restricting to an embedded cylinder within $M$, we can then realize the Reshetikhin-Turaev functor as a map from the space of $\mathbb{C}$-spanned $\mathcal{C}$-colored ribbon graphs in the embedded cylinder to the appropriate Hom set. For instance, in the previous example we have
$$RT: \mathbb{C} [\Gamma] \longrightarrow \operatorname{Hom}\left( X \otimes Y, U \otimes V \otimes W \right)$$
The fact that $RT$ is well-defined is mostly only dependent on $\mathcal{C}$ being a ribbon category. For any classical Lie algebra $\mathfrak{g}$, $U_q(\mathfrak{g})$-Rep is a ribbon category that satisfies the above requirements for Theorem \ref{theorem:RTFunctor}. Therefore, the Reshetikhin-Turaev functor says that we can understand the category $U_q(\mathfrak{g})$-Rep pictorially. In particular, maps in this category can be represented by (a linear combination of) ribbon graphs.

\begin{definition}
    Given an oriented $3$-manifold, $M$, and a Lie algebra, $\mathfrak{g}$, the corresponding \textit{skein module} is
    $$\operatorname{Sk}_{\mathfrak{g},q}(M) := \mathbb{C} \langle \text{closed ribbon graphs in $M$} \rangle / \sim_{RT},$$
    where $\sim_{RT}$ is defined locally on every embedded $I \times I \times I$ and $q \in \mathbb{C}^\times$.
\end{definition}

The equivalence relation, $\sim_{RT}$, are the relations that are picked up through the functor $RT$. This definition is purposefully vague, due to the fact that the relations are dependent on the category.

When mathematicians define skein modules in the literature, it is important to remember that the given relations are not arbitrary rules that happen to match with $A$-Rep. Rather, they are the consequences of the structure and properties of that category. For example, when we restrict ourselves to the case of $\mathfrak{g} = \mathfrak{sl}_2$, our graphical calculus becomes greatly simplified. Firstly, since the fundamental representation, $L(1)$, is a tensor generator of $U_q(\mathfrak{sl}_2)$-Rep, $L(1)$ is the only label we need to consider, allowing us to drop the labeling entirely. Secondly, thanks to quantum Schur-Weyl duality, we can remove any trivalent coupons, meaning we only need to consider links. Finally, the self-duality of $L(1)$ allows us to remove any edge orientations on our ribbon graphs.

Although the first and third simplifications may be clear, the second one is a bit more complicated. It's important to understand why we don't need to consider trivalent coupons in the $\mathfrak{sl}_2$ case, since trivalent and other $n$-valent coupons are necessary in other cases.
For example, consider the subcategory of $U_q(\mathfrak{sl}_2)$-Rep tensor generated by the adjoint representation, $L(2)$ (this ends up corresponding to $PGL_2$). Using \ref{eq:Ln}, we see that $L(0) \oplus L(2) \oplus L(4) \cong L(2) \otimes L(2)$, giving us maps $\pi_2^\prime: L(2) \otimes L(2) \to L(2)$ and $\iota_2^\prime: L(2) \to L(2) \otimes L(2)$. This implies that we need to use trivalent coupons in this scenario.
\begin{center}
    \begin{tikzpicture}
        \draw (0.3, -1) -- (1, 0) -- (1.7, -1);
        \draw (1, 0) -- (1, 1);
        \path[thick, tips, ->] (0.3, -1) -- (0.6, -0.57);
        \path[thick, tips, ->] (1.7, -1) -- (1.4, -0.57);
        \path[thick, tips, ->] (1, 0) -- (1, 0.65);
        \draw[draw=black, fill=white] (0.5, -0.25) rectangle (1.5, 0.25);
        \node at (1, 0) {$\pi_2^\prime$};
        \node at (0.3, -1.3) {$L(2)$};
        \node at (1, -1.3) {$\otimes$};
        \node at (1.7, -1.3) {$L(2)$};
        \node at (1, 1.3) {$L(2)$};
        \draw (3.3, 1) -- (4, 0) -- (4.7, 1);
        \draw (4, 0) -- (4, -1);
        \path[thick, tips, ->] (4, 0) -- (3.5, 0.71);
        \path[thick, tips, ->] (4, 0) -- (4.5, 0.71);
        \path[thick, tips, ->] (4, -1) -- (4, -0.6);
        \draw[draw=black, fill=white] (3.5, -0.25) rectangle (4.5, 0.25);
        \node at (4, 0) {$\iota_2^\prime$};
        \node at (3.3, 1.3) {$L(2)$};
        \node at (4, 1.3) {$\otimes$};
        \node at (4.7, 1.3) {$L(2)$};
        \node at (4, -1.3) {$L(2)$};
    \end{tikzpicture}
\end{center}
So why is this not needed in the $\mathfrak{sl}_2$ case?

Since $U_q(\mathfrak{sl}_2)$-Rep is tensor generated by $L(1)$, any map in this category can be understood by pre-composing and post-composing with the proper inclusion and projection maps described in (\ref{eq:JWP1}) and (\ref{eq:JWP2}). For example, if $M$, $M'$, and $N$ are finite dimensional $U_q(\mathfrak{sl}_2)$-modules, extending a map between $N$ and $M \otimes M'$ might look something like the following picture.
\begin{center}
    \begin{tikzpicture}
        \draw (1, -1.5) -- (1, 0);
        \draw (0.8, 0) -- (0.8, 1.5);
        \draw (1.2, 0) -- (1.2, 1.5);
        \path[thick, tips, ->] (1, -1) -- (1, -0.8);
        \path[thick, tips, ->] (0.8, 0) -- (0.8, 0.9);
        \path[thick, tips, ->] (1.2, 0) -- (1.2, 0.9);
        \draw[draw=black, fill=white] (0.65, -0.2) rectangle (1.35, 0.2);
        \node at (1, -1.8) {$N$};
        \node at (0.3, 0.8) {$M$};
        \node at (1.7, 0.8) {$M'$};
        \draw[->] (2, 0) -- (3, 0);
        \draw (3, 2) -- (4, 1) -- (5, 2);
        \node at (4, 1.65) {$\cdots$};
        \path[thick, tips, ->] (4, 1) -- (3.45, 1.55);
        \path[thick, tips, ->] (4, 1) -- (4.55, 1.55);
        \node at (4, 2.3) {$L(1) \otimes \cdots \otimes L(1)$};
        \draw (4, -1) -- (4, 0);
        \draw (3.85, 0) -- (3.85, 1);
        \draw (4.15, 0) -- (4.15, 1);
        \path[thick, tips, ->] (4, -1) -- (4, -0.5);
        \path[thick, tips, ->] (3.85, 0) -- (3.85, 0.55);
        \path[thick, tips, ->] (4.15, 0) -- (4.15, 0.55);
        \draw (3, -2) -- (4, -1) -- (5, -2);
        \node at (4, -1.65) {$\cdots$};
        \path[thick, tips, ->] (3, -2) -- (3.45, -1.55);
        \path[thick, tips, ->] (5, -2) -- (4.55, -1.55);
        \draw[draw=black, fill=white] (3.65, -0.2) rectangle (4.35, 0.2);
        \draw[draw=black, fill=white] (3.65, 0.8) rectangle (4.35, 1.2);
        \draw[draw=black, fill=white] (3.65, -1.2) rectangle (4.35, -0.8);
        \node at (4, -2.3) {$L(1) \otimes \cdots \otimes L(1)$};
    \end{tikzpicture}
\end{center}

In the classical case, Schur-Weyl duality says that the algebra of intertwining operators of $V^{\otimes m}$ that commute with the action of the algebra $U\left(\mathfrak{gl}_n\right)$ is generated by the permutation of adjacent pairs in the tensor product. However, in the quantum case of $U_q(\mathfrak{sl}_2)$, maps that permute tensor products of modules, like $\tau$, aren't necessarily module homomorphisms and so we need to upgrade to something more complex. The correct setting is to replace the $S_{m}$-action with a $\mathbf{H}(S_m)$-action, where $\mathbf{H}(S_m)$ is the corresponding Hecke algebra.

\begin{definition}
    The Hecke algebra, $\mathbf{H}(S_n)$, is the complex associative algebra with generators $T_1, \cdots, T_{n-1}$ and relations
    \begin{align*}
        T_i T_j &= T_j T_i \quad\quad \text{ if } |i-j| > 1,\\
        T_{i}T_{i+1}T_{i} &= T_{i+1}T_{i}T_{i+1},\\
        (T_i - q)(T_i + q^{-1}) &= 0.
    \end{align*}
\end{definition}
\begin{remark}
    If we specialize $q = 1$, we get back the symmetric group algebra. Therefore, the Hecke algebra, $\mathbf{H}(S_n)$, is a deformation of $\mathbb{C}[S_n]$, and is why we use this notation.
\end{remark}
\begin{remark}
    It may seem like each generator, $T_i$, does not have an inverse in $\mathbf{H}(S_n)$ a priori. However, expanding the last relation provides us with a closed form of its inverse, proving its existence.\\
    \begin{align*}
        1 &= T_i^2 - qT_i + q^{-1}T_i\\
        \Rightarrow T_i^{-1} &= T_i - (q - q^{-1})
    \end{align*}
\end{remark}
\begin{remark}
    The last two relations of the Hecke algebra are equivalent to solutions for YBE.
\end{remark}

\begin{theorem}[12.3.10 in \cite{MR1300632}]
    Suppose $m, n > 1$ and consider the module $L(n-1)^{\otimes m}$. There is a functor from $\mathbf{H}(S_m)$-Rep to the subcategory of $U_q(\mathfrak{sl}_n)$-Rep consisting of modules isomorphic to an irreducible component of $L(1)^{\otimes m}$ defined by
$$\mathcal{J}: M \mapsto M \otimes_{\mathbf{H}(S_m)} L(n-1)^{\otimes m}$$
where the $U_q(\mathfrak{sl}_n)$-module structure is the natural structure induced by $L(n-1)^{\otimes m}$.
\end{theorem}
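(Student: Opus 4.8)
The plan is to recognise this statement as one direction of quantum Schur--Weyl duality and to build the functor $\mathcal{J}$ by converting the braiding of $U_q(\mathfrak{sl}_n)$-Rep into an action of the Hecke algebra $\mathbf{H}(S_m)$. Throughout write $V := L(n-1)$ for the $n$-dimensional vector representation of $U_q(\mathfrak{sl}_n)$, so that the target of $\mathcal{J}$ is the full subcategory of $U_q(\mathfrak{sl}_n)$-Rep whose objects are direct sums of irreducible summands of $V^{\otimes m}$. The essential point is that $V^{\otimes m}$ carries a $\big(U_q(\mathfrak{sl}_n),\mathbf{H}(S_m)\big)$-bimodule structure with commuting actions; granting this, the $U_q(\mathfrak{sl}_n)$-module structure on $\mathcal{J}(M)=M\otimes_{\mathbf{H}(S_m)}V^{\otimes m}$ is inherited from the tensor factor $V^{\otimes m}$, functoriality is automatic, and the position of the image is forced by semisimplicity of $U_q(\mathfrak{sl}_n)$-Rep.

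First I would produce the $\mathbf{H}(S_m)$-action. Let $B=B_{V,V}\colon V\otimes V\to V\otimes V$ be the braiding of Section~\ref{section:RibbonHopfAlg}, which is realised by the $R$-matrix and is therefore a $U_q(\mathfrak{sl}_n)$-module isomorphism. Since $n>1$, $V\otimes V$ is the direct sum of exactly two non-isomorphic irreducible $U_q(\mathfrak{sl}_n)$-modules (the quantum symmetric and exterior squares of $V$), so by Schur's lemma $B$ acts as a scalar on each and hence satisfies a quadratic relation $(B-a)(B-b)=0$ with $a\neq b$; a short computation with the $R$-matrix --- formally identical to the $\mathfrak{sl}_2$ matrix of Section~\ref{section:RibbonHopfAlg}, where one gets eigenvalues $q^{-1}$ and $-q$ --- determines $a$ and $b$ and shows that a simultaneous rescaling by a scalar $c\in\mathbb{C}^\times$ makes them equal to $q$ and $-q^{-1}$. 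Define $T_i$ to act on $V^{\otimes m}$ as $\operatorname{id}^{\otimes(i-1)}\otimes(cB)\otimes\operatorname{id}^{\otimes(m-i-1)}$. Then $T_iT_j=T_jT_i$ for $|i-j|>1$ is clear, the braid relation $T_iT_{i+1}T_i=T_{i+1}T_iT_{i+1}$ is the Yang--Baxter identity for $B$ (a consequence of the hexagon axioms, Figure~\ref{fig:HexId}), and $(T_i-q)(T_i+q^{-1})=0$ is exactly the quadratic relation for $cB$. This makes $V^{\otimes m}$ an $\mathbf{H}(S_m)$-module, and since $B$ is a $U_q(\mathfrak{sl}_n)$-intertwiner every $T_i$ commutes with the iterated-coproduct action of $U_q(\mathfrak{sl}_n)$ on $V^{\otimes m}$, so $V^{\otimes m}$ is genuinely a $\big(U_q(\mathfrak{sl}_n),\mathbf{H}(S_m)\big)$-bimodule.

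Next I would put $\mathcal{J}(M):=M\otimes_{\mathbf{H}(S_m)}V^{\otimes m}$ for an $\mathbf{H}(S_m)$-module $M$ and equip it with the $U_q(\mathfrak{sl}_n)$-action $u\cdot(x\otimes w):=x\otimes(u\cdot w)$, which is well defined precisely because the two actions on $V^{\otimes m}$ commute; for a morphism $f\colon M\to M'$ one takes $\mathcal{J}(f)=f\otimes\operatorname{id}$, and preservation of identities and composites is immediate, so $\mathcal{J}$ is a functor. To see that the image lies in the prescribed subcategory, recall that since $q$ is not a root of unity $U_q(\mathfrak{sl}_n)$-Rep is semisimple, so as a bimodule $V^{\otimes m}\cong\bigoplus_\lambda L_\lambda\otimes W_\lambda$, where $W_\lambda=\operatorname{Hom}_{U_q(\mathfrak{sl}_n)}(L_\lambda,V^{\otimes m})$ is an $\mathbf{H}(S_m)$-module and the sum runs over those $L_\lambda$ occurring as irreducible summands of $V^{\otimes m}$. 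Since the $\mathbf{H}(S_m)$-action sees only the factors $W_\lambda$, we get $\mathcal{J}(M)\cong\bigoplus_\lambda L_\lambda\otimes\big(M\otimes_{\mathbf{H}(S_m)}W_\lambda\big)$, a direct sum of copies of such $L_\lambda$, which is what we wanted.

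I expect the only real work to be in the first step: fixing the normalising scalar $c$ and checking the quadratic Hecke relation, which reduces to computing the two eigenvalues of $B$ on $V\otimes V$ from the $R$-matrix (equivalently, from the action of the universal twist on the symmetric and exterior squares of $V$), together with the Yang--Baxter verification; everything afterwards is formal once the semisimplicity of $U_q(\mathfrak{sl}_n)$-Rep is invoked. If one wanted the sharper statement that $\mathcal{J}$ restricts to an equivalence onto that subcategory, one would additionally need the double-centralizer theorem --- surjectivity of $\mathbf{H}(S_m)\to\operatorname{End}_{U_q(\mathfrak{sl}_n)}(V^{\otimes m})$ --- but the claim here concerns only the existence of the functor and the location of its image, so that is not required.
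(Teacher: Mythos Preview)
The paper does not prove this theorem at all: it is quoted as Theorem 12.3.10 from \cite{MR1300632} (Chari--Pressley) and used as a black box, with only the follow-up remarks about essential surjectivity and the $m\le n$ equivalence. There is therefore no ``paper's own proof'' to compare against.

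Your outline is the standard argument and is correct in substance. Constructing the $\mathbf{H}(S_m)$-action via the rescaled braiding, checking the Hecke relations through the Yang--Baxter equation and the two-eigenvalue decomposition of $V\otimes V$, and then observing that the commuting actions make $V^{\otimes m}$ a bimodule is exactly how this is done in Chari--Pressley and elsewhere. The semisimple decomposition $V^{\otimes m}\cong\bigoplus_\lambda L_\lambda\otimes W_\lambda$ and the resulting computation of $\mathcal{J}(M)$ is the right way to pin down the image. The only point where you should be slightly more careful is the normalisation: the scalar $c$ depends on the precise convention for the $R$-matrix and for the Hecke parameter (the paper uses $(T_i-q)(T_i+q^{-1})=0$), so one should actually compute the eigenvalues of $B_{V,V}$ for $\mathfrak{sl}_n$ rather than just invoke the $\mathfrak{sl}_2$ case, since the twist eigenvalue on the summands of $V\otimes V$ involves the Casimir of $\mathfrak{sl}_n$. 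But this is a routine calculation and does not affect the structure of your argument.
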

In particular, this functor is essentially surjective and thus every endomorphism in the algebra $\operatorname{End}_{U_q(\mathfrak{sl}_n)}\left(L(n-1)^{\otimes m}\right)$ can be understood using a representation of $\mathbf{H}(S_m)$. Moreover, if $m \leq n$, then $\mathcal{J}$ is also injective, making $\mathcal{J}$ an equivalence of categories.

One would hope that the action of the Hecke algebra has a diagrammatic interpretation nice enough to not have any $n$-valent coupons for $n \geq 3$. Unfortunately, the Hecke algebra does not have to play nicely and the lack of $n$-valent coupons is not true in general. However, a particular algebra that is diagrammatic very nice and has all of the properties that we could ask for is the \emph{Temperley-Lieb algebra}.

\begin{definition}
    The Temperley-Lieb algebra, $TL_n(\delta)$, where $\delta \in \mathbb{C}^\times$, is a unital associative algebra over $\mathbb{C}$ with generators $U_1, \cdots, U_{n-1}$ and defining relations
    $$U_i^2 = \delta U_i$$
    $$U_i U_{i \pm 1} U_i = U_i$$
    $$U_i U_j = U_j U_i \quad \text{for } |i-j| > 1.$$
\end{definition}
See figure \ref{fig:TL5Ex} for a graphical example of multiplying two elements in $TL_5(\delta)$. In this example, multiplication is understood as vertical stacking and any closed loop is replaced by the distinguished scaling factor, $\delta$.

\begin{figure}
    \begin{center}
        \begin{tikzpicture}
            \draw[fill=gray!40] (-0.3, -1) -- (-0.3, 1) -- (4.3, 1) -- (4.3, -1) -- (-0.3, -1);
            \draw[line width=2mm] (0, -1) to[out=90, in=90] (1, -1);
            \draw[line width=2mm] (2, -1) to[out=90, in=90] (3, -1);
            \draw[line width=2mm] (4, -1) to[out=90, in=270] (2, 1);
            \draw[line width=2mm] (0, 1) to[out=270, in=270] (1, 1);
            \draw[line width=2mm] (3, 1) to[out=270, in=270] (4, 1);
            \node at (4.5, 0) {$\cdot$};
            \draw[fill=gray!40] (4.7, -1) -- (4.7, 1) -- (9.3, 1) -- (9.3, -1) -- (4.7, -1);
            \draw[line width=2mm] (5, -1) to[out=90, in=270] (7, 1);
            \draw[line width=2mm] (6, -1) to[out=90, in=270] (8, 1);
            \draw[line width=2mm] (9, -1) to[out=90, in=270] (9, 1);
            \draw[line width=2mm] (7, -1) to[out=90, in=90] (8, -1);
            \draw[line width=2mm] (5, 1) to[out=270, in=270] (6, 1);
        \end{tikzpicture}
    \end{center}
    \begin{center}
        \begin{tikzpicture}
            \node at (-0.7, 0) {$=$};
            \draw[fill=gray!40] (-0.3, -1.5) -- (-0.3, 1.5) -- (4.3, 1.5) -- (4.3, -1.5) -- (-0.3, -1.5);
            \draw[line width=2mm] (0, 0) to[out=90, in=90] (1, 0);
            \draw[line width=2mm] (2, 0) to[out=90, in=90] (3, 0);
            \draw[line width=2mm] (4, 0) to[out=90, in=270] (2, 1.5);
            \draw[line width=2mm] (0, 1.5) to[out=270, in=270] (1, 1.5);
            \draw[line width=2mm] (3, 1.5) to[out=270, in=270] (4, 1.5);
            \draw[line width=2mm] (0, -1.5) to[out=90, in=270] (2, 0);
            \draw[line width=2mm] (1, -1.5) to[out=90, in=270] (3, 0);
            \draw[line width=2mm] (4, -1.5) to[out=90, in=270] (4, 0);
            \draw[line width=2mm] (2, -1.5) to[out=90, in=90] (3, -1.5);
            \draw[line width=2mm] (0, 0) to[out=270, in=270] (1, 0);
        \end{tikzpicture}
    \end{center}
    \begin{center}
        \begin{tikzpicture}
            \node at (4.8, 0) {$=$};
            \node at (5.25, 0) {$\delta$};
            \draw[fill=gray!40] (5.5, -1) -- (5.5, 1) -- (10.1, 1) -- (10.1, -1) -- (5.5, -1);
            \draw[line width=2mm] (5.8, 1) to[out=270, in=270] (6.8, 1);
            \draw[line width=2mm] (8.8, 1) to[out=270, in=270] (9.8, 1);
            \draw[line width=2mm] (5.8, -1) to[out=90, in=90] (6.8, -1);
            \draw[line width=2mm] (9.8, -1) to[out=90, in=270] (7.8, 1);
            \draw[line width=2mm] (7.8, -1) to[out=90, in=90] (8.8, -1);
        \end{tikzpicture}
    \end{center}
    \caption[Example of multiplication in $TL_5(\delta)$]{Multiplication in $TL_n(\delta)$ corresponds to vertical concatenation.}
    \label{fig:TL5Ex}
\end{figure}
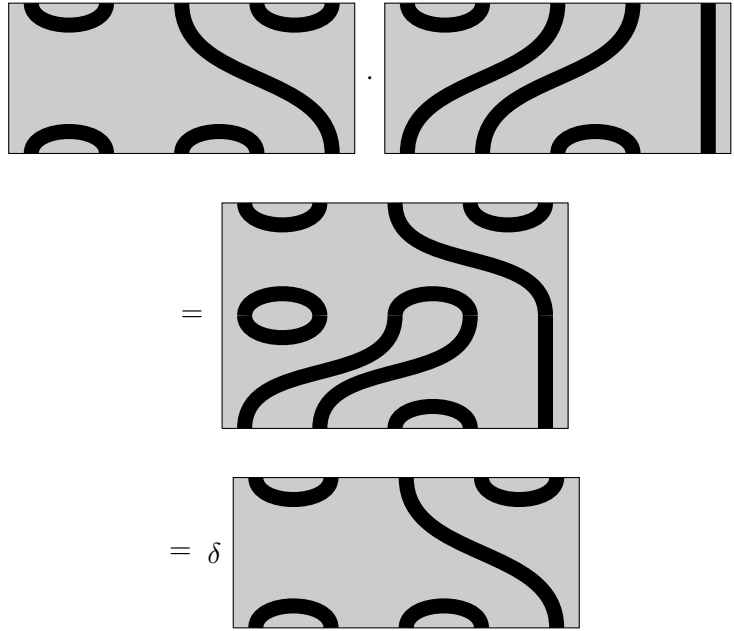

$TL_n(\delta)$ can be realized as the algebra of intertwining operators acting on the $U_q(\mathfrak{sl}_2)$-module $L(1)^{\otimes n}$, as follows:
$$U_i \mapsto \operatorname{id}^{\otimes (i-1)} \otimes
( coev \circ ev ) \otimes \operatorname{id}^{\otimes (n-i-1)},$$
where $ev: L(1) \otimes L(1) \to L(0)$ and $coev: L(0) \to L(1) \otimes L(1)$. Notice how these maps correspond to the evaluation and coevalutaion maps defined for rigid categories. Therefore, the diagrammatic interpretations of $coev$ and $ev$ correspond to cups and caps.
$$coev \circ ev = \begin{tikzpicture}[baseline=-1, scale=0.5]
    \draw[gray!40, fill=gray!40] (0,0) circle (1);
    \draw[thick] (-0.71, 0.71) to[out=-60, in=-120] (0.71, 0.71);
    \draw[thick] (-0.71, -0.71) to[out=60, in=120] (0.71, -0.71);
\end{tikzpicture}$$
In particular, $TL_n$ can be diagrammatically understood without any crossings and without any trivalent coupons (see figure \ref{fig:TL5Ex}). Moreover, the following proposition relates the Hecke algebra with a corresponding Temperley-Lieb algebra.
\begin{prop}
    There is a surjective algebra homomorphism
    \begin{align*}
        \mathbf{H}(S_m) &\twoheadrightarrow TL_m(-q-q^{-1})\\
        T_i &\mapsto q + U_i.
    \end{align*}
    When $m = 2$, this is an algebra isomorphism. 
\end{prop}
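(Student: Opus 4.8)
The plan is to verify directly that the assignment $T_i \mapsto q + U_i$ respects the three families of defining relations of $\mathbf{H}(S_m)$, deduce surjectivity from the fact that each $U_i$ lies in the image, and finally treat $m=2$ by a dimension count. Throughout, since an algebra homomorphism fixes the unit, the symbol $q$ appearing in $q + U_i$ is read as $q\cdot 1$.

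First I would dispatch the easy relations. For $|i-j|>1$ we have $U_iU_j = U_jU_i$, so $(q+U_i)(q+U_j) = q^2 + q(U_i+U_j) + U_iU_j$ is manifestly symmetric in $i$ and $j$, giving the commutation relation. For the quadratic relation, note that $T_i - q \mapsto U_i$ and $T_i + q^{-1} \mapsto (q+q^{-1}) + U_i$, so the product maps to $U_i\big((q+q^{-1}) + U_i\big) = (q+q^{-1})U_i + U_i^2 = (q+q^{-1})U_i + \delta U_i$, which vanishes precisely because $\delta = -q-q^{-1}$.

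The one genuine computation is the braid relation. Writing $a = U_i$ and $b = U_{i+1}$, I would expand $(q+a)(q+b)(q+a)$ using $a^2 = \delta a$, $aba = a$, and $q\delta = -q^2 - 1$; the coefficients collapse to give $(q+a)(q+b)(q+a) = q^3 + q^2(a+b) + q(ab+ba)$. Since this expression is symmetric under $a \leftrightarrow b$, it is unchanged by swapping the roles of $U_i$ and $U_{i+1}$, hence equals the image of $T_{i+1}T_iT_{i+1}$. This shows that $\phi\colon T_i \mapsto q + U_i$ is a well-defined algebra homomorphism. Surjectivity is then immediate, since $U_i = \phi(T_i) - q\,\phi(1) = \phi(T_i - q)$, so every generator of $TL_m(-q-q^{-1})$ lies in the image.

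Finally, for $m=2$ both algebras are generated by a single element subject to one quadratic relation, so $\mathbf{H}(S_2) = \mathbb{C}[T_1]/\big((T_1-q)(T_1+q^{-1})\big)$ and $TL_2(\delta) = \mathbb{C}[U_1]/(U_1^2 - \delta U_1)$ are each $2$-dimensional over $\mathbb{C}$; a surjection between vector spaces of equal finite dimension is a bijection, hence an algebra isomorphism. Alternatively, one can exhibit the inverse explicitly via $U_1 \mapsto T_1 - q$, which is well defined because $(T_1-q)\big(T_1 - q - \delta\big) = (T_1-q)(T_1+q^{-1}) = 0$ in $\mathbf{H}(S_2)$. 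I expect the braid-relation expansion to be the only step requiring any care, and even there the bookkeeping shortens considerably once $\delta = -q-q^{-1}$ is substituted.
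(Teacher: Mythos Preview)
Your proof is correct and follows essentially the same approach as the paper: verify the three defining relations directly, deduce surjectivity from $U_i = \phi(T_i - q)$, and handle $m=2$ by a dimension count. The only cosmetic differences are that the paper checks the quadratic relation by computing $(q+U_i)(q^{-1}+U_i)=1$ rather than your direct factorization, and for $m=2$ it shows injectivity by writing out $\phi(aT+b)=0 \Rightarrow a=b=0$ rather than invoking equal dimensions; both are equivalent.
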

\begin{proof}
    Call this map $\varphi$ and notice that $\varphi$ is a well-defined map as
    \begin{align*}
        \varphi(T_{i} T_{i}^{-1}) &= (q + U_i)(q^{-1} + U_i) \\
        &= 1 + (q + q^{-1})U_i + U_{i}^{2} \\
        &= 1 \\[1.5ex]
        \varphi(T_i T_{i+1} T_i) &= (q  +U_i)(q + U_{i+1})(q + U_i) \\
        &= q^{3} + q^{2}U_i + q^{2}U_{i+1} + qU_{i}U_{i+1} + qU_{i+1}U_{i} + q^{2}U_{i} + qU_{i}^{2} + U_{i}U_{i+1}U_{i} \\
        &= q^{3} + q^{2}(U_i + U_{i+1}) + q(U_{i} U_{i+1} + U_{i+1}t_{i}) \\
        &= \varphi(T_{i+1} T_{i} T_{i+1}) \\[1.5ex]
        \varphi(T_i T_j) &= (q + U_i)(q + U_j) \\
        &= (q + U_j)(q + U_i) \\
        &= \varphi(T_j T_i)
    \end{align*}
    for $|i - j| > 1$.
    As this is an algebra homomorphism and $\varphi(T_i-q) = U_i$ for all $i$, $\varphi$ is surjective.

    If $n=2$, then both algebras only have one generator. Thus, the relations are greatly simplified and both algebras are two dimensional. For any $a, b \in \mathbb{C}$, if
    $$0 = \varphi(aT + b) = a(q + U) + b$$
    then $a = b = 0$ as $\{ 1, U \}$ is a basis for $TL_2\left(-q-q^{-1}\right)$ and so $\varphi$ is injective.
    This clearly fails for even just $n=3$ by a dimension counting argument.
\end{proof}
Therefore, the Hecke algebra will always surject onto the Temperley-Lieb algebra. It turns out that when working in the setting of $U_q(\mathfrak{sl}_2)$, $TL_m(-q-q^{-1})$ accounts for the entire algebra $\operatorname{End}_{U_q(\mathfrak{sl}_2)}\left( L(1)^{\otimes m} \right)$, just like the Hecke algebra. As $TL_m$ doesn't use trivalent coupons and this completely overlaps with the action of $\mathbf{H}(S_m)$, quantum Schur-Weyl duality says $U_q(\mathfrak{sl}_2)$-Rep doesn't need to consider them either.

\begin{remark}
    Note that coupons are usually used when discussing skein categories or specifically $A$-Rep itself. When focusing on particular skein modules, many works will drop the coupons altogether and just use traditional vertices instead, a convention that we'll be following for the rest of this thesis.
\end{remark}

\textbf{Note:} The previous definition that we used for the quantized universal enveloping algebra, $U_q(\mathfrak{sl}_2)$, follows the fairly standard convention found in most textbooks. However, in skein theory, mathematicians often reparameterize $q$ to $q^2$ for convenience. It's important to note that for the remainder of this work, we will adopt this reparameterization as well.
$$U_q\left(\mathfrak{sl}_2\right) = \frac{\mathbb{C} \left[E, F, K^{\pm 1}\right]}{\left( \begin{gathered} KEK^{-1} = q^4E\\ KFK^{-1} = q^{-4}F\\ [E,F] = \frac{K - K^{-1}}{q^2 - q^{-2}}\end{gathered} \right)}$$
This adjustment simplifies notation, reducing the need for fractional powers of $q$ and limiting us to using at worst $q^{1/2}$.

\section{Kauffman Bracket Skein Algebras}

When in the case of $\mathfrak{g} = \mathfrak{sl}_2$, the $R$-matrix (the braiding) has a sum decomposition that can be somewhat understood as a constant times the identity map and another constant times an evaluation map composed with a coevaluation map. This diagrammatically looks like the following.
\begin{center}\resizebox{1.2\width}{!}{
    \begin{tikzpicture}
        \draw[thick] (3,-1) -- (2,1);
        \draw[line width=3mm, white] (2,-1) -- (3,1);
        \draw[thick] (2,-1) -- (3,1);
        \draw[thick] (1.2, 1) -- (3.8, 1);
        \draw[thick] (1.2, -1) -- (3.8, -1);
        \node[draw, circle, inner sep=0pt, minimum size=4pt, fill=black] at (2,1) {};
        \node[draw, circle, inner sep=0pt, minimum size=4pt, fill=black] at (3,1) {};
        \node[draw, circle, inner sep=0pt, minimum size=4pt, fill=black] at (2,-1) {};
        \node[draw, circle, inner sep=0pt, minimum size=4pt, fill=black] at (3,-1) {};
        \node at (1.9, 1.3) {$L(1)$};
        \node at (3.1, 1.3) {$L(1)^*$};
        \node at (1.9, -1.3) {$L(1)$};
        \node at (3.1, -1.3) {$L(1)^*$};
        \node at (2.45, 1.3) {$\otimes$};
        \node at (2.45, -1.3) {$\otimes$};
        \node[text width=0.3cm] at (4.3,0) {$=$};
        \node[text width=0.3cm] at (4.9,0) {$q$};
        \draw[thick] (6,-1) -- (6,1);
        \draw[thick] (7,-1) -- (7,1);
        \draw[thick] (5.2, 1) -- (7.8, 1);
        \draw[thick] (5.2, -1) -- (7.8, -1);
        \node[draw, circle, inner sep=0pt, minimum size=4pt, fill=black] at (6,1) {};
        \node[draw, circle, inner sep=0pt, minimum size=4pt, fill=black] at (7,1) {};
        \node[draw, circle, inner sep=0pt, minimum size=4pt, fill=black] at (6,-1) {};
        \node[draw, circle, inner sep=0pt, minimum size=4pt, fill=black] at (7,-1) {};
        \node at (5.9, 1.3) {$L(1)$};
        \node at (7.1, 1.3) {$L(1)^*$};
        \node at (5.9, -1.3) {$L(1)$};
        \node at (7.1, -1.3) {$L(1)^*$};
        \node at (6.45, 1.3) {$\otimes$};
        \node at (6.45, -1.3) {$\otimes$};
        \node[text width=0.3cm] at (8.2,0) {$+$};
        \node[text width=0.5cm] at (8.8,0) {$q^{-1}$};
        \draw[thick] (10,-1) to[out=90, in=180] (10.5, -0.5) to[out=0, in=90] (11,-1);
        \draw[thick] (10,1) to[out=270, in=180] (10.5, 0.5) to[out=0, in=270] (11,1);
        \draw[thick] (9.2, 1) -- (11.8, 1);
        \draw[thick] (9.2, -1) -- (11.8, -1);
        \node[draw, circle, inner sep=0pt, minimum size=4pt, fill=black] at (10,1) {};
        \node[draw, circle, inner sep=0pt, minimum size=4pt, fill=black] at (11,1) {};
        \node[draw, circle, inner sep=0pt, minimum size=4pt, fill=black] at (10,-1) {};
        \node[draw, circle, inner sep=0pt, minimum size=4pt, fill=black] at (11,-1) {};
        \node at (9.9, 1.3) {$L(1)$};
        \node at (11.1, 1.3) {$L(1)^*$};
        \node at (9.9, -1.3) {$L(1)$};
        \node at (11.1, -1.3) {$L(1)^*$};
        \node at (10.45, 1.3) {$\otimes$};
        \node at (10.45, -1.3) {$\otimes$};
    \end{tikzpicture}}
\end{center}
Similarly, as ribbon categories are rigid categories, we can also compute $\operatorname{eval} \circ \operatorname{coeval}$ and find
\begin{center}\resizebox{1.2\width}{!}{
    \begin{tikzpicture}
        \draw (2.5,0) circle (1);
        \draw[thick] (1,1) -- (4,1);
        \draw[thick] (1,-1) -- (4,-1);
        \draw[thick, dashed] (1,0) -- (4,0);
        \node[text width=0.3cm] at (4.4,0) {$=$};
        \node[text width=1.7cm] at (5.6,0) {$-q^2 - q^{-2}$.};
    \end{tikzpicture}}
\end{center}
These two relations create what are called the Kauffman bracket skein relations.

One might consider the implications of extending these relations to $3$-manifolds beyond $\mathbb{R}^2 \times I$. Such extensions gives rise to skein modules, which notably, in the case of $\mathfrak{sl}_2$, results in the Kauffman bracket skein module. These modules have proven to be quite important in noncommutative geometry, knot theory, and, of course, quantum representation theory.

\newpage
As mentioned in section \ref{section:SkeinMods}, we will always use a blackboard framing for our curves (see figure \ref{fig:BBFraming}) and avoid using half-twists. Moreover, using the above relations, we can explicitly compute both framing relations:

\begin{align*}
    \begin{tikzpicture}[baseline=6]
        \draw[gray!40, fill=gray!40] (0.75, 0.35) circle (0.8);
        \draw[thick] (1.45, 0) -- (1.2, 0) to[out=180, in=270] (0.5, 0.5) to[out=90, in=180] (0.75, 0.8) to[out=0, in=90] (1, 0.5);
        \draw[line width=2mm, gray!40] (1, 0.5) to[out=270, in=0] (0.3, 0);
        \draw[thick] (1, 0.5) to[out=270, in=0] (0.3, 0) -- (0.05, 0);
    \end{tikzpicture} &= q \phantom{\cdot}
    \begin{tikzpicture}[baseline=6]
        \draw[gray!40, fill=gray!40] (0.75, 0.35) circle (0.8);
        \draw[thick] (0.05, 0) -- (0.5, 0) to[out=0, in=270] (0.5, 0.5) to[out=90, in=180] (0.75, 0.8) to[out=0, in=90] (1, 0.5) to[out=270, in=180] (1, 0) -- (1.45, 0);
    \end{tikzpicture} + q^{-1} \phantom{\cdot}
    \begin{tikzpicture}[baseline=6]
        \draw[gray!40, fill=gray!40] (0.75, 0.35) circle (0.8);
        \draw[thick] (0.05, 0) -- (1.45, 0);
        \draw[thick] (0.75, 0.5) circle (0.3);
    \end{tikzpicture} \\
    &= q \phantom{\cdot}
    \begin{tikzpicture}[baseline=6]
        \draw[gray!40, fill=gray!40] (0.75, 0.35) circle (0.8);
        \draw[thick] (0.05, 0) -- (1.45, 0);
    \end{tikzpicture} + q^{-1}(-q^2 - q^{-2}) \phantom{\cdot}
    \begin{tikzpicture}[baseline=-3]
        \draw[gray!40, fill=gray!40] (0.75, 0.35) circle (0.8);
        \draw[thick] (0.05, 0) -- (1.45, 0);
    \end{tikzpicture} \\
    &= -q^{-3} \phantom{\cdot}
    \begin{tikzpicture}[baseline=-3]
        \draw[gray!40, fill=gray!40] (0.75, 0.35) circle (0.8);
        \draw[thick] (0, 0) -- (1.5, 0);
    \end{tikzpicture}
\end{align*}
\begin{align*}
    \begin{tikzpicture}[baseline=6]
        \draw[gray!40, fill=gray!40] (0.75, 0.35) circle (0.8);
        \draw[thick] (0.05, 0) -- (0.3, 0) to[out=0, in=270] (1, 0.5) to[out=90, in=0] (0.75, 0.8) to[out=180, in=90] (0.5, 0.5);
        \draw[line width=2mm, gray!40] (0.5, 0.5) to[out=270, in=180] (1.25, 0);
        \draw[thick] (0.5, 0.5) to[out=270, in=180] (1.2, 0) -- (1.45, 0);
    \end{tikzpicture} &= q \phantom{\cdot}
    \begin{tikzpicture}[baseline=6]
        \draw[gray!40, fill=gray!40] (0.75, 0.35) circle (0.8);
        \draw[thick] (0.05, 0) -- (1.45, 0);
        \draw[thick] (0.75, 0.5) circle (0.3);
    \end{tikzpicture} + q^{-1} \phantom{\cdot}
    \begin{tikzpicture}[baseline=6]
        \draw[gray!40, fill=gray!40] (0.75, 0.35) circle (0.8);
        \draw[thick] (0.05, 0) -- (0.5, 0) to[out=0, in=270] (0.5, 0.5) to[out=90, in=180] (0.75, 0.8) to[out=0, in=90] (1, 0.5) to[out=270, in=180] (1, 0) -- (1.45, 0);
    \end{tikzpicture} \\
    &= q(-q^2 - q^{-2}) \phantom{\cdot}
    \begin{tikzpicture}[baseline=-3]
        \draw[gray!40, fill=gray!40] (0.75, 0.35) circle (0.8);
        \draw[thick] (0.05, 0) -- (1.45, 0);
    \end{tikzpicture} + q^{-1} \phantom{\cdot}
    \begin{tikzpicture}[baseline=-3]
        \draw[gray!40, fill=gray!40] (0.75, 0.35) circle (0.8);
        \draw[thick] (0.05, 0) -- (1.45, 0);
    \end{tikzpicture} \\
    &= -q^{3} \phantom{\cdot}
    \begin{tikzpicture}[baseline=-3]
        \draw[gray!40, fill=gray!40] (0.75, 0.35) circle (0.8);
        \draw[thick] (0.05, 0) -- (1.45, 0);
    \end{tikzpicture}
\end{align*}

\begin{definition}
   The \textit{Kauffman bracket skein module}, $K_{q}(M)$, of an oriented $3$-manifold, $M$, is the $\mathbb{C}$-module generated by isotopy classes of framed unoriented links in $M$, modulo the following two local relations.
   \begin{center}\resizebox{0.9\width}{!}{
       \begin{tikzpicture}
            \draw[gray!40, fill=gray!40] (0,0) circle (1);
            \draw[thick] (-0.71, 0.71) -- (0.71, -0.71);
            \draw[line width=3mm, gray!40] (0.70, 0.70) -- (-0.70, -0.70);
            \draw[thick] (0.71, 0.71) -- (-0.71, -0.71);
            \node[text width=1cm] at (1.75,0) {$= q$};
            \draw[gray!40, fill=gray!40] (3,0) circle (1);
            \draw[thick] (2.29, 0.71) to[out=-60, in=60] (2.29, -0.71);
            \draw[thick] (3.71, 0.71) to[out=-120, in=120] (3.71, -0.71);
            \node[text width=1.25cm] at (4.75,0) {$+$ $q^{-1}$};
            \draw[gray!40, fill=gray!40] (6.25,0) circle (1);
            \draw[thick] (5.54, 0.71) to[out=-60, in=-120] (6.96, 0.71);
            \draw[thick] (5.54, -0.71) to[out=60, in=120] (6.96, -0.71);
            \node at (3,-1.4) {$(R_1)$ Skein Relation};
            \draw[gray!40, fill=gray!40] (10,0) circle (1);
            \draw[thick] (10,0) circle (.5);
            \node[text width=2.4cm] at (12.4,0) {$= (-q^2 - q^{-2})$};
            \draw[gray!40, fill=gray!40] (14.7,0) circle (1);
            \node at (12.4,-1.4) {$(R_2)$ Trivial Knot Relation};
       \end{tikzpicture}}
   \end{center}
\end{definition}

If additionally $M = \Sigma \times I$, then we can define the multiplication of two diagrams, $\alpha \cdot \alpha^\prime$, by stacking $\alpha$ above $\alpha^\prime$ along the interval $I$.
For example, if $M$ is a thickened torus, multiplying the longitude and meridian together results in the following diagram.
\begin{center}
    \begin{tikzpicture}
        \draw[gray!40, fill=gray!40] (-1,1) -- (1,1) -- (1,-1) -- (-1,-1) -- (-1,1);
        \draw[thick, blue] (-1,1) -- (1,1);
        \draw[thick, blue] (-1,-1) -- (1,-1);
        \draw[thick, olive] (-1,-1) -- (-1,1);
        \draw[thick, olive] (1,-1) -- (1,1);
        \draw[thick, blue, ->] (-1,1) -- (0,1);
        \draw[thick, blue, ->] (-1,-1) -- (0,-1);
        \draw[thick, olive, ->] (-1,-1) -- (-1,0);
        \draw[thick, olive, ->] (1,-1) -- (1,0);
        \draw[thick] (0.3,1) -- (0.3,-1);
        \node at (0, -1.3) {$\alpha$};
        \node at (1.5,0) {$\cdot$};
        \draw[gray!40, fill=gray!40] (2,1) -- (4,1) -- (4,-1) -- (2,-1) -- (2,1);
        \draw[thick, blue] (2,1) -- (4,1);
        \draw[thick, blue] (2,-1) -- (4,-1);
        \draw[thick, olive] (2,-1) -- (2,1);
        \draw[thick, olive] (4,-1) -- (4,1);
        \draw[thick, blue, ->] (2,1) -- (3,1);
        \draw[thick, blue, ->] (2,-1) -- (3,-1);
        \draw[thick, olive, ->] (2,-1) -- (2,0);
        \draw[thick, olive, ->] (4,-1) -- (4,0);
        \draw[thick] (2,0.3) -- (4,0.3);
        \node at (3, -1.3) {$\alpha'$};
        \node at (4.5,0) {$=$};
        \draw[gray!40, fill=gray!40] (5,1) -- (7,1) -- (7,-1) -- (5,-1) -- (5,1);
        \draw[thick, blue] (5,1) -- (7,1);
        \draw[thick, blue] (5,-1) -- (7,-1);
        \draw[thick, olive] (5,-1) -- (5,1);
        \draw[thick, olive] (7,-1) -- (7,1);
        \draw[thick, blue, ->] (5,1) -- (6,1);
        \draw[thick, blue, ->] (5,-1) -- (6,-1);
        \draw[thick, olive, ->] (5,-1) -- (5,0);
        \draw[thick, olive, ->] (7,-1) -- (7,0);
        \draw[thick] (5,0.3) -- (6.2,0.3);
        \draw[thick] (6.4,0.3) -- (7,0.3);
        \draw[thick] (6.3,1) -- (6.3,-1);
        \node at (6, -1.3) {$\alpha\alpha^\prime$};
    \end{tikzpicture}
\end{center}

While Kauffman bracket skein modules are technically only defined for $3$-manifolds, we often use the shorthand $K_q(\Sigma)$ to refer to $K_q(\Sigma \times [0,1])$ for a surface $\Sigma$. This notation also conveys to the reader that there is more than just the module structure—it includes the additional algebra structure defined by stacking.

One particular example that we're interested in is the Kauffman bracket skein algebra of the torus, $K_q(T^2)$. In general, the simple curves on $T^2$ can be classified up to homotopy by their ``slope.'' More specifically, when $r,s \in \mathbb{Z}$ are relatively prime, we define $(r,s)$ to be the unoriented simple closed curve in $T^2$ that is the image of the line $y = \frac{r}{s} x$ under the natural projection from $\mathbb{R}^2$ to $T^2$.

The algebra $K_q(T^2)$ has been studied quite heavily and thus has an explicit basis. Let $T_n(x)$ be the $n$th Chebyshev polynomial, defined recursively where $T_0(x) = 2$, $T_1(x) = x$, and $T_{n+1} = xT_{n} - T_{n-1}$. Define $(r,s)_T$ to be the evaluation of $T_d$ on the $(r/d,s/d)$-curve where $r,s \in \mathbb{Z}$ and $d = \gcd(r,s)$.

\begin{theorem}[\cite{MR1675190}]\label{theorem:KqT2Basis}
    The set $\{ (r,s)_T \}_{r,s \in \mathbb{Z}} / \sim$ where $(r,s)_T \sim (-r, -s)_T$ is a basis for $K_q(T^2)$.
\end{theorem}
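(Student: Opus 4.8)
The plan is to prove the two halves of the statement by different means: that $\{(r,s)_T\}/{\sim}$ \emph{spans} $K_q(T^2)$ is an elementary consequence of the Kauffman bracket relations $(R_1)$, $(R_2)$ together with the classification of simple closed curves on the torus, whereas \emph{linear independence} is the substantive part, for which I would construct an algebra homomorphism from $K_q(T^2)$ into the symmetric part of a quantum torus on which the proposed basis elements are manifestly independent.

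\emph{Spanning.} Let $L\subset T^2\times I$ be a framed link and isotope it to a blackboard-framed diagram $D$ on $T^2$. Resolving every crossing of $D$ with $(R_1)$ writes $[L]$ as a $\mathbb{C}[q^{\pm1}]$-linear combination of crossingless diagrams, i.e. disjoint unions of embedded circles in $T^2$. A null-homotopic circle bounds a disk and is deleted by $(R_2)$ at the price of the scalar $-q^2-q^{-2}$. The surviving circles are essential, hence each is isotopic to a primitive curve $(r/d,s/d)$; since two non-isotopic essential simple closed curves on $T^2$ have positive geometric intersection number (equal to $|r_1s_2-r_2s_1|$ for slopes $r_i/s_i$), the curves in a crossingless essential diagram are mutually parallel, so the diagram is $d$ parallel copies of one primitive curve, which under the stacking product is the $d$-th power of that curve. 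Thus $K_q(T^2)$ is spanned by $\{\gamma^{\,d}\}$ as $\gamma$ ranges over primitive curves and $d\ge 0$, with $\gamma^0=1$ the empty diagram. For a fixed primitive slope the recursion $T_{n+1}=xT_n-T_{n-1}$ makes $(T_d)_{d\ge 0}$ a triangular change of basis with nonzero diagonal entries from $(\gamma^{d})_{d\ge 0}$, so $\{(r,s)_T\}$, being the $T_d$-transforms of the primitive-slope powers, also spans; the identification $(r,s)_T\sim(-r,-s)_T$ simply records that the slope-$r/s$ curve is unoriented.

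\emph{Linear independence.} I would construct an algebra homomorphism $\Phi\colon K_q(T^2)\to\mathbb{A}$, where $\mathbb{A}$ is the $\mathbb{Z}/2$-fixed subalgebra of the quantum torus $\mathbb{C}\langle x^{\pm1},y^{\pm1}\rangle/(xy-t^2yx)$ under the involution $x\mapsto x^{-1},\,y\mapsto y^{-1}$ (with $t$ a fixed square root related to $q$), determined by $(r,s)_T\mapsto x^ry^s+x^{-r}y^{-s}$. Well-definedness of $\Phi$ is exactly the ``product-to-sum'' identity: one checks on diagrams that the product of the slope-$(a,b)$ and slope-$(c,d)$ curves, after resolving its $|ad-bc|$ crossings and applying $(R_1)$, $(R_2)$, equals $q^{\,ad-bc}(a+c,b+d)_T+q^{-(ad-bc)}(a-c,b-d)_T$, which matches the relation satisfied by the symmetrized monomials $x^ay^b+x^{-a}y^{-b}$ in $\mathbb{A}$; checking this on curves whose slopes generate $\mathbb{Z}^2$ suffices. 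Since distinct $\mathbb{Z}/2$-orbits of monomials are linearly independent in $\mathbb{A}$ and $\Phi$ carries the distinct elements of $\{(r,s)_T\}/{\sim}$ to these, linear independence in $K_q(T^2)$ follows and the theorem is proved.

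\emph{Main obstacle and alternative.} The crux is the product-to-sum identity, equivalently well-definedness of $\Phi$: it demands a careful resolution of the $\gcd$-many crossings between two straight-line representatives on $T^2$, bookkeeping of blackboard framings, and matching the Chebyshev normalization so that multiplicities of primitive curves really do correspond to symmetrized monomials. An alternative that sidesteps the explicit formula is to work over $\mathbb{C}(q)$ and specialize to the classical limit $q=-1$: there $K_{-1}(T^2)$ is the coordinate ring of the $SL_2(\mathbb{C})$-character variety of $\pi_1(T^2)=\mathbb{Z}^2$, the elements $-(r,s)_T|_{q=-1}$ are the trace functions of the corresponding curves, whose independence is classical, so a $\mathbb{C}(q)$-linear dependence among the $(r,s)_T$ would specialize to a dependence there. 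I would pursue the product-to-sum route first, since it additionally delivers the multiplicative structure of $K_q(T^2)$ used later in the thesis.
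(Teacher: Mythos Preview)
The thesis does not prove this statement; it is quoted from Frohman--Gelca without argument, so there is no in-paper proof to compare against. Your spanning argument is correct and standard.

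Your linear-independence route, however, is circular as written. You propose to define $\Phi\colon K_q(T^2)\to\mathbb{A}$ by $(r,s)_T\mapsto e_{(r,s)}:=x^ry^s+x^{-r}y^{-s}$, but at this point the $(r,s)_T$ are only known to \emph{span}; prescribing a linear map on a spanning set requires that every linear relation among the $(r,s)_T$ be sent to a relation among the $e_{(r,s)}$, and since the latter are independent this is precisely the statement you are trying to prove. Establishing product-to-sum inside $K_q(T^2)$ yields a surjection $\mathbb{A}\twoheadrightarrow K_q(T^2)$ in the \emph{opposite} direction (well-defined because the $e_{(r,s)}$ are already a basis of $\mathbb{A}$), whose injectivity is again the point at issue; and ``checking on slopes that generate $\mathbb{Z}^2$'' does not help, since no presentation of $K_q(T^2)$ by those generators is yet available. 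What actually underlies the result---and what Frohman--Gelca take as input---is Przytycki's general theorem that for any surface $\Sigma$ the isotopy classes of multicurves form a basis of $K_q(\Sigma)$, proved by a confluence (Diamond-Lemma) argument on the skein resolutions, independent of any target algebra. On $T^2$ the multicurves are exactly the parallel powers of primitive curves, and your Chebyshev change of basis then finishes. The product-to-sum formula and the isomorphism $K_q(T^2)\cong A_q^{\mathbb{Z}/2\mathbb{Z}}$ quoted later in the thesis are downstream of this basis, not a substitute for proving it; your $q=-1$ specialization alternative likewise needs freeness over $\mathbb{C}[q^{\pm1}]$, which again reduces to the multicurve basis theorem.
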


\section{Stated Skein Algebras}

The main restriction of Kauffman bracket skein modules is that they are only defined using links. Whenever one works with links, it's natural to ask whether the framework can be extended to tangles as well. Moreover, in the definition of $K_q(M)$, the boundary of $M$ doesn't play any crucial part, and hence we have $K_q(M) \cong K_q(\mathring{M})$. In \cite{MR3827810}, Thang L\^{e} reinterpreted the quantum trace maps in \cite{MR2851072} by introducing \textit{stated skein modules}, which addressed how to incorporate both tangles and boundary components to establish an excision property for these modules.

\begin{definition}
    A \textit{marked $3$-manifold} is a pair $(M, \mathcal{N})$ where $M$ is a compact oriented $3$-manifold with (possibly empty) boundary $\partial M$, and $\mathcal{N} \subset \partial M$ are oriented arcs called \textit{markings}.
\end{definition}

\begin{remark}
    The orientation of the marking provides the points on this arc with a natural ordering.
\end{remark}

\begin{definition}
    A \textit{marked surface} is a pair $(\Sigma, \mathcal{P})$ where $\Sigma$ is a compact oriented surface with (possibly empty) boundary $\partial \Sigma$, and $\mathcal{P} \subset \partial \Sigma$ is a finite set, called the set of marked points.
\end{definition}
The associated marked $3$-manifold $(M, \mathcal{N})$ is defined by $M = \Sigma \times I$ and its markings $\mathcal{N} = \mathcal{P} \times I$.

\begin{definition}\label{defn:StatedTangle}
    A \textit{stated $\mathcal{N}$-tangle} is a pair ($\alpha$, $s$) where $\alpha$ is a compact $1$-dimensional unoriented submanifold with a framing such that $\partial \alpha = \alpha \cap \mathcal{N}$ and $s$ is a map $s : \partial \alpha \to \{\pm\}$.
\end{definition}

The decorations of these states at each endpoint correspond to our two basis vectors in the fundamental representation of $U_q(\mathfrak{sl}_2)$, $L(1)$. Once again, as $L(1)$ is self-dual, we don't have to worry about orientations of these tangles.

\begin{definition}
    The \textit{stated skein module} of $(M, \mathcal{N})$, denoted $\mathscr{S}\left(M, \mathcal{N} \right)$, is the quotient of the free module spanned by isotopy classes of stated $\mathcal{N}$-tangles subject to the following local relations.
    \begin{center}\resizebox{0.9\width}{!}{
       \begin{tikzpicture}
            \draw[gray!40, fill=gray!40] (0,0) circle (1);
            \draw[thick] (-0.71, 0.71) -- (0.71, -0.71);
            \draw[line width=3mm, gray!40] (0.70, 0.70) -- (-0.70, -0.70);
            \draw[thick] (0.71, 0.71) -- (-0.71, -0.71);
            \node[text width=1cm] at (1.75,0) {$= q$};
            \draw[gray!40, fill=gray!40] (3,0) circle (1);
            \draw[thick] (2.29, 0.71) to[out=-60, in=60] (2.29, -0.71);
            \draw[thick] (3.71, 0.71) to[out=-120, in=120] (3.71, -0.71);
            \node[text width=1.25cm] at (4.75,0) {$+$ $q^{-1}$};
            \draw[gray!40, fill=gray!40] (6.25,0) circle (1);
            \draw[thick] (5.54, 0.71) to[out=-60, in=-120] (6.96, 0.71);
            \draw[thick] (5.54, -0.71) to[out=60, in=120] (6.96, -0.71);
            \node at (3,-1.4) {$(R_1)$ Skein Relation};
            \draw[gray!40, fill=gray!40] (10,0) circle (1);
            \draw[thick] (10,0) circle (.5);
            \node[text width=2.4cm] at (12.4,0) {$= (-q^2 - q^{-2})$};
            \draw[gray!40, fill=gray!40] (14.7,0) circle (1);
            \node at (12.4,-1.4) {$(R_2)$ Trivial Knot Relation};
       \end{tikzpicture}}
   \end{center}
   \begin{center}\resizebox{0.9\width}{!}{
       \begin{tikzpicture}
            \draw[gray!40, thick, fill=gray!40, domain=-45:225] plot ({cos(\x)}, {sin(\x)}) to[out=45, in=130] (0.71, -0.71);
            \draw[thick] (-0.71, -0.71) to[out=45, in=135] (0.71, -0.71);
            \node[draw, circle, inner sep=0pt, minimum size=4pt, fill=black] (p1) at (0,-0.41) {};
            \draw[thick] (p1) to[out=45, in=-120] (0.31,0) to[out=60, in=0] (0,0.45) to[out=180, in=120] (-0.31,0) to[out=-60, in=150] (-0.15,-0.2);
            \node (s1) at (-0.6,-0.3) {$-$};
            \node (s2) at (0.6,-0.3) {$+$};
            \node[text width=1.4cm] at (2,0) {$= q^{-1/2}$};
            \draw[gray!40, thick, fill=gray!40, domain=-45:225] plot ({3.8+cos(\x)}, {sin(\x)}) to[out=45, in=130] (4.51, -0.71);
            \draw[thick] (3.09, -0.71) to[out=45, in=130] (4.51, -0.71);
            \node[draw, circle, inner sep=0pt, minimum size=4pt, fill=black] (p2) at (3.8,-0.41) {};
            \node at (2,-1.4) {$(R_3)$ Trivial Arc Relation 1};
            \draw[gray!40, thick, fill=gray!40, domain=-45:225] plot ({7.5+cos(\x)}, {sin(\x)}) to[out=45, in=130] (8.21, -0.71);
            \draw[thick] (6.79, -0.71) to[out=45, in=130] (8.21, -0.71);
            \node[draw, circle, inner sep=0pt, minimum size=4pt, fill=black] (p3) at (7.5,-0.41) {};
            \draw[thick] (p3) to[out=45, in=-120] (7.81,0) to[out=60, in=0] (7.5,0.45) to[out=180, in=120] (7.19,0) to[out=-60, in=150] (7.35,-0.2);
            \node (s1) at (6.9,-0.3) {$-$};
            \node (s2) at (8.1,-0.3) {$-$};
            \node[text width=1.1cm] at (9.25,0) {$= 0 =$};
            \draw[gray!40, thick, fill=gray!40, domain=-45:225] plot ({11+cos(\x)}, {sin(\x)}) to[out=45, in=130] (11.71, -0.71);
            \draw[thick] (10.29, -0.71) to[out=45, in=130] (11.71, -0.71);
            \node[draw, circle, inner sep=0pt, minimum size=4pt, fill=black] (p4) at (11,-0.41) {};
            \draw[thick] (p4) to[out=45, in=-120] (11.31,0) to[out=60, in=0] (11,0.45) to[out=180, in=120] (10.69,0) to[out=-60, in=150] (10.85,-0.2);
            \node (s1) at (10.4,-0.3) {$+$};
            \node (s2) at (11.6,-0.3) {$+$};
            \node at (9.25,-1.4) {$(R_4)$ Trivial Arc Relation 2};
       \end{tikzpicture}}
   \end{center}
   \begin{center}\resizebox{0.9\width}{!}{
       \begin{tikzpicture}
            \draw[gray!40, thick, fill=gray!40, domain=-45:225] plot ({cos(\x)}, {sin(\x)}) to[out=45, in=130] (0.71, -0.71);
            \draw[thick] (-0.71, -0.71) to[out=45, in=135] (0.71, -0.71);
            \node[draw, circle, inner sep=0pt, minimum size=4pt, fill=black] (p1) at (0,-0.41) {};
            \draw[thick] (p1) -- (-0.71,0.71);
            \draw[thick] (0.71,0.71) -- (0.1,-0.26);
            \node (s1) at (-0.6,-0.3) {$+$};
            \node (s2) at (0.6,-0.3) {$-$};
            \node[text width=1.1cm] at (1.75,0) {$= q^{-2}$};
            \draw[gray!40, thick, fill=gray!40, domain=-45:225] plot ({3.5+cos(\x)}, {sin(\x)}) to[out=45, in=130] (4.21, -0.71);
            \draw[thick] (2.79, -0.71) to[out=45, in=130] (4.21, -0.71);
            \node[draw, circle, inner sep=0pt, minimum size=4pt, fill=black] (p2) at (3.5,-0.41) {};
            \draw[thick] (p2) -- (2.79,0.71);
            \draw[thick] (3.6,-0.26) -- (4.21,0.71);
            \node (s3) at (2.9,-0.3) {$-$};
            \node (s4) at (4.1,-0.3) {$+$};
            \node[text width=1.2cm] at (5.25,0) {$+$ $q^{1/2}$};
            \draw[gray!40, thick, fill=gray!40, domain=-45:225] plot ({7+cos(\x)}, {sin(\x)}) to[out=45, in=130] (7.71, -0.71);
            \draw[thick] (6.29, -0.71) to[out=45, in=130] (7.71, -0.71);
            \node[draw, circle, inner sep=0pt, minimum size=4pt, fill=black] (p3) at (7,-0.41) {};
            \draw[thick] (6.29, 0.71) to[out=-60, in=180] (7,0) to[out=0, in=240] (7.71,0.71);
            \node at (3.5,-1.4) {$(R_5)$ State Exchange Relation};
       \end{tikzpicture}}
   \end{center}
\end{definition}

Using relations $(R_1) - (R_5)$ we can easily find the following state exchange relation, and hence get every state exchange relation.
\begin{center}
    \begin{tikzpicture}
        \draw[gray!40, thick, fill=gray!40, domain=-45:225] plot ({cos(\x)}, {sin(\x)}) to[out=45, in=130] (0.71, -0.71);
        \draw[thick] (-0.71, -0.71) to[out=45, in=135] (0.71, -0.71);
        \node[draw, circle, inner sep=0pt, minimum size=4pt, fill=black] (p1) at (0,-0.41) {};
        \draw[thick] (-0.1, -0.26) -- (-0.71,0.71);
        \draw[thick] (p1) -- (0.71,0.71);
        \node (s1) at (-0.6,-0.3) {$+$};
        \node (s2) at (0.6,-0.3) {$-$};
        \node[text width=1.1cm] at (1.9,0) {$= q^{2}$};
        \draw[gray!40, thick, fill=gray!40, domain=-45:225] plot ({3.5+cos(\x)}, {sin(\x)}) to[out=45, in=130] (4.21, -0.71);
        \draw[thick] (2.79, -0.71) to[out=45, in=130] (4.21, -0.71);
        \node[draw, circle, inner sep=0pt, minimum size=4pt, fill=black] (p2) at (3.5,-0.41) {};
        \draw[thick] (3.4,-0.26) -- (2.79,0.71);
        \draw[thick] (p2) -- (4.21,0.71);
        \node (s3) at (2.9,-0.3) {$-$};
        \node (s4) at (4.1,-0.3) {$+$};
        \node[text width=1.4cm] at (5.4,0) {$+$ $q^{-1/2}$};
        \draw[gray!40, thick, fill=gray!40, domain=-45:225] plot ({7.25+cos(\x)}, {sin(\x)}) to[out=45, in=130] (7.96, -0.71);
        \draw[thick] (6.54, -0.71) to[out=45, in=130] (7.96, -0.71);
        \node[draw, circle, inner sep=0pt, minimum size=4pt, fill=black] (p3) at (7.25,-0.41) {};
        \draw[thick] (6.54, 0.71) to[out=-60, in=180] (7.25,0) to[out=0, in=240] (7.96,0.71);
    \end{tikzpicture}
\end{center}

It's also not too hard to see that for any $\nu \in \{ \pm \}$, $(R_5)$ is equivalent to the following height exchange relations.
\begin{center}
    \begin{tikzpicture}
        \draw[gray!40, thick, fill=gray!40, domain=-45:225] plot ({cos(\x)}, {sin(\x)}) to[out=45, in=130] (0.71, -0.71);
        \draw[thick] (-0.71, -0.71) to[out=45, in=135] (0.71, -0.71);
        \node[draw, circle, inner sep=0pt, minimum size=4pt, fill=black] (p1) at (0,-0.41) {};
        \draw[thick] (p1) -- (-0.71,0.71);
        \draw[thick] (0.71,0.71) -- (0.1,-0.26);
        \node (s1) at (-0.6,-0.3) {$\nu$};
        \node (s2) at (0.6,-0.3) {$\nu$};
        \node[text width=1.1cm] at (1.72,0) {$= q^{-1}$};
        \draw[gray!40, thick, fill=gray!40, domain=-45:225] plot ({3.3+cos(\x)}, {sin(\x)}) to[out=45, in=130] (4.01, -0.71);
        \draw[thick] (2.59, -0.71) to[out=45, in=130] (4.01, -0.71);
        \node[draw, circle, inner sep=0pt, minimum size=4pt, fill=black] (p2) at (3.3,-0.41) {};
        \draw[thick] (3.2,-0.26) -- (2.59,0.71);
        \draw[thick] (p2) -- (4.01,0.71);
        \node (s3) at (2.7,-0.3) {$\nu$};
        \node (s4) at (3.9,-0.3) {$\nu$};
    \end{tikzpicture}
\end{center}
\begin{center}
    \begin{tikzpicture}
        \draw[gray!40, thick, fill=gray!40, domain=-45:225] plot ({cos(\x)}, {sin(\x)}) to[out=45, in=130] (0.71, -0.71);
        \draw[thick] (-0.71, -0.71) to[out=45, in=135] (0.71, -0.71);
        \node[draw, circle, inner sep=0pt, minimum size=4pt, fill=black] (p1) at (0,-0.41) {};
        \draw[thick] (p1) -- (-0.71,0.71);
        \draw[thick] (0.71,0.71) -- (0.1,-0.26);
        \node (s1) at (-0.6,-0.3) {$-$};
        \node (s2) at (0.6,-0.3) {$+$};
        \node[text width=1.1cm] at (1.8,0) {$= q$};
        \draw[gray!40, thick, fill=gray!40, domain=-45:225] plot ({3.3+cos(\x)}, {sin(\x)}) to[out=45, in=130] (4.01, -0.71);
        \draw[thick] (2.59, -0.71) to[out=45, in=130] (4.01, -0.71);
        \node[draw, circle, inner sep=0pt, minimum size=4pt, fill=black] (p2) at (3.3,-0.41) {};
        \draw[thick] (3.2,-0.26) -- (2.59,0.71);
        \draw[thick] (p2) -- (4.01,0.71);
        \node (s3) at (2.7,-0.3) {$-$};
        \node (s4) at (3.9,-0.3) {$+$};
    \end{tikzpicture}
\end{center}
\begin{center}
    \begin{tikzpicture}
        \draw[gray!40, thick, fill=gray!40, domain=-45:225] plot ({cos(\x)}, {sin(\x)}) to[out=45, in=130] (0.71, -0.71);
        \draw[thick] (-0.71, -0.71) to[out=45, in=135] (0.71, -0.71);
        \node[draw, circle, inner sep=0pt, minimum size=4pt, fill=black] (p1) at (0,-0.41) {};
        \draw[thick] (p1) -- (-0.71,0.71);
        \draw[thick] (0.71,0.71) -- (0.1,-0.26);
        \node (s1) at (-0.6,-0.3) {$+$};
        \node (s2) at (0.6,-0.3) {$-$};
        \node[text width=1.1cm] at (1.8,0) {$= q^{-3}$};
        \draw[gray!40, thick, fill=gray!40, domain=-45:225] plot ({3.5+cos(\x)}, {sin(\x)}) to[out=45, in=130] (4.21, -0.71);
        \draw[thick] (2.79, -0.71) to[out=45, in=130] (4.21, -0.71);
        \node[draw, circle, inner sep=0pt, minimum size=4pt, fill=black] (p2) at (3.5,-0.41) {};
        \draw[thick] (3.4,-0.26) -- (2.79,0.71);
        \draw[thick] (p2) -- (4.21,0.71);
        \node (s3) at (2.9,-0.3) {$+$};
        \node (s4) at (4.1,-0.3) {$-$};
        \node[text width=3.5cm] at (6.5,0) {$+$ $q^{-3/2} \left( q^2 - q^{-2} \right)$};
        \draw[gray!40, thick, fill=gray!40, domain=-45:225] plot ({9.25+cos(\x)}, {sin(\x)}) to[out=45, in=130] (9.96, -0.71);
        \draw[thick] (8.54, -0.71) to[out=45, in=130] (9.96, -0.71);
        \node[draw, circle, inner sep=0pt, minimum size=4pt, fill=black] (p3) at (9.25,-0.41) {};
        \draw[thick] (8.54, 0.71) to[out=-60, in=180] (9.25,0) to[out=0, in=240] (9.96,0.71);
        \node at (4.6,-1.4) {$(R_6)$ Height Exchange Relation};
    \end{tikzpicture}
\end{center}

Lastly, we can also quickly find all trivial arc relations. Below are the trivial arc relations corresponding to different states, $(R_3)$.\\
\begin{align*}
    \begin{tikzpicture}[baseline=-3]
        \draw[gray!40, thick, fill=gray!40, domain=-45:225] plot ({cos(\x)}, {sin(\x)}) to[out=45, in=130] (0.71, -0.71);
        \draw[thick] (-0.71, -0.71) to[out=45, in=135] (0.71, -0.71);
        \node[draw, circle, inner sep=0pt, minimum size=4pt, fill=black] (p1) at (0,-0.41) {};
        \draw[thick] (p1) to[out=135, in=-60] (-0.31, 0) to[out=120, in=180] (0, 0.45) to[out=0, in=60] (0.31, 0) to[out=-120, in=30] (0.15,-0.2);
        \node (s1) at (-0.6,-0.3) {$+$};
        \node (s2) at (0.6,-0.3) {$-$};
   \end{tikzpicture} &= -q^{5/2} \\
   \begin{tikzpicture}[baseline=-3]
        \draw[gray!40, thick, fill=gray!40, domain=-45:225] plot ({cos(\x)}, {sin(\x)}) to[out=45, in=130] (0.71, -0.71);
        \draw[thick] (-0.71, -0.71) to[out=45, in=135] (0.71, -0.71);
        \node[draw, circle, inner sep=0pt, minimum size=4pt, fill=black] (p1) at (0,-0.41) {};
        \draw[thick] (p1) to[out=135, in=-60] (-0.31, 0) to[out=120, in=180] (0, 0.45) to[out=0, in=60] (0.31, 0) to[out=-120, in=30] (0.15,-0.2);
        \node (s1) at (-0.6,-0.3) {$-$};
        \node (s2) at (0.6,-0.3) {$+$};
   \end{tikzpicture} &= q^{1/2} \\
   \begin{tikzpicture}[baseline=-3]
        \draw[gray!40, thick, fill=gray!40, domain=-45:225] plot ({cos(\x)}, {sin(\x)}) to[out=45, in=130] (0.71, -0.71);
        \draw[thick] (-0.71, -0.71) to[out=45, in=135] (0.71, -0.71);
        \node[draw, circle, inner sep=0pt, minimum size=4pt, fill=black] (p1) at (0,-0.41) {};
        \draw[thick] (p1) to[out=45, in=-120] (0.31,0) to[out=60, in=0] (0,0.45) to[out=180, in=120] (-0.31,0) to[out=-60, in=150] (-0.15,-0.2);
        \node (s1) at (-0.6,-0.3) {$+$};
        \node (s2) at (0.6,-0.3) {$-$};
   \end{tikzpicture} &= -q^{-5/2} \\
   \begin{tikzpicture}[baseline=-3]
        \draw[gray!40, thick, fill=gray!40, domain=-45:225] plot ({cos(\x)}, {sin(\x)}) to[out=45, in=130] (0.71, -0.71);
        \draw[thick] (-0.71, -0.71) to[out=45, in=135] (0.71, -0.71);
        \node[draw, circle, inner sep=0pt, minimum size=4pt, fill=black] (p1) at (0,-0.41) {};
        \draw[thick] (p1) to[out=45, in=-120] (0.31,0) to[out=60, in=0] (0,0.45) to[out=180, in=120] (-0.31,0) to[out=-60, in=150] (-0.15,-0.2);
        \node (s1) at (-0.6,-0.3) {$-$};
        \node (s2) at (0.6,-0.3) {$+$};
   \end{tikzpicture} &= q^{-1/2}
\end{align*}
The first and fourth diagrams, as well as the second and third diagrams, differ by a twist, which introduces a multiplicative factor of $-q^{-3}$. For consistency and convenience, throughout this thesis, we adopt L\^{e}'s notation from \cite{MR3827810} for these constants. That is,
\begin{align*}
    C_{+}^{+} &= 0, \\
    C_{+}^{-} &= -q^{-5/2}, \\
    C_{-}^{+} &= q^{-1/2}, \\
    C_{-}^{-} &= 0.
\end{align*}

Analogous to the Kauffman bracket, if $M = \Sigma \times I$ and $\mathcal{N} = \mathcal{P} \times I$ for $\mathcal{P} \subset \partial \Sigma$, then we can define an $\mathbb{C}$-algebra structure on the $\mathbb{C}$-module $\mathscr{S}(M, \mathcal{P})$ by defining products $\alpha \alpha'$ to be stacking $\alpha$ above $\alpha'$, subject to the same 5 relations.
\begin{center}
    \begin{tikzpicture}
        \draw[thick, fill=gray!40] (-1,1) -- (1,1) -- (1,-1) -- (-1,-1) -- (-1,1);
        \node[draw, circle, inner sep=0pt, minimum size=4pt, fill=black] (p11) at (-1,0) {};
        \node[draw, circle, inner sep=0pt, minimum size=4pt, fill=black] (p12) at (0,1) {};
        \node[draw, circle, inner sep=0pt, minimum size=4pt, fill=black] (p13) at (1,0) {};
        \node[draw, circle, inner sep=0pt, minimum size=4pt, fill=black] (p14) at (0,-1) {};
        \draw[thick] (p12) -- (p14);
        \node at (0, -1.3) {$\alpha$};
        \node at (1.5,0) {$\cdot$};
        \draw[thick, fill=gray!40] (2,1) -- (4,1) -- (4,-1) -- (2,-1) -- (2,1);
        \node[draw, circle, inner sep=0pt, minimum size=4pt, fill=black] (p21) at (2,0) {};
        \node[draw, circle, inner sep=0pt, minimum size=4pt, fill=black] (p22) at (3,1) {};
        \node[draw, circle, inner sep=0pt, minimum size=4pt, fill=black] (p23) at (4,0) {};
        \node[draw, circle, inner sep=0pt, minimum size=4pt, fill=black] (p24) at (3,-1) {};
        \draw[thick] (p21) -- (p23);
        \node at (3, -1.3) {$\alpha'$};
        \node at (4.5,0) {$=$};
        \draw[thick, fill=gray!40] (5,1) -- (7,1) -- (7,-1) -- (5,-1) -- (5,1);
        \node[draw, circle, inner sep=0pt, minimum size=4pt, fill=black] (p31) at (5,0) {};
        \node[draw, circle, inner sep=0pt, minimum size=4pt, fill=black] (p32) at (6,1) {};
        \node[draw, circle, inner sep=0pt, minimum size=4pt, fill=black] (p33) at (7,0) {};
        \node[draw, circle, inner sep=0pt, minimum size=4pt, fill=black] (p34) at (6,-1) {};
        \draw[thick] (p31) -- (5.9,0);
        \draw[thick] (6.1,0) -- (p33);
        \draw[thick] (p32) -- (p34);
        \node at (6, -1.3) {$\alpha\alpha'$};
    \end{tikzpicture}
\end{center}

The Kauffman bracket skein module is functorial in the sense that $K_q(-)$ is a covariant functor from the category of oriented $3$-manifolds with isotopy classes of embeddings as morphisms to the category of $\mathbb{C}$-modules.
Thus, each embedding, $f: M \hookrightarrow M^\prime$, induces a module homomorphism of Kauffman bracket skein modules, $f_*: K_q(M) \to K_q(M^\prime)$, by $f_*[\alpha] = [f(\alpha)]$ for any framed link $\alpha$.
Similarly, the stated skein algebra $\mathscr{S}(-)$ is also functorial.
However, the domain of this functor is the category of marked $3$-manifolds where our embeddings preserve marking orientations.
Indeed, each embedding, $f: (M, \mathcal{N}) \hookrightarrow (M^\prime, \mathcal{N}^\prime)$, induces a similar homomorphism, $f_*: \mathscr{S}(M, \mathcal{N}) \to \mathscr{S}(M^\prime, \mathcal{N}^\prime)$, by $f_*[\alpha] = [f(\alpha)]$ for any stated tangle $\alpha$.

Since framed links exist in stated skein modules due to relations $(R_5)$ and $(R_6)$, we get that $K_q(M) \hookrightarrow \mathscr{S}(M, \mathcal{N})$.
This is consistent with the functoriality of our skein modules since $\mathcal{N}$ can be empty and $\mathscr{S}(-)$ can be restricted to the same functor on the category of oriented $3$-manifolds, making $K_q(-) \rightarrow \mathscr{S}(-)$ a natural transformation.

The main reason stated skein modules were created was due to the splitting theorem. This theorem allows us to analyze skein algebras through smaller (hopefully simpler) pieces, at the cost of more complexity. As important as this result is, the splitting theorem is not particularly relevant for this thesis, and so it will not be discussed here any further. You can read more details on this theorem in \cite{costantino2022stated, MR3827810, MR4264235}.

Since the stated skein modules can be thought of as generalizations of Kauffman bracket skein modules, there are important distinctions to consider that make the stated case more difficult to work with. For example, it is clear from relation $(R_1)$ that when $q = \pm 1$, $K_q(\Sigma \times I)$ is a commutative algebra. However, due to our additional relations, in particular $(R_6)$, $\mathscr{S}(M, \mathcal{N})$ is \emph{only} commutative at $q=1$ when $\mathcal{N}$ is nonempty. Moreover, $K_{q}(M) \cong K_{-q}(M)$ as algebras (see \cite{MR1670233}). However, this clearly can't be true in the stated case due to this lack of commutivity. Furthermore, there is additional information to keep track of when working in the stated case. Specifically, we need to keep track of not only the states but also the the heights of each tangle at each marking or marked point.

\section{The Conventional Model}\label{section:ConventionalModel}

The definitions of stated skein algebras given above differ slightly from the more common ones found in the literature. Our approach to stated tangles is somewhat closer to the notion of \textit{ideal arcs} described in \cite{costantino2022stated, MR4431131} (see also Definition \ref{defn:IdealArc}). In this section, we will first introduce the conventional definitions and then demonstrate that they are isomorphic to our definitions, showing that they can be understood in essentially the same way.

The following definitions are primarily taken from \cite{costantino2022stated} and \cite{MR3827810}. However, they have been very slightly modified to fit in our framework. In particular, given a surface, $\Sigma$, the corresponding $3$-manifold is typically taken as $\Sigma \times (0,1)$ in the literature. However, it will be important for us to consider $\Sigma \times [0,1]$ instead as this extra information will be important for us later. 

\begin{definition}\label{defn:PBS}
    Let $\Sigma^\prime$ be a (possibly punctured) oriented surface with (possibly empty) boundary, and $\mathcal{P} \subset \partial \Sigma^\prime$ be a finite nonempty set such that every connected component of $\partial \Sigma^\prime$ has at least one point in $\mathcal{P}$. Then $\Sigma = \Sigma^\prime \setminus \mathcal{P}$ is called a \textit{punctured bordered surface}.
\end{definition}

\begin{remark}
    Following the notation from definition \ref{defn:PBS}, $\Sigma^\prime$ is always uniquely determined by its punctured bordered surface, $\Sigma$.
\end{remark}

\begin{definition}\label{defn:IdealArc}
    An \textit{ideal arc} on $\Sigma$ is an immersion $ \alpha : [0,1] \to \Sigma^\prime$ such that $\alpha(0), \alpha(1) \in \mathcal{P}$ and the restriction of $\alpha$ onto $(0,1)$ is an embedding into $\Sigma$.
\end{definition}

\begin{definition}
    A connected component of $\partial \Sigma$ is called a \textit{boundary edge}.
\end{definition}

\begin{definition}
    Let $(s,t) \in \Sigma \times [0,1]$. The \textit{height} of $(s,t)$ is $t$ and we say a vector at $(s,t)$ is \textit{vertical} if it is parallel to $s \times [0,1]$.
\end{definition}

\begin{definition}
    Let $\Sigma$ be a punctured bordered surface. A \textit{stated $\partial \Sigma$-tangle} is a tuple, $(\alpha, s)$ where $\alpha \subset \Sigma \times [0,1]$ is an unoriented, framed, compact, properly embedded $1$-dimensional submanifold such that
    \begin{itemize}
        \item at every point in $\partial \alpha = \alpha \cap \left( \partial \Sigma \times [0,1] \right)$ the framing is \textit{vertical}, 
        \item for every boundary edge $b \subset \Sigma$, $\partial \alpha \cap (b \times [0,1])$ have distinct heights,
    \end{itemize}
    and $s$ is a map $s : \partial \alpha \to \{\pm\}$.
\end{definition}

Just as before, we only consider isotopy classes of these stated $\partial \Sigma$-tangles. Therefore, isotopies of stated $\partial \Sigma$-tangles are required to preserve the height order.

\begin{definition}
    The stated skein algebra of a punctured bordered surface, denoted $\mathscr{S}^{pb}(\Sigma)$ or $\mathscr{S}^{pb}(\Sigma^\prime)$, is the $\mathbb{C}$-module freely spanned by isotopy classes of stated $\partial \Sigma$-tangles modulo the following local relations.
    \begin{center}\resizebox{0.9\width}{!}{
        \begin{tikzpicture}
            \draw[gray!40, fill=gray!40] (0,0) circle (1);
            \draw[thick] (-0.71, 0.71) -- (0.71, -0.71);
            \draw[line width=3mm, gray!40] (0.70, 0.70) -- (-0.70, -0.70);
            \draw[thick] (0.71, 0.71) -- (-0.71, -0.71);
            \node[text width=1cm] at (1.75,0) {$= q$};
            \draw[gray!40, fill=gray!40] (3,0) circle (1);
            \draw[thick] (2.29, 0.71) to[out=-60, in=60] (2.29, -0.71);
            \draw[thick] (3.71, 0.71) to[out=-120, in=120] (3.71, -0.71);
            \node[text width=1.25cm] at (4.75,0) {$+$ $q^{-1}$};
            \draw[gray!40, fill=gray!40] (6.25,0) circle (1);
            \draw[thick] (5.54, 0.71) to[out=-60, in=-120] (6.96, 0.71);
            \draw[thick] (5.54, -0.71) to[out=60, in=120] (6.96, -0.71);
            \node at (3,-1.4) {$(R_1^{pb})$ Skein Relation};
            \draw[gray!40, fill=gray!40] (10,0) circle (1);
            \draw[thick] (10,0) circle (.5);
            \node[text width=2.4cm] at (12.4,0) {$= (-q^2 - q^{-2})$};
            \draw[gray!40, fill=gray!40] (14.7,0) circle (1);
            \node at (12.4,-1.4) {$(R_2^{pb})$ Trivial Knot Relation};
        \end{tikzpicture}}
    \end{center}
    \begin{center}\resizebox{0.9\width}{!}{
        \begin{tikzpicture}
            \draw[gray!40, thick, fill=gray!40, domain=-45:225] plot ({cos(\x)}, {sin(\x)}) to[out=45, in=130] (0.71, -0.71);
            \draw[thick] (-0.71, -0.71) to[out=45, in=135] (0.71, -0.71);
            \draw[thick] (-0.25, -0.45) to[out=120, in=180] (0, 0.4) to[out=0, in=60] (0.25, -0.45);
            \path[thick, tips, ->] (-0.71, -0.71) to[out=45, in=135] (0.71, -0.71);
            \node (s1) at (-0.6, -0.3) {$-$};
            \node (s2) at (0.6, -0.3) {$+$};
            \node[text width=1.4cm] at (2,0) {$= q^{-1/2}$};
            \draw[gray!40, thick, fill=gray!40, domain=-45:225] plot ({3.8+cos(\x)}, {sin(\x)}) to[out=45, in=130] (4.51, -0.71);
            \draw[thick] (3.09, -0.71) to[out=45, in=130] (4.51, -0.71);
            \path[thick, tips, ->] (3.09, -0.71) to[out=45, in=130] (4.51, -0.71);
            \node at (2, -1.4) {$(R_3^{pb})$ Trivial Arc Relation 1};
            \draw[gray!40, thick, fill=gray!40, domain=-45:225] plot ({7.5+cos(\x)}, {sin(\x)}) to[out=45, in=130] (8.21, -0.71);
            \draw[thick] (6.79, -0.71) to[out=45, in=130] (8.21, -0.71);
            \draw[thick] (7.25, -0.45) to[out=120, in=180] (7.5,0.4) to[out=0, in=60] (7.75, -0.45);
            \path[thick, tips, ->] (6.79, -0.71) to[out=45, in=130] (8.21, -0.71);
            \node (s1) at (6.9,-0.3) {$-$};
            \node (s2) at (8.1,-0.3) {$-$};
            \node[text width=1.1cm] at (9.25,0) {$= 0 =$};
            \draw[gray!40, thick, fill=gray!40, domain=-45:225] plot ({11+cos(\x)}, {sin(\x)}) to[out=45, in=130] (11.71, -0.71);
            \draw[thick] (10.29, -0.71) to[out=45, in=130] (11.71, -0.71);
            \draw[thick] (10.75, -0.45) to[out=120, in=180] (11, 0.4) to[out=0, in=60] (11.25, -0.45);
            \path[thick, tips, ->] (10.29, -0.71) to[out=45, in=130] (11.71, -0.71);
            \node (s1) at (10.4,-0.3) {$+$};
            \node (s2) at (11.6,-0.3) {$+$};
            \node at (9.25,-1.4) {$(R_4^{pb})$ Trivial Arc Relation 2};
        \end{tikzpicture}}
    \end{center}
    \begin{center}\resizebox{0.9\width}{!}{
        \begin{tikzpicture}
            \draw[gray!40, thick, fill=gray!40, domain=-45:225] plot ({cos(\x)}, {sin(\x)}) to[out=45, in=130] (0.71, -0.71);
            \draw[thick] (-0.71, -0.71) to[out=45, in=135] (0.71, -0.71);
            \draw[thick] (-0.25, -0.45) -- (-0.71, 0.71);
            \draw[thick] (0.25, -0.45) -- (0.71, 0.71);
            \path[thick, tips, ->] (-0.71, -0.71) to[out=45, in=135] (0.71, -0.71);
            \node (s1) at (-0.6,-0.3) {$+$};
            \node (s2) at (0.6,-0.3) {$-$};
            \node[text width=1.1cm] at (1.75,0) {$= q^{-2}$};
            \draw[gray!40, thick, fill=gray!40, domain=-45:225] plot ({3.5+cos(\x)}, {sin(\x)}) to[out=45, in=130] (4.21, -0.71);
            \draw[thick] (2.79, -0.71) to[out=45, in=130] (4.21, -0.71);
            \draw[thick] (3.25, -0.45) -- (2.79, 0.71);
            \draw[thick] (3.75, -0.45) -- (4.21, 0.71);
            \path[thick, tips, ->] (2.79, -0.71) to[out=45, in=130] (4.21, -0.71);
            \node (s3) at (2.9,-0.3) {$-$};
            \node (s4) at (4.1,-0.3) {$+$};
            \node[text width=1.2cm] at (5.25,0) {$+$ $q^{1/2}$};
            \draw[gray!40, thick, fill=gray!40, domain=-45:225] plot ({7+cos(\x)}, {sin(\x)}) to[out=45, in=130] (7.71, -0.71);
            \draw[thick] (6.29, -0.71) to[out=45, in=130] (7.71, -0.71);
            \path[thick, tips, ->] (6.29, -0.71) to[out=45, in=130] (7.71, -0.71);
            \draw[thick] (6.29, 0.71) to[out=-60, in=180] (7,0) to[out=0, in=240] (7.71,0.71);
            \node at (3.5,-1.4) {$(R_5^{pb})$ State Exchange Relation};
        \end{tikzpicture}}
    \end{center}
\end{definition}

\begin{definition}
    A stated $\partial \Sigma$-tangle, $\alpha$, is said to be in \textit{generic position} if the natural projection $\pi : \Sigma \times [0,1] \to \Sigma$ restricts to an embedding of $\alpha$, except for the possibility of transverse double points in the interior of $\Sigma$.
\end{definition}
Each stated $\partial \Sigma$-tangle is isotopic to one in generic position. Furthermore, we can define a $\mathbb{C}$-algebra structure on $\ms{S}^{pb}(\Sigma)$ by defining products $\alpha \cdot \alpha'$ to be stacking $\alpha$ above $\alpha'$, just as before, and isotoping the new diagram to be in generic position.

\begin{definition}
    Let $\Sigma$ be a punctured bordered surface and $\mathfrak{o}$ be an orientation of $\partial \Sigma$, which may differ from the orientation inherited from $\Sigma$. We say a $\partial \Sigma$-tangle diagram, $D$, is \textit{$\mathfrak{o}$-ordered} if for each boundary component, the points of $\partial D$ increase when traversing in the direction of $\mathfrak{o}$.
\end{definition}
Every $\partial \Sigma$-tangle can be presented, after an appropriate isotopy, by an $\mathfrak{o}$-ordered $\partial \Sigma$-tangle diagram. Figure \ref{fig:OrPBSEx} shows examples of $\mathfrak{o}$-ordered $\partial \Sigma$-tangle diagrams (without states) when our surface is a torus with boundary.

\begin{figure}[h]
    \centering
    \begin{tikzpicture}[scale=1.5, baseline=-3]
        \MarkedTorusBackground
        \node[draw, circle, inner sep=0pt, minimum size=3pt, fill=white] at (1, 0.15) {};
        \draw[thick] (0, 0.6) to[out=0, in=210] (0.88, -0.09);
        \draw[thick] (0.88, 0.09) to[out=150, in=180] (1, 0.6) -- (2, 0.6);
        \path[thick, tips, ->] (1.5, -0.15) -- (0.95, -0.15);
    \end{tikzpicture} $\cdot$ 
    \begin{tikzpicture}[scale=1.5, baseline=-3]
        \MarkedTorusBackground
        \node[draw, circle, inner sep=0pt, minimum size=3pt, fill=white] at (1, 0.15) {};
        \draw[thick] (1.4, 1) to[out=270, in=30] (1.12, 0.09);
        \draw[thick] (1.12, -0.09) to[out=330, in=90] (1.4, -1);
        \path[thick, tips, ->] (1.5, -0.15) -- (0.95, -0.15);
    \end{tikzpicture} $=$
    \begin{tikzpicture}[scale=1.5, baseline=-3]
        \MarkedTorusBackground
        \node[draw, circle, inner sep=0pt, minimum size=3pt, fill=white] at (1, 0.15) {};
        \draw[thick] (1.4, 1) to[out=270, in=30] (1.12, 0.09);
        \draw[thick] (1.12, -0.09) to[out=330, in=90] (1.4, -1);
        \draw[line width=2mm, gray!40] (1.2, 0.6) -- (1.8, 0.6);
        \draw[thick] (0, 0.6) to[out=0, in=210] (0.88, -0.09);
        \draw[thick] (0.88, 0.09) to[out=150, in=180] (1, 0.6) -- (2, 0.6);
        \path[thick, tips, ->] (1.5, -0.15) -- (0.95, -0.15);
    \end{tikzpicture}
    \caption[An $\mathfrak{o}$-ordered diagram on a punctured bordered surface]{A product of $\mathfrak{o}$-ordered (stateless) $\partial \Sigma$-diagram where $\Sigma^\prime = T^2 \setminus D^2$ and $\mathfrak{o}$ has a clockwise orientation.}
    \label{fig:OrPBSEx}
\end{figure}
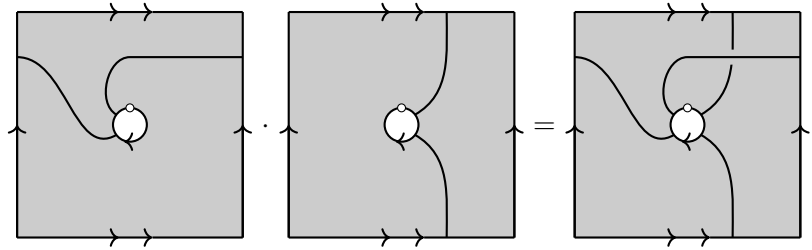

\begin{theorem}[Theorem 2.11 in \cite{MR3827810}]\label{theorem:OrBasis}
    Let $\Sigma$ be a punctured bordered surface and $\mf{o}$ an orientation of $\partial \Sigma$. Define $B(\mathfrak{o}, \Sigma)$ be the set of of all isotopy classes of increasingly stated, $\mathfrak{o}$-ordered, simple $\partial \Sigma$-tangle diagrams. Then $B(\mathfrak{o}, \Sigma)$ is a $\mathbb{C}$-basis of $\ms{S}^{pb}(\Sigma)$.
\end{theorem}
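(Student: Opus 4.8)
The plan is to establish the two assertions implicit in the statement separately: that $B(\mathfrak{o},\Sigma)$ \emph{spans} $\mathscr{S}^{pb}(\Sigma)$, and that it is \emph{linearly independent}. Spanning is the routine half; linear independence carries the real content.

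\emph{Spanning.} Given an arbitrary stated $\partial\Sigma$-tangle diagram $D$ in generic position, I would attach to it a complexity $(c(D),k(D))\in\mathbb{N}^2$, ordered lexicographically, where $c(D)$ is the number of double points of $D$ and $k(D)$ records, summed over the boundary edges, the number of pairs of endpoints of $D$ that are out of $\mathfrak{o}$-order together with the number of consecutive endpoints whose states fail to increase. Applying $(R_1^{pb})$ to a crossing writes $D$ as a combination of two diagrams with strictly smaller $c$; any contractible loop created along the way is removed by $(R_2^{pb})$ at the cost of a scalar $-q^2-q^{-2}$. Iterating drives $c$ to $0$, leaving a combination of simple diagrams; trivial arcs (those isotopic into a boundary edge) are then eliminated by $(R_3^{pb})$ and $(R_4^{pb})$. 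Finally, $(R_5^{pb})$ and the height-exchange relations it implies are used to push the endpoints on each boundary edge into increasing $\mathfrak{o}$-order and increasing state order; each such move strictly lowers $k$ modulo terms of strictly smaller complexity. Well-foundedness of the lexicographic order forces the reduction to terminate, so $D$, and hence every element of $\mathscr{S}^{pb}(\Sigma)$, lies in the $\mathbb{C}$-span of $B(\mathfrak{o},\Sigma)$.

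\emph{Linear independence.} The danger is that the relations could silently collapse the span; to rule this out I would exhibit, for each $\Sigma$, a linear map out of $\mathscr{S}^{pb}(\Sigma)$ separating the proposed basis, and the cleanest source of such maps is the quantum trace / quantum-torus embedding of L\^{e}--Yu \cite{MR4431131}. Choosing an ideal triangulation of $\Sigma$ (when one exists), the associated embedding sends $\mathscr{S}^{pb}(\Sigma)$ into a quantum torus (or a tensor product of them), and one checks that distinct elements of $B(\mathfrak{o},\Sigma)$ have distinct \emph{leading} Laurent monomials under this map --- the monomial recording, edge by edge, how the simple diagram crosses the triangulation together with its states. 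Distinct leading monomials are linearly independent in a quantum torus, so the elements of $B(\mathfrak{o},\Sigma)$ are independent. The surfaces with no ideal triangulation (the monogon, bigon, and a few small cases) are handled by hand, where $\mathscr{S}^{pb}$ is small enough to identify explicitly --- for the bigon one recognizes a quantized $SL_2$-coordinate-type algebra with its evident PBW basis --- and the general case is then reached, if necessary, via the splitting/excision homomorphism, whose injectivity lets leading-term data on the pieces control leading-term data on $\Sigma$. A self-contained alternative, avoiding the quantum trace, is a Bergman diamond-lemma argument: verify that all overlap ambiguities among $(R_1^{pb})$--$(R_5^{pb})$ (two crossings resolved in either order, a state slid past a crossing in two ways, an arc pulled over a boundary edge from either side) are confluent, which forces the normal forms --- exactly $B(\mathfrak{o},\Sigma)$ --- to be a basis.

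The principal obstacle is the linear-independence step, and inside it the bookkeeping of \emph{height} data: because the state- and height-exchange relations reorder endpoints only at the price of diagrams with fewer endpoints, one must build a filtration --- by the multiset of boundary states, or by a height-ordering statistic --- fine enough that the leading term of each basis element under the chosen separating map (or under the rewriting) is unambiguous and varies injectively with the basis element. Subsidiary care is needed to confirm that working with $\Sigma\times[0,1]$ rather than $\Sigma\times(0,1)$ leaves the algebra unchanged --- the closed interval only records framing and height information already implicit --- and to align the chosen orientation $\mathfrak{o}$ of $\partial\Sigma$ with the height order of endpoints near each boundary edge.
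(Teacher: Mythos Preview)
This theorem is not proved in the thesis at all: it is quoted verbatim as Theorem~2.11 of L\^{e}'s paper \cite{MR3827810} and used as a black box. So there is no ``paper's own proof'' to compare against here; the thesis simply imports the result.

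That said, your sketch deserves a substantive comment because your \emph{primary} route to linear independence is circular. The L\^{e}--Yu embedding $\varphi_{\mathcal{E}}:\mathscr{S}^{pb}(\Sigma)\hookrightarrow\mathbb{T}^r$ from \cite{MR4431131} is built \emph{on top of} this basis theorem: one first needs to know that $\mathscr{S}^{pb}(\Sigma)$ has no zero-divisors and has the expected size (both consequences of Theorem~\ref{theorem:OrBasis}) before one can prove that $\varphi_{\mathcal{E}}$ is injective. Invoking the quantum-torus embedding to establish the basis is therefore begging the question.

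Your \emph{alternative} --- a diamond-lemma/confluence argument on the rewriting system $(R_1^{pb})$--$(R_5^{pb})$ --- is essentially what L\^{e} does in \cite{MR3827810}. There the spanning step is as you describe, and independence is obtained by checking that all local ambiguities (overlapping skein resolutions, a height exchange adjacent to a crossing, a trivial-arc removal interacting with a state swap) resolve confluently, so that the normal forms $B(\mathfrak{o},\Sigma)$ are forced to be linearly independent. Your instinct that the filtration by number of boundary endpoints controls the ``lower-order'' terms produced by $(R_5^{pb})$ is exactly the mechanism L\^{e} uses. So: drop the quantum-torus route, and promote your diamond-lemma alternative to the main argument.
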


\begin{definition}
    Let $\Sigma^\prime$ be a surface with boundary and $\Sigma = \Sigma^\prime \setminus \mathcal{P}$ be the corresponding punctured bordered surface. We say a $\partial \Sigma$-orientation, $\mf{o}$, is \textit{consistent} if $\mf{o}$ can be extended to $\partial \Sigma^\prime$. That is, the direction of orientations on adjacent boundary edges agree for every boundary component.
\end{definition}

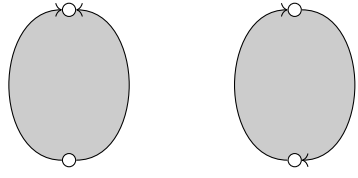
\begin{figure}[h]
    \centering
    \begin{tikzpicture}
        \draw[draw=none, fill=gray!40] (0, -1) to[out=180, in=180] (0, 1) to[out=0, in=0] (0, -1);
        \node[draw, circle, fill=white, inner sep=0pt, minimum size=5pt] (op1) at (0, -1) {};
        \node[draw, circle, fill=white, inner sep=0pt, minimum size=5pt] (op2) at (0, 1) {};
        \draw[->, fill=gray!40] (op1) to[out=180, in=270] (-0.8, 0) to[out=90, in=180] (op2);
        \draw[->, fill=gray!40] (op1) to[out=0, in=270] (0.8, 0) to[out=90, in=0] (op2);
        \draw[draw=none, fill=gray!40] (3, -1) to[out=180, in=180] (3, 1) to[out=0, in=0] (3, -1);
        \node[draw, circle, fill=white, inner sep=0pt, minimum size=5pt] (op1) at (3, -1) {};
        \node[draw, circle, fill=white, inner sep=0pt, minimum size=5pt] (op2) at (3, 1) {};
        \draw[->, fill=gray!40] (op1) to[out=180, in=270] (2.2, 0) to[out=90, in=180] (op2);
        \draw[->, fill=gray!40] (op2) to[out=0, in=90] (3.8, 0) to[out=270, in=0] (op1);
    \end{tikzpicture}
    \caption[Example of consistent boundary orientations]{The right punctured bordered surface has a consistent orientation while the left does not.}
    \label{fig:ConsistentPBSOrientations}
\end{figure}

\begin{prop}\label{prop:IsoOfModels}
    Let $\Sigma^\prime$ be a surface with boundary and $\Sigma = \Sigma^\prime \setminus \mathcal{P}$ be the corresponding punctured bordered surface. There exists an algebra isomorphism $\phi_{\mf{o}} : \ms{S}^{pb}(\Sigma) \xrightarrow{\sim} \ms{S}(\Sigma^\prime, \mathcal{P})$.
\end{prop}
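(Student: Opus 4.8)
The plan is to realize $\phi_{\mathfrak{o}}$ diagrammatically as a ``slide the boundary endpoints to the punctures'' operation and to build an explicit two-sided inverse by the reverse slide. Fix first the combinatorial data attached to $\mathfrak{o}$: when $\mathfrak{o}$ is consistent, each boundary edge $b$ of $\Sigma$ has a unique $\mathfrak{o}$-terminal marked point $p_b \in \mathcal{P}$, and of the (at most) two boundary edges meeting at a given $p \in \mathcal{P}$ exactly one is $\mathfrak{o}$-terminal there; the general case is handled by the same recipe, interleaving by height the (at most two) families of endpoints that arrive at $p$. Given a $\partial \Sigma$-tangle, represent it by an $\mathfrak{o}$-ordered diagram $D$ (possible since every $\partial\Sigma$-tangle admits an $\mathfrak{o}$-ordered diagram) and isotope $\Sigma^\prime \times [0,1]$ in a collar of $\partial \Sigma^\prime \times [0,1]$ so that, for each boundary edge $b$, the endpoints of $D$ on $b \times [0,1]$ are pushed along $b \times [0,1]$, at their original heights, onto the vertical marking segment $p_b \times [0,1]$. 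The result is a stated $\mathcal{N}$-tangle, and I set $\phi_{\mathfrak{o}}([D])$ to be its class in $\mathscr{S}(\Sigma^\prime, \mathcal{P})$. Since the space of such collar isotopies is connected and any two $\mathfrak{o}$-ordered representatives of a class are related by an isotopy through $\mathfrak{o}$-ordered diagrams up to the skein relations, this is well defined; pushing the endpoints on $p \times [0,1]$ back onto the $\mathfrak{o}$-terminal edge at $p$, highest height nearest $p$, defines $\psi_{\mathfrak{o}}$, and $\psi_{\mathfrak{o}} \circ \phi_{\mathfrak{o}} = \mathrm{id} = \phi_{\mathfrak{o}} \circ \psi_{\mathfrak{o}}$ on diagrams.

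Next I would check that $\phi_{\mathfrak{o}}$ (and $\psi_{\mathfrak{o}}$) descend to the quotients and respect the products. The interior relations $R_1^{pb}, R_2^{pb}$ are supported away from the boundary and map verbatim to $R_1, R_2$. The relations $R_3^{pb}, R_4^{pb}, R_5^{pb}$ are supported in a collar of a boundary edge, and after the slide they become precisely the local corner pictures defining $R_3, R_4, R_5$; here one must redraw the ``horizontal $\mathfrak{o}$-oriented edge'' pictures as ``vertical marking segment'' pictures and confirm that the half-twist contributions $q^{\pm 1/2}$ and $q^{\pm 2}$ and the trivial-arc constants $C^{\pm}_{\pm}$ come out identically, the derived height-exchange relation $(R_6)$ being what accounts for any reordering of endpoints produced by the slide. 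For multiplicativity, stacking occurs in the $[0,1]$-direction whereas the slide is supported in a collar and moves endpoints transversally to the stacking, so $\phi_{\mathfrak{o}}(\alpha \alpha^\prime) = \phi_{\mathfrak{o}}(\alpha)\phi_{\mathfrak{o}}(\alpha^\prime)$ and $\phi_{\mathfrak{o}}$ preserves the empty diagram; hence $\phi_{\mathfrak{o}}$ is an algebra homomorphism, and with $\psi_{\mathfrak{o}}$ its inverse, an isomorphism. As a bonus, $\phi_{\mathfrak{o}}$ carries the basis $B(\mathfrak{o}, \Sigma)$ of Theorem \ref{theorem:OrBasis} onto the set of increasingly stated, height-ordered, simple $\mathcal{N}$-tangle diagrams, which therefore is a basis of $\mathscr{S}(\Sigma^\prime, \mathcal{P})$.

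The main obstacle I expect is the bookkeeping at the marked points: verifying that the slide is independent of the chosen $\mathfrak{o}$-ordered representative and of the collar isotopy, and --- most delicately --- checking on the nose that $R_3^{pb}$--$R_5^{pb}$ go to $R_3$--$R_5$ with identical scalars once the boundary edge has been replaced by a vertical marking. Aligning the orientation and height conventions so that no spurious half-twist, and hence no stray factor of $-q^{\pm 3}$, sneaks in is exactly where care is required; the remaining verifications are routine, if somewhat tedious.
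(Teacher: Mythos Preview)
Your proposal is correct and follows essentially the same approach as the paper: both define $\phi_{\mathfrak{o}}$ by sliding endpoints from boundary edges onto the marking segments $\mathcal{P}\times[0,1]$ in the direction of a consistent orientation $\mathfrak{o}$, construct $\psi_{\mathfrak{o}}$ as the reverse slide, and verify the two are mutual inverses compatible with the defining relations and the stacking product. The only minor difference is organizational: the paper defines $\phi_{\mathfrak{o}}$ on the basis $B(\mathfrak{o},\Sigma)$ and extends linearly (sidestepping some well-definedness checks), whereas you work directly at the level of arbitrary $\mathfrak{o}$-ordered diagrams and are correspondingly more explicit about relation compatibility and independence of choices.
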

\begin{proof}
    Let $\mf{o}$ be a consistent orientation on $\partial \Sigma$. By theorem \ref{theorem:OrBasis}, $B(\mathfrak{o}, \Sigma)$ is a basis for $\ms{S}^{pb}(\Sigma)$. Let $D \in B(\mathfrak{o}, \Sigma)$ be a fixed representative in its isotopy class.
    
    Define $\iota : \Sigma \times [0,1] \hookrightarrow \Sigma^\prime \times [0,1]$ to be the natural embedding and define
    \begin{align*}
        \overline{pr}_i &: \Sigma^\prime \times [0,1] \to \Sigma^\prime \times \{i\} \to \Sigma^\prime \times [0,1] \\
        pr_i &: \Sigma \times [0,1] \to \Sigma \times \{i\} \to \Sigma \times [0,1]
    \end{align*}to be the natural projection maps composed with their natural embeddings for any $i \in [0,1]$. Notice that $\left(\overline{pr}_0 \circ \iota \right)(D)$ is simple as $D$ was simple and in generic position.
    
    Endow $\mathcal{P} \times [0,1]$ with its natural orientation induced by the $[0,1]$ component. For every boundary edge, $b \subset \partial \Sigma^\prime$, isotope the endpoints of $\left(\overline{pr}_0 \circ \iota \right)(D)$ on $b \times \{0\}$ along $\mf{o}$ and along the orientation of $\mathcal{P} \times [0,1]$ so that all endpoints lie on $\mathcal{P}(0, 1)$. Call this new tangle diagram $D'$.
    \begin{center}
        \begin{tikzpicture}[baseline=-3]
            \draw[thick, ->] (0, -1) -- (0, 1);
            \draw[thick, ->] (0, 1) -- (2, 1);
            \draw[blue] (0, 0.3) -- (2, 0.3);
            \draw[blue] (0, -0.4) -- (2, -0.4);
            \node[text width=0.3cm] at (-0.2, 0.3) {$+$};
            \node[text width=0.3cm] at (-0.2, -0.4) {$-$};
            \node[text width=1.5cm] at (0, -1.3) {$\partial \Sigma \times \{0\}$};
            \node[text width=1.5cm] at (2.8, 1.15) {$\mathcal{P} \times [0,1]$};
        \end{tikzpicture} $\rightsquigarrow$ 
        \begin{tikzpicture}[baseline=-3]
            \draw[thick, ->] (0, -1) -- (0, 1);
            \draw[thick, ->] (0, 1) -- (2, 1);
            \draw[blue] (1.3, 1) to[out=270, in=180] (2, 0.3);
            \draw[blue] (0.5, 1) to[out=270, in=180] (2, -0.4);
            \node[text width=0.3cm] at (1.3, 1.2) {$+$};
            \node[text width=0.3cm] at (0.5, 1.2) {$-$};
            \node[text width=1.5cm] at (0, -1.3) {$\partial \Sigma \times \{0\}$};
            \node[text width=1.5cm] at (2.8, 1.15) {$\mathcal{P} \times [0,1]$};
        \end{tikzpicture}
    \end{center}
    At this point, it is not clear if $D'$ is still simple, now with respect to $\Sigma^\prime$. However, we can still canonically identify $D'$ as some element of $\ms{S}(\Sigma^\prime, \mathcal{P})$.

    Define $\phi_{\mf{o}}(D) = D'$ for each $D \in B(\mathfrak{o}, \Sigma)$ and extend linearly. Notice that $\phi_{\mf{o}}$ is well-defined as $D'$ is unique up to isotopy and $\phi_{\mf{o}}$ respects the relations $(R_1^{pb}) - (R_5^{pb})$ through the corresponding relations $(R_1) - (R_5)$. Moreover, as relative heights are preserved, this is actually an algebra homomorphism as well.
    
    Now let $\alpha' \in \ms{S}(\Sigma^\prime, \mathcal{P})$ be a fixed representative of its isotopy class and endow $\Sigma$ with the orientation $\mf{o}$. Isotope the endpoints of $\alpha$ in each summand in the reverse direction of the orientations of $\mathcal{P} \times [0,1]$ and $\partial \Sigma \times \{0\}$ until each endpoint lies entirely on $\partial \Sigma \times \{0\}$. Since $\mf{o}$ is consistent, this is well-defined.
    \begin{center}
        \begin{tikzpicture}[baseline=-3]
            \draw[thick, ->] (0, -1) -- (0, 1);
            \draw[thick, ->] (0, 1) -- (2, 1);
            \draw[blue] (1.4, 1) -- (1.4, -1);
            \draw[blue] (0.7, 1) -- (0.7, -1);
            \node[text width=0.3cm] at (1.4, 1.2) {$+$};
            \node[text width=0.3cm] at (0.7, 1.2) {$-$};
            \node[text width=1.5cm] at (0, -1.3) {$\partial \Sigma \times \{0\}$};
            \node[text width=1.5cm] at (2.8, 1.15) {$\mathcal{P} \times [0,1]$};
        \end{tikzpicture} $\rightsquigarrow$ 
        \begin{tikzpicture}[baseline=-3]
            \draw[thick, ->] (0, -1) -- (0, 1);
            \draw[thick, ->] (0, 1) -- (2, 1);
            \draw[blue] (0, 0.3) to[out=0, in=90] (1.4, -1);
            \draw[blue] (0, -0.4) to[out=0, in=90] (0.7, -1);
            \node[text width=0.3cm] at (-0.2, 0.3) {$+$};
            \node[text width=0.3cm] at (-0.2, -0.4) {$-$};
            \node[text width=1.5cm] at (0, -1.3) {$\partial \Sigma \times \{0\}$};
            \node[text width=1.5cm] at (2.8, 1.15) {$\mathcal{P} \times [0,1]$};
        \end{tikzpicture}
    \end{center}
    We can now view this new object, call it $\tilde{\alpha}'$, as living entirely on $\Sigma \times [0,1]$. Finally, define $\alpha := pr_{1/2}(\tilde{\alpha}')$ and isotope $\alpha$ so that it is generic and $\mf{o}$-ordered. Therefore, we can canonically identify $\alpha$ as an element in $\mathscr{S}^{pb}(\Sigma)$.

    Pointwise define $\psi_{\mf{o}}(\alpha') = \alpha$ for each $\alpha' \in \ms{S}(\Sigma^\prime, \mathcal{P})$. Clearly $(\psi_{\mf{o}} \circ \phi_{\mf{o}})(\alpha) = \alpha$ and $(\phi_{\mf{o}} \circ \psi_{\mf{o}})(\alpha') = \alpha'$ for all $\alpha \in \mathscr{S}^{pb}(\Sigma)$ and $\alpha' \in \mathscr{S}(\Sigma^\prime, \mathcal{P})$ and hence $\phi_{\mf{o}}$ is bijective.
\end{proof}

\begin{definition}
    Let $\mf{o}$ be a consistent orientation on $\partial \Sigma$. We say a $D \in \mathscr{S}(\Sigma^\prime)$ is \textit{$\mf{o}$-simple} if $D$ is simple and $\psi_{\mf{o}}(D)$ is simple.
\end{definition}

\begin{prop}\label{prop:oSimpDiag}
    Every simple diagram in $\mathscr{S}(\Sigma^\prime)$ can be written as a linear combination of increasingly stated, $\mf{o}$-simple $\partial \Sigma^\prime$-tangle diagrams.
\end{prop}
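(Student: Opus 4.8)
The plan is to transport the basis of the conventional model across the isomorphism $\phi_{\mf{o}}$ of Proposition \ref{prop:IsoOfModels} and then check that the images of basis elements already have the desired form. Let $D$ be a simple $\partial\Sigma^\prime$-tangle diagram, viewed as an element of $\mathscr{S}(\Sigma^\prime)$. Apply $\psi_{\mf{o}} = \phi_{\mf{o}}^{-1}$ to obtain $\psi_{\mf{o}}(D) \in \mathscr{S}^{pb}(\Sigma)$, and use Theorem \ref{theorem:OrBasis} to expand it in the basis $B(\mf{o},\Sigma)$, say $\psi_{\mf{o}}(D) = \sum_i c_i D_i$ with each $D_i$ an increasingly stated, $\mf{o}$-ordered, simple $\partial\Sigma$-tangle diagram. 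Since $\phi_{\mf{o}}$ is linear we get $D = \sum_i c_i \phi_{\mf{o}}(D_i)$ in $\mathscr{S}(\Sigma^\prime)$, so it suffices to prove that each $\phi_{\mf{o}}(D_i)$ is an increasingly stated, $\mf{o}$-simple $\partial\Sigma^\prime$-tangle diagram.

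To do this I would unwind the definition of $\phi_{\mf{o}}$ from Proposition \ref{prop:IsoOfModels}: $\phi_{\mf{o}}(D_i)$ is obtained from $D_i$ by isotoping, for each boundary edge $b$, the endpoints of $D_i$ on $b \times \{0\}$ along $\mf{o}$ and along $\mathcal{P} \times [0,1]$ until they all lie on $\mathcal{P} \times (0,1)$. Two of the three required properties come for free. First, $\psi_{\mf{o}}(\phi_{\mf{o}}(D_i)) = D_i$ by Proposition \ref{prop:IsoOfModels}, and $D_i$ is simple, so the ``$\psi_{\mf{o}}$-simple'' half of $\mf{o}$-simplicity holds. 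Second, this isotopy only moves endpoints (leaving their states fixed) and preserves the relative height order along each boundary edge, so $\phi_{\mf{o}}(D_i)$ is still increasingly stated. The real content is that $\phi_{\mf{o}}(D_i)$ is itself a \emph{simple} $\partial\Sigma^\prime$-tangle diagram. I would carry out the endpoint-pushing inside disjoint collar neighborhoods of the boundary edges; since $D_i$ is $\mf{o}$-ordered, endpoints on a common edge never have to move past one another, so no new crossings are created, and since $D_i$ has no $\Sigma$-trivial arcs, a short argument---comparing a ``combed'' arc with the boundary arc it would cobound a disk with, and noting that after filling the punctures back in that disk can be taken to avoid $\mathcal{P}$---rules out the appearance of trivial arcs or trivial loops. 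Hence $\phi_{\mf{o}}(D_i)$ is simple, therefore $\mf{o}$-simple, and the proof is complete.

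I expect this last geometric check to be the main obstacle, since it is precisely the point left open in Proposition \ref{prop:IsoOfModels} (``it is not clear if $D'$ is still simple''). The subtle configurations are arcs of $D_i$ with both endpoints on the same boundary edge, where one must verify that pushing them to a common marking does not convert an arc essential in $\Sigma$ into a trivial arc of $\Sigma^\prime$, and arcs running nearly parallel to a boundary edge, which must be pushed off a sufficiently thin collar before combing. If in some configuration a trivial arc genuinely appears, the fallback is to remove it using the trivial arc relations $(R_3)$ and $(R_4)$, each of which only rescales the term by one of the constants $C_s^{s'}$ (or annihilates it); the resulting reduced diagrams are still $\mf{o}$-simple, so the statement survives even under the stronger reading in which one wants $\phi_{\mf{o}}(D_i)$ simple on the nose.
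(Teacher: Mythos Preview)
Your approach is correct but genuinely different from the paper's. The paper argues by direct local computation: starting from a simple diagram in $\mathscr{S}(\Sigma')$, it looks at each pair of adjacent endpoints on a marking whose height order makes $\psi_{\mf{o}}$ introduce a crossing, routes that local picture through $\mathscr{S}^{pb}(\Sigma)$, resolves the crossing there with $(R_1^{pb})$, and applies $\phi_{\mf{o}}$ to obtain an explicit rewriting rule of the form
\[
\locTangle{\mu}{\nu}\;=\;q^{\pm1}C_{\bullet}^{\bullet}\,(\text{cup})\;+\;q^{\mp1}\,\locIdeal{\nu}{\mu},
\]
which replaces the bad pair by a good one plus a term with fewer endpoints; iterating and then applying $(R_5)$ finishes. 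You instead transport the known basis $B(\mf{o},\Sigma)$ across $\phi_{\mf{o}}$ and verify the images are already $\mf{o}$-simple. The step you flag as the obstacle---that $\phi_{\mf{o}}(D_i)$ is simple---can in fact be closed without the fallback: because $\mf{o}$ is consistent, each marking $p$ has a \emph{unique} adjacent edge $b$ with $\mf{o}$ pointing into $p$, so two endpoints of an arc $\alpha$ land on the same marking only if they both lay on that $b$; writing the endpoints as $x_1<x_2$ along $\mf{o}$, the pushed loop $\phi_{\mf{o}}(\alpha)$ is conjugate to $\alpha\cup[x_1,x_2]\subset\Sigma$, which bounds a disk in $\Sigma'$ iff it bounds one in $\Sigma$ (they differ only by boundary points), i.e.\ iff $\alpha$ was already $\Sigma$-trivial---contradicting $D_i\in B(\mf{o},\Sigma)$. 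Your route is more structural and yields the stronger statement that $\phi_{\mf{o}}$ carries $B(\mf{o},\Sigma)$ bijectively onto the set of increasingly stated $\mf{o}$-simple diagrams; the paper's route is more hands-on but produces the concrete height-exchange formulas that are used repeatedly later in the thesis.
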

\begin{proof}
    It suffices to only check for two adjacent endpoints on a single marked point. Firstly, notice that if the diagram is simple, we only need to consider two cases, depending on the orientation, $\mf{o}$, of the adjacent boundary.
    \begin{align*}
        \psi_{\mf{o}}\left(\begin{tikzpicture}[baseline=-3]
            \draw[gray!40, thick, fill=gray!40, domain=-45:225] plot ({cos(\x)}, {sin(\x)}) to[out=45, in=130] (0.71, -0.71);
            \draw[thick] (-0.71, -0.71) to[out=45, in=135] (0.71, -0.71);
            \node[draw, circle, inner sep=0pt, minimum size=4pt, fill=black] (p1) at (0, -0.41) {};
            \path[thick, tips, ->] (-0.71, -0.71) to[out=45, in=208] (-0.35, -0.49);
            \path[thick, tips, ->] (p1) to[out=0, in=152] (0.5, -0.55);
            \draw[thick] (-0.1, -0.26) -- (-0.71, 0.71);
            \draw[thick] (p1) -- (0.71, 0.71);
            \node (s1) at (-0.6, -0.3) {$\mu$};
            \node (s2) at (0.6, -0.3) {$\nu$};
        \end{tikzpicture}\right) &= 
        \begin{tikzpicture}[baseline=-3]
            \draw[gray!40, thick, fill=gray!40, domain=-45:225] plot ({cos(\x)}, {sin(\x)}) to[out=45, in=130] (0.71, -0.71);
            \draw[thick] (-0.71, -0.71) to[out=45, in=135] (0.71, -0.71);
            \draw[thick] (-0.25, -0.45) -- (-0.71, 0.71);
            \draw[thick] (0.25, -0.45) -- (0.71, 0.71);
            \path[thick, tips, ->] (-0.71, -0.71) to[out=45, in=135] (0.71, -0.71);
            \node (s1) at (-0.6,-0.3) {$\mu$};
            \node (s2) at (0.6,-0.3) {$\nu$};
        \end{tikzpicture} & 
        \psi_{\mf{o}}\left(\begin{tikzpicture}[baseline=-3]
            \draw[gray!40, thick, fill=gray!40, domain=-45:225] plot ({cos(\x)}, {sin(\x)}) to[out=45, in=130] (0.71, -0.71);
            \draw[thick] (-0.71, -0.71) to[out=45, in=135] (0.71, -0.71);
            \node[draw, circle, inner sep=0pt, minimum size=4pt, fill=black] (p1) at (0, -0.41) {};
            \path[thick, tips, ->] (0.71, -0.71) to[out=135, in=332] (0.35, -0.49);
            \path[thick, tips, ->] (p1) to[out=0, in=28] (-0.5, -0.55);
            \draw[thick] (p1) -- (-0.71, 0.71);
            \draw[thick] (0.1, -0.26) -- (0.71, 0.71);
            \node (s1) at (-0.6, -0.3) {$\mu$};
            \node (s2) at (0.6, -0.3) {$\nu$};
        \end{tikzpicture}\right) &= 
        \begin{tikzpicture}[baseline=-3]
            \draw[gray!40, thick, fill=gray!40, domain=-45:225] plot ({cos(\x)}, {sin(\x)}) to[out=45, in=130] (0.71, -0.71);
            \draw[thick] (-0.71, -0.71) to[out=45, in=135] (0.71, -0.71);
            \draw[thick] (-0.25, -0.45) -- (-0.71, 0.71);
            \draw[thick] (0.25, -0.45) -- (0.71, 0.71);
            \path[thick, tips, ->] (0.71, -0.71) to[out=135, in=45] (-0.71, -0.71);
            \node (s1) at (-0.6,-0.3) {$\mu$};
            \node (s2) at (0.6,-0.3) {$\nu$};
        \end{tikzpicture} \\
        \psi_{\mf{o}}\left(\begin{tikzpicture}[baseline=-3]
            \draw[gray!40, thick, fill=gray!40, domain=-45:225] plot ({cos(\x)}, {sin(\x)}) to[out=45, in=130] (0.71, -0.71);
            \draw[thick] (-0.71, -0.71) to[out=45, in=135] (0.71, -0.71);
            \node[draw, circle, inner sep=0pt, minimum size=4pt, fill=black] (p1) at (0, -0.41) {};
            \path[thick, tips, ->] (-0.71, -0.71) to[out=45, in=208] (-0.35, -0.49);
            \path[thick, tips, ->] (p1) to[out=0, in=152] (0.5, -0.55);
            \draw[thick] (p1) -- (-0.71, 0.71);
            \draw[thick] (0.1, -0.26) -- (0.71, 0.71);
            \node (s1) at (-0.6, -0.3) {$\mu$};
            \node (s2) at (0.6, -0.3) {$\nu$};
        \end{tikzpicture}\right) &= 
        \begin{tikzpicture}[baseline=-3]
            \draw[gray!40, thick, fill=gray!40, domain=-45:225] plot ({cos(\x)}, {sin(\x)}) to[out=45, in=130] (0.71, -0.71);
            \draw[thick] (-0.71, -0.71) to[out=45, in=135] (0.71, -0.71);
            \draw[thick] (-0.25, -0.45) -- (0.71, 0.71);
            \draw[line width=2mm, gray!40] (0.15, -0.33) -- (-0.15, 0.033);
            \draw[thick] (0.25, -0.45) -- (-0.71, 0.71);
            \path[thick, tips, ->] (-0.71, -0.71) to[out=45, in=135] (0.71, -0.71);
            \node (s1) at (-0.6,-0.3) {$\nu$};
            \node (s2) at (0.6,-0.3) {$\mu$};
        \end{tikzpicture}  & 
        \psi_{\mf{o}}\left(\begin{tikzpicture}[baseline=-3]
            \draw[gray!40, thick, fill=gray!40, domain=-45:225] plot ({cos(\x)}, {sin(\x)}) to[out=45, in=130] (0.71, -0.71);
            \draw[thick] (-0.71, -0.71) to[out=45, in=135] (0.71, -0.71);
            \node[draw, circle, inner sep=0pt, minimum size=4pt, fill=black] (p1) at (0, -0.41) {};
            \path[thick, tips, ->] (0.71, -0.71) to[out=135, in=332] (0.35, -0.49);
            \path[thick, tips, ->] (p1) to[out=0, in=28] (-0.5, -0.55);
            \draw[thick] (-0.1, -0.26) -- (-0.71, 0.71);
            \draw[thick] (p1) -- (0.71, 0.71);
            \node (s1) at (-0.6, -0.3) {$\mu$};
            \node (s2) at (0.6, -0.3) {$\nu$};
        \end{tikzpicture}\right) &= 
        \begin{tikzpicture}[baseline=-3]
            \draw[gray!40, thick, fill=gray!40, domain=-45:225] plot ({cos(\x)}, {sin(\x)}) to[out=45, in=130] (0.71, -0.71);
            \draw[thick] (-0.71, -0.71) to[out=45, in=135] (0.71, -0.71);
            \draw[thick] (0.25, -0.45) -- (-0.71, 0.71);
            \draw[line width=2mm, gray!40] (-0.15, -0.33) -- (0.15, 0.033);
            \draw[thick] (-0.25, -0.45) -- (0.71, 0.71);
            \path[thick, tips, ->] (0.71, -0.71) to[out=135, in=45] (-0.71, -0.71);
            \node (s1) at (-0.6,-0.3) {$\nu$};
            \node (s2) at (0.6,-0.3) {$\mu$};
        \end{tikzpicture}
    \end{align*}

    When the adjacent boundary orientation is clockwise, locally we get that
    \begin{align*}
        \phi_{\mf{o}} \circ \psi_{\mf{o}}\left(\begin{tikzpicture}[baseline=-3]
            \draw[gray!40, thick, fill=gray!40, domain=-45:225] plot ({cos(\x)}, {sin(\x)}) to[out=45, in=130] (0.71, -0.71);
            \draw[thick] (-0.71, -0.71) to[out=45, in=135] (0.71, -0.71);
            \node[draw, circle, inner sep=0pt, minimum size=4pt, fill=black] (p1) at (0, -0.41) {};
            \path[thick, tips, ->] (-0.71, -0.71) to[out=45, in=208] (-0.35, -0.49);
            \path[thick, tips, ->] (p1) to[out=0, in=152] (0.5, -0.55);
            \draw[thick] (p1) -- (-0.71, 0.71);
            \draw[thick] (0.1, -0.26) -- (0.71, 0.71);
            \node (s1) at (-0.6, -0.3) {$\mu$};
            \node (s2) at (0.6, -0.3) {$\nu$};
        \end{tikzpicture}\right) &=
        \phi_{\mf{o}}\left(\begin{tikzpicture}[baseline=-3]
            \draw[gray!40, thick, fill=gray!40, domain=-45:225] plot ({cos(\x)}, {sin(\x)}) to[out=45, in=130] (0.71, -0.71);
            \draw[thick] (-0.71, -0.71) to[out=45, in=135] (0.71, -0.71);
            \draw[thick] (-0.25, -0.45) -- (0.71, 0.71);
            \draw[line width=2mm, gray!40] (0.15, -0.33) -- (-0.15, 0.033);
            \draw[thick] (0.25, -0.45) -- (-0.71, 0.71);
            \path[thick, tips, ->] (-0.71, -0.71) to[out=45, in=135] (0.71, -0.71);
            \node (s1) at (-0.6,-0.3) {$\nu$};
            \node (s2) at (0.6,-0.3) {$\mu$};
        \end{tikzpicture}\right) \\
        &= \phi_{\mf{o}}\left( q \phantom{\cdot} C_{\nu}^{\mu} \phantom{\cdot} \begin{tikzpicture}[baseline=-3]
            \draw[gray!40, thick, fill=gray!40, domain=-45:225] plot ({cos(\x)}, {sin(\x)}) to[out=45, in=130] (0.71, -0.71);
            \draw[thick] (-0.71, -0.71) to[out=45, in=135] (0.71, -0.71);
            \draw[thick] (-0.71, 0.71) to[out=300, in=180] (0, 0) to[out=0, in=240] (0.71, 0.71);
            \path[thick, tips, ->] (-0.71, -0.71) to[out=45, in=135] (0.71, -0.71);
        \end{tikzpicture} + q^{-1} \phantom{\cdot}
        \begin{tikzpicture}[baseline=-3]
            \draw[gray!40, thick, fill=gray!40, domain=-45:225] plot ({cos(\x)}, {sin(\x)}) to[out=45, in=130] (0.71, -0.71);
            \draw[thick] (-0.71, -0.71) to[out=45, in=135] (0.71, -0.71);
            \draw[thick] (-0.25, -0.45) -- (-0.71, 0.71);
            \draw[thick] (0.25, -0.45) -- (0.71, 0.71);
            \path[thick, tips, ->] (-0.71, -0.71) to[out=45, in=135] (0.71, -0.71);
            \node (s1) at (-0.6,-0.3) {$\nu$};
            \node (s2) at (0.6,-0.3) {$\mu$};
        \end{tikzpicture}
        \right) \\
        &= q \phantom{\cdot} C_{\nu}^{\mu} \phantom{\cdot} \begin{tikzpicture}[baseline=-3]
            \draw[gray!40, thick, fill=gray!40, domain=-45:225] plot ({cos(\x)}, {sin(\x)}) to[out=45, in=130] (0.71, -0.71);
            \draw[thick] (-0.71, -0.71) to[out=45, in=135] (0.71, -0.71);
            \node[draw, circle, inner sep=0pt, minimum size=4pt, fill=black] (p1) at (0, -0.41) {};
            \path[thick, tips, ->] (-0.71, -0.71) to[out=45, in=208] (-0.35, -0.49);
            \path[thick, tips, ->] (p1) to[out=0, in=152] (0.5, -0.55);
            \draw[thick] (-0.71, 0.71) to[out=300, in=180] (0, 0) to[out=0, in=240] (0.71, 0.71);
            \path[thick, tips, ->] (-0.71, -0.71) to[out=45, in=135] (0.71, -0.71);
        \end{tikzpicture} + q^{-1} \phantom{\cdot}
        \begin{tikzpicture}[baseline=-3]
            \draw[gray!40, thick, fill=gray!40, domain=-45:225] plot ({cos(\x)}, {sin(\x)}) to[out=45, in=130] (0.71, -0.71);
            \draw[thick] (-0.71, -0.71) to[out=45, in=135] (0.71, -0.71);
            \node[draw, circle, inner sep=0pt, minimum size=4pt, fill=black] (p1) at (0, -0.41) {};
            \path[thick, tips, ->] (-0.71, -0.71) to[out=45, in=208] (-0.35, -0.49);
            \path[thick, tips, ->] (p1) to[out=0, in=152] (0.5, -0.55);
            \draw[thick] (-0.1, -0.26) -- (-0.71, 0.71);
            \draw[thick] (p1) -- (0.71, 0.71);
            \node (s1) at (-0.6, -0.3) {$\nu$};
            \node (s2) at (0.6, -0.3) {$\mu$};
        \end{tikzpicture}.
    \end{align*}

    If the orientation is instead counterclockwise, then we get that
    \begin{align*}
        \phi_{\mf{o}} \circ \psi_{\mf{o}}\left(\begin{tikzpicture}[baseline=-3]
            \draw[gray!40, thick, fill=gray!40, domain=-45:225] plot ({cos(\x)}, {sin(\x)}) to[out=45, in=130] (0.71, -0.71);
            \draw[thick] (-0.71, -0.71) to[out=45, in=135] (0.71, -0.71);
            \node[draw, circle, inner sep=0pt, minimum size=4pt, fill=black] (p1) at (0, -0.41) {};
            \path[thick, tips, ->] (0.71, -0.71) to[out=135, in=332] (0.35, -0.49);
            \path[thick, tips, ->] (p1) to[out=0, in=28] (-0.5, -0.55);
            \draw[thick] (-0.1, -0.26) -- (-0.71, 0.71);
            \draw[thick] (p1) -- (0.71, 0.71);
            \node (s1) at (-0.6, -0.3) {$\mu$};
            \node (s2) at (0.6, -0.3) {$\nu$};
        \end{tikzpicture}\right) &=
        \phi_{\mf{o}}\left(\begin{tikzpicture}[baseline=-3]
            \draw[gray!40, thick, fill=gray!40, domain=-45:225] plot ({cos(\x)}, {sin(\x)}) to[out=45, in=130] (0.71, -0.71);
            \draw[thick] (-0.71, -0.71) to[out=45, in=135] (0.71, -0.71);
            \draw[thick] (0.25, -0.45) -- (-0.71, 0.71);
            \draw[line width=2mm, gray!40] (-0.15, -0.33) -- (0.15, 0.033);
            \draw[thick] (-0.25, -0.45) -- (0.71, 0.71);
            \path[thick, tips, ->] (0.71, -0.71) to[out=135, in=45] (-0.71, -0.71);
            \node (s1) at (-0.6,-0.3) {$\nu$};
            \node (s2) at (0.6,-0.3) {$\mu$};
        \end{tikzpicture}\right) \\
        &= \phi_{\mf{o}}\left( q \phantom{\cdot}
        \begin{tikzpicture}[baseline=-3]
            \draw[gray!40, thick, fill=gray!40, domain=-45:225] plot ({cos(\x)}, {sin(\x)}) to[out=45, in=130] (0.71, -0.71);
            \draw[thick] (-0.71, -0.71) to[out=45, in=135] (0.71, -0.71);
            \draw[thick] (-0.25, -0.45) -- (-0.71, 0.71);
            \draw[thick] (0.25, -0.45) -- (0.71, 0.71);
            \path[thick, tips, ->] (0.71, -0.71) to[out=135, in=45] (-0.71, -0.71);
            \node (s1) at (-0.6,-0.3) {$\nu$};
            \node (s2) at (0.6,-0.3) {$\mu$};
        \end{tikzpicture}
        + (q^{-1}) (-q^3) C_{\mu}^{\nu} \phantom{\cdot} \begin{tikzpicture}[baseline=-3]
            \draw[gray!40, thick, fill=gray!40, domain=-45:225] plot ({cos(\x)}, {sin(\x)}) to[out=45, in=130] (0.71, -0.71);
            \draw[thick] (-0.71, -0.71) to[out=45, in=135] (0.71, -0.71);
            \draw[thick] (-0.71, 0.71) to[out=300, in=180] (0, 0) to[out=0, in=240] (0.71, 0.71);
            \path[thick, tips, ->] (0.71, -0.71) to[out=135, in=45] (-0.71, -0.71);
        \end{tikzpicture} \right) \\
        &= q \phantom{\cdot} \begin{tikzpicture}[baseline=-3]
            \draw[gray!40, thick, fill=gray!40, domain=-45:225] plot ({cos(\x)}, {sin(\x)}) to[out=45, in=130] (0.71, -0.71);
            \draw[thick] (-0.71, -0.71) to[out=45, in=135] (0.71, -0.71);
            \node[draw, circle, inner sep=0pt, minimum size=4pt, fill=black] (p1) at (0, -0.41) {};
            \path[thick, tips, ->] (0.71, -0.71) to[out=135, in=332] (0.35, -0.49);
            \path[thick, tips, ->] (p1) to[out=0, in=28] (-0.5, -0.55);
            \draw[thick] (p1) -- (-0.71, 0.71);
            \draw[thick] (0.1, -0.26) -- (0.71, 0.71);
            \node (s1) at (-0.6, -0.3) {$\mu$};
            \node (s2) at (0.6, -0.3) {$\nu$};
        \end{tikzpicture} - q^{2}  C_{\mu}^{\nu} \phantom{\cdot} \phantom{\cdot}
        \begin{tikzpicture}[baseline=-3]
            \draw[gray!40, thick, fill=gray!40, domain=-45:225] plot ({cos(\x)}, {sin(\x)}) to[out=45, in=130] (0.71, -0.71);
            \draw[thick] (-0.71, -0.71) to[out=45, in=135] (0.71, -0.71);
            \node[draw, circle, inner sep=0pt, minimum size=4pt, fill=black] (p1) at (0, -0.41) {};
            \path[thick, tips, ->] (0.71, -0.71) to[out=135, in=332] (0.35, -0.49);
            \path[thick, tips, ->] (p1) to[out=0, in=28] (-0.5, -0.55);
            \draw[thick] (-0.71, 0.71) to[out=300, in=180] (0, 0) to[out=0, in=240] (0.71, 0.71);
        \end{tikzpicture}.
    \end{align*}

    From here, we can use the state exchange relations, $(R5)$, to make the diagram increasingly stated, which does not change height orders or simplicity conditions.
\end{proof}

\begin{theorem}\label{theorem:SSOrdBasis}
    The set of all isotopy classes of increasingly stated, $\mathfrak{o}$-simple $\partial \Sigma^\prime$-tangle diagrams forms a $\mathbb{C}$-basis for $\mathscr{S}(\Sigma^\prime, \mathcal{P})$.
\end{theorem}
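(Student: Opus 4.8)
The plan is to transport the basis $B(\mf{o},\Sigma)$ of $\ms{S}^{pb}(\Sigma)$ furnished by Theorem~\ref{theorem:OrBasis} across the isomorphism $\phi_{\mf{o}}$ of Proposition~\ref{prop:IsoOfModels}, and to recognize its image as exactly the set in the statement. Fix a consistent orientation $\mf{o}$ of $\partial\Sigma$, write $\mathcal{B}'$ for the set of isotopy classes of increasingly stated, $\mf{o}$-simple $\partial\Sigma'$-tangle diagrams, and recall that $\psi_{\mf{o}}=\phi_{\mf{o}}^{-1}$.

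First I would establish that $\mathcal{B}'$ spans $\ms{S}(\Sigma',\mathcal{P})$. Using relation $(R_1)$ to resolve crossings (and $(R_2)$ to discard trivial loops), every stated $\partial\Sigma'$-tangle diagram equals, in $\ms{S}(\Sigma',\mathcal{P})$, a linear combination of \emph{simple} stated diagrams; hence the simple diagrams span. By Proposition~\ref{prop:oSimpDiag}, each simple stated diagram is a linear combination of increasingly stated, $\mf{o}$-simple diagrams, i.e.\ of elements of $\mathcal{B}'$, so $\mathcal{B}'$ spans.

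For linear independence I would use $\psi_{\mf{o}}$. Unwinding its construction from the proof of Proposition~\ref{prop:IsoOfModels}, $\psi_{\mf{o}}$ takes a diagram $D$, slides the endpoints sitting on each copy of $\mathcal{P}\times[0,1]$ back along $\mf{o}$ until they lie on $\partial\Sigma\times\{0\}$, and then projects to $\Sigma\times\{1/2\}$. If $D\in\mathcal{B}'$, then $\psi_{\mf{o}}(D)$ is simple by the very definition of $\mf{o}$-simple; the endpoints, which were stacked by height on $\mathcal{P}\times[0,1]$, land in $\mf{o}$-order on the boundary edges, so $\psi_{\mf{o}}(D)$ is $\mf{o}$-ordered; and the state assignment is carried along unchanged, so $\psi_{\mf{o}}(D)$ remains increasingly stated. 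Hence $\psi_{\mf{o}}(D)\in B(\mf{o},\Sigma)$. Since $\psi_{\mf{o}}$ acts bijectively on isotopy classes (its inverse being $\phi_{\mf{o}}$), it maps $\mathcal{B}'$ injectively into $B(\mf{o},\Sigma)$. By Theorem~\ref{theorem:OrBasis} the set $B(\mf{o},\Sigma)$ is $\mathbb{C}$-linearly independent, hence so is its subset $\psi_{\mf{o}}(\mathcal{B}')$, and applying the linear isomorphism $\phi_{\mf{o}}$ shows $\mathcal{B}'$ is linearly independent in $\ms{S}(\Sigma',\mathcal{P})$. Together with the spanning statement this makes $\mathcal{B}'$ a basis; as a byproduct $\psi_{\mf{o}}|_{\mathcal{B}'}$ is forced to be a bijection onto $B(\mf{o},\Sigma)$, so $\phi_{\mf{o}}$ identifies the two bases.

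The step I expect to be the real obstacle is the middle of the penultimate paragraph: verifying carefully that the endpoint-sliding isotopy defining $\psi_{\mf{o}}$ sends an increasingly stated, $\mf{o}$-simple $\partial\Sigma'$-diagram to a genuine element of $B(\mf{o},\Sigma)$ — in particular that sliding the endpoints toward the marked points and off onto $\partial\Sigma$ forces no new transverse double points (this is exactly where $\mf{o}$-simplicity, as opposed to mere simplicity, is needed), and that the height order at a marked point of $\Sigma'$ translates precisely into the $\mf{o}$-order along the adjacent boundary edges of $\Sigma$, consistently across each whole boundary component (this is where consistency of $\mf{o}$ is used).
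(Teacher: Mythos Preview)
Your proposal is correct and is essentially the same approach the paper takes: the paper's proof is the one-liner ``This directly follows from Propositions~\ref{prop:IsoOfModels} and~\ref{prop:oSimpDiag},'' and what you have written is precisely the unpacking of that sentence, using the isomorphism $\phi_{\mf{o}}$ to transport the basis $B(\mf{o},\Sigma)$ of Theorem~\ref{theorem:OrBasis} and invoking Proposition~\ref{prop:oSimpDiag} for spanning. Your final paragraph on the endpoint-sliding subtlety is exactly the content that the paper has already absorbed into the construction of $\phi_{\mf{o}}$, $\psi_{\mf{o}}$, and the definition of $\mf{o}$-simple.
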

\begin{proof}
    This directly follows from Propositions \ref{prop:IsoOfModels} and \ref{prop:oSimpDiag}.
\end{proof}

\textbf{Note:} Unless specified otherwise, if we denote the stated skein algebra without specifying the markings or marked points (e.g., $\mathscr{S}(M)$), we assume that there is exactly one marking or marked point on its boundary. Additionally, the boundary of $M$ should be obvious. I will always clarify beforehand if there is an exception to this convention, however, it should be clear from the context of which it's given.

\section{Spherical Double Affine Hecke Algebra}
As discussed in section \ref{section:SkeinMods}, we introduced the Hecke algebra, $\mathbf{H}(S_n)$, as a deformation of $\mathbb{C}[S_n]$, the symmetric group algebra. In general, Hecke algebras can be defined over any Coxeter group. More specifically, we want to think of our Hecke algebras as deformations of a Weyl group algebra, $\mathbb{C}[W]$. Given a finite dimensional semisimple Lie algebra, $\mathfrak{g}$, the jump from Hecke algebra to affine Hecke algebras (AHA) and then to double affine Hecke algebras (DAHA) can be roughly understood as deformations of
\begin{center}
    \begin{tabular}{ccc}
        \text{Hecke} & $\longleftrightarrow$ & $W$ \\
        \text{AHA} & $\longleftrightarrow$ & $W \ltimes P^\vee$ \\
        \text{DAHA} & $\longleftrightarrow$ & $W \ltimes \left( P \oplus P^\vee \right)$ \\
    \end{tabular}
\end{center}
where $P$ and $P^\vee$ are weight and coweight lattices.

The construction of a DAHA can be done by starting with a quantum torus over $P \oplus P^\vee$, where the $q$-commuting coefficients are defined through a symplectic pairing, $\omega$, between $P$ and $P^\vee$. If we think of $P$ and $P^{\vee}$ as $\mathbb{Z}^n$, then the corresponding quantum torus is generated by $X_{1}^{\pm 1}, \cdots , X_{n}^{\pm 1}, Y_{1}^{\pm 1}, \cdots, Y_{n}^{\pm 1}$ with relations $X^\lambda Y^\mu = q^{\omega(\lambda,\mu)} Y^\mu X^\lambda$, where $X^\lambda := \prod_i X_i^{\lambda_i}$ and $Y^\mu := \prod_j Y_j^{\mu_j}$ for $\lambda \in P$ and $\mu \in P^\vee$. For now, we'll denote this quantum torus as $\mathbb{T}_{\omega}^{2n}$.
In general, the Weyl group action on $P$ and $P^\vee$ corresponds to permutations of the $X_i$ and $Y_i$ in $\mathbb{T}_{\omega}^{2n}$. When $\mathfrak{g} = \mathfrak{sl}_2$, the Weyl group action corresponds to simultaneously inverting $X$ and $Y$. This action gives an embedding, $W \hookrightarrow \operatorname{Out}\left(\mathbb{T}_{\omega}^{2n}\right) \cong \operatorname{SP}_{2n}(\mathbb{Z})$, which in turn determines the extension
$$\mathbb{T}_{\omega}^{2n} \longrightarrow \mathcal{H}_{q,t=1} \longleftarrow \mathbb{C}[W].$$
As we deform $\mathcal{H}_{q,t=1}$ using the formal parameter $t$, we simultaneously deform the group algebra $\mathbb{C}[W]$ into its Hecke algebra, using $t \in \mathbb{C}^\times$ as the distinguished parameter now instead of $q$.

Let's now specialize to the case we are concerned with in this thesis. When $\mathfrak{g} = \mathfrak{sl}_2$, $P$ and $P^\vee$ are both isomorphic to $\mathbb{Z}$ and $W \cong \mathbb{Z}/2\mathbb{Z} \cong S_2$. Therefore, $\mathbf{H}(W)$ has a single generator, $T$, and single relation, $(T - t)(T + t^{-1}) = 0$, making it a $2$-dimensional algebra. Hence, the corresponding $A_1$ DAHA is precisely the following.

\begin{definition}
    Define $\mathcal{H}_{q,t}$ to be the algebra generated by $X^{\pm 1}, Y^{\pm 1}$, and $T$, subject to the relations
    $$TXT = X^{-1},
    \quad TY^{-1}T=Y,
    \quad XY=q^{2}YXT^{2},
    \quad (T-t)(T+t^{-1})=0.$$
\end{definition}

\begin{remark}
    When $t=1$, our last relation gives us $T^2 = 1$ and thus $XY = q^{2}YX$, the same relation found in the quantum torus, $\mathbb{T}_{\omega}^{2}$.
\end{remark}

\begin{definition}
    Assume $t \neq \pm i$ and let $\mathbf{e} \in \mathcal{H}_{q,t}$ be the idempotent $(T + t^{-1})/(t + t^{-1})$. The corresponding \textit{spherical subalgebra} of $\mathcal{H}_{q,t}$ is defined as the two-sided ideal $\mathcal{SH}_{q,t} := \mathbf{e}\mathcal{H}_{q,t}\mathbf{e}$.
\end{definition}

Our spherical DAHA inherits its additive and multiplicative structure from $\mathcal{H}_{q,t}$. However, $1$ is no longer the multiplicative identity but rather $\mathbf{e}$ is.

\begin{remark}
    $\mathcal{H}_{q,t}$ is not commutative, even at the limit $q=1$. However, the spherical subalgebra $\mathcal{SH}_{q,t}$ is commutative at $q=1$.
\end{remark}

\begin{remark}
    In the limit $t=1$, $\mathcal{SH}_{q,t}$ is isomorphic to the Weyl-invariant subalgebra of $\mathbb{T}_{\omega}^2$.
\end{remark}

\begin{remark}
    This particular spherical DAHA specialized to $q=t=1$ is isomorphic to the ring of functions on the moduli space of flat $SL_2$-connections on a two-torus, $T^2$.
\end{remark}

\begin{lemma}[Lemma 2.24 in \cite{MR3947640}]
    Suppose $t^{2}q^{-2}-t^{-2}q^{2}$ is invertible. Then $\mathcal{H}_{q,t}\mathbf{e}\mathcal{H}_{q,t} = \mathcal{H}_{q,t}$ and $\mathcal{SH}_{q,t}$ is Morita equivalent to $\mathcal{H}_{q,t}$ via the functors
    \begin{align*}
        \mathcal{H}_{q,t}\operatorname{-Mod} &\to \mathcal{SH}_{q,t}\operatorname{-Mod} & \mathcal{SH}_{q,t}\operatorname{-Mod} &\to \mathcal{H}_{q,t}\operatorname{-Mod}\\
        M &\mapsto \mathbf{e}M & M &\mapsto \mathcal{H}_{q,t}\mathbf{e} \otimes_{\mathcal{SH}_{q,t}} M.
    \end{align*}
\end{lemma}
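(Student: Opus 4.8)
The plan is to separate the statement into a formal part and a computational part; the hypothesis on $t^{2}q^{-2}-t^{-2}q^{2}$ enters only in the latter. For the formal part, the Morita equivalence via the displayed functors is the standard one attached to a full idempotent. There are canonical bimodule isomorphisms $\mathbf{e}\mathcal{H}_{q,t}\otimes_{\mathcal{H}_{q,t}}\mathcal{H}_{q,t}\mathbf{e}\cong\mathbf{e}\mathcal{H}_{q,t}\mathbf{e}=\mathcal{SH}_{q,t}$ (valid for any idempotent, since $\mathcal{H}_{q,t}\mathbf{e}$ is a direct summand of $\mathcal{H}_{q,t}$) and $\mathcal{H}_{q,t}\mathbf{e}\otimes_{\mathcal{SH}_{q,t}}\mathbf{e}\mathcal{H}_{q,t}\cong\mathcal{H}_{q,t}\mathbf{e}\mathcal{H}_{q,t}$ (an isomorphism onto the two-sided ideal $\mathcal{H}_{q,t}\mathbf{e}\mathcal{H}_{q,t}$); together with the canonical identification $\mathbf{e}\mathcal{H}_{q,t}\otimes_{\mathcal{H}_{q,t}}M\cong\mathbf{e}M$, these show that $M\mapsto\mathbf{e}M$ and $N\mapsto\mathcal{H}_{q,t}\mathbf{e}\otimes_{\mathcal{SH}_{q,t}}N$ are mutually quasi-inverse equivalences exactly when $\mathcal{H}_{q,t}\mathbf{e}\mathcal{H}_{q,t}=\mathcal{H}_{q,t}$. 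Hence the whole statement reduces to the ring-theoretic identity $\mathcal{H}_{q,t}\mathbf{e}\mathcal{H}_{q,t}=\mathcal{H}_{q,t}$.

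Since $\mathcal{H}_{q,t}\mathbf{e}\mathcal{H}_{q,t}$ is a two-sided ideal containing $\mathbf{e}$, it equals $\mathcal{H}_{q,t}$ if and only if it contains the complementary idempotent $\mathbf{e}_{-}:=1-\mathbf{e}=(t-T)/(t+t^{-1})$, and the plan is to exhibit $\mathbf{e}_{-}$ as such an element by an explicit computation in $\mathcal{H}_{q,t}$. The tools that make this manageable are, first, that a $T$ adjacent to an idempotent collapses to a scalar: $T\mathbf{e}=\mathbf{e}T=t\mathbf{e}$, $T\mathbf{e}_{-}=\mathbf{e}_{-}T=-t^{-1}\mathbf{e}_{-}$, and hence $T^{2}\mathbf{e}=t^{2}\mathbf{e}$, $T^{2}\mathbf{e}_{-}=t^{-2}\mathbf{e}_{-}$, using $T^{2}=(t-t^{-1})T+1$; and second, the conjugation identities extracted from the defining relations, namely $XTX=T^{-1}$ (from $TXT=X^{-1}$), $Y^{-1}TY^{-1}=T^{-1}$ (from $TY^{-1}T=Y$), and $Y^{-1}XY=q^{2}XT^{2}$ (from $XY=q^{2}YXT^{2}$). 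Using these, one writes a finite combination $\sum_{i}a_{i}\,\mathbf{e}\,b_{i}$ of corner products, with $a_{i}\in\mathbf{e}_{-}\mathcal{H}_{q,t}\mathbf{e}$ and $b_{i}\in\mathbf{e}\mathcal{H}_{q,t}\mathbf{e}_{-}$ built from $X^{\pm1},Y^{\pm1}$, pushes every $T$ onto an idempotent, and reduces to a PBW normal form for $\mathcal{H}_{q,t}$; the $\mathbf{e}_{-}$-component that survives has coefficient a monomial in $q,t$ times $t^{2}q^{-2}-t^{-2}q^{2}$. Dividing by that coefficient, which is legitimate exactly under the hypothesis, produces $\mathbf{e}_{-}\in\mathcal{H}_{q,t}\mathbf{e}\mathcal{H}_{q,t}$ and finishes the proof.

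The main obstacle is this last computation: arranging the combination of corner elements so that the various non-scalar ``tails'' picked up while commuting $X$, $Y$, and $T$ past one another (terms such as $\mathbf{e}_{-}X^{2}\mathbf{e}_{-}$, $\mathbf{e}_{-}Y^{2}\mathbf{e}_{-}$, $\mathbf{e}_{-}XY\mathbf{e}_{-}$, and so on) cancel, leaving only a multiple of $\mathbf{e}_{-}$, and then confirming that this multiple is indeed a unit times $t^{2}q^{-2}-t^{-2}q^{2}$, so that the hypothesis is exactly what is needed. Every other step — the equivalence $\mathcal{H}_{q,t}\mathbf{e}\mathcal{H}_{q,t}=\mathcal{H}_{q,t}\iff\mathbf{e}_{-}\in\mathcal{H}_{q,t}\mathbf{e}\mathcal{H}_{q,t}$, the collapsing of $T$ on idempotents, and the Morita formalism — is routine. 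An alternative that sidesteps the explicit element is to invoke the classification of simple $\mathcal{H}_{q,t}$-modules when $t^{2}q^{-2}-t^{-2}q^{2}$ is invertible: every nonzero simple module $M$ then satisfies $\mathbf{e}M\neq0$, so no simple $\mathcal{H}_{q,t}$-module is annihilated by the two-sided ideal $\mathcal{H}_{q,t}\mathbf{e}\mathcal{H}_{q,t}$, which forces that ideal to be all of $\mathcal{H}_{q,t}$; this route, however, trades the computation for a substantially heavier structural input.
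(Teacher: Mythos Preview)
The paper does not supply its own proof of this lemma; it is quoted as Lemma~2.24 of the cited reference and used as a black box. So there is no in-paper argument to compare against.

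On its merits, your proposal is a correct outline but not a proof. The formal reduction is sound: the displayed functors are the standard Morita pair attached to an idempotent, and they are quasi-inverse exactly when $\mathcal{H}_{q,t}\mathbf{e}\mathcal{H}_{q,t}=\mathcal{H}_{q,t}$, which in turn is equivalent to $\mathbf{e}_{-}:=1-\mathbf{e}\in\mathcal{H}_{q,t}\mathbf{e}\mathcal{H}_{q,t}$. The identities $T\mathbf{e}=t\mathbf{e}$, $T\mathbf{e}_{-}=-t^{-1}\mathbf{e}_{-}$ and the rewritten relations you list are all correct. What is missing is the actual computation: you describe the shape of the combination $\sum_i a_i\mathbf{e}b_i$ and the cancellation that must occur, name it ``the main obstacle,'' and stop. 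As written, nothing in the proposal verifies that the surviving coefficient is a unit times $t^{2}q^{-2}-t^{-2}q^{2}$ rather than, say, zero or some other polynomial in $q,t$; that is precisely the content of the lemma, and it has to be exhibited.

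For what it is worth, in the cited source the argument is carried out by writing down a single explicit corner element (built from $Y^{\pm1}$ sandwiched between the idempotents) and reducing; the factor $t^{2}q^{-2}-t^{-2}q^{2}$ drops out after a short computation once the right element is chosen. Your alternative route through simple modules is also legitimate and is how analogous ``fullness of the idempotent'' statements are often proved for rational Cherednik algebras, but it does import a classification result that is heavier than the direct computation.
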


Therefore, if we want to better understand the representation theory of $\mathcal{H}_{q,t}$, it is sufficient to understand the representation theory of $\mathcal{SH}_{q,t}$. Furthermore, it was shown in \cite{MR3947640} that $\mathcal{SH}_{q,t}$ is isomorphic to a slightly modified version of the Kauffman bracket skein algebra of the punctured torus.

\begin{definition}
    For any surface $\Sigma$, let $K_{q,t}(\Sigma \setminus D^2)$ be the modified Kauffman bracket skein module defined as the quotient of $K_{q}(\Sigma \setminus D^2)$ by the relation
    \begin{center}
       \begin{tikzpicture}
            \draw[thick, dashed] (2, 1) ellipse (0.5 and 0.25);
            \draw[thick, dashed] (2, -1) ellipse (0.5 and 0.25);
            \draw[thick, dashed] (1.5, 1) -- (1.5, -1);
            \draw[thick, dashed] (2.5, 1) -- (2.5, -1);
            \draw[thick] (1.5, 0.4) to[out=225, in=90] (1.3, 0) to[out=270, in=180] (2, -0.5) to[out=0, in=270] (2.7, 0) to[out=90, in=315] (2.5, 0.4);
            \node[text width=3.7cm] at (4.8,0) {$ = (-q^{2} t^{-2} - q^{-2} t^{2})$ $\cdot$};
            \draw[thick, dashed] (7.3, 1) ellipse (0.5 and 0.25);
            \draw[thick, dashed] (7.3, -1) ellipse (0.5 and 0.25);
            \draw[thick, dashed] (6.8, 1) -- (6.8, -1);
            \draw[thick, dashed] (7.8, 1) -- (7.8, -1);
       \end{tikzpicture}.
   \end{center}
\end{definition}

In other words, this is exactly the same as the usual Kauffman bracket skein algebra, $K_q(\Sigma \setminus D^2)$, except we have the additional relation where a loop around the boundary can be removed at the cost of the constant $-q^{2} t^{-2} - q^{-2} t^{2}$.

\begin{remark}
    Since the identity component of the diffeomorphism group of a surface acts transitively, choosing a different disk to remove corresponds to isomorphic algebras. Therefore, we don't need to specify where the disk we removed is located.
\end{remark}

When $\Sigma = T^2$, the torus, notice that in the limit $t=1$ we get $K_{q,t=1}(T^2 \setminus D^2) \cong K_{q}(\Sigma)$ as algebras. Therefore, setting $t=1$ can be interpreted as filling in the disk and just considering the closed torus without the resulting boundary.

\begin{theorem}[3.9 in \cite{MR3947640}]\label{theorem:PetersIso}
    The algebras $K_{q,t}(T^2 \setminus D^2)$ and $\mathcal{SH}_{q,t}$ are isomorphic.
\end{theorem}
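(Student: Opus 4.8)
The plan is to produce the isomorphism explicitly on generators and relations, and then to upgrade the resulting surjection to an isomorphism by comparing both algebras with their common classical limit, the coordinate ring of the $SL_2$-character variety of the once-punctured torus. Since Kauffman bracket skein modules do not see the boundary, $K_q(T^2 \setminus D^2)$ is the skein algebra of the one-holed torus, and I would start from the classical Bullock--Przytycki presentation: it is generated by the three curves $x = (1,0)$, $y = (0,1)$, $z = (1,1)$ subject to the $q$-commutation relations $q\,xy - q^{-1}yx = (q^2 - q^{-2})z$ together with its cyclic permutations, and one ``cubic'' relation expressing the curve $\partial$ parallel to the puncture in terms of $x,y,z$ and a scalar. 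Passing to $K_{q,t}(T^2 \setminus D^2)$ replaces $\partial$ by the scalar $-q^2 t^{-2} - q^{-2} t^2$, so this algebra has an honest finite presentation in which $t$ enters only through that one constant.

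Next I would set up the target side. Inside $\mathcal{H}_{q,t}$ the Weyl-invariant elements $X + X^{-1}$, $Y + Y^{-1}$, and a suitably $q$-corrected $YX + (YX)^{-1}$ commute with the idempotent $\mathbf{e}$ up to the Hecke relation, so their images $\hat x, \hat y, \hat z \in \mathbf{e}\mathcal{H}_{q,t}\mathbf{e} = \mathcal{SH}_{q,t}$ make sense. Using $TXT = X^{-1}$, $XY = q^2 YX T^2$, $(T - t)(T + t^{-1}) = 0$ and the identity $\mathbf{e}T = t\,\mathbf{e}$ one computes the $q$-commutators of $\hat x, \hat y, \hat z$ and a cubic relation among them; the only scalar that survives the computation (forced by $T$ acting as $t$ on $\mathbf{e}$) is precisely $-q^2 t^{-2} - q^{-2} t^2$. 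I would then define $\Phi \colon K_{q,t}(T^2 \setminus D^2) \to \mathcal{SH}_{q,t}$ by $x \mapsto \hat x$, $y \mapsto \hat y$, $z \mapsto \hat z$ and check that the defining relations of the first presentation land in the relations of the second.

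Finally, to see $\Phi$ is bijective: surjectivity is automatic because $\hat x, \hat y, \hat z$ generate $\mathcal{SH}_{q,t}$. For injectivity I would filter both algebras by word length in the generators (on the skein side this is the ``geometric intersection with a fixed essential curve'' filtration), and note that both associated gradeds are the coordinate ring of the $SL_2$-character variety of the one-holed torus --- a cubic surface in $\mathbb{A}^3$ --- since both $K_{q,t}(T^2\setminus D^2)$ and $\mathcal{SH}_{q,t}$ degenerate to that ring at $q = t = 1$ (as recorded in the remarks before the theorem). Because $\Phi$ is filtered and induces an isomorphism on associated gradeds, it is itself an isomorphism; equivalently one can match the ordered-monomial $\mathbb{C}[q^{\pm 1}, t^{\pm 1}]$-bases $\{x^a y^b z^c : c \in \{0,1\}\}$ on the two sides up to lower-order terms.

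\textbf{Main obstacle.} The genuinely delicate step is checking that $\Phi$ respects \emph{all} relations, and in particular that the cubic relation of $K_{q,t}(T^2 \setminus D^2)$, with its boundary curve set to $-q^2 t^{-2} - q^{-2} t^2$, is carried to the cubic relation satisfied by $\hat x, \hat y, \hat z$ in $\mathcal{SH}_{q,t}$. This forces one to pin down every normalization (the $q^{\pm 1/2}$ skein conventions and the exact $q$-correction in $\hat z$) and to do careful bookkeeping with $XY = q^2 YX T^2$ and $\mathbf{e}T\mathbf{e} = t\,\mathbf{e}$; it is the one place where $t$ interacts nontrivially with the algebra structure, so any sign or power discrepancy surfaces exactly there.
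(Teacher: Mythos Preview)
The paper does not prove this theorem; it is quoted verbatim as Theorem~3.9 of \cite{MR3947640} and used as a black box. So there is no ``paper's own proof'' to compare against. Your proposal is a faithful sketch of the argument actually given in the cited reference: Samuelson writes down the Bullock--Przytycki presentation of $K_q(T^2\setminus D^2)$ in the generators $x,y,z$, sets the boundary curve equal to $-q^2t^{-2}-q^{-2}t^2$, defines $\Phi$ on generators by sending $x,y,z$ to the symmetrized elements $\mathbf{e}(X+X^{-1})\mathbf{e}$, $\mathbf{e}(Y+Y^{-1})\mathbf{e}$, and (a $q$-shift of) $\mathbf{e}(q^{-1}YX+qX^{-1}Y^{-1})\mathbf{e}$, verifies the three $q$-commutators and the cubic relation by hand using $\mathbf{e}T=t\mathbf{e}$, and then argues injectivity via a PBW/degree comparison.

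One small caution about your injectivity step: the ordered-monomial basis $\{x^a y^b z^c : c\in\{0,1\}\}$ is not the right shape --- the cubic relation in the skein presentation is degree~$3$ in all three variables jointly, not quadratic in $z$, so you cannot reduce $z^2$ directly. In the reference the comparison is done instead against the known multicurve basis of $K_q(T^2\setminus D^2)$ on one side and a PBW basis of $\mathcal{H}_{q,t}$ (in $X^{\pm},Y^{\pm},T$) compressed by $\mathbf{e}$ on the other. Your filtered/associated-graded idea is the correct mechanism, but the explicit basis you wrote down would need to be replaced by one of these.
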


\chapter{The Stated Skein Algebra of the Torus with Boundary}

There are three main facts from the previous sections that I would like to highlight:
\begin{enumerate}
    \item There is a natural embedding, $K_q(T^2 \setminus D^2) \hookrightarrow \ms{S}(T^2 \setminus D^2)$, from the Kauffman bracket skein algebra of $T^2 \setminus D^2$ to the stated skein algebra of $T^2 \setminus D^2$.
    \item There is an algebra isomorphism, $\mathcal{SH}_{q,t} \xrightarrow{\sim} K_{q,t}(T^2 \setminus D^2)$, from the $A_1$ spherical double affine Hecke algebra to the modified Kauffman bracket skein algebra of $T^2 \setminus D^2$.
    \item The $A_1$ double affine Hecke algebra, $\mathcal{H}_{q,t}$, is Morita equivalent to it's spherical subalgebra, $\mathcal{SH}_{q,t}$.
\end{enumerate}
Together, these facts give us the central idea and guiding principle of this work: modules over $\ms{S}(T^2 \setminus D^2)$ provide us with modules for $\mathcal{SH}_{q,t}$, and therefore for the $A_1$ DAHA, $\mathcal{H}_{q,t}$.
Consider the following diagram of algebras.
\begin{center}
    \begin{tikzcd}
        K_q(T^2 \setminus D^2) \arrow[d, two heads] \arrow[r, hook] & \ms{S}(T^2 \setminus D^2) \\
        K_{q,t}(T^2 \setminus D^2) \arrow[r, "\sim"] & \mathcal{SH}_{q,t}
    \end{tikzcd}
\end{center}
There is no obvious direct map between $\ms{S}(T^2 \setminus D^2)$ and $\mathcal{SH}_{q,t}$. However, a module over $\ms{S}(T^2 \setminus D^2)$ can be turned (by restricting the skein algbera) into a module over $K_q(T^2 \setminus D^2)$. From here we can use the functor
\begin{align}\label{eq:ExtOfScal}
    K_{q,t}(T^2 \setminus D^2) \otimes_{K_{q}(T^2 \setminus D^2)} - : K_{q}(T^2 \setminus D^2)\operatorname{-Mod} \longrightarrow K_{q,t}(T^2 \setminus D^2)\operatorname{-Mod}
\end{align}
to create $\mathcal{SH}_{q,t}$-modules from these $K_q(T^2 \setminus D^2)$-modules. This is known as \textit{extension of scalars} and is the left adjoint to the restriction functor.\footnote{Given a map of rings, $f: R \to S$, there is also a right adjoint to the corresponding restriction functor, known as \textit{coextension of scalars}, which turns $\operatorname{Hom}(S, M)$ in the category of $R$-Mod into an $S$-module by $(s \cdot g)(s') := g(s' s)$.} We will explore a few different kinds of modules over $\ms{S}(T^2 \setminus D^2)$ in later chapters, but before we can discuss this, we should first better understand the algebra $\ms{S}(T^2 \setminus D^2)$.

\section{$\ms{S}(T^2 \setminus D^2)$ Notation}

In this section we will show that the algebra $\mathscr{S}(T^2 \setminus D^2)$ with one marking is generated by the twelve elements $B = \{ X_{1,0}(\mu_1, \nu_1), X_{2,0}(\mu_2, \nu_2), X_{3,0}(\mu_3, \nu_3) \, \mid \mu_i, \nu_i \in \{ \pm \} \}$, where
$$X_{1,0}(\mu_1, \nu_1) =
\begin{tikzpicture}[baseline=-1]
    \draw[draw=none, fill=gray!40] (0,1) -- (2,1) -- (2,-1) -- (0,-1) -- (1, -0.15) to[out=0, in=-90] (1.15, 0) to[out=90, in=0] (1, 0.15) to[out=180, in=90] (0.85, 0) to[out=-90, in=180] (1, -0.15) -- (0, -1) -- (0,1);
    \draw[thick] (0, 0.15) -- (0.85, 0.15);
    \draw[thick] (1, 0.15) -- (2, 0.15);
    \draw[thick, ->] (0,1) -- (0.9,1);
    \draw[thick, ->] (0.85,1) -- (1.2,1);
    \draw[thick] (1.2,1) -- (2,1);
    \draw[thick, ->] (0,-1) -- (0.9,-1);
    \draw[thick, ->] (0.85,-1) -- (1.2,-1);
    \draw[thick] (1.2,-1) -- (2,-1);
    \draw[thick, ->] (0,-1) -- (0,0);
    \draw[thick] (0,-1) -- (0,1);
    \draw[thick, ->] (2,-1) -- (2,0);
    \draw[thick] (2,-1) -- (2,1);
    \draw[thick, black] (1, 0) circle (0.15);
    \node[draw, circle, inner sep=0pt, minimum size=3pt, fill=white] at (1, 0.15) {};
    \node at (0.7, 0.3) {\scriptsize{$\mu_1$}};
    \node at (1.3, 0.3)  {\scriptsize{$\nu_1$}};
\end{tikzpicture} \quad
X_{2,0}(\mu_2, \nu_2) = \begin{tikzpicture}[baseline=-1]
    \draw[draw=none, fill=gray!40] (4,1) -- (6,1) -- (6,-1) -- (4,-1) -- (5, -0.15) to[out=0, in=-90] (5.15, 0) to[out=90, in=0] (5, 0.15) to[out=180, in=90] (4.85, 0) to[out=-90, in=180] (5, -0.15) -- (4, -1) -- (4,1);
    \draw[thick] (4.7, -1) to[out=90, in=270] (4.7, 0) to[out=90, in=180] (4.85, 0.15);
    \draw[thick] (5, 0.15) to[out=120, in=270] (4.7, 1);
    \draw[thick, ->] (4,1) -- (4.9,1);
    \draw[thick, ->] (4.85,1) -- (5.2,1);
    \draw[thick] (5.2,1) -- (6,1);
    \draw[thick, ->] (4,-1) -- (4.9,-1);
    \draw[thick, ->] (4.85,-1) -- (5.2,-1);
    \draw[thick] (5.2,-1) -- (6,-1);
    \draw[thick, ->] (4,-1) -- (4,0);
    \draw[thick] (4,-1) -- (4,1);
    \draw[thick, ->] (6,-1) -- (6,0);
    \draw[thick] (6,-1) -- (6,1);
    \draw[thick, black] (5, 0) circle (0.15);
    \node[draw, circle, inner sep=0pt, minimum size=3pt, fill=white] at (5, 0.15) {};
    \node at (4.5, 0) {\scriptsize{$\mu_2$}};
    \node at (5.2, 0.3)  {\scriptsize{$\nu_2$}};
\end{tikzpicture}\quad
X_{3,0}(\mu_3, \nu_3) =\begin{tikzpicture}[baseline=-1]
    \draw[draw=none, fill=gray!40] (8,1) -- (10,1) -- (10,-1) -- (8,-1) -- (9, -0.15) to[out=0, in=-90] (9.15, 0) to[out=90, in=0] (9, 0.15) to[out=180, in=90] (8.85, 0) to[out=-90, in=180] (9, -0.15) -- (8, -1) -- (8,1);
    \draw[thick] (8.4, -1) -- (8.4, -0.2) to[out=90, in=180] (8.85, 0.15);
    \draw[thick] (9, 0.15) to[out=45, in=180] (10, 0.6);
    \draw[thick] (8, 0.6) to[out=0, in=270] (8.4, 1);
    \draw[thick, ->] (8,1) -- (8.9,1);
    \draw[thick, ->] (8.85,1) -- (9.2,1);
    \draw[thick] (9.2,1) -- (10,1);
    \draw[thick, ->] (8,-1) -- (8.9,-1);
    \draw[thick, ->] (8.85,-1) -- (9.2,-1);
    \draw[thick] (9.2,-1) -- (10,-1);
    \draw[thick, ->] (8,-1) -- (8,0);
    \draw[thick] (8,-1) -- (8,1);
    \draw[thick, ->] (10,-1) -- (10,0);
    \draw[thick] (10,-1) -- (10,1);
    \draw[thick, black] (9, 0) circle (0.15);
    \node[draw, circle, inner sep=0pt, minimum size=3pt, fill=white] at (9, 0.15) {};
    \node at (8.7, 0.3) {\scriptsize{$\mu_3$}};
    \node at (9.4, 0.15)  {\scriptsize{$\nu_3$}};
\end{tikzpicture}.$$
The second subscript will be more thoroughly explained in section \ref{section:HalfTwists}.

\begin{definition}
    A curve is \textit{simple} if it does not contain any self crossings. Similarly, a $\partial \mathfrak{S}$-tangle diagram is \textit{simple} if it does not contain any self crossings on the interior of $\mathfrak{S}$ and has no trivial component.
\end{definition}
As we are working with isotopy classes of curves and isotopy classes of tangles, in general, we call $\alpha$ simple if there exists a representative of $\alpha$ that is simple.

\begin{definition}
    A closed component of a $\partial \mathfrak{S}$-tangle diagram is \textit{trivial} if it bounds an open disk in $\mathfrak{S}$. A tangle component is \textit{trivial} if it can be homotoped, relative to its endpoints, into a single marking.
\end{definition}

\begin{definition}
    A \textit{boundary curve} is a simple closed curve that is parallel to a boundary edge.
\end{definition}

\begin{definition}
    A \textit{parallel tangle} is one that can be homotoped, relative to its endpoints, to a boundary edge.
\end{definition}
An example of a parallel tangle in $\mathscr{S}(T^2 \setminus D^2)$ with one marking is
\begin{center}
    \begin{tikzpicture}[scale=1.5]
        \MarkedTorusBackground
        \draw[thick] (1, 0.15) to[out=0, in=90] (1.3, -0.2) to[out=270, in=0] (1, -0.5) to[out=180, in=270] (0.7, -0.2) to[out=90, in=210] (0.87, 0.15);
        \node[draw, circle, inner sep=0pt, minimum size=3pt, fill=white] at (1, 0.15) {};
        \node at (0.7, 0.2) {$\mu$};
        \node at (1.3, 0.2) {$\nu$};
    \end{tikzpicture}
\end{center}
as it can clearly be homotoped to the boundary.

Let $Y_1$, $Y_2$, and $Y_3$ be the meridian, longitude, and (1,1)-curve, respectively, and consider the stated skein algebra generated by $B$. Using the state-exchange relation, we can express each $Y_i$ in this algebra as
$$Y_i = q^{1/2} X_{i,0}(+,-) - q^{5/2} X_{i,0}(-,+)$$
for $i \mod 3$. Moreover, we can interchange heights using the proper height exchange relation.
\begin{figure}[h]
    \centering
    \begin{tikzpicture}[baseline=-3]
        \MarkedTorusBackground
        \draw[thick] (0, 0.4) -- (2, 0.4);
        \node[draw, circle, inner sep=0pt, minimum size=3pt, fill=white] at (1, 0.15) {};
    \end{tikzpicture} $= q^{1/2}$
    \begin{tikzpicture}[baseline=-3]
        \MarkedTorusBackground
        \draw[thick] (0, 0.15) -- (0.85, 0.15);
        \draw[thick] (1, 0.15) -- (2, 0.15);
        \node[draw, circle, inner sep=0pt, minimum size=3pt, fill=white] at (1, 0.15) {};
        \node at (0.65, 0.35) {$+$};
        \node at (1.35, 0.35) {$-$};
    \end{tikzpicture} $- q^{-5/2}$
    \begin{tikzpicture}[baseline=-3]
        \MarkedTorusBackground
        \draw[thick] (0, 0.15) -- (0.85, 0.15);
        \draw[thick] (1, 0.15) -- (2, 0.15);
        \node[draw, circle, inner sep=0pt, minimum size=3pt, fill=white] at (1, 0.15) {};
        \node at (0.65, 0.35) {$-$};
        \node at (1.35, 0.35) {$+$};
    \end{tikzpicture}
    \caption[Expressing $Y_1$ as stated $\partial (T^2 \setminus D^2)$-tangles]{Expressing $Y_1$ as stated $\partial (T^2 \setminus D^2)$-tangles using the state exchange relation}
    \label{fig:Y1StateExchangeRelation}
\end{figure}

As we're only considering $\mathscr{S}(T^2 \setminus D^2)$ with a single marking, there is only one simple parallel tangle and one boundary curve.
The boundary curve can be expressed completely in terms of $Y_i$s and constants, as a quick calculation shows that
\begin{align*}
    \begin{tikzpicture}[baseline=-1]
        \MarkedTorusBackground
        \draw[thick, fill=none] (1,0) circle (0.5);
        \node[draw, circle, inner sep=0pt, minimum size=3pt, fill=white] at (1, 0.15) {};
    \end{tikzpicture} = q Y_1 Y_2 Y_3 - q^2 Y_1^2 - q^{-2} Y_2^2 - q^2 Y_3^2 + q^2 + q^{-2},
\end{align*}
and for any $\mu, \nu \in \left\{\pm\right\}$,
\begin{align*}
    \begin{tikzpicture}[baseline=-1]
        \MarkedTorusBackground
        \draw[thick] (1, 0.15) to[out=0, in=90] (1.3, -0.2) to[out=270, in=0] (1, -0.5) to[out=180, in=270] (0.7, -0.2) to[out=90, in=210] (0.87, 0.15);
        \node[draw, circle, inner sep=0pt, minimum size=3pt, fill=white] at (1, 0.15) {};
        \node at (0.7, 0.2) {$\mu$};
        \node at (1.3, 0.2) {$\nu$};
    \end{tikzpicture} = q Y_1 Y_2 X_{3,0}\left(\mu, \nu\right) - q^2 X_{1,0}\left(\mu, \nu\right) Y_1 - q^{-2} X_{2,0}\left(\mu, \nu\right) Y_2 - q^2 X_{3,0}\left(\mu, \nu\right) Y_3 - C_{\mu}^{\nu}.
\end{align*}
The details of this calculation can be found in \ref{appendix:ParaTangCalc} of the appendix. Therefore, the rest of this section is devoted to checking that we can generate all nonparallel tangles and non-boundary curves.

There is clearly a $4$-to-$1$ correspondence between any simple stateless $\partial \left( T^2 \setminus D^2 \right)$-tangle and simple stated $\partial \left( T^2 \setminus D^2 \right)$-tangles.
Therefore, we will disregard the states in the following lemmas, as the proofs are independent of the possible states.
Additionally, we will overlook relative heights along our marking, as we can readily obtain all possible heights using $(R_6)$ alongside $Y_1$, $Y_2$, and $Y_3$.

\section{Half-Twists Around the Boundary}\label{section:HalfTwists}

Before presenting our results, it is necessary to first explain the significance of the second index, $r$, in $X_{i,r}(\mu, \nu)$.
While simple closed (non-boundary) curves on the torus (with boundary) are conventionally classified by their slopes, which are numbers in $\mathbb{Q} \cup \frac{1}{0}$, one might expect a similar classification for our tangles, considering that each tangle must start and end at the same marking.
However, this analogy breaks down when considering tangles with what we'll call ``twists'' around the boundary.

For instance, if we trace along the path of $X_{3,0}$ and introduce a twist around the boundary just before reaching the marking, we get a different element back.
$$\begin{tikzpicture}[baseline=-1]
    \MarkedTorusBackground
    \draw[thick] (0, -1) to[out=45, in=180] (1, 0.15) to[out=60, in=225] (2, 1);
    \node[draw, circle, inner sep=0pt, minimum size=3pt, fill=white] at (1, 0.15) {};
\end{tikzpicture}
\quad\rightsquigarrow\quad
\begin{tikzpicture}[baseline=-1]
    \MarkedTorusBackground
    \draw[thick] (2, 1) -- (1, 0.15) to[out=0, in=60] (1.3, -0.3) to[out=240, in=0] (1,-0.45) to[out=180, in=45] (0, -1);
    \node[draw, circle, inner sep=0pt, minimum size=3pt, fill=white] at (1, 0.15) {};
\end{tikzpicture}$$
As these twists resemble half Dehn twists around the boundary, we distinguish these twists by indexing over $\frac{1}{2}\mathbb{Z}$ instead of $\mathbb{Z}$. In particular, $X_{i, r}$ is a \textit{full} Dehn twist of $X_{i, r-1}$.
The following examples illustrate $X_{1,r}$ for various choices of $r \in \frac{1}{2}\mathbb{Z}$.
$$
\begin{tikzpicture}
    \MarkedTorusBackground
    \draw[thick] (1, 0.15) to[out=180, in=90] (0.5, -0.2) to[out=270, in=180] (1, -0.75) to[out=0, in=180] (2, 0.15);
    \draw[thick] (1, 0.15) to[out=180, in=120] (0.7, -0.3) to[out=300, in=180] (1,-0.45) to[out=0, in=270] (1.5, 0.1) to[out=90, in=0] (1, 0.6) to[out=180, in=0] (0, 0.15);
    \node[draw, circle, inner sep=0pt, minimum size=3pt, fill=white] at (1, 0.15) {};
    \node at (1,-1.5) {$X_{1,-1}$};
\end{tikzpicture}
\begin{tikzpicture}
    \MarkedTorusBackground
    \draw[thick] (0, 0.15) -- (1, 0.15) to[out=180, in=120] (0.7, -0.3) to[out=300, in=180] (1,-0.45) to[out=0, in=180] (2, 0.15);
    \node[draw, circle, inner sep=0pt, minimum size=3pt, fill=white] at (1, 0.15) {};
    \node at (1,-1.5) {$X_{1,-\frac{1}{2}}$};
    \node (space) at (-1, 0) {};
\end{tikzpicture}
\begin{tikzpicture}
    \MarkedTorusBackground
    \draw[thick] (0, 0.15) -- (1, 0.15) -- (2, 0.15);
    \node[draw, circle, inner sep=0pt, minimum size=3pt, fill=white] at (1, 0.15) {};
    \node at (1,-1.5) {$X_{1,0}$};
    \node (space) at (-1, 0) {};
\end{tikzpicture}
\begin{tikzpicture}
    \MarkedTorusBackground
    \draw[thick] (2, 0.15) -- (1, 0.15) to[out=0, in=60] (1.3, -0.3) to[out=240, in=0] (1,-0.45) to[out=180, in=0] (0, 0.15);
    \node[draw, circle, inner sep=0pt, minimum size=3pt, fill=white] at (1, 0.15) {};
    \node at (1,-1.5) {$X_{1,\frac{1}{2}}$};
    \node (space) at (-1, 0) {};
\end{tikzpicture}
\begin{tikzpicture}
    \MarkedTorusBackground
    \draw[thick] (1, 0.15) to[out=0, in=90] (1.5, -0.2) to[out=270, in=0] (1, -0.75) to[out=180, in=0] (0, 0.15);
    \draw[thick] (1, 0.15) to[out=0, in=60] (1.3, -0.3) to[out=240, in=0] (1,-0.45) to[out=180, in=270] (0.5, 0.1) to[out=90, in=180] (1, 0.6) to[out=0, in=180] (2, 0.15);
    \node[draw, circle, inner sep=0pt, minimum size=3pt, fill=white] at (1, 0.15) {};
    \node at (1,-1.5) {$X_{1,1}$};
    \node (space) at (-1, 0) {};
\end{tikzpicture}
$$

\begin{remark}
    The distinction between two tangles with different numbers of twists becomes apparent when comparing these to fundamental groups with different base points.
    Let $x$ be a point on the boundary of $T^2 \setminus D^2$, $p$ be a point in the interior of $T^2 \setminus D^2$, and define $\operatorname{Sim}\left(T^2 \setminus D^2, z\right)$ as the subset of simple elements\footnote{An element of a fundamental group is simple if it has a representative that does not contain any self-intersections.} in $\pi_1\left(T^2 \setminus D^2, z\right)$.
    Although there exists an isomorphism between the fundamental groups with different base points, $\varphi: \pi_1\left(T^2 \setminus D^2, x\right) \xrightarrow{\sim} \pi_1\left(T^2 \setminus D^2, p\right)$, it is not necessarily true that $\varphi\left(\operatorname{Sim}(T^2 \setminus D^2, x)\right) = \operatorname{Sim}\left(T^2 \setminus D^2, p\right)$. In fact, we instead observe a strict inclusion $\varphi\left(\operatorname{Sim}(T^2 \setminus D^2, x)\right) \subsetneq \operatorname{Sim}\left(T^2 \setminus D^2, p\right)$, as partially demonstrated in figure \ref{fig:ProperSimpleEles}.
    The set of simple closed curves in $\pi_1(T^2 \setminus D^2)$ has already been classified in \cite{Birman_Series_1984, Cohen_Metzler_Zimmermann_1981} using cyclic reduction of words. However, this is unfortunately only under the subtle assumption that our base point is not on the boundary and so we will need to do additional work to fill in the gaps.
    \begin{figure}[h]
        \centering
        \begin{tikzpicture}
            \MarkedTorusBackground
            \draw[thick] (0.5, 1) -- (1, 0.6) -- (1.5, 1);
            \draw[thick] (0.5, -1) to[out=90, in=180] (1, 0.4) to[out=0, in=90] (1.5, -1);
            \node[draw, circle, inner sep=0pt, minimum size=3pt, fill=white] at (1, 0.6) {};
            \node at (1, -1.3) {Base point $p$};
            \node (space) at (3, 0) {};
        \end{tikzpicture}
        \begin{tikzpicture}
            \MarkedTorusBackground
            \draw[thick] (0.5, 1) -- (1, 0.15) -- (1.5, 1);
            \draw[thick] (0.5, -1) to[out=90, in=180] (1, 0.4) to[out=0, in=90] (1.5, -1);
            \node[draw, circle, inner sep=0pt, minimum size=3pt, fill=white] at (1, 0.15) {};
            \node at (1, -1.3) {Base point $x$};
            \node (space) at (-1, 0) {};
        \end{tikzpicture}
        \caption[Strict inclusion of simple elements of fundamental groups]{Left: a simple element with the base point off of the boundary. Right: after making a change of base point, it is no longer simple.}
        \label{fig:ProperSimpleEles}
    \end{figure}
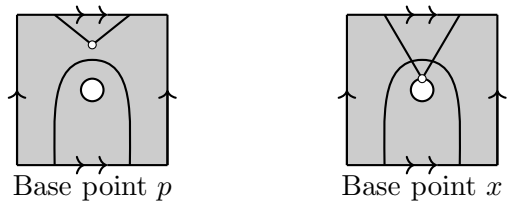
\end{remark}

\section{Generators of $\ms{S}(T^2 \setminus D^2)$}

Given any marked surface, $\mathfrak{S}$, L\^{e} shown in \cite{MR3827810} (Theorem \ref{theorem:OrBasis}) that the set of all isotopy classes of increasingly stated, simple $\partial\mathfrak{S}$-tangle diagrams forms a basis for $\mathscr{S}(\mathfrak{S})$. Therefore, our approach will be to first classify all possible simple (stateless) tangles in $T^2 \setminus D^2$ that start and end on $x \in \partial \left( T^2 \setminus D^2 \right)$ and then demonstrate that all basis elements are contained in the subalgebra generated by $\left\{ X_{1,0}(\mu_1, \nu_1), X_{2,0}(\mu_2, \nu_2), X_{3,0}(\mu_3, \nu_3)\right\}$ for all $\mu_i, \nu_j \in \{ \pm \}$.

In theorem \ref{theorem:classification}, we classify a large class of closed curves by their slope and number of twists. We construct this classification by covering a neighborhood of $\partial \left( T^2 \setminus D^2 \right)$ with an annulus and proceed to use a Seifert-Van Kampen-style approach to calculate the slope outside the annulus as well as determine the number of twists inside the annulus.

To clean things up a bit, we'll introduce some more notation. We will denote the following set of tuples as
\[ \mathcal{I} := \left.\left\{ (p,q,r) \in \mathbb{Z} \times \mathbb{Z} \times \frac{1}{2}\mathbb{Z} \, \mid \gcd(p,q) = 1 \right\} \right/ \sim \]
where $(p,q,r) \sim (-p,-q,r)$. Additionally, for any $x \in T^2 \setminus D^2$, define $\Omega_{x}$ to be the set of isotopy classes of simple unoriented closed non-parallel curves that begin and end on $x$. Any time we refer to a representative or an isotopic representative of an element in $\Omega_x$, we are always assuming that this representative begins and ends at the point $x$.

\newpage
\begin{theorem}\label{theorem:classification}
    Let $x \in \partial \left( T^2 \setminus D^2 \right)$. There exists a bijection $f_{A}: \mathcal{I} \longrightarrow \Omega_x$.
\end{theorem}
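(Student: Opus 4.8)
The plan is to build the bijection $f_A$ in two stages, separating the "slope data" $(p,q)$ from the "twist data" $r$. First I would fix a closed collar neighborhood $A \cong S^1 \times [0,1]$ of $\partial(T^2\setminus D^2)$ containing the marked point $x$, so that $T^2\setminus D^2 = A \cup_{S^1} \Sigma_0$ where $\Sigma_0$ is a once-punctured torus whose puncture boundary is the inner boundary circle of $A$; think of $\Sigma_0$ as carrying the slope and $A$ as the region where twisting is recorded. Given a class $\omega \in \Omega_x$, choose a representative transverse to the inner boundary circle $c = S^1\times\{1\}$. Since the representative is simple and begins/ends at $x \in \partial(T^2\setminus D^2) = S^1 \times \{0\}$, a minimal-position argument (standard innermost-disk/bigon reduction, using that $c$ is essential and the curve has no self-intersections) shows that up to isotopy the curve meets $c$ in at most two points, and in fact in exactly two points unless $\omega$ lies in one of the finitely many exceptional cases (meridian, longitude, etc.) which must be handled separately --- this is the source of the phrase "a large class of closed curves" and I would state precisely which slopes are handled by the generic argument.

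Second I would extract the two invariants. Cutting along $c$ decomposes the representative into an arc $\beta \subset \Sigma_0$ with both endpoints on $c$ and an arc $\gamma \subset A$ from $x$ on $S^1\times\{0\}$ to the two points on $c = S^1\times\{1\}$. The arc $\beta$, being simple and properly embedded in the once-punctured torus with endpoints on its boundary circle, is classified up to isotopy by its slope $(p,q)$ with $\gcd(p,q)=1$ (this is the classical classification of essential simple arcs on $\Sigma_0$, equivalently of simple closed curves on $T^2$, cf. Theorem \ref{theorem:KqT2Basis} and the slope discussion preceding it); reversing orientation of $\omega$ sends $(p,q)\mapsto(-p,-q)$, which is why we quotient $\mathcal{I}$ by $(p,q,r)\sim(-p,-q,r)$. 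The arc $\gamma$ in the annulus $A$, with one fixed endpoint at $x$ and the other endpoints determined (up to isotopy) by where $\beta$ meets $c$, is classified up to isotopy rel endpoints by its winding number around $A$, which is a half-integer $r \in \frac12\mathbb{Z}$ precisely because the two endpoints of $\gamma$ on $c$ are antipodal-type points rather than a single point --- this recovers exactly the half-twist bookkeeping introduced in Section \ref{section:HalfTwists}, where $X_{i,r}$ is a full Dehn twist of $X_{i,r-1}$. Setting $f_A^{-1}(\omega) = (p,q,r)$ gives a well-defined map $\Omega_x \to \mathcal{I}$.

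For injectivity I would argue that the pair (slope of $\beta$, winding of $\gamma$) is a complete isotopy invariant: if two representatives yield the same $(p,q,r)$ then an isotopy of $\beta$ in $\Sigma_0$ and an isotopy of $\gamma$ in $A$ can be chosen to agree on $c$ (after adjusting by isotopies of $c$ itself, which act on the endpoint data in a controlled way), and these glue to an ambient isotopy of $T^2\setminus D^2$ carrying one representative to the other. For surjectivity I would, conversely, construct for each $(p,q,r)\in\mathcal{I}$ an explicit simple closed curve through $x$: take the $(p,q)$-curve on $T^2$, isotope it to pass through a point of $c$, push it off $c$ into $\Sigma_0$ to get $\beta$, connect its endpoints to $x$ through the collar $A$ by an arc of winding number $r$, and check (by a small-index argument, as in the $X_{1,r}$ examples drawn in Section \ref{section:HalfTwists}) that the result is simple and non-parallel. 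The main obstacle, I expect, is the minimal-position step --- rigorously bounding the geometric intersection number of a simple arc-based-at-the-boundary with the collar circle $c$, and treating the exceptional slopes where the curve can be isotoped off $c$ entirely or meets it only once --- together with verifying that the gluing in the injectivity argument is genuinely an ambient isotopy and not merely a homotopy (which is exactly the subtlety flagged in the remark after Section \ref{section:HalfTwists} about $\operatorname{Sim}$ not being preserved under change of base point).
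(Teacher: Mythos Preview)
Your approach is essentially the paper's: a collar annulus $A$ separates slope data (recorded by a simple arc in the once-punctured torus $\Sigma_0$, classified by coprime $(p,q)$ up to sign) from twist data (recorded inside $A$). The paper builds $f_A$ in the forward direction first and then reads off its inverse, but the underlying decomposition is identical.

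Two places where the paper is sharper than your sketch. First, there are no exceptional slopes: every $\omega \in \Omega_x$ is non-parallel by definition, so any representative must leave $A$ and return, yielding exactly two intersections with $c$ in minimal position --- the meridian and longitude are not special. Second, the half-integer $r$ is not a single winding number of $\gamma$. The annulus portion is really \emph{two} arcs $\hat\alpha_1,\hat\alpha_2$ from $x$ to the two points on $c$; the paper first slides those endpoints along $c$ to fixed reference positions $x_1,x_2$ (the direction of the slide depending on the sign of the slope), then records the Dehn-twist class $r_1 \in \mathbb{Z}$ of $\hat\alpha_1$, and observes that simplicity forces $\hat\alpha_2$ to have winding $r_2 \in \{r_1, r_1-1\}$, distinguished by the sign of $\det(v_1\, v_2)$ for the two tangent vectors at $x$; then $r=(r_1+r_2)/2$. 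Without this normalization-and-two-arc bookkeeping your ``winding number of $\gamma$'' is not yet a well-defined invariant independent of the slope.
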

\begin{proof}
    We will construct $f_{A}$ by first constructing a map, $\widetilde{f}_{A}$, from $\mathcal{I}$ to $\widetilde{\Omega}_x$, the set of simple closed unoriented non-parallel curves that begin and end on $x$, and then projecting onto the corresponding set of isotopy classes of curves.
    \begin{center}
        \begin{tikzcd}
            \mathcal{I} \arrow[rr, hook, "\widetilde{f}_{A}"] \arrow[rrd, "f_{A}"'] & & \widetilde{\Omega}_x \arrow[d, two heads, "\pi_{\text{iso}}"] \\
            & & \Omega_x
        \end{tikzcd}
    \end{center}
    
    We'll first introduce some definitions. Let $\varepsilon > 0$ be sufficiently small. Define $B_{\varepsilon}(m,n)$ to be the open ball of radius $\varepsilon$ at point $(m,n) \in \mathbb{R}^2$, $E_{\varepsilon} := \mathbb{R}^{2} \setminus \left( \bigcup_{(m,n) \in \mathbb{Z}^2} B_{\varepsilon}(m,n) \right)$, and $\Phi_{\varepsilon}$ to be the restriction of the identity map on $\mathbb{R}^2$ to $E_{\varepsilon}$, composed with the obvious covering map.
    $$\Phi_{\varepsilon} : E_{\varepsilon} \longrightarrow T^2 \setminus D^2$$
    Let $A := \Phi_{\varepsilon}\left( \overline{E_{\epsilon} \setminus E_{2\varepsilon}} \right)$ be the closed annulus and define $\partial_A := \partial A \setminus \partial (T^2 \setminus D^2)$, the ``outer boundary''. $\partial_A$ will constantly serve as a reference point throughout the rest of this proof and is the dividing bridge between the $\{ p, q \}$ and $\{ r \}$ in the tuple.
    
    Choose any triple $(p,q,r) \in \mathcal{I}$. Let $\gamma_{p,q}$ be the graph of the function $y = \frac{p}{q}x$ in $\mathbb{R}^2$ (or $x=0$ when $q=0$) and consider $\gamma_0 := \Phi_{\varepsilon}\left( \gamma_{p,q} \right) \cap \left( T^2 \setminus A \right)$.
    Since $\gamma_{p,q}$ has constant slope, the two endpoints of $\gamma_{0}$ must lie on $\partial_A$ as antipodal points.
    \begin{center}
        \begin{tikzpicture}[scale=1.3]
            \draw[help lines, ystep=0.5, xstep=0.5, color=gray!40] (-4.9, -0.9) grid (-3.1, 0.9);
            \draw[thick] (-4.5, -0.9) -- (-3.5, 0.9);
            \node[text width=0.6cm] at (-4.3, 0.3) {$\gamma_{p,q}$};
            \draw[->] (-2.5, 0) -- (-1.5, 0);
            \draw[help lines, ystep=0.5, xstep=0.5, color=gray!40] (-0.9, -0.9) grid (0.9, 0.9);
            \draw[thick] (-0.5, -0.9) -- (0.5, 0.9);
            \foreach \x in {-0.5, 0, 0.5}
                \foreach \y in {-0.5, 0, 0.5}
                    \draw[thick, draw=violet, fill=white] (\x, \y) circle (0.08);
            \draw[->] (1.5, 0) -- (2.5, 0);
            \draw[draw=none, fill=gray!40] (3,1) -- (5,1) -- (5,-1) -- (3,-1) -- (4, -0.15) to[out=0, in=-90] (4.15, 0) to[out=90, in=0] (4, 0.15) to[out=180, in=90] (3.85, 0) to[out=-90, in=180] (4, -0.15) -- (3, -1) -- (3,1);
            \draw[thick, blue, ->] (3,1) -- (4,1);
            \draw[thick, blue] (3,1) -- (5,1);
            \draw[thick, blue, ->] (3,-1) -- (4,-1);
            \draw[thick, blue] (3,-1) -- (5,-1);
            \draw[thick, olive, ->] (3,-1) -- (3,0);
            \draw[thick, olive] (3,-1) -- (3,1);
            \draw[thick, olive, ->] (5,-1) -- (5,0);
            \draw[thick, olive] (5,-1) -- (5,1);
            \draw[thick] (3.5, -1) -- (4.5, 1);
            \draw[thick] (4.5, -1) -- (5, 0);
            \draw[thick] (3, 0) -- (3.5, 1);
            \draw[draw=red, fill=gray!40, dashed] (4, 0) circle[radius=15pt];
            \draw[thick, violet, fill=white] (4, 0) circle (0.15);
            \node[draw, circle, inner sep=0pt, minimum size=3pt, fill=white] at (4, 0.15) {};
            \node at (3.47, -0.55) {$y_1$};
            \node at (4.55, 0.5) {$y_2$};
            \node at (3.65, 0.65) {\footnotesize{$\partial_A$}};
        \end{tikzpicture}
    \end{center}
    
    Let $x_0$ be the unique point on $\partial_A$ without a unique geodesic to $x$ through $A$.
    Let $x_1, x_2 \in \partial_A$ be the midpoints between $x_0$ and its antipodal counterpart, labeled in clockwise order from $x_0$, and let $c_1$ be the geodesic path from $x_1$ to $x$ within $A$.
    \begin{center}
        \begin{tikzpicture}[scale=2]
            \draw[draw=none, fill=gray!40] (4,1) -- (6,1) -- (6,-1) -- (4,-1) -- (5, -0.15) to[out=0, in=-90] (5.15, 0) to[out=90, in=0] (5, 0.15) to[out=180, in=90] (4.85, 0) to[out=-90, in=180] (5, -0.15) -- (4, -1) -- (4,1);
            \draw[thick, blue, ->] (4,1) -- (5,1);
            \draw[thick, blue] (4,1) -- (6,1);
            \draw[thick, blue, ->] (4,-1) -- (5,-1);
            \draw[thick, blue] (4,-1) -- (6,-1);
            \draw[thick, olive, ->] (4,-1) -- (4,0);
            \draw[thick, olive] (4,-1) -- (4,1);
            \draw[thick, olive, ->] (6,-1) -- (6,0);
            \draw[thick, olive] (6,-1) -- (6,1);
            \draw[draw=none, fill=orange, opacity=0.7] (5,0) circle[radius=0.5cm];
            \node[draw, circle, inner sep=0pt, minimum size=3pt, fill=black] at (5.5, 0) {};
            \node[draw, circle, inner sep=0pt, minimum size=3pt, fill=black] at (4.5, 0) {};
            \node[draw, circle, inner sep=0pt, minimum size=3pt, fill=black] at (5, -0.5) {};
            \draw[thick, black, fill=white] (5, 0) circle (0.15);
            \node[draw, circle, inner sep=0pt, minimum size=3pt, fill=white] at (5, 0.15) {};
            \draw[dashed, blue] (4.5, 0) -- (5, 0.17);
            \node at (4.4, -0.15) {$x_1$};
            \node at (5.6, -0.15) {$x_2$};
            \node at (5, -0.7) {$x_0$};
            \node at (5, 0.3) {$x$};
            \node[blue] at (4.75, 0.2) {$c_1$};
        \end{tikzpicture}
    \end{center}

    We'll assume $y_1$ and $y_2$ are labeled such that $y_1$ is closer to $x_1$ on $\partial_A$. If they are both the same distance away from $x_1$, then we must be in the case some $y_i = x_0$, as $y_1$ and $y_2$ are antipodal points. If this happens, label this $y_i$ (the south pole) as $y_1$ and label the other (the north pole) as $y_2$. Let $\gamma_i$ be the geodesic paths (containing their endpoints) on $\partial_A$ from $y_i$ to $x_i$. This should correspond to a clockwise path when $\frac{p}{q}$ is a positive slope or equal to $\frac{1}{0}$, and a counterclockwise path when $\frac{p}{q}$ is a negative slope.\footnote{Our choice of clockwise rotation when the slope is $\frac{1}{0}$ was made for notational convenience outside of this proof. Alternatively, one could instead use a counterclockwise rotation by swapping the labels $y_1$ and $y_2$. The difference between these classifications results in a shift of $r$ by $1/2$ whenever $(p,q) = (1,0)$.} Define $\delta := \gamma_0 \sqcup \gamma_1 \sqcup \gamma_2$ and notice that this is simple.
    \begin{figure}[h]
    \begin{center}
        \begin{tikzpicture}[scale=1.5]
            \draw[draw=none, fill=gray!40] (0,1) -- (2,1) -- (2,-1) -- (0,-1) -- (1, -0.15) to[out=0, in=-90] (1.15, 0) to[out=90, in=0] (1, 0.15) to[out=180, in=90] (0.85, 0) to[out=-90, in=180] (1, -0.15) -- (0, -1) -- (0,1);
            \draw[thick, blue, ->] (0,1) -- (1,1);
            \draw[thick, blue] (0,1) -- (2,1);
            \draw[thick, blue, ->] (0,-1) -- (1,-1);
            \draw[thick, blue] (0,-1) -- (2,-1);
            \draw[thick, olive, ->] (0,-1) -- (0,0);
            \draw[thick, olive] (0,-1) -- (0,1);
            \draw[thick, olive, ->] (2,-1) -- (2,0);
            \draw[thick, olive] (2,-1) -- (2,1);
            \draw[draw=none, fill=orange, opacity=0.7] (1,0) circle[radius=0.5cm];
            \draw[thick] (0, 0) -- (0.5, 0);
            \draw[thick] (1.5, 0) -- (2, 0);
            \draw[thick, black, fill=white] (1, 0) circle (0.15);
            \node[draw, circle, inner sep=0pt, minimum size=3pt, fill=white] at (1, 0.15) {};
            \node[draw, circle, inner sep=0pt, minimum size=3pt, fill=black] at (1.5, 0) {};
            \node[draw, circle, inner sep=0pt, minimum size=3pt, fill=black] at (0.5, 0) {};
            \node at (0.4, -0.15) {$x_1$};
            \node at (1.6, -0.15) {$x_2$};
            \draw[draw=none, fill=gray!40] (3,1) -- (5,1) -- (5,-1) -- (3,-1) -- (4, -0.15) to[out=0, in=-90] (4.15, 0) to[out=90, in=0] (4, 0.15) to[out=180, in=90] (3.85, 0) to[out=-90, in=180] (4, -0.15) -- (3, -1) -- (3,1);
            \draw[thick, blue, ->] (3,1) -- (4,1);
            \draw[thick, blue] (3,1) -- (5,1);
            \draw[thick, blue, ->] (3,-1) -- (4,-1);
            \draw[thick, blue] (3,-1) -- (5,-1);
            \draw[thick, olive, ->] (3,-1) -- (3,0);
            \draw[thick, olive] (3,-1) -- (3,1);
            \draw[thick, olive, ->] (5,-1) -- (5,0);
            \draw[thick, olive] (5,-1) -- (5,1);
            \draw[draw=none, fill=orange, opacity=0.7] (4,0) circle[radius=0.5cm];
            \draw[thick, purple, domain=180:255] plot ({0.5*cos(\x)+4}, {0.5*sin(\x)});
            \draw[thick, purple, domain=0:53] plot ({0.5*cos(\x)+4}, {0.5*sin(\x)});
            \draw[thick] (3.61, -1) -- (3.87, -0.48);
            \draw[thick] (4.3, 0.4) -- (4.6, 1);
            \draw[thick] (4.6, -1) -- (5, -0.2);
            \draw[thick] (3, -0.2) -- (3.61, 1);
            \draw[thick, black, fill=white] (4, 0) circle (0.15);
            \node[draw, circle, inner sep=0pt, minimum size=3pt, fill=white] at (4, 0.15) {};
            \node[draw, circle, inner sep=0pt, minimum size=3pt, fill=black] at (4.5, 0) {};
            \node[draw, circle, inner sep=0pt, minimum size=3pt, fill=black] at (3.5, 0) {};
            \node at (3.4, 0.16) {$x_1$};
            \node at (4.65, -0.15) {$x_2$};
            \node at (3.69, -0.22) {$\gamma_1$};
            \node at (4.55, 0.27) {$\gamma_2$};
            \draw[draw=none, fill=gray!40] (6,1) -- (8,1) -- (8,-1) -- (6,-1) -- (7, -0.15) to[out=0, in=-90] (7.15, 0) to[out=90, in=0] (7, 0.15) to[out=180, in=90] (6.85, 0) to[out=-90, in=180] (7, -0.15) -- (6, -1) -- (6, 1);
            \draw[thick, blue, ->] (6,1) -- (7,1);
            \draw[thick, blue] (6,1) -- (8,1);
            \draw[thick, blue, ->] (6,-1) -- (7,-1);
            \draw[thick, blue] (6,-1) -- (8,-1);
            \draw[thick, olive, ->] (6,-1) -- (6,0);
            \draw[thick, olive] (6,-1) -- (6,1);
            \draw[thick, olive, ->] (8,-1) -- (8,0);
            \draw[thick, olive] (8,-1) -- (8,1);
            \draw[draw=none, fill=orange, opacity=0.7] (7,0) circle[radius=0.5cm];
            \draw[thick, purple, domain=127:180] plot ({0.5*cos(\x)+7}, {0.5*sin(\x)});
            \draw[thick, purple, domain=-37:0] plot ({0.5*cos(\x)+7}, {0.5*sin(\x)});
            \draw[thick] (6.1, 1) -- (6.7, 0.4);
            \draw[thick] (7.4, -0.3) -- (8, -0.9);
            \draw[thick] (6, -0.9) -- (6.1, -1);
            \draw[thick, black, fill=white] (7, 0) circle (0.15);
            \node[draw, circle, inner sep=0pt, minimum size=3pt, fill=white] at (7, 0.15) {};
            \node[draw, circle, inner sep=0pt, minimum size=3pt, fill=black] at (7.5, 0) {};
            \node[draw, circle, inner sep=0pt, minimum size=3pt, fill=black] at (6.5, 0) {};
            \node at (6.4, -0.15) {$x_1$};
            \node at (7.65, 0.15) {$x_2$};
            \node at (6.4, 0.27) {$\gamma_1$};
            \node at (7.6, -0.25) {$\gamma_2$};
        \end{tikzpicture}
        \caption[Constructing geodesic paths, $\gamma_i$]{Left: When $(p,q) = (0,1)$ we get $y_1 = x_1$ and $y_2 = x_2$ and so each $\gamma_i$ is trivial. Middle: When $\gamma_0$ has a positive slope, we trace out a geodesic path clockwise from $y_i$ to $x_i$. Right: When $\gamma_0$ has a negative slope we move counterclockwise instead.}
    \end{center}
    \end{figure}
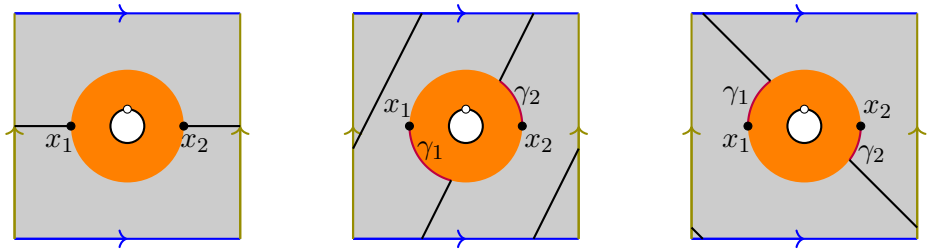

    Recall that the mapping class group, $\text{Mod}\left(\Sigma\right)$, of a surface, $\Sigma$, is the group of isotopy classes of elements of $\operatorname{Homeo}_{+}(\Sigma, \partial \Sigma)$, which pointwise-fix $\partial \Sigma$. Also recall that the mapping class group of the annulus is $\text{Mod}(A) = \langle \sigma \rangle$ where $\sigma$ is the clockwise Dehn twist along the curve $\varsigma$, parallel to the boundaries.
    \begin{center}
        \begin{tikzpicture}[scale=1.5]
            \draw (1, 0) circle (1);
            \draw (1, 0) circle (0.2);
            \draw[thick, blue] (0, 0) -- (0.8, 0);
            \draw[dashed, red] (1, 0) circle (0.5);
            \node at (1.6, -0.2) {$\varsigma$};
            \draw[->] (2.5, 0) -- (3.5, 0);
            \node at (3, 0.2) {$\sigma$};
            \draw (5, 0) circle (1);
            \draw (5, 0) circle (0.2);
            \draw[thick, blue, domain=0:1] plot ({5+(1-0.8*\x)*cos(360*\x + 180)}, {(1-0.8*\x)*sin(360*\x + 180)});
        \end{tikzpicture}
    \end{center}
    For $r \in \frac{1}{2}\mathbb{Z}$, notice that there is a unique decomposition, $r = \frac{r_1 + r_2}{2}$, such that $r_1$,$r_2 \in \mathbb{Z}$ and $r_1 - r_2  \in \{0,1\}$.
    Let $\alpha_1 = \sigma^{r_1} c_1$ and consider this curve in $A$.
    
    Define $B$ to be the filled-in square with corners labeled $\{ v_1, v_2, v_3, v_4 \}$, indexed clockwise, and edges labeled $\{ e_1, e_2, e_3, e_{4} \}$ such that $e_i$ has endpoints $v_i$ and $v_{i+1}$ for $i \mod 4$, and let $z$ be a point on the interior of $e_{4}$.
    \begin{center}
        \begin{tikzpicture}[scale=1.2]
            \draw[thick] (8, 1) to[out=330, in=210] (10,1);
            \draw[thick] (8,-1) -- (10,-1);
            \draw[thick] (8,1) -- (8,0);
            \draw[thick] (8,0) -- (8,-1);
            \draw[thick] (10,1) -- (10,0);
            \draw[thick] (10,0) -- (10,-1);
            \node[draw, circle, inner sep=0pt, minimum size=3pt, fill=white] at (8, 1) {};
            \node[draw, circle, inner sep=0pt, minimum size=3pt, fill=white] at (10, 1) {};
            \node[draw, circle, inner sep=0pt, minimum size=3pt, fill=white] at (8, -1) {};
            \node[draw, circle, inner sep=0pt, minimum size=3pt, fill=white] at (10, -1) {};
            \node[draw, circle, inner sep=0pt, minimum size=3pt, fill=black] at (9, -1) {};
            \node at (7.7, -1) {$v_1$};
            \node at (7.7, 1) {$v_2$};
            \node at (10.3, 1) {$v_3$};
            \node at (10.3, -1) {$v_4$};
            \node at (7.7, 0) {$e_1$};
            \node at (9, 0.9) {$e_2$};
            \node at (10.3, 0) {$e_3$};
            \node at (9, -1.3) {$e_4$};
            \node at (9, -0.8) {$z$};
        \end{tikzpicture}
    \end{center}
    Take $g_{\alpha_1} : B \to A$ to be the quotient map such that
    \begin{itemize}
        \item $g_{\alpha_1} (e_1) = g_{\alpha_1}(e_3) = \alpha_1$,
        \item $g_{\alpha_1}(v_2) = g_{\alpha_1}(v_3) = x$,
        \item $g_{\alpha_1}(v_1) = g_{\alpha_1}(v_4) = x_1$,
        \item $g_{\alpha_1}(z) = x_2$,
        \item and canonically identifies $e_4$ to $\partial_A$ and $e_2$ to $\partial \left(T^2 \setminus D^2\right)$.
    \end{itemize}
    Using a pullback of the induced metric on $T^2 \setminus D^2$, if $r_2 = r_1 - 1$ define $\beta_2$ to be the geodesic path in $B$ from $z$ to $v_{2}$ and to be the geodesic path from $z$ to $v_{3}$ if $r_2 = r_1$. Finally, let $\alpha_2 := g_{\alpha_1}(\beta_2)$. Then the curve $\alpha := \delta \cup \alpha_1 \cup \alpha_2$ is a simple tangle with endpoints on $x$. We now finally define $\widetilde{f}_{A}(p,q,r) = \alpha$ and $f_{A}(p,q,r) = [\alpha]$.
    
    The rest of this proof is primarily devoted to showing $f_{A}$ is surjective.
    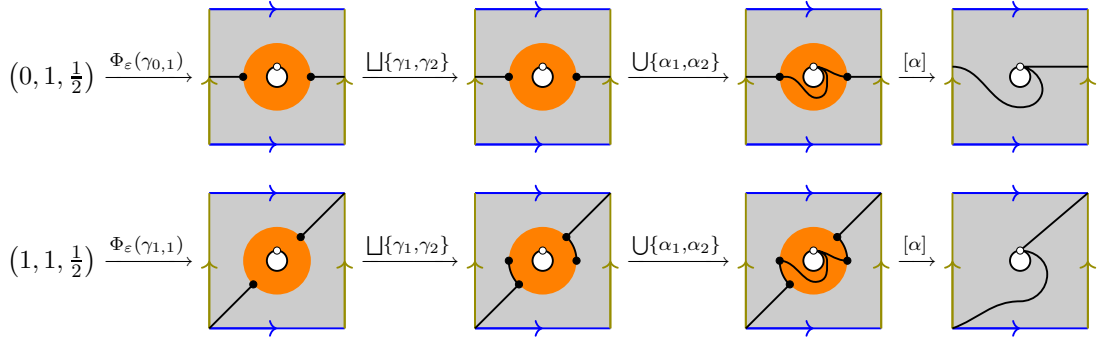
\begin{figure}[h]
        \centering
        $$\resizebox{0.9\width}{!}{$\left(0,1,\frac{1}{2}\right) \xrightarrow{\Phi_{\varepsilon}(\gamma_{0,1})}
        \begin{tikzpicture}[scale=1, baseline=-3]
            \draw[draw=none, fill=gray!40] (0,1) -- (2,1) -- (2,-1) -- (0,-1) -- (1, -0.15) to[out=0, in=-90] (1.15, 0) to[out=90, in=0] (1, 0.15) to[out=180, in=90] (0.85, 0) to[out=-90, in=180] (1, -0.15) -- (0, -1) -- (0,1);
            \draw[thick, blue, ->] (0,1) -- (1,1);
            \draw[thick, blue] (0,1) -- (2,1);
            \draw[thick, blue, ->] (0,-1) -- (1,-1);
            \draw[thick, blue] (0,-1) -- (2,-1);
            \draw[thick, olive, ->] (0,-1) -- (0,0);
            \draw[thick, olive] (0,-1) -- (0,1);
            \draw[thick, olive, ->] (2,-1) -- (2,0);
            \draw[thick, olive] (2,-1) -- (2,1);
            \draw[draw=none, fill=orange, opacity=0.7] (1,0) circle[radius=0.5cm];
            \draw[thick] (0, 0) -- (0.5, 0);
            \draw[thick] (1.5, 0) -- (2, 0);
            \draw[thick, black, fill=white] (1, 0) circle (0.15);
            \node[draw, circle, inner sep=0pt, minimum size=3pt, fill=white] at (1, 0.15) {};
            \node[draw, circle, inner sep=0pt, minimum size=3pt, fill=black] at (1.5, 0) {};
            \node[draw, circle, inner sep=0pt, minimum size=3pt, fill=black] at (0.5, 0) {};
        \end{tikzpicture} \xrightarrow{\bigsqcup \{ \gamma_1, \gamma_2 \}}
        \begin{tikzpicture}[scale=1, baseline=-3]
            \draw[draw=none, fill=gray!40] (0,1) -- (2,1) -- (2,-1) -- (0,-1) -- (1, -0.15) to[out=0, in=-90] (1.15, 0) to[out=90, in=0] (1, 0.15) to[out=180, in=90] (0.85, 0) to[out=-90, in=180] (1, -0.15) -- (0, -1) -- (0,1);
            \draw[thick, blue, ->] (0,1) -- (1,1);
            \draw[thick, blue] (0,1) -- (2,1);
            \draw[thick, blue, ->] (0,-1) -- (1,-1);
            \draw[thick, blue] (0,-1) -- (2,-1);
            \draw[thick, olive, ->] (0,-1) -- (0,0);
            \draw[thick, olive] (0,-1) -- (0,1);
            \draw[thick, olive, ->] (2,-1) -- (2,0);
            \draw[thick, olive] (2,-1) -- (2,1);
            \draw[draw=none, fill=orange, opacity=0.7] (1,0) circle[radius=0.5cm];
            \draw[thick] (0, 0) -- (0.5, 0);
            \draw[thick] (1.5, 0) -- (2, 0);
            \draw[thick, black, fill=white] (1, 0) circle (0.15);
            \node[draw, circle, inner sep=0pt, minimum size=3pt, fill=white] at (1, 0.15) {};
            \node[draw, circle, inner sep=0pt, minimum size=3pt, fill=black] at (1.5, 0) {};
            \node[draw, circle, inner sep=0pt, minimum size=3pt, fill=black] at (0.5, 0) {};
        \end{tikzpicture} \xrightarrow{\bigcup \{ \alpha_1, \alpha_2 \}}
        \begin{tikzpicture}[scale=1, baseline=-3]
            \draw[draw=none, fill=gray!40] (0,1) -- (2,1) -- (2,-1) -- (0,-1) -- (1, -0.15) to[out=0, in=-90] (1.15, 0) to[out=90, in=0] (1, 0.15) to[out=180, in=90] (0.85, 0) to[out=-90, in=180] (1, -0.15) -- (0, -1) -- (0,1);
            \draw[thick, blue, ->] (0,1) -- (1,1);
            \draw[thick, blue] (0,1) -- (2,1);
            \draw[thick, blue, ->] (0,-1) -- (1,-1);
            \draw[thick, blue] (0,-1) -- (2,-1);
            \draw[thick, olive, ->] (0,-1) -- (0,0);
            \draw[thick, olive] (0,-1) -- (0,1);
            \draw[thick, olive, ->] (2,-1) -- (2,0);
            \draw[thick, olive] (2,-1) -- (2,1);
            \draw[draw=none, fill=orange, opacity=0.7] (1,0) circle[radius=0.5cm];
            \draw[thick] (0, 0) -- (0.6, 0) to[out=0, in=200] (1.1, -0.3) to[out=20, in=270] (1.2, 0) to[out=90, in=0] (1, 0.15) to[out=0, in=180] (1.5, 0) -- (2, 0);
            \draw[thick, black, fill=white] (1, 0) circle (0.15);
            \node[draw, circle, inner sep=0pt, minimum size=3pt, fill=white] at (1, 0.15) {};
            \node[draw, circle, inner sep=0pt, minimum size=3pt, fill=black] at (1.5, 0) {};
            \node[draw, circle, inner sep=0pt, minimum size=3pt, fill=black] at (0.5, 0) {};
        \end{tikzpicture} \xrightarrow{[\alpha]}
        \begin{tikzpicture}[scale=1, baseline=-3]
            \draw[draw=none, fill=gray!40] (0,1) -- (2,1) -- (2,-1) -- (0,-1) -- (1, -0.15) to[out=0, in=-90] (1.15, 0) to[out=90, in=0] (1, 0.15) to[out=180, in=90] (0.85, 0) to[out=-90, in=180] (1, -0.15) -- (0, -1) -- (0,1);
            \draw[thick, blue, ->] (0,1) -- (1,1);
            \draw[thick, blue] (0,1) -- (2,1);
            \draw[thick, blue, ->] (0,-1) -- (1,-1);
            \draw[thick, blue] (0,-1) -- (2,-1);
            \draw[thick, olive, ->] (0,-1) -- (0,0);
            \draw[thick, olive] (0,-1) -- (0,1);
            \draw[thick, olive, ->] (2,-1) -- (2,0);
            \draw[thick, olive] (2,-1) -- (2,1);
            \draw[thick] (2, 0.15) -- (1, 0.15) to[out=0, in=60] (1.3, -0.3) to[out=240, in=0] (1,-0.45) to[out=180, in=0] (0, 0.15);
            \draw[thick, black, fill=white] (1, 0) circle (0.15);
            \node[draw, circle, inner sep=0pt, minimum size=3pt, fill=white] at (1, 0.15) {};
        \end{tikzpicture}$}$$
        $$\resizebox{0.9\width}{!}{$\left(1,1,\frac{1}{2}\right) \xrightarrow{\Phi_{\varepsilon}(\gamma_{1,1})}
        \begin{tikzpicture}[scale=1, baseline=-3]
            \draw[draw=none, fill=gray!40] (0,1) -- (2,1) -- (2,-1) -- (0,-1) -- (1, -0.15) to[out=0, in=-90] (1.15, 0) to[out=90, in=0] (1, 0.15) to[out=180, in=90] (0.85, 0) to[out=-90, in=180] (1, -0.15) -- (0, -1) -- (0,1);
            \draw[thick, blue, ->] (0,1) -- (1,1);
            \draw[thick, blue] (0,1) -- (2,1);
            \draw[thick, blue, ->] (0,-1) -- (1,-1);
            \draw[thick, blue] (0,-1) -- (2,-1);
            \draw[thick, olive, ->] (0,-1) -- (0,0);
            \draw[thick, olive] (0,-1) -- (0,1);
            \draw[thick, olive, ->] (2,-1) -- (2,0);
            \draw[thick, olive] (2,-1) -- (2,1);
            \draw[draw=none, fill=orange, opacity=0.7] (1,0) circle[radius=0.5cm];
            \draw[thick] (0, -1) -- (0.65, -0.35);
            \draw[thick] (1.35, 0.35) -- (2, 1);
            \draw[thick, black, fill=white] (1, 0) circle (0.15);
            \node[draw, circle, inner sep=0pt, minimum size=3pt, fill=white] at (1, 0.15) {};
            \node[draw, circle, inner sep=0pt, minimum size=3pt, fill=black] at (1.35, 0.35) {};
            \node[draw, circle, inner sep=0pt, minimum size=3pt, fill=black] at (0.65, -0.35) {};
        \end{tikzpicture} \xrightarrow{\bigsqcup \{ \gamma_1, \gamma_2 \}}
        \begin{tikzpicture}[scale=1, baseline=-3]
            \draw[draw=none, fill=gray!40] (0,1) -- (2,1) -- (2,-1) -- (0,-1) -- (1, -0.15) to[out=0, in=-90] (1.15, 0) to[out=90, in=0] (1, 0.15) to[out=180, in=90] (0.85, 0) to[out=-90, in=180] (1, -0.15) -- (0, -1) -- (0,1);
            \draw[thick, blue, ->] (0,1) -- (1,1);
            \draw[thick, blue] (0,1) -- (2,1);
            \draw[thick, blue, ->] (0,-1) -- (1,-1);
            \draw[thick, blue] (0,-1) -- (2,-1);
            \draw[thick, olive, ->] (0,-1) -- (0,0);
            \draw[thick, olive] (0,-1) -- (0,1);
            \draw[thick, olive, ->] (2,-1) -- (2,0);
            \draw[thick, olive] (2,-1) -- (2,1);
            \draw[draw=none, fill=orange, opacity=0.7] (1,0) circle[radius=0.5cm];
            \draw[thick] (0, -1) -- (0.65, -0.35);
            \draw[thick, domain=180:225] plot ({0.5*cos(\x)+1}, {0.5*sin(\x)});
            \draw[thick, domain=0:45] plot ({0.5*cos(\x)+1}, {0.5*sin(\x)});
            \draw[thick] (1.35, 0.35) -- (2, 1);
            \draw[thick, black, fill=white] (1, 0) circle (0.15);
            \node[draw, circle, inner sep=0pt, minimum size=3pt, fill=white] at (1, 0.15) {};
            \node[draw, circle, inner sep=0pt, minimum size=3pt, fill=black] at (1.35, 0.35) {};
            \node[draw, circle, inner sep=0pt, minimum size=3pt, fill=black] at (0.65, -0.35) {};
            \node[draw, circle, inner sep=0pt, minimum size=3pt, fill=black] at (1.5, 0) {};
            \node[draw, circle, inner sep=0pt, minimum size=3pt, fill=black] at (0.5, 0) {};
        \end{tikzpicture} \xrightarrow{\bigcup \{ \alpha_1, \alpha_2 \}}
        \begin{tikzpicture}[scale=1, baseline=-3]
            \draw[draw=none, fill=gray!40] (0,1) -- (2,1) -- (2,-1) -- (0,-1) -- (1, -0.15) to[out=0, in=-90] (1.15, 0) to[out=90, in=0] (1, 0.15) to[out=180, in=90] (0.85, 0) to[out=-90, in=180] (1, -0.15) -- (0, -1) -- (0,1);
            \draw[thick, blue, ->] (0,1) -- (1,1);
            \draw[thick, blue] (0,1) -- (2,1);
            \draw[thick, blue, ->] (0,-1) -- (1,-1);
            \draw[thick, blue] (0,-1) -- (2,-1);
            \draw[thick, olive, ->] (0,-1) -- (0,0);
            \draw[thick, olive] (0,-1) -- (0,1);
            \draw[thick, olive, ->] (2,-1) -- (2,0);
            \draw[thick, olive] (2,-1) -- (2,1);
            \draw[draw=none, fill=orange, opacity=0.7] (1,0) circle[radius=0.5cm];
            \draw[thick] (0, -1) -- (0.65, -0.35);
            \draw[thick, domain=180:225] plot ({0.5*cos(\x)+1}, {0.5*sin(\x)});
            \draw[thick, domain=0:45] plot ({0.5*cos(\x)+1}, {0.5*sin(\x)});
            \draw[thick] (1.35, 0.35) -- (2, 1);
            \draw[thick] (0.5, 0) to[out=0, in=200] (1.1, -0.3) to[out=20, in=270] (1.2, 0) to[out=90, in=0] (1, 0.15) to[out=0, in=180] (1.5, 0);
            \draw[thick, black, fill=white] (1, 0) circle (0.15);
            \node[draw, circle, inner sep=0pt, minimum size=3pt, fill=white] at (1, 0.15) {};
            \node[draw, circle, inner sep=0pt, minimum size=3pt, fill=black] at (1.35, 0.35) {};
            \node[draw, circle, inner sep=0pt, minimum size=3pt, fill=black] at (0.65, -0.35) {};
            \node[draw, circle, inner sep=0pt, minimum size=3pt, fill=black] at (1.5, 0) {};
            \node[draw, circle, inner sep=0pt, minimum size=3pt, fill=black] at (0.5, 0) {};
        \end{tikzpicture} \xrightarrow{[\alpha]}
        \begin{tikzpicture}[scale=1, baseline=-3]
            \draw[draw=none, fill=gray!40] (0,1) -- (2,1) -- (2,-1) -- (0,-1) -- (1, -0.15) to[out=0, in=-90] (1.15, 0) to[out=90, in=0] (1, 0.15) to[out=180, in=90] (0.85, 0) to[out=-90, in=180] (1, -0.15) -- (0, -1) -- (0,1);
            \draw[thick, blue, ->] (0,1) -- (1,1);
            \draw[thick, blue] (0,1) -- (2,1);
            \draw[thick, blue, ->] (0,-1) -- (1,-1);
            \draw[thick, blue] (0,-1) -- (2,-1);
            \draw[thick, olive, ->] (0,-1) -- (0,0);
            \draw[thick, olive] (0,-1) -- (0,1);
            \draw[thick, olive, ->] (2,-1) -- (2,0);
            \draw[thick, olive] (2,-1) -- (2,1);
            \draw[thick] (0, -1) to[out=20, in=180] (1, -0.6) to[out=0, in=270] (1.4, -0.2) to[out=90, in=0] (1, 0.15) -- (2, 1);
            \draw[thick, black, fill=white] (1, 0) circle (0.15);
            \node[draw, circle, inner sep=0pt, minimum size=3pt, fill=white] at (1, 0.15) {};
        \end{tikzpicture}$}$$
        \caption[Examples of $f_{A}(p,q,r)$]{Example of $f_{A}\left(0,1,\frac{1}{2}\right)$ and $f_{A}\left(1,1,\frac{1}{2}\right)$}
        \label{fig:fMapExample}
    \end{figure}
    
    Define $A$, $\partial_A$, $x_0$, $x_1$, $x_2$, and $c_1$ as before. Let $\eta$ be a smooth radial vector field on a neighborhood of $A$, denoted $N(A)$, such that
    \begin{itemize}
        \item For all $y \in N(A) \setminus A$, $\eta(y)$ is tangent to the geodesic within $N(A)$ from $y$ to $\partial_A$
        \item $\eta$ is zero outside of $N(A)$,
        \item and for all $y \in \partial_A$, the vector $\eta(y)$ is normal to $\partial_A$ and points inwards.
    \end{itemize}
    
    Define
    $$\Psi: \left(T^2 \setminus D^2 \right) \setminus A \xlongrightarrow{\sim} T^2 \setminus \{\mathfrak{p}\} \hookrightarrow T^2$$ to be the homeomorphism that flows along $\eta$, and therefore restricts to the identity on $T^2 \setminus N(A)$, followed by its canonical embedding into $T^2$.
    \begin{center}
        \begin{tikzpicture}[scale=1.3]
            \draw[draw=none, fill=gray!40] (-1,1) -- (1,1) -- (1,-1) -- (-1,-1);
            \draw[thick, blue, ->] (-1,1) -- (0,1);
            \draw[thick, blue] (-1,1) -- (1,1);
            \draw[thick, blue, ->] (-1,-1) -- (0,-1);
            \draw[thick, blue] (-1,-1) -- (1,-1);
            \draw[thick, olive, ->] (-1,-1) -- (-1,0);
            \draw[thick, olive] (-1,-1) -- (-1,1);
            \draw[thick, olive, ->] (1,-1) -- (1,0);
            \draw[thick, olive] (1,-1) -- (1,1);
            \draw[draw=red] (0, 0) circle[radius=15pt];
            \node at (0.53, 0.5) {\footnotesize{$\partial_A$}};
            \draw[thick, black, fill=white] (0, 0) circle (0.15);
            \node[draw, circle, inner sep=0pt, minimum size=3pt, fill=white] at (0, 0.15) {};
            \draw[->] (1.5, 0) -- (2.5, 0);
            \node at (1.95, 0.15) {$\sim$};
            \draw[draw=none, fill=gray!40] (3,1) -- (5,1) -- (5,-1) -- (3,-1) -- (3,1);
            \draw[thick, blue, ->] (3,1) -- (4,1);
            \draw[thick, blue] (3,1) -- (5,1);
            \draw[thick, blue, ->] (3,-1) -- (4,-1);
            \draw[thick, blue] (3,-1) -- (5,-1);
            \draw[thick, olive, ->] (3,-1) -- (3,0);
            \draw[thick, olive] (3,-1) -- (3,1);
            \draw[thick, olive, ->] (5,-1) -- (5,0);
            \draw[thick, olive] (5,-1) -- (5,1);
            \node[draw, circle, red, inner sep=0pt, minimum size=3pt, fill=white] at (4, 0) {};
            \node at (3.8, -0.1) {$\mathfrak{p}$};
            \draw[->] (5.5, 0) -- (6.5, 0);
            \draw[draw=none, fill=gray!40] (7,1) -- (9,1) -- (9,-1) -- (7,-1);
            \draw[thick, blue, ->] (7,1) -- (8,1);
            \draw[thick, blue] (7,1) -- (9,1);
            \draw[thick, blue, ->] (7,-1) -- (8,-1);
            \draw[thick, blue] (7,-1) -- (9,-1);
            \draw[thick, olive, ->] (7,-1) -- (7,0);
            \draw[thick, olive] (7,-1) -- (7,1);
            \draw[thick, olive, ->] (9,-1) -- (9,0);
            \draw[thick, olive] (9,-1) -- (9,1);
        \end{tikzpicture}
    \end{center}
    
    Consider any curve in $\Omega_x$ and select a simple representative, $\alpha$, such that $\left|\alpha \cap \partial_A \right| = 2$ and $\Psi\left(\alpha \setminus ( A \cap \alpha )\right)$ has constant slope. Since we required $\alpha$ to be simple, if $\alpha$ lies entirely within $A$, it must either be parallel to the boundary of $T^2 \setminus D^2$ or null-homotopic, and therefore not in $\Omega_x$.
    As all simple unoriented closed curves on the torus are classified as $(p,q)$-curves, where $(p,q) \sim (-p,-q)$ and $p$ and $q$ are coprime, we can uniquely associate a $(p,q)$ pair to the closed curve $\Psi\left(\alpha \setminus ( A \cap \alpha )\right) \cup \{\mathfrak{p}\}$.
    \begin{figure}[h]
    \begin{center}
        \begin{tikzpicture}[scale=1.5]
            \draw[draw=none, fill=gray!40] (3,1) -- (5,1) -- (5,-1) -- (3,-1) -- (4, -0.15) to[out=0, in=-90] (4.15, 0) to[out=90, in=0] (4, 0.15) to[out=180, in=90] (3.85, 0) to[out=-90, in=180] (4, -0.15) -- (3, -1) -- (3,1);
            \draw[thick, blue, ->] (3,1) -- (4,1);
            \draw[thick, blue] (3,1) -- (5,1);
            \draw[thick, blue, ->] (3,-1) -- (4,-1);
            \draw[thick, blue] (3,-1) -- (5,-1);
            \draw[thick, olive, ->] (3,-1) -- (3,0);
            \draw[thick, olive] (3,-1) -- (3,1);
            \draw[thick, olive, ->] (5,-1) -- (5,0);
            \draw[thick, olive] (5,-1) -- (5,1);
            \draw[thick] (3, -1) -- (3.4, -0.73) to[out=30, in=180] (4,-0.45) to[out=0, in=240] (4.3, -0.3) to[out=60, in=0] (4.1, 0.15);
            \draw[thick] (4, 0.15) to[out=90, in=180] (4.54, 0.03) -- (5, 0.33);
            \draw[thick] (3, 0.33) -- (4, 1);
            \draw[thick] (4, -1) -- (5, -0.33);
            \draw[thick] (3, -0.33) -- (5, 1);
            \draw[thick, black] (4, 0) circle (0.15);
            \node[draw, circle, inner sep=0pt, minimum size=3pt, fill=green] at (4, 0.15) {};
            \draw[draw=none, fill=white] (4.1,-0.26) circle[radius=15pt];
            \draw[->] (5.5, 0) -- (6.5, 0);
            \draw[draw=none, fill=gray!40] (7,1) -- (9,1) -- (9,-1) -- (7,-1);
            \draw[thick, blue, ->] (7,1) -- (8,1);
            \draw[thick, blue] (7,1) -- (9,1);
            \draw[thick, blue, ->] (7,-1) -- (8,-1);
            \draw[thick, blue] (7,-1) -- (9,-1);
            \draw[thick, olive, ->] (7,-1) -- (7,0);
            \draw[thick, olive] (7,-1) -- (7,1);
            \draw[thick, olive, ->] (9,-1) -- (9,0);
            \draw[thick, olive] (9,-1) -- (9,1);
            \draw[thick] (7, -1) -- (9, 0.33);
            \draw[thick] (7, 0.33) -- (8, 1);
            \draw[thick] (8, -1) -- (9, -0.33);
            \draw[thick] (7, -0.33) -- (9, 1);
            \node[draw, circle, inner sep=0pt, minimum size=3pt, fill=black] at (8.2, -0.2) {};
            \node at (8.25, -0.5) {$\mathfrak{p}$};
        \end{tikzpicture}\caption[Turning a tangle into a closed curve on the torus]{Turning the tangle, $\alpha$, in the complement of $A$ into a closed curve on the torus via $\Psi$. This example becomes a curve in $T^2$ with classification $(2,3)$}\label{fig:PsiClosure}
    \end{center}
    \end{figure}
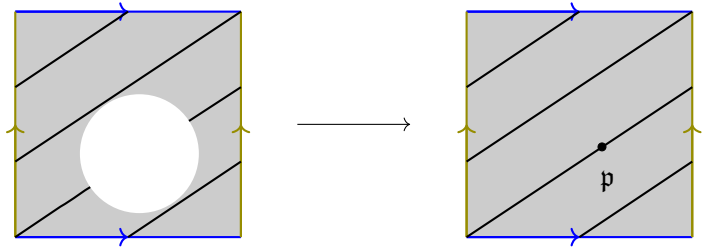

    Identify $\alpha$ with a map, $\alpha : [0,1] \to T^2 \setminus D^2$, such that $\alpha(0) = \alpha(1) = x$. Label the points $\{y_1, y_2\} = \alpha \cap \partial_A$ and their preimages as $t_i^\prime := \alpha^{-1}(y_i)$, assuming $t_1^\prime < t_2^\prime$. If $(p,q)$ is a positive slope, then let $\gamma_i$ be the geodesics on $\partial_A$ from $y_i$ to $x_i$, rotating clockwise. If these geodesics intersect each other then we can swap the labels $y_i$ by changing the orientation of $\alpha$ to avoid this. If $(p,q)$ is a negative slope then let $\gamma_i$ be the geodesics on $\partial_A$ from $y_i$ to $x_i$, rotating counterclockwise, once again changing the orientation of $\alpha$ if needed.
    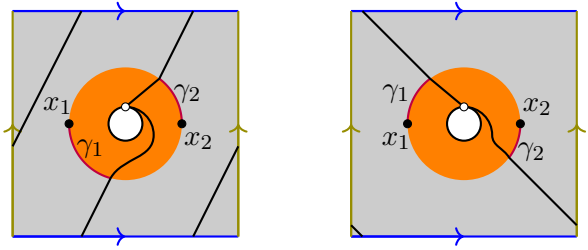
\begin{figure}
    \begin{center}
        \begin{tikzpicture}[scale=1.5]
            \draw[draw=none, fill=gray!40] (3,1) -- (5,1) -- (5,-1) -- (3,-1) -- (4, -0.15) to[out=0, in=-90] (4.15, 0) to[out=90, in=0] (4, 0.15) to[out=180, in=90] (3.85, 0) to[out=-90, in=180] (4, -0.15) -- (3, -1) -- (3,1);
            \draw[thick, blue, ->] (3,1) -- (4,1);
            \draw[thick, blue] (3,1) -- (5,1);
            \draw[thick, blue, ->] (3,-1) -- (4,-1);
            \draw[thick, blue] (3,-1) -- (5,-1);
            \draw[thick, olive, ->] (3,-1) -- (3,0);
            \draw[thick, olive] (3,-1) -- (3,1);
            \draw[thick, olive, ->] (5,-1) -- (5,0);
            \draw[thick, olive] (5,-1) -- (5,1);
            \draw[draw=none, fill=orange, opacity=0.7] (4,0) circle[radius=0.5cm];
            \draw[thick, purple, domain=180:255] plot ({0.5*cos(\x)+4}, {0.5*sin(\x)});
            \draw[thick, purple, domain=0:53] plot ({0.5*cos(\x)+4}, {0.5*sin(\x)});
            \draw[thick] (3.61, -1) -- (3.87, -0.48) to[out=60, in=270] (4.25, -0.1) to[out=90, in=0] (4, 0.15) -- (4.3, 0.4) -- (4.6, 1);
            \draw[thick] (4.6, -1) -- (5, -0.2);
            \draw[thick] (3, -0.2) -- (3.61, 1);
            \draw[thick, black, fill=white] (4, 0) circle (0.15);
            \node[draw, circle, inner sep=0pt, minimum size=3pt, fill=white] at (4, 0.15) {};
            \node[draw, circle, inner sep=0pt, minimum size=3pt, fill=black] at (4.5, 0) {};
            \node[draw, circle, inner sep=0pt, minimum size=3pt, fill=black] at (3.5, 0) {};
            \node at (3.4, 0.16) {$x_1$};
            \node at (4.65, -0.15) {$x_2$};
            \node at (3.69, -0.22) {$\gamma_1$};
            \node at (4.55, 0.27) {$\gamma_2$};
            \draw[draw=none, fill=gray!40] (6,1) -- (8,1) -- (8,-1) -- (6,-1) -- (7, -0.15) to[out=0, in=-90] (7.15, 0) to[out=90, in=0] (7, 0.15) to[out=180, in=90] (6.85, 0) to[out=-90, in=180] (7, -0.15) -- (6, -1) -- (6, 1);
            \draw[thick, blue, ->] (6,1) -- (7,1);
            \draw[thick, blue] (6,1) -- (8,1);
            \draw[thick, blue, ->] (6,-1) -- (7,-1);
            \draw[thick, blue] (6,-1) -- (8,-1);
            \draw[thick, olive, ->] (6,-1) -- (6,0);
            \draw[thick, olive] (6,-1) -- (6,1);
            \draw[thick, olive, ->] (8,-1) -- (8,0);
            \draw[thick, olive] (8,-1) -- (8,1);
            \draw[draw=none, fill=orange, opacity=0.7] (7,0) circle[radius=0.5cm];
            \draw[thick, purple, domain=127:180] plot ({0.5*cos(\x)+7}, {0.5*sin(\x)});
            \draw[thick, purple, domain=-37:0] plot ({0.5*cos(\x)+7}, {0.5*sin(\x)});
            \draw[thick] (6.1, 1) -- (6.7, 0.4) -- (7, 0.15) to[out=0, in=90] (7.25, -0.1) to[out=270, in=120] (7.4, -0.3) -- (8, -0.9);
            \draw[thick] (6, -0.9) -- (6.1, -1);
            \draw[thick, black, fill=white] (7, 0) circle (0.15);
            \node[draw, circle, inner sep=0pt, minimum size=3pt, fill=white] at (7, 0.15) {};
            \node[draw, circle, inner sep=0pt, minimum size=3pt, fill=black] at (7.5, 0) {};
            \node[draw, circle, inner sep=0pt, minimum size=3pt, fill=black] at (6.5, 0) {};
            \node at (6.4, -0.15) {$x_1$};
            \node at (7.65, 0.15) {$x_2$};
            \node at (6.4, 0.27) {$\gamma_1$};
            \node at (7.6, -0.25) {$\gamma_2$};
        \end{tikzpicture}
        \caption[Constructing geodesic paths, $\gamma_i$]{Left: Example of a curve with a positive slope. Right: Example with a negative slope.}
    \end{center}
    \end{figure}
    If $(p,q) = (1,0)$, the same slope as the longitude, then let $\gamma_i$ be the geodesics on $\partial_A$ from $y_i$ to $x_i$, rotating clockwise. Finally, if $(p,q) = (0,1)$ then $y_i = x_i$ and $\gamma_i$ is trivial for each $i$.
    
    Isotope $\alpha$ inside $A$ so that for some $0 < t_1 < t_1^\prime$ and $1 > t_2 > t_2^\prime$, $\alpha$ satisfies:
    \begin{itemize}
        \item $\alpha(t_i) = x_i$,
        \item $\alpha([t_1, t_1^\prime]) = \gamma_1$,
        \item $\alpha([t_2^\prime, t_2]) = \gamma_2$,
        \item $\alpha([t_1^\prime, t_2^\prime]\bigsqcup \{0,1\})$ remains unchanged,
        \item $\alpha$ is transverse with itself at $\alpha(0)$ and $\alpha(1)$.
    \end{itemize}
    \begin{center}
        \begin{tikzpicture}[scale=1.5]
            \draw[draw=none, fill=gray!40] (4,1) -- (6,1) -- (6,-1) -- (4,-1) -- (5, -0.15) to[out=0, in=-90] (5.15, 0) to[out=90, in=0] (5, 0.15) to[out=180, in=90] (4.85, 0) to[out=-90, in=180] (5, -0.15) -- (4, -1) -- (4,1);
            \draw[thick, blue, ->] (4,1) -- (5,1);
            \draw[thick, blue] (4,1) -- (6,1);
            \draw[thick, blue, ->] (4,-1) -- (5,-1);
            \draw[thick, blue] (4,-1) -- (6,-1);
            \draw[thick, olive, ->] (4,-1) -- (4,0);
            \draw[thick, olive] (4,-1) -- (4,1);
            \draw[thick, olive, ->] (6,-1) -- (6,0);
            \draw[thick, olive] (6,-1) -- (6,1);
            \draw[draw=none, fill=orange, opacity=0.7] (5.1,-0.26) circle[radius=0.52cm];
            \draw[thick] (4, -1) -- (4.2, -0.865) -- (4.4, -0.73) to[out=30, in=180] (5,-0.45) to[out=0, in=240] (5.3, -0.3) to[out=60, in=0] (5, 0.15);
            \draw[thick] (5, 0.15) to[out=30, in=180] (5.54, 0.03) -- (6, 0.33);
            \draw[thick] (4, 0.33) -- (5, 1);
            \draw[thick] (5, -1) -- (6, -0.33);
            \draw[thick] (4, -0.33) -- (6, 1);
            \path[thick, tips, ->] (5.45,0) arc (0:-90:0.45);
            \draw[thick, black, fill=white] (5, 0) circle (0.15);
            \node[draw, circle, inner sep=0pt, minimum size=3pt, fill=white] at (5, 0.15) {};
            \node[draw, circle, inner sep=0pt, minimum size=3pt, fill=black] at (4.6, -0.26) {};
            \node[draw, circle, inner sep=0pt, minimum size=3pt, fill=black] at (5.6, -0.26) {};
            \node at (4.7, -0.75) {$y_1$};
            \node at (5.55, 0.23) {$y_2$};
            \draw[->] (6.5, 0) -- (7.5, 0);
            \draw[draw=none, fill=gray!40] (8,1) -- (10,1) -- (10,-1) -- (8,-1) -- (9, -0.15) to[out=0, in=-90] (9.15, 0) to[out=90, in=0] (9, 0.15) to[out=180, in=90] (8.85, 0) to[out=-90, in=180] (9, -0.15) -- (8, -1) -- (8,1);
            \draw[thick, blue, ->] (8,1) -- (9,1);
            \draw[thick, blue] (8,1) -- (10,1);
            \draw[thick, blue, ->] (8,-1) -- (9,-1);
            \draw[thick, blue] (8,-1) -- (10,-1);
            \draw[thick, olive, ->] (8,-1) -- (8,0);
            \draw[thick, olive] (8,-1) -- (8,1);
            \draw[thick, olive, ->] (10,-1) -- (10,0);
            \draw[thick, olive] (10,-1) -- (10,1);
            \draw[draw=none, fill=orange, opacity=0.7] (9.1,-0.26) circle[radius=0.53cm];
            \draw[thick] (8, -1) -- (8.66, -0.55) to[out=135, in=250] (8.6, -0.26) to[out=0, in=180] (9,-0.45) to[out=0, in=240] (9.3, -0.3) to[out=60, in=0] (9, 0.15);
            \draw[thick] (9, 0.15) to[out=30, in=130] (9.6, -0.26) to[out=70, in=-45] (9.54, 0.03) -- (10, 0.33);
            \draw[thick] (8, 0.33) -- (9, 1);
            \draw[thick] (9, -1) -- (10, -0.33);
            \draw[thick] (8, -0.33) -- (10, 1);
            \path[thick, tips, ->] (9.45,0) arc (0:-90:0.45);
            \draw[thick, black, fill=white] (9, 0) circle (0.15);
            \node[draw, circle, inner sep=0pt, minimum size=3pt, fill=white] at (9, 0.15) {};
            \node[draw, circle, inner sep=0pt, minimum size=3pt, fill=black] at (8.6, -0.26) {};
            \node[draw, circle, inner sep=0pt, minimum size=3pt, fill=black] at (9.6, -0.26) {};
            \node at (8.7, -0.75) {$y_1$};
            \node at (9.55, 0.23) {$y_2$};
        \end{tikzpicture}
    \end{center}
    We can now focus on the region within the annulus, $A$, and just $\alpha([0,t_1] \sqcup [t_2, 1])$.
    \begin{center}
        \begin{tikzpicture}[scale=1.5]
            \draw[draw=none, fill=gray!40] (0,1) -- (2,1) -- (2,-1) -- (0,-1) -- (1, -0.15) to[out=0, in=250] (1.15, 0) to[out=90, in=0] (1, 0.15) to[out=180, in=90] (0.85, 0) to[out=-90, in=180] (1, -0.15) -- (0, -1) -- (0,1);
            \draw[thick, blue, ->] (0,1) -- (1,1);
            \draw[thick, blue] (0,1) -- (2,1);
            \draw[thick, blue, ->] (0,-1) -- (1,-1);
            \draw[thick, blue] (0,-1) -- (2,-1);
            \draw[thick, olive, ->] (0,-1) -- (0,0);
            \draw[thick, olive] (0,-1) -- (0,1);
            \draw[thick, olive, ->] (2,-1) -- (2,0);
            \draw[thick, olive] (2,-1) -- (2,1);
            \draw[draw=none, fill=orange, opacity=0.7] (1.1,-0.26) circle[radius=0.53cm];
            \draw[thick] (0, -1) -- (0.66, -0.55) to[out=135, in=250] (0.6, -0.26) to[out=0, in=180] (1,-0.45) to[out=0, in=240] (1.3, -0.3) to[out=60, in=0] (1, 0.15);
            \draw[thick] (1, 0.15) to[out=30, in=130] (1.6, -0.26) to[out=70, in=-45] (1.54, 0.03) -- (2, 0.33);
            \draw[thick] (0, 0.33) -- (1, 1);
            \draw[thick] (1, -1) -- (2, -0.33);
            \draw[thick] (0, -0.33) -- (2, 1);
            \path[thick, tips, ->] (1.45,0) arc (0:-90:0.45);
            \draw[thick, black, fill=white] (1, 0) circle (0.15);
            \node[draw, circle, inner sep=0pt, minimum size=3pt, fill=white] at (1, 0.15) {};
            \node[draw, circle, inner sep=0pt, minimum size=3pt, fill=black] at (0.6, -0.26) {};
            \node[draw, circle, inner sep=0pt, minimum size=3pt, fill=black] at (1.6, -0.26) {};
            \draw[fill=gray!40] (5, 0) circle (1);
            \draw[fill=white] (5, 0) circle (0.2);
            \draw[thick] (4, 0) to[out=0, in=180] (5, -0.45) to[out=0, in=240] (5.3, -0.3) to[out=60, in=20] (5, 0.2);
            \draw[thick] (5, 0.2) to[out=45, in=180] (6, 0);
            \path[thick, tips, ->] (5.45,0) arc (0:-90:0.45);
            \path[thick, tips, ->] (6, 0) -- (5.5, 0.18);
            \draw[dashed, blue] (4, 0) -- (5, 0.2);
            \node[draw, circle, inner sep=0pt, minimum size=3pt, fill=black] at (4, 0) {};
            \node[draw, circle, inner sep=0pt, minimum size=3pt, fill=black] at (6, 0) {};
            \node[draw, circle, inner sep=0pt, minimum size=3pt, fill=white] at (5, 0.2) {};
            \node at (4.2, 0.16) {$x_1$};
            \node at (5.8, -0.17) {$x_2$};
            \node[blue] at (4.7, 0.3) {$c_1$};
            \draw[->] (2.3, 0) -- (3.7, 0);
            \node at (3, 0.2) {$A$};
        \end{tikzpicture}
    \end{center}

    Consider the corresponding restriction, $\hat{\alpha}: [0,t_1] \cup [t_2, 1] \to A$, of $\alpha$ and further restrict this into its two components, $\hat{\alpha}_1: [0,t_1] \to A$, and $\hat{\alpha}_2: [t_2,1] \to A$.
    Since the mapping class group of the annulus is isomorphic to $\mathbb{Z}$, $\hat{\alpha}_1$ must be isotopic to $\sigma^{r_{1,\alpha}} c_1$ for some $r_{1,\alpha} \in \mathbb{Z}$.
    
    If we cut $A$ along $\hat{\alpha}_1$, the resulting picture is a square.
    Since $\hat{\alpha}_2$ is a path from $\hat{\alpha}(t_2)$ to $x$ and since $\hat{\alpha}(t_2)$ lies on $\partial_A$ and is distinct from $\hat{\alpha}(t_1)$, there are exactly two possible ways $\hat{\alpha}$ can be simple up to isotopy.
    \begin{center}
        \begin{tikzpicture}[scale=1.3]
            \draw (3,0) circle (1);
            \draw[thick, orange] (2.01, 0.15) to[out=0, in=135] (3, 0.15);
            \draw[thick, cyan] (3, 0.15) to[out=180, in=120] (2.7, -0.3) to[out=300, in=180] (3,-0.45) to[out=0, in=210] (3.99, 0.15);
            \draw[thick, blue] (3, 0.15) -- (3.99, 0.15);
            \draw[thick, black] (3, 0) circle (0.15);
            \node[draw, circle, inner sep=0pt, minimum size=3pt, fill=white] at (3, 0.15) {};
            \node[draw, circle, inner sep=0pt, minimum size=3pt, fill=green] at (2.01, 0.15) {};
            \node[draw, circle, inner sep=0pt, minimum size=3pt, fill=red] at (3.99, 0.15) {};
            \path[thick, tips, orange, ->] (3, 0.35) -- (2.5, 0.24);
            \path[thick, tips, blue, ->] (4, 0.15) -- (3.5, 0.15);
            \path[thick, tips, cyan, ->] (3.45,0) arc (0:-90:0.45);
            \node at (1.5, 0.37) {$\hat{\alpha}_1(t_1)$};
            \node at (3, 0.35) {$x$};
            \node at (4.5, 0.37) {$\hat{\alpha}_2(t_2)$};
            \draw[->] (5.3, 0) -- (6.8, 0);
            \node at (6, 0.2) {cut};
            \draw[thick] (8, 1) to[out=330, in=210] (10,1);
            \draw[thick] (8,-1) -- (10,-1);
            \draw[thick, orange, ->] (8,1) -- (8,0);
            \draw[thick, orange] (8,0) -- (8,-1);
            \draw[thick, orange, ->] (10,1) -- (10,0);
            \draw[thick, orange] (10,0) -- (10,-1);
            \draw[thick, cyan, ->] (9, -1) -- (8.5, 0);
            \draw[thick, cyan] (8.5, 0) -- (8, 1);
            \draw[thick, blue, ->] (9, -1) -- (9.5, 0);
            \draw[thick, blue] (9.5, 0) -- (10, 1);
            \node[draw, circle, inner sep=0pt, minimum size=3pt, fill=white] at (8, 1) {};
            \node[draw, circle, inner sep=0pt, minimum size=3pt, fill=white] at (10, 1) {};
            \node[draw, circle, inner sep=0pt, minimum size=3pt, fill=green] at (8, -1) {};
            \node[draw, circle, inner sep=0pt, minimum size=3pt, fill=green] at (10, -1) {};
            \node[draw, circle, inner sep=0pt, minimum size=3pt, fill=red] at (9, -1) {};
            \node at (7.4, -1) {$\hat{\alpha}_1(t_1)$};
            \node at (10.6, -1) {$\hat{\alpha}_1(t_1)$};
            \node at (9, -1.3) {$\hat{\alpha}_2(t_2)$};
            \node at (7.75, 1) {$x$};
            \node at (10.25, 1) {$x$};
        \end{tikzpicture}
    \end{center}
    
    Let $v_1 = \hat{\alpha}_{1}'(0)$ and $v_2 = \hat{\alpha}_{2}'(1)$, the corresponding tangent vectors, using the induced orientation from $\alpha$'s domain.\footnote{We may assume $\alpha$ is parameterized by arclength so that these vectors are never zero.}
    Since $\alpha$ is transverse with itself at $\alpha(0)$ and $\alpha(1)$, we must have that $\operatorname{det}(v_1 v_2) \neq 0$.
    Finally, define 
    \begin{align}\label{eq:r2}
        r_{2,\alpha} = \begin{cases}
            r_{1,\alpha} & \text{if } \operatorname{det}(v_1 v_2) > 0\\
            r_{1,\alpha} - 1 & \text{if } \operatorname{det}(v_1 v_2) < 0\\
        \end{cases}
    \end{align}
    and let $r = \frac{r_{1,\alpha} + r_{2,\alpha}}{2}$ which is clearly an element of $\frac{1}{2}\mathbb{Z}$, providing us with the last entry in the tuple.\footnote{If $x_1$ were located somewhere else on $\partial_A$, it might seem more natural to define $r_{2, \alpha}$ as $r_{1,\alpha} + 1$ when the determinant is positive and as $r_{1,\alpha}$ when it's negative. However, using equation (\ref{eq:r2}) instead of this for the definition of $r_{2, \alpha}$, merely corresponds to a $\frac{1}{2}$ shift in this $\frac{1}{2}\mathbb{Z}$-torsor and does not violate any assumptions. Furthermore, if we had chosen $\sigma$ to be the counterclockwise Dehn twist instead, we would, among other things, want to define $r_{2, \alpha}$ as $r_{1,\alpha}$ when the determinant is positive and as $r_{1,\alpha}+1$ when it's negative.}
    Notice that this representative of $\alpha$ is clearly equivalent to $\widetilde{f}_{A}(p,q,r)$ outside of $A$ and isotopic inside of $A$ as the decomposition of $r$ into $r_{1,\alpha}$ and $r_{2,\alpha}$ is unique.
    Therefore, $\left(\pi_{\text{iso}} \circ \widetilde{f}_{A}\right)(p,q,r) = [\alpha]$ and so this map is surjective.

    Finally, suppose $f_{A}(p,q,r) = f_{A}(p^\prime,q^\prime,r^\prime) = [\alpha]$. First notice that $\alpha$'s geometric intersection number with a fixed meridian and fixed longitude away from $x$ are invariant under isotopy. As these intersection numbers correspond to $p$ and $q$ respectively, $p = p^\prime$ and $q = q^\prime$.
    Notice that $\widetilde{f}_{A}(p,q,r) \cap\partial_A$ must completely agree with $\widetilde{f}_{A}(p^\prime,q^\prime,r^\prime) \cap\partial_A$ as well as they are equivalent on $T^2 \setminus A$ and therefore $[\widetilde{f}_{A}(p,q,r)] = [\widetilde{f}_{A}(p^\prime,q^\prime,r^\prime)]$ on $A$.
    Because they agree up to isotopy and since $r = \frac{r_{1,\alpha} + r_{2,\alpha}}{2}$ and $r^\prime = \frac{r_{1,\alpha}^\prime + r_{2,\alpha}^\prime}{2}$ have unique decompositions, $r = r^\prime$.
    Thus $(p,q,r) = (p^\prime, q^\prime, r^\prime)$ and so $f_{A}$ is bijective.
\end{proof}

\begin{lemma}\label{lemma:IntersectionDehnTwist}
    If $\alpha \in \mathscr{S}(\mathfrak{S})$ is a tangle and $\gamma \in \mathscr{S}(\mathfrak{S})$ is a closed curve such that the geometric intersection number of $\alpha$ and $\gamma$ is $1$, then $\frac{1}{q^2 - q^{-2}}[\alpha, \gamma]_q$ resolves to a Dehn twist of $\alpha$ along $\gamma$ and $\frac{-1}{q^2 - q^{-2}}[\alpha, \gamma]_{q^{-1}} = \frac{1}{q^2 - q^{-2}}[\gamma, \alpha]_q$ is the corresponding Dehn twist in the opposite direction.
\end{lemma}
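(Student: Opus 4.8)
The plan is to reduce the statement to a purely local computation at the single point where $\alpha$ and $\gamma$ meet, together with the elementary linear algebra of inverting a $2\times 2$ system. First I would fix representatives of $\alpha$ and the simple closed curve $\gamma$ realizing the geometric intersection number $1$, so that they are transverse and meet at exactly one point $p$, and the diagram of $\alpha\cup\gamma$ is embedded away from a small disk around $p$. Forming the product $\alpha\gamma$ in $\mathscr{S}(\mathfrak{S})$ (stack $\alpha$ over $\gamma$) then produces a $\partial\mathfrak{S}$-tangle diagram with exactly one crossing, located at $p$, and likewise $\gamma\alpha$ produces a diagram whose unique crossing is the mirror of that one.

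Next I would apply the Kauffman bracket relation $(R_1)$ at this crossing to write $\alpha\gamma = qD_1 + q^{-1}D_2$, where $D_1,D_2$ are the two smoothings, and by the mirrored relation $\gamma\alpha = q^{-1}D_1 + qD_2$ with the \emph{same} diagrams $D_1, D_2$. The geometric heart of the proof is the identification of $D_1$ and $D_2$: since $p$ is the only intersection point, smoothing it in either way splices $\alpha$ and $\gamma$ into a single arc, and the two resulting arcs are exactly the curves obtained from $\alpha$ by a positive, respectively negative, Dehn twist of $\alpha$ along $\gamma$ — the arc ``turns onto'' $\gamma$, runs once around it, and turns back off. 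Here one also has to carry out the framing bookkeeping: after isotoping each $D_i$ to a blackboard framing no kink should remain, so that no factor of $-q^{\pm 3}$ is introduced; I would arrange the local model so that this is manifestly the case, exactly as in the product-to-sum formula for curves on the torus.

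Finally I would solve the linear system, obtaining $q(\alpha\gamma) - q^{-1}(\gamma\alpha) = (q^2 - q^{-2})D_1$ and $q(\gamma\alpha) - q^{-1}(\alpha\gamma) = (q^2 - q^{-2})D_2$, so that $\frac{1}{q^2-q^{-2}}[\alpha,\gamma]_q = D_1$ and $\frac{1}{q^2-q^{-2}}[\gamma,\alpha]_q = D_2$ are the two Dehn twists of $\alpha$ along $\gamma$ in opposite directions; the equality $\frac{-1}{q^2-q^{-2}}[\alpha,\gamma]_{q^{-1}} = \frac{1}{q^2-q^{-2}}[\gamma,\alpha]_q$ is then just the antisymmetry of the $q$-commutator. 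The only genuinely delicate step is the second one — correctly matching each smoothing with its Dehn twist and checking the framing — since everything else is formal; but this is a standard local skein argument that does not see the ambient surface, so the lemma holds for an arbitrary marked surface $\mathfrak{S}$.
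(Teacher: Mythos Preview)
Your proposal is correct and matches the paper's proof essentially line for line: the paper also works locally at the unique crossing, applies $(R_1)$ to both $q\alpha\gamma$ and $q^{-1}\gamma\alpha$, cancels the common smoothing, and invokes the antisymmetry $[\gamma,\alpha]_q = -[\alpha,\gamma]_{q^{-1}}$ for the opposite twist. The only cosmetic difference is that you phrase it as solving a $2\times 2$ system for $D_1,D_2$ while the paper just writes out the four terms and cancels directly.
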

\begin{proof}
    Suppose $\alpha$ and $\gamma$ intersect exactly once.
    Then locally, we have
    \begin{align*}
        [\alpha, \gamma]_q &= q\alpha \gamma - q^{-1}\gamma \alpha\\
        &= q
        \begin{tikzpicture}[baseline=0]
            \draw[dashed, fill=gray!40] (0,0) circle (1);
            \draw[thick] (-0.71, 0.71) -- (0.71, -0.71);
            \draw[line width=3mm, gray!40] (-0.5, -0.5) -- (0.5, 0.5);
            \draw[thick] (-0.71, -0.71) -- (0.71, 0.71);
            \node at (-0.3,-0.6) {$\alpha$};
            \node at (-0.3,0.6) {$\gamma$};
            \node (space) at (-1, 0) {};
        \end{tikzpicture} - q^{-1}
        \begin{tikzpicture}[baseline=0]
            \draw[dashed, fill=gray!40] (0,0) circle (1);
            \draw[thick] (-0.71, -0.71) -- (0.71, 0.71);
            \draw[line width=3mm, gray!40] (-0.5, 0.5) -- (0.5, -0.5);
            \draw[thick] (-0.71, 0.71) -- (0.71, -0.71);
            \node at (-0.3,-0.6) {$\alpha$};
            \node at (-0.3,0.6) {$\gamma$};
            \node (space) at (-1, 0) {};
        \end{tikzpicture} \displaybreak\\
        &= q^2
        \begin{tikzpicture}[baseline=0]
            \draw[dashed, fill=gray!40] (0,0) circle (1);
            \draw[thick] (-0.71, -0.71) to[out=45, in=-45] (-0.71, 0.71);
            \draw[thick] (0.71, -0.71) to[out=135, in=-135] (0.71, 0.71);
            \node (space) at (-1, 0) {};
        \end{tikzpicture} +
        \begin{tikzpicture}[baseline=0]
            \draw[dashed, fill=gray!40] (0,0) circle (1);
            \draw[thick] (-0.71, -0.71) to[out=45, in=135] (0.71, -0.71);
            \draw[thick] (-0.71, 0.71) to[out=-45, in=-135] (0.71, 0.71);
            \node (space) at (-1, 0) {};
        \end{tikzpicture} -
        \begin{tikzpicture}[baseline=0]
            \draw[dashed, fill=gray!40] (0,0) circle (1);
            \draw[thick] (-0.71, -0.71) to[out=45, in=135] (0.71, -0.71);
            \draw[thick] (-0.71, 0.71) to[out=-45, in=-135] (0.71, 0.71);
            \node (space) at (-1, 0) {};
        \end{tikzpicture} - q^{-2}
        \begin{tikzpicture}[baseline=0]
            \draw[dashed, fill=gray!40] (0,0) circle (1);
            \draw[thick] (-0.71, -0.71) to[out=45, in=-45] (-0.71, 0.71);
            \draw[thick] (0.71, -0.71) to[out=135, in=-135] (0.71, 0.71);
            \node (space) at (-1, 0) {};
        \end{tikzpicture}\\
        &= \left( q^2 - q^{-2} \right)
        \begin{tikzpicture}[baseline=0]
            \draw[dashed, fill=gray!40] (0,0) circle (1);
            \draw[thick] (-0.71, -0.71) to[out=45, in=-45] (-0.71, 0.71);
            \draw[thick] (0.71, -0.71) to[out=135, in=-135] (0.71, 0.71);
            \node (space) at (-1, 0) {};
        \end{tikzpicture}\\
    \end{align*}
    giving us our result.
    Noticing that $[b,a]_q = -[a,b]_{q^{-1}}$, we get $\frac{1}{q^2 - q^{-2}}[\gamma, \alpha]_{q} = \frac{-1}{q^2 - q^{-2}}[\alpha,\gamma]_{q^{-1}}$ is the Dehn twist in the opposite direction.
\end{proof}

\begin{lemma}\label{lemma:rationalcurves}
    As an algebra, the set $\{ X_{1,0}(\mu_{1}, \nu_{1}), X_{2,0}(\mu_{2}, \nu_{2}), X_{3,0}(\mu_{3}, \nu_{3}) \}$ for all $\mu_i, \nu_j \in \{ \pm \}$ generates any $\left\{ \mathbb{Q} \cup \frac{1}{0} \right\}$-sloped tangle in $\mathscr{S}\left( T^2 \setminus D^2 \right)$ with $0 \in \frac{1}{2}\mathbb{Z}$ twists.
\end{lemma}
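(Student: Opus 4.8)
The plan is to generate every zero-twist $(\mathbb{Q}\cup\tfrac{1}{0})$-sloped tangle $f_A(p,q,0)$ (notation as in Theorem \ref{theorem:classification}) from the three generators by iterating Dehn twists, organized along the Farey tessellation of $\mathbb{Q}\cup\{\tfrac{1}{0}\}$. The point is that $X_{1,0}$, $X_{2,0}$, $X_{3,0}$ are precisely $f_A$ of the three mutually unimodular slopes $(1,0)$, $(0,1)$, $(1,1)$, which form a triangle of the Farey tessellation, and every primitive slope is obtained from such a triangle by iterated Farey mediants. I would induct on the Stern--Brocot depth of a slope, with base case the three generators (together with the one extra vertex $(1,-1)$ discussed below).

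Two preliminary observations are needed. \emph{(a)} Whenever the zero-twist tangle $f_A(p,q,0)$ already lies in the subalgebra $\langle B\rangle$, so does the closed $(p,q)$-curve $Y_{(p,q)}$: pushing a small arc of the closed curve onto the marking and applying the state-exchange relation $(R_5)$, exactly as for $Y_1$ in Figure \ref{fig:Y1StateExchangeRelation}, yields $Y_{(p,q)} = q^{1/2}f_A(p,q,0)(+,-) - q^{5/2}f_A(p,q,0)(-,+)$. \emph{(b)} If $\alpha$ is a zero-twist tangle and $c$ is a closed curve that can be isotoped into $(T^2\setminus D^2)\setminus A$, i.e.\ off a collar of the boundary, then a Dehn twist of $\alpha$ along $c$ is again a zero-twist tangle. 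Indeed, the twisting homeomorphism is supported away from $A$, so it fixes $\alpha\cap A$ and a neighborhood of the basepoint $x$; in the construction underlying Theorem \ref{theorem:classification} the twist index $r$ depends only on the class of $\hat\alpha_1$ in $\mcg{A}$ and the sign of $\det(v_1\,v_2)$ at $x$, hence is unchanged, whereas the slope, which is read off from $\alpha$ outside $A$, transforms by the corresponding $SL_2(\mathbb{Z})$ matrix.

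Granting (a) and (b), the inductive step is short. If $(p,q)$ is not one of the base slopes, write it as the Farey mediant of two strictly simpler slopes $(p_1,q_1)$, $(p_2,q_2)$ with $p_1q_2-p_2q_1=\pm 1$. By induction $f_A(p_2,q_2,0)\in\langle B\rangle$, hence $Y_{(p_1,q_1)}\in\langle B\rangle$ by (a). After isotoping $Y_{(p_1,q_1)}$ off the collar and isotoping $f_A(p_2,q_2,0)$ outside $A$ to the straight $(p_2,q_2)$-curve, the geometric intersection number of tangle and curve is $|p_1q_2-p_2q_1|=1$, so Lemma \ref{lemma:IntersectionDehnTwist} applies: one of $\tfrac{1}{q^2-q^{-2}}[f_A(p_2,q_2,0),Y_{(p_1,q_1)}]_q$ or $\tfrac{1}{q^2-q^{-2}}[Y_{(p_1,q_1)},f_A(p_2,q_2,0)]_q$ (the choice dictated by which sign of mediant produces $(p,q)$) resolves to the Dehn twist of $f_A(p_2,q_2,0)$ along $Y_{(p_1,q_1)}$, a $(p,q)$-sloped tangle which by (b) has zero twists, i.e.\ equals $f_A(p,q,0)$. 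Thus $f_A(p,q,0)\in\langle B\rangle$. To cover all of $\mathbb{Q}\cup\{\tfrac{1}{0}\}$ one descends through both Farey triangles sharing the edge $\{(1,0),(0,1)\}$; the extra vertex $(1,-1)$ of the second triangle is a single Dehn twist of a generator along a generator-curve, so it too lies in $\langle B\rangle$ and the induction proceeds. States and relative heights along the marking are recovered from $(R_6)$ as noted in the section preamble, so it suffices to argue for stateless diagrams.

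The main obstacle I anticipate is observation (b): one must return to the explicit construction of $f_A$ and verify that the twist index genuinely decouples from the slope --- that a Dehn twist supported off the collar leaves intact every piece of data (the behavior of the tangle inside $A$ and at $x$) that determines $r$, while acting on the slope by exactly the $SL_2(\mathbb{Z})$ matrix realizing the Farey mediant, and with the correct handedness so as to match the Dehn twist produced by Lemma \ref{lemma:IntersectionDehnTwist}. The remaining ingredient --- that at each step the curve $Y_{(p_1,q_1)}$ and the tangle $f_A(p_2,q_2,0)$ can be simultaneously isotoped to meet exactly once --- is precisely the Farey-neighbor condition and is therefore automatic along a Stern--Brocot path, so that part should be routine.
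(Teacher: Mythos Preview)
Your approach is essentially the paper's: both reduce to iterated Dehn twists via Lemma~\ref{lemma:IntersectionDehnTwist}, organized along the Farey/Stern--Brocot decomposition of a primitive slope into unimodular pairs---the paper phrases this via the matrix map $\sigma(A)=A\cdot\begin{psmallmatrix}1\\1\end{psmallmatrix}$ and a decomposition lemma from \cite{MR1675190}, but the combinatorics is identical. Your observation (b), that a Dehn twist supported off the collar preserves the twist index, is actually more explicit than anything in the paper, which simply takes this for granted. One small slip to clean up: in the inductive step you write ``$f_A(p_2,q_2,0)\in\langle B\rangle$, hence $Y_{(p_1,q_1)}\in\langle B\rangle$ by (a)'', but (a) applied to $(p_2,q_2)$ yields $Y_{(p_2,q_2)}$; you need $f_A(p_1,q_1,0)\in\langle B\rangle$ as well, which also follows from the inductive hypothesis since both Farey parents of $(p,q)$ are strictly simpler.
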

\begin{proof}
    Recall that $\left| \operatorname{det} \begin{pmatrix} a & c\\ b & d \end{pmatrix} \right| = n$ if and only if the $(a,b)$-curve and the $(c,d)$-curve (or $(c,d)$-tangle) have a geometric intersection number of $n$.
    Consider the (non-homomorphic) map of $G$-sets, $\sigma : GL_2(\mathbb{Z}) \to \mathbb{Z}^2$ defined by $A \mapsto A\cdot \left[ \begin{matrix} 1\\1\end{matrix} \right]$.
    Suppose $p$ and $q$ are coprime.
    If they are not coprime then the $(p,q,0)$-tangle intersects itself and can be resolved into curves and $\partial (T^2 \setminus D^2)$-tangles with $\mathbb{Q} \bigcup \frac{1}{0}$ slopes.
    We'll also assume that $0 < q < p$ (the proof is symmetric for negative slopes and when $q > p$).
    
    By Lemma 1 in \cite{MR1675190}, we can decompose $p$ and $q$ into $u + w = p$ and $v + z = q$ such that $\operatorname{det} \begin{pmatrix} u & w\\ v & z \end{pmatrix} = \pm 1$ with $0 < w < p$, $0 < u < p-1$, and $v, z > 0$, for $p \geq 3$.
    Thus for $\begin{pmatrix} p\\ q \end{pmatrix} \in \mathbb{Z}^2$, there exists an inverse, $\sigma^{-1} \begin{pmatrix} p\\ q \end{pmatrix} = \begin{pmatrix} u & w\\ v & z \end{pmatrix}$.
    We then find the inverse of the second column, $\sigma^{-1} \begin{pmatrix} w\\ z \end{pmatrix}$, and repeat until we get $\begin{pmatrix} p'\\ q' \end{pmatrix}$ for $q' < p' \leq 2$.
    Using Lemma \ref{lemma:IntersectionDehnTwist}, each step in the reverse process of this algorithm corresponds to a Dehn twist, $\frac{\pm 1}{q^2 - q^{-2}}\left[ Y_j, - \right]_{q^{\pm 1}}$, along some $Y_j$-curve corresponding to the first column.
    
    Finally, note that the $(2,1)$-tangle is equal to $\frac{1}{q^2 - q^{-2}} \left[ Y_1, X_{3,0}(\mu, \nu) \right]_q$.
\end{proof}

\begin{example}
    To illustrate the application of this algorithm, let's examine the (5,3)-tangle with states $\mu$ and $\nu$, which we'll denote as $\tilde{X}(\mu, \nu)$.
    The matrices of interest in this example are
    $$\sigma\begin{pmatrix} 0 & 1\\ 1 & 0 \end{pmatrix} = \begin{pmatrix} 1\\ 1 \end{pmatrix}, \quad\quad
    \sigma\begin{pmatrix} 1 & 1\\ 0 & 1 \end{pmatrix} = \begin{pmatrix} 2\\ 1 \end{pmatrix}, \quad\quad
    \sigma\begin{pmatrix} 2 & 1\\ 1 & 1 \end{pmatrix} = \begin{pmatrix} 3\\ 2 \end{pmatrix}, \quad\quad
    \sigma\begin{pmatrix} 3 & 2\\ 2 & 1 \end{pmatrix} = \begin{pmatrix} 5\\ 3 \end{pmatrix}.$$
    Thus, we get the following series of Dehn twists
    $$ \frac{-1}{\left( q^2 - q^{-2} \right)^{6}}\left[ \left[ Y_3, \left[ Y_1, \left[ Y_2, Y_1 \right]_{q^{-1}} \right]_q \right]_{q^{-1}}, \left[ Y_1, \left[ Y_2, X_{1,0}(\mu, \nu) \right]_{q^{-1}} \right]_q \right]_{q} = \tilde{X}(\mu, \nu).$$
\end{example}

\begin{thm}
    $B$ generates $\mathscr{S}\left( T^2 \setminus D^2 \right)$ as an algebra.
\end{thm}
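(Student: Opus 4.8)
The plan is to combine Theorem~\ref{theorem:OrBasis} with the curve classification of Theorem~\ref{theorem:classification}. By Theorem~\ref{theorem:OrBasis}, the increasingly stated, simple $\partial(T^2\setminus D^2)$-tangle diagrams form a $\mathbb{C}$-basis of $\mathscr{S}(T^2\setminus D^2)$, so it suffices to show that every such diagram lies in the subalgebra $\langle B\rangle$ generated by $B$. A simple diagram is a disjoint union of simple closed curves together with finitely many simple arcs, all based at the single marking $x$. I would induct on a complexity measure — say the number of components plus the total number of crossings needed to realize the diagram as a product of its components — so that at each step it is enough to produce a single component (a closed curve or a single arc), up to diagrams of strictly smaller complexity, the latter lying in $\langle B\rangle$ by the inductive hypothesis together with the skein relation $(R_1)$ and the height-exchange relation $(R_6)$.

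For the closed-curve components, a simple closed curve in $T^2\setminus D^2$ is either boundary-parallel or a $(p,q)$-curve. The boundary curve is expressed explicitly in terms of $Y_1,Y_2,Y_3$ by the identity computed above, hence lies in $\langle B\rangle$ since each $Y_i=q^{1/2}X_{i,0}(+,-)-q^{5/2}X_{i,0}(-,+)$. For a $(p,q)$-curve one runs the continued-fraction reduction of Lemma~\ref{lemma:rationalcurves} with the arc replaced by a closed curve: starting from the meridian, longitude, and $(1,1)$-curve $Y_1,Y_2,Y_3$ and iterating the Dehn-twist brackets of Lemma~\ref{lemma:IntersectionDehnTwist}, one reaches every $(p,q)$-curve modulo lower-complexity diagrams.

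For the arc components, Theorem~\ref{theorem:classification} says each simple non-parallel arc based at $x$ equals $f_A(p,q,r)$ for a unique $(p,q,r)\in\mathcal{I}$. When $r=0$, Lemma~\ref{lemma:rationalcurves} (using the $(2,1)$-tangle identity there and the formulas for the $Y_i$) puts $f_A(p,q,0)$ in $\langle B\rangle$. For general $r\in\frac{1}{2}\mathbb{Z}$ one peels off half-Dehn twists around the boundary: each half-twist sends $X_{i,r}$ to $X_{i,r\pm 1/2}$ and is realized by multiplying against the parallel tangle near $x$ (equivalently, by a commutator against the boundary curve), producing $X_{i,r\pm 1/2}$ plus tangle diagrams of strictly smaller complexity, so iterating reduces to the $r=0$ case. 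Parallel arcs themselves are handled by the explicit identity above, all other relative heights at the marking come from a single fixed ordering via $(R_6)$ and $Y_1,Y_2,Y_3$, and the four sign choices $X_{i,0}(\pm,\pm)$ are literally in $B$, so the $4$-to-$1$ passage from stateless to stated diagrams is harmless. Assembling these cases gives $\langle B\rangle=\mathscr{S}(T^2\setminus D^2)$.

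The main obstacle is the half-twist step: unlike the $r=0$ case, no prior lemma covers it, so one must verify both that a half-Dehn twist around the boundary can be realized inside $\langle B\rangle$ and, crucially, that the crossing resolutions it introduces only ever yield diagrams of strictly smaller complexity, so that the induction closes. Choosing the complexity measure so that this holds simultaneously for the closed-curve twists, the $(p,q)$-reduction, and the half-twist reduction is the delicate bookkeeping that the proof must handle with care.
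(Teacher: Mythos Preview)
Your overall scaffolding matches the paper's: reduce to simple diagrams via Theorem~\ref{theorem:OrBasis} (the paper uses the equivalent Theorem~\ref{theorem:SSOrdBasis}), use Theorem~\ref{theorem:classification} to index simple arcs by $(p,q,r)$, invoke Lemma~\ref{lemma:rationalcurves} for $r=0$, and then treat the twist parameter $r$ separately. The explicit formulas for the parallel tangle and boundary curve are already in the paper and handle those components exactly as you say.

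The genuine gap is the one you flag yourself: your mechanism for changing $r$ is not correct. Multiplying by the parallel tangle does not produce an arc---it produces a diagram with four endpoints at the marking, and you cannot simply cap two of them off without invoking $(R_3)$--$(R_5)$ with specific states, so there is no clean ``$X_{i,r\pm 1/2}$ plus lower terms'' coming out. Your alternative, a $q$-commutator with the boundary curve $\partial$, cannot be handled by Lemma~\ref{lemma:IntersectionDehnTwist}: that lemma requires geometric intersection number~$1$, whereas any non-parallel arc crosses $\partial$ exactly twice. Resolving those two crossings yields four terms, and you would need a separate calculation to show these isolate $X_{i,r\pm 1/2}$; the paper never does such a calculation, and your parenthetical ``equivalently'' between these two mechanisms is not justified either.

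The paper's resolution is completely different and avoids both half-twists and complexity induction. It records the exact identities
\[
X_{i,k\pm 1}(\mu,\nu)=\frac{1}{(q^2-q^{-2})^3}\bigl[Y_{i+1},[Y_i,[Y_{i-1},X_{i,k}(\mu,\nu)]_q]_q\bigr]_q
\]
(equations (\ref{eq:shiftup}) and (\ref{eq:shiftdown}), with the brackets ordered appropriately), realizing a full boundary Dehn twist as a composition of three interior Dehn twists along $Y_1,Y_2,Y_3$---each of which is already covered by Lemma~\ref{lemma:IntersectionDehnTwist} since each $Y_j$ meets the relevant arc once. These are equalities, not ``up to lower order terms,'' so there is no inductive bookkeeping: combining them with the Dehn-twist algorithm of Lemma~\ref{lemma:rationalcurves} (now applied to $X_{i,k}$ rather than just $X_{i,0}$) reaches every $(p,q,r)$-tangle directly. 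The missing idea in your proposal is precisely this: shift $r$ by applying three interior Dehn twists rather than attempting a single boundary half-twist.
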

\begin{proof}
    A short calculation shows that
    \begin{align}
        X_{i,k+1}(\mu, \nu) &= \frac{1}{(q^2 - q^{-2})^3} \left[Y_{i+1}, \left[ Y_i, \left[ Y_{i-1}, X_{i,k}(\mu, \nu) \right]_q \right]_q \right]_q \label{eq:shiftup}\\
        X_{i,k-1}(\mu, \nu) &= \frac{1}{(q^2 - q^{-2})^3} \left[ \left[ \left[ X_{i,k}(\mu, \nu), Y_{i+1} \right]_q, Y_{i} \right]_q, Y_{i-1} \right]_q \label{eq:shiftdown}
    \end{align}
    for $i \mod 3$ and $\mu, \nu \in \{ \pm \}$.
    Using this result and Lemma \ref{lemma:rationalcurves}, we can construct any $(p,q,r)$-tangle.
    By Theorem \ref{theorem:SSOrdBasis}, the set of all isotopy classes of increasingly stated, $\mathfrak{o}$-simple $\partial(T^2 \setminus D^2)$-tangle diagrams forms a basis for $\mathscr{S}\left( T^2 \setminus D^2 \right)$.
    Since every simple non-parallel tangle can be expressed as a $(p,q,r)$-tangle, every simple $\partial\left( T^2 \setminus D^2 \right)$-tangle diagram can be written as a linear combination of products of these $(p,q,r)$-tangles and powers of our single parallel tangle.
    Thus, the stated skein algebra generated by $B$ spans the entire space.\hfill
\end{proof}

\begin{remark}
    Since we have the relation
    $$X_{3,0}(\mu, \nu) = \frac{1}{q^2 - q^{-2}}\left[ X_{1,0}(\mu, \nu), q^{1/2}X_{2,0}(+,-) - q^{5/2}X_{2,0}(-,+) \right]_q,$$
    we don't technically need to include $X_{3,0}(\mu, \nu)$ to generate $\mathscr{S}(T^2 \setminus D^2)$. We only need the eight elements $\left\{ X_{i,0}(\mu, \nu) \, \mid \mu, \nu \in \{ \pm \}, i \in \{1,2\} \right\}$. However, as with $K_q(T^2 \setminus D^2)$, it is often more notationally convenient to include $X_{3,0}(\mu, \nu)$ as well.
\end{remark}

\section{Relation to Factorization Homology}

Alekseev-Grosse-Schomerus moduli algebras (AGS algebras) are deformations of representation varieties of $\Sigma \setminus D^2$ for some surface $\Sigma$, via the Fock-Rosly Poisson structure. It was shown in both \cite{faitg2022holonomy} and \cite{MR4598807} that these AGS algebras are isomorphic as $\mathcal{O}_{q}(SL_2)$-comodule algebras to $\mathscr{S}(\Sigma \setminus D^2)$, the corresponding stated skein algebra with one marking on the boundary created by removing $D^2$.

Internal skein algebras (also known as internal endomorphism algebras), denoted $A_{\Sigma \setminus D^2}$, were explored in \cite{MR3847209, MR3659493, MR4557403} using factorization homology and were also shown to be isomorphic to corresponding AGS algebras in \cite{MR3847209}. It was stated in \cite{MR4557403} and \cite{MR4264235} and made more explicit in \cite{arx220100045} that $A_{\Sigma \setminus D^2}$ should be isomorphic to $\mathscr{S}(\Sigma \setminus D^2)$ with one marking as $\mathcal{O}_q(SL_2)$-comodule algebras, tying together factorization homology and stated skein theory. Using only skein theory, Ha\"{i}oun proved in \cite{MR4437512} that $\mathscr{S}(\Sigma \setminus D^2)$ (with one marking) is isomorphic to its corresponding internal skein algebra as $\mathcal{O}_{q}(SL_2)$-comodule algebras.

Moving to our specific case, it was shown in \cite{MR3847209, MR3659493} that the algebra $A_{T^2 \setminus D^2}$, and therefore $\mathscr{S}(T^2 \setminus D^2)$, is isomorphic as an $\mathcal{O}_q(SL_2)$-comodule algebra to the algebra of quantum differential operators, $D_q(SL_2) \cong U_q(\mathfrak{sl}_2) \ltimes \mathcal{O}_{q}(SL_2)$, which they called the \emph{elliptic double} and is a subalgebra of the Heisenberg double of $U_q(\mathfrak{sl}_2)$. In \cite{MR4557403}, Gunningham, Jordan, and Safronov provided an explicit presentation for this algebra: $U_q(\mathfrak{sl}_2) \ltimes \mathcal{O}_{q}(SL_2)$ is generated by $a_{1}^{1}, a_{2}^{1}, a_{1}^{2}, a_{2}^{2}, b_{1}^{1}, b_{2}^{1}, b_{1}^{2}, b_{2}^{2}$, subject to the relations
\begin{align*}
    R_{21} A_1 R_{12} A_2 &= A_2 R_{21} A_1 R_{12} \\
    R_{21} B_1 R B_2 &= B_2 R_{21} B_1 R \\
    R_{21} B_1 R A_2 &= A_2 R_{21} B_1 R_{21}^{-1}\\
    1 &= a_1^1 a_2^2-q^2 a_2^1 a_1^2, \\
    1 &= b_1^1 b_2^2-q^2 b_2^1 b_1^2,
\end{align*}
where
\begin{align*}
    A_1 &= \begin{pmatrix} a_1^1 & a_2^1 \\ a_1^2 & a_2^2 \end{pmatrix} \otimes \operatorname{Id},
    & B_1 &= \begin{pmatrix} b_1^1 & b_2^1 \\ b_1^2 & b_2^2 \end{pmatrix} \otimes \operatorname{Id},\\
    A_2 &= \operatorname{Id} \otimes \begin{pmatrix} a_1^1 & a_2^1 \\ a_1^2 & a_2^2 \end{pmatrix},
    & B_2 &= \operatorname{Id} \otimes \begin{pmatrix} b_1^1 & b_2^1 \\ b_1^2 & b_2^2 \end{pmatrix},
\end{align*}
and $R = R_{12}$ is the $R$ matrix over $L(1) \otimes L(1)$. These generators correspond to the same generators just discussed.

\begin{align*}
    X_{1,0}(+,+) &\mapsto a_2^2 & X_{2,0}(+,+) &\mapsto b_2^2 \\
    X_{1,0}(+,-) &\mapsto a_2^1 & X_{2,0}(+,-) &\mapsto b_2^1 \\
    X_{1,0}(-,+) &\mapsto a_1^2 & X_{2,0}(-,+) &\mapsto b_1^2 \\
    X_{1,0}(-,-) &\mapsto a_1^1 & X_{2,0}(-,-) &\mapsto b_1^1
\end{align*}
However, multiplication in these two algebras are defined a bit differently and so the relations on $\mathscr{S}(T^2 \setminus D^2)$ aren't going to be the same on the nose. In particular, the product structure in $D_q(SL_2)$ is an associative braided product (called the covariantised product in \cite{MR1381692}) which corresponds to a transmutation of $\mathcal{O}_q(SL_2)$, the Hopf dual of $U_q(\mathfrak{sl}_2)$. You can read more about transmutation in \cite{MR1381692} and for great pictures and details on how it relates to stated skein algebras, see L\^{e} and Sikora's preprint \cite{arx220100045}.
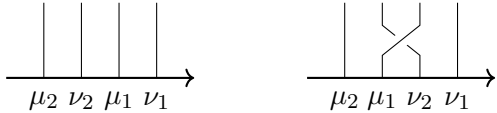
\begin{figure}[h]
    \centering
    \begin{tikzpicture}
        \draw[thick, ->] (0, 0) -- (2.5, 0);
        \draw (0.5, 0) -- (0.5, 1);
        \draw (1, 0) -- (1, 1);
        \draw (1.5, 0) -- (1.5, 1);
        \draw (2, 0) -- (2, 1);
        \node at (0.5, -0.3) {$\mu_2$};
        \node at (1, -0.3) {$\nu_2$};
        \node at (1.5, -0.3) {$\mu_1$};
        \node at (2, -0.3) {$\nu_1$};
        \draw[thick, ->] (4, 0) -- (6.5, 0);
        \draw (4.5, 0) -- (4.5, 1);
        \draw (5.5, 0) -- (5.5, 0.3) -- (5, 0.7) -- (5, 1);
        \draw[line width=2mm, white] (5, 0.3) -- (5.5, 0.7);
        \draw (5, 0) -- (5, 0.3) -- (5.5, 0.7) -- (5.5, 1);
        \draw (6, 0) -- (6, 1);
        \node at (4.5, -0.3) {$\mu_2$};
        \node at (5, -0.3) {$\mu_1$};
        \node at (5.5, -0.3) {$\nu_2$};
        \node at (6, -0.3) {$\nu_1$};
    \end{tikzpicture}
    \caption[Diagram of transmutation multiplication]{Very roughly speaking, the left is multiplication in $\mathscr{S}(T^2 \setminus D^2)$ and right is multiplication in the ``transmutated'' version.}
    \label{fig:enter-label}
\end{figure}

Note that when viewing $\mathscr{S}(T^2 \setminus D^2)$ with this braided product as our multiplication, the generators $\left\{X_{1,0}(\mu_1, \nu_1), X_{2,0}(\mu_2, \nu_2) \, \mid \mu_i, \nu_i \in \{ \pm \}\right\}$ agree with these relations under the identification above. Therefore, this is a concrete example of the relationship between factorization homology and stated skein algebras. For a more in-depth explanation of this relationship, please refer to \cite{MR3847209, MR4557403, MR4437512, arx220100045}. These details, although quite interesting, are not directly relevant to the main discussion in this work and will not be discussed here.

\section{Towards a PBW Basis}

It would be ideal and highly advantageous if we could additionally find a PBW basis for $\ms{S}\left(T^2 \setminus D^2\right)$.
Sadly, establishing the basis for this algebra has proven to be quite challenging, primarily due to the rapid escalation of calculations.
The main hurdle lies in identifying all the relations required for such a presentation, as is often the case in such endeavors.

Unfortunately, attempting to use similar techniques used in the Kauffman bracket case has not been fruitful.
While we have the relation $\frac{1}{q^2 - q^{-2}}[Y_i, Y_{i+1}]_q = Y_{i+2}$ just as the Kauffman bracket case, computing an analogous relation on tangles instead yields \ref{eq:shiftup} and \ref{eq:shiftdown}, indicating the possible need to consider these half-twists when establishing a basis.
Paradoxically, we also have the equality
\begin{align}\label{eq:NotLI}
    Y_i = q^{1/2} X_{i,r}(+,-) - q^{5/2} X_{i,r}(-,+)
\end{align}
for all $r \in \frac{1}{2}\mathbb{Z}$, further complicating things.
Note that this also extends to all closed $(p,q)$-curves and their corresponding $(p,q,r)$-tangles.

For any $r \in \frac{1}{2}\mathbb{Z}$, let $\gamma_{p,q,r}$ be a simple $\partial \left( T^2 \setminus D^2 \right)$-tangle corresponding to the tuple $(p,q,r)$ from Theorem \ref{theorem:classification}.
An interesting note is that for any $r,s \in \frac{1}{2}\mathbb{Z}$, resolving any crossings in the product of $\gamma_{p,q,r} \gamma_{p,q,s}$ will always result in a sum of simple tangles containing a summand element in $\gamma_{\frac{r+s}{2}}$ when $\frac{r+s}{2} \in \frac{1}{2}\mathbb{Z}$ and an element in $\gamma_{ \frac{\lfloor r+s \rfloor}{2} } \gamma_{ \frac{\lceil r+s \rceil}{2} }$ when $\frac{r+s}{2} \notin \frac{1}{2}\mathbb{Z}$.
\begin{conj}
    The product of stateless simple tangles, $\gamma_{p,q,r}$ and $\gamma_{p,q,s}$, always resolves to a polynomial of closed $(p,q)$-curves, parallel tangles, parallel closed curves, $\gamma_{p,q,\frac{\lfloor r+s \rfloor}{2}}$, and $\gamma_{p,q,\frac{\lceil r+s \rceil}{2}}$.
\end{conj}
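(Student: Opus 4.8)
The plan is to localise the product to an annular collar of the marking, resolve the crossings there with the Kauffman relation $(R_1)$, check that each crossingless term is automatically a disjoint union of elements of the four ``boundary'' types plus at most two $(p,q)$-tangles based at the marking, and finally---this is the real content---control the half-twist indices of those two tangles. To set up, I would use the local model from the proof of Theorem \ref{theorem:classification}: fix the collar annulus $A\subset T^2\setminus D^2$ of $\partial(T^2\setminus D^2)$, the circle $\partial_A$, the marking $x$, and representatives so that $\gamma_{p,q,r}$ and $\gamma_{p,q,s}$ coincide outside $A$ with a single $(p,q)$-arc $\gamma_0$ and differ inside $A$ only in carrying $r$ resp.\ $s$ half-twists around the removed disk. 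Isotoping the two outside copies of $\gamma_0$ in $\gamma_{p,q,r}\cdot\gamma_{p,q,s}$ to be parallel and disjoint, all crossings are pushed into $A$; using $\mcg{A}\cong\mathbb{Z}$ and the canonical decompositions $r=(r_1+r_2)/2$, $s=(s_1+s_2)/2$ with $r_1-r_2,s_1-s_2\in\{0,1\}$, I would normalise the part inside $A$ to four arcs from $\partial_A$ to $x$ together with a single twist region whose crossing number is determined by $r$ and $s$. (States and heights are suppressed throughout, as in the rest of this section; one may also first apply the automorphism of $\ms{S}(T^2\setminus D^2)$ induced by a boundary-fixing homeomorphism taking slope $(p,q)$ to the longitude, which preserves each listed type and merely translates the index set $\tfrac12\mathbb{Z}$, but since the picture inside $A$ does not depend on the slope this reduction is optional.)

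Next I would expand the twist region by repeated $(R_1)$ and delete trivial components by $(R_2)$. Each smoothing removes a double point of the projection, so every surviving term is a disjoint, embedded union of arcs and circles and therefore equals, with no further skein relations, the product of its components in any height order. Each component is of a required type for homological reasons: it is assembled from the two copies of $\gamma_0$ and arcs lying in the collar $A$, the latter being null-homologous in $T^2\setminus D^2$ since the core of $A$ is parallel to $\partial(T^2\setminus D^2)$; hence a closed component is a simple closed curve homologous to the primitive class $(p,q)$ or to $0$, so isotopic to a $(p,q)$-curve or a parallel closed curve, and an arc from $x$ to $x$ is homologically $(p,q)$ or $0$, so by Theorem \ref{theorem:classification} is some $\gamma_{p,q,t}$ or a parallel tangle. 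Since the resulting tangle has its four ends at $x$, it is a union of two such arcs together with closed components, so at most two tangles $\gamma_{p,q,t}$ occur in any term.

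What remains, and is the main obstacle, is to show that the only admissible indices are $t=\lfloor r+s\rfloor/2$ and $t=\lceil r+s\rceil/2$ (coinciding with a single $\gamma_{p,q,(r+s)/2}$ when $r+s$ is even), with the leading smoothing(s) of the twist region producing exactly the product $\gamma_{p,q,\lfloor r+s\rfloor/2}\,\gamma_{p,q,\lceil r+s\rceil/2}$. Here I would track, across each smoothing, the winding number around the removed disk of each reconnected strand, expressed in terms of the $r_i,s_i$: one has to verify that a smoothing introducing a cup and a cap strictly drops the surviving winding (so $t\le\lceil r+s\rceil/2$ always and no intermediate value occurs), and that the straight-through smoothing distributes the available winding over the at most two through-strands precisely as the half-twist pairs prescribed by the classification for $\gamma_{p,q,\lfloor r+s\rfloor/2}$ and $\gamma_{p,q,\lceil r+s\rceil/2}$. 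I expect this winding bookkeeping, honestly tracked across all the smoothings of the twist region, to be the only genuine difficulty---consistent with the paper's remark that the obstruction is the rapid escalation of the calculations---and the natural way to tame it would be to present the fixed-slope subalgebra generated by $\{\gamma_{p,q,r}\}_r$ and the boundary elements (expected to be a quotient of the stated skein algebra of a once-marked annulus) and prove the statement there.
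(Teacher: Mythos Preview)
This statement appears in the paper as a \emph{conjecture}, not a theorem; it is offered as an observation based on calculations in the section ``Towards a PBW Basis'' and is explicitly left unproved. There is therefore no proof in the paper to compare your proposal against.

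Your outline is a sensible strategy, and you correctly identify where the difficulty lies. One small correction to the homological step: you assert that every arc component after resolution has homology class $(p,q)$ or $0$, but a priori a smoothing inside $A$ could produce a $\partial_A$-to-$\partial_A$ arc joining the two external copies of $\gamma_0$ into a single arc from $x$ to $x$ of homology $(2p,2q)$ or $0$, depending on relative orientation. The $(2p,2q)$ case is ruled out a posteriori---the arc is embedded and Theorem~\ref{theorem:classification} forces primitive homology on simple non-parallel tangles---while the $0$ case is a parallel tangle; so your conclusion survives, but the case analysis needs this extra wrinkle.

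The genuine obstruction, as you acknowledge, is the twist-index control, and here your heuristic needs sharpening. Saying that a cup--cap smoothing ``strictly drops the surviving winding'' would on its face permit a whole range of values $t\le\lceil r+s\rceil/2$, not force $t\in\{\lfloor r+s\rfloor/2,\,\lceil r+s\rceil/2\}$. The conjecture is really asserting that intermediate twist values either never arise from these specific smoothings or else cancel in the sum, and this ``averaging'' is exactly the phenomenon the paper flags as unexplained (and as obstructing any naive grading). Your proposed reduction to a fixed-slope subalgebra---morally a stated skein algebra of a once-marked annulus---is the natural move, but until that algebra is presented and the product computed there, the statement remains what the paper says it is: open.
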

In particular, if $s = r \pm \frac{1}{2}$, the product can always be drawn without any crossings.
Although this observation may give off the sense of a grading, the presence of this averaging formula precludes it from being classified as such.
Furthermore, it, of course, could not constitute a decomposition of our algebra either, as illustrated by equation \ref{eq:NotLI}, which demonstrates the absence of linear independence among these if we tried to make them direct summands.
As far as I am aware, there are no comparable examples in the existing literature of this phenomenon to serve as a reference point.

Another possible route is to employing the embedding technique described in Section \ref{section:T6Embedding} and studying the image of $\mathscr{S}(T^2 \setminus D^2)$.
Although some progress has been made, extracting useful patterns from this embedding is particularly arduous.
Notably, the image of seemingly simple tangle elements quickly becomes unwieldy in size as $r \in \frac{1}{2}\mathbb{Z}$ moves further away from $0$ (see Appendix \ref{appendix:ImageOfT6} for an example).

For convenience and further use, I have calculated, and partially verified using a computer program (Appendix \ref{appendix:qCommRel}), all $16$ commuting relations among $X_{1,k}$ and $X_{2,k}$ for all states. Note that these calculations assume both tangles share the same number of twists with respect to our classification in Theorem \ref{theorem:classification}. I have also added the full calculation for the longest calculation (in the case of $0$ twists around the boundary) in Appendex \ref{section:CommRelCalc}. Here, $\widetilde{X}_{3,k}(\mu, \nu)$ corresponds to the $(1,-1)$-tangle with $k$ twists and $\widetilde{Y}_3$ is the closed $(1,-1)$-curve.

\begin{small}
    \begin{align*}
        X_{1,k}(+,+) X_{2,k}(+,+) &= q^{2} X_{2,k}(+,+) X_{1,k}(+,+) \\
        X_{1,k}(+,+) X_{2,k}(+,-) &= q^{-2} X_{2,k}(+,-) X_{1,k}(+,+) + q^{-3/2}(q^2 - q^{-2})X_{3,k}(+,+)\\
        X_{1,k}(+,+) X_{2,k}(-,+) &= q^{-2} X_{2,k}(-,+) X_{1,k}(+,+) \\
        X_{1,k}(+,+) X_{2,k}(-,-) &= q^{-6} X_{2,k}(-,-) X_{1,k}(+,+) + q^{-3/2}(q^2 - q^{-2}) X_{3,k}(-,+) \\
        X_{1,k}(+,-) X_{2,k}(+,+) &= q^{6} X_{2,k}(+,+) X_{1,k}(+,-) - q^{7/2}(q^2 - q^{-2})X_{2,k}(+,+)Y_1\\
        &\phantom{=} - q^{5/2}(q^2 - q^{-2})\left( q^{2} \widetilde{X}_{3,k}(+,+) + q^{-2}\widetilde{X}_{3,k-\frac{1}{2}}(+,+) \right)\\
        X_{1,k}(+,-) X_{2,k}(+,-) &= q^{2} X_{2,k}(+,-) X_{1,k}(+,-) + (q^2 - q^{-2})\widetilde{Y}_3 \\
        &\phantom{=} - q^{-1/2}(q^2 - q^{-2}) \left( q \widetilde{X}_{3,k}(+,-) + X_{2}(+,-)Y_1 - q^{-1} X_{3}(+,-) \right) \\
        X_{1,k}(+,-) X_{2,k}(-,+) &= q^{2} X_{2,k}(-,+) X_{1,k}(+,-) - q^{-1/2}(q^2 - q^{-2}) \widetilde{X}_{3,k-\frac{1}{2}}(-,+)\\
        X_{1,k}(+,-) X_{2,k}(-,-) &= q^{-2} X_{2,k}(-,-) X_{1,k}(+,-) + q^{-3/2}(q^2 - q^{-2}) X_{3,k}(-,-) \\
        X_{1,k}(-,+) X_{2,k}(+,+) &= q^{6} X_{2,k}(+,+) X_{1,k}(-,+) - q^{5/2}(q^2 - q^{-2}) \widetilde{X}_{3,k-\frac{1}{2}}(+,+) \\
        X_{1,k}(-,+) X_{2,k}(+,-) &= q^{2} X_{2,k}(+,-) X_{1,k}(-,+) - q^{1/2}(q^2 - q^{-2}) \widetilde{X}_{3,k}(-,+) \\
        X_{1,k}(-,+) X_{2,k}(-,+) &= q^{2} X_{2,k}(-,+) X_{1,k}(-,+) \\
        X_{1,k}(-,+) X_{2,k}(-,-) &= q^{-2} X_{2,k}(-,-) X_{1,k}(-,+) \\
        X_{1,k}(-,-) X_{2,k}(+,+) &= \resizebox{0.95\width}{!}{$q^{10} X_{2,k}(+,+) X_{1,k}(-,-) - q^{13/2} (q^2 - q^{-2})\left( q^{4} X_{3,k}(-,+) + q^{-4} \widetilde{X}_{3,k}(+,-) \right)$}\\
        &\phantom{=} - q^{11/2}(q^2 - q^{-2}) \left( q^3 \widetilde{X}_{3,k-\frac{1}{2}}(-,+) + q^{-3} \widetilde{X}_{3,k-\frac{1}{2}}(+,-)\right)\\
        &\phantom{=} - q^{7}(q^2 - q^{-2})(q^3 + q^{-3}) \widetilde{Y}_{3,k}\\
        X_{1,k}(-,-) X_{2,k}(+,-) &= q^{6} X_{2,k}(+,-) X_{1,k}(-,-)\\
        &\phantom{=} - q^{5/2}(q^2 - q^{-2}) \left( q X_{2,k}(-,-)Y_1 + (q^{2} + q^{-2})\widetilde{X}_{3,k}(-,-) \right) \\
        X_{1,k}(-,-) X_{2,k}(-,+) &= q^{6} X_{2,k}(-,+) X_{1,k}(-,-) - q^{7/2} (q^2 - q^{-2}) \widetilde{X}_{3,k-\frac{1}{2}}(-,-) \\
        X_{1,k}(-,-) X_{2,k}(-,-) &= q^{2} X_{2,k}(-,-) X_{1,k}(-,-)
    \end{align*}
\end{small}
\vspace*{-\baselineskip}
\chapter{Representations from Quantum Tori}

\section{Embedding Into Quantum Tori}\label{section:T6Embedding}
\begin{definition}\label{defn:qtorus}
    Given an anti-symmetric integral $n \times n$ matrix $Q$, the associated \textit{quantum torus} is defined as
    $$\mathbb{T}^{n}(Q) := \frac{\mathbb{C} \left[ x_1^{\pm 1}, \cdots, x_n^{\pm 1} \right]}{\left( x_i x_j = q^{Q_{ij}} x_j x_i \right)}$$
    and the corresponding \textit{quantum plane} is
    $$\mathbb{T}_{+}^{n}(Q) := \frac{\mathbb{C} \left[ x_1, \cdots, x_n \right]}{\left( x_i x_j = q^{Q_{ij}} x_j x_i \right)}.$$
\end{definition}
We will often drop the $Q$ in $\mathbb{T}^{n}(Q)$ and $\mathbb{T}_{+}^{n}(Q)$ to shorten notation.
Clearly, $\mathbb{T}_+^n$ is an Ore domain and $\mathbb{T}^n$ is an Ore localization is $\mathbb{T}_{+}^n$.

L\^{e} and Yu proved that there exist embeddings $\mathbb{T}_{+}^r \xhookrightarrow{\psi_{\mathcal{E}}} \ms{S}(\mf{S}) \xhookrightarrow{\varphi_{\mathcal{E}}} \mathbb{T}^{r}$, where $r$ is the Gelfand-Kirillov dimension of $\ms{S}(\mf{S})$ and $\psi_{\mathcal{E}}$ and $\varphi_{\mathcal{E}}$ are algebra homomorphisms.
Both of these embeddings depend on a quasitriangulation of $\mf{S}$, denoted $\mathcal{E}$, and are discussed extensively in \cite{MR4431131}.
Bonahon and Wong constructed a similar map in \cite{MR2851072} where $\mathbb{T}^r$ would instead be rational functions in skew-commuting variables associated to the square roots of the shear coordinates of Chekhov and Fock's enhanced quantum Teichm\"{u}ller space.
Furthermore, Bonahon and Wong demonstrated that a change in quasitriangulation induces an algebra isomorphism between the respective Teichm\"{u}ller spaces, which can also be understood as a change of shear coordinates.

Let $(\Sigma, \mathcal{P})$ be a marked surface and $\mathfrak{S}$ the corresponding punctured bordered surface.
Define $r(\Sigma, \mathcal{P}) = r(\mathfrak{S})$ to be 0 if $\mathfrak{S}$ is the sphere with no or one ideal point, $1$ if $\mathfrak{S}$ is the sphere with two ideal points, $2$ if $\mathfrak{S}$ is the closed torus, and $3|\mathcal{P}| - 3 \chi(\mathfrak{S})$ otherwise, where $\chi(\mathfrak{S})$ is the Euler characteristic of $\mathfrak{S}$.
L\^{e} and Yu showed in \cite{MR4264235} that the Gelfand-Kirillov dimension of $\mathscr{S}(\mathfrak{S})$ is $r(\mathfrak{S})$.
Equivalently, $r$ can also be defined as the cardinality of the maximal collection of non-isotopic ideal arcs $\overline{\mathcal{E}} = \mathcal{E} \bigsqcup \hat{\mathcal{E}}_\partial = \left( \mathring{\mathcal{E}} \bigsqcup \mathcal{E}_\partial \right) \bigsqcup \hat{\mathcal{E}}_\partial$, where $\mathring{\mathcal{E}}$ are the ideal arcs not parallel to any boundary components, $\mathcal{E}_\partial$ are the ideal arcs that are parallel to boundary components, and $\hat{\mathcal{E}}_\partial$ is a copy of $\mathcal{E}_\partial$.
Each $e \in \overline{\mathcal{E}}$ corresponds to a generator, $x_e$, of $\mathbb{T}_{+}^{r}$.
The anti-symmetric integral matrix for $\mathbb{T}_{+}^r$ is defined using the anti-symmetric function
\begin{align*}
    Q(a,b) &= \# \locIdeal{b}{a} - \# \locIdeal{a}{b} & \text{for } a,b \in \mathcal{E}\\
    Q(a,\hat{b}) &= - \# \locIdeal{b}{a} - \# \locIdeal{a}{b} & \text{for } a \in \mathcal{E}, b \in \mathcal{E}_\partial \\
    Q(\hat{a}, \hat{b}) &= -Q(a,b) & \text{for } a,b \in \mathcal{E}_\partial.
\end{align*}
and canonically identifying the entries of $Q$ as $Q_{a,b} = Q(a,b)$.
By $\# \resizebox{0.8\width}{!}{\locIdeal{i}{j}}$ we mean the number of times a half edge of $i$ and a half edge $j$ meet at the same ideal point with $i$ following $j$ in the clockwise order.

We define $\psi_{\mathcal{E}}$ as sending each generator to the corresponding diagram (possibly scaled by some $q^{\pm 1/2}$) where the diagram has positive states if $e \in \mathcal{E}$ and is a bad arc if $e \in \hat{\mathcal{E}}_\partial$.
For example, if both of endpoints of $e$ end on the same ideal point, $\psi_{\mathcal{E}}$ sends $e$ to the diagram
$$\locIdeal{}{} \mapsto
\begin{cases}
    q^{-1/2} \text{ } \locTangle{+}{+} & \text{ if } e \in \mathcal{E}\\
    q^{1/2} \text{\phantom{aa}} \locTangle{+}{-} & \text{ if } e \in \hat{\mathcal{E}}_\partial.
\end{cases}$$
where the $q^{\pm 1/2}$ scalar is introduced so that $x_e$ is reflection invariant, which follows by the $(R_6)$ relation.
Therefore, in this example, we have $\locTangle{+}{+} \mapsto q^{1/2} x_e$ and $\locTangle{+}{-} \mapsto q^{-1/2} x_{\hat{e}}$ as $\varphi_{\mathcal{E}}$ is an algebra homomorphism.

The stated skein algebra has the property that for every $\alpha \in \ms{S}(\mf{S})$, there is some monomial $m(x_1, \cdots, x_r) \in \mathbb{T}_{+}^{r}$ such that $\psi_{\mathcal{E}}\left(m(x_1, \cdots, x_r)\right) \alpha \in \operatorname{Im}(\psi_{\mathcal{E}})$.
Because these are algebra homomorphisms and since $(\varphi_{\mathcal{E}} \circ \psi_{\mathcal{E}})(x_e) = x_e$, we can explicitly find where any element in our skein algebra is mapped to using the following trick
$$\varphi_{\mathcal{E}}(\alpha) = m^{-1}(x_1, \cdots, x_r) m(x_1, \cdots, x_r) \varphi_{\mathcal{E}}(\alpha) = m^{-1}(x_1, \cdots, x_r) \varphi_{\mathcal{E}}(\psi_{\mathcal{E}}(m(x_1, \cdots, x_r)) \alpha),$$
where $\ms{S}(\mf{S})$ is viewed as the canonical $T_{+}^{r}$-module induced by $\psi_{\mathcal{E}}$, $m \cdot \alpha := \psi_{\mathcal{E}}(m)\alpha$.

\section{The Quantum $6$-Torus}\label{section:Quantum6Torus}

We will now perform this calculation for when $\mathfrak{S} = T^2 \setminus D^2$, with a single marked point on the boundary.
Let $\mathcal{E}$ be the quasitriangulation of $T^2 \setminus D^2$ shown in figure \ref{fig:quasitriangulation}.
\begin{figure}[h]
    \centering
    \resizebox{2\width}{!}{\begin{tikzpicture}
        \draw[draw=none, fill=gray!40] (0,1) -- (2,1) -- (2,-1) -- (0,-1) -- (1, -0.15) to[out=0, in=-90] (1.15, 0) to[out=90, in=0] (1, 0.15) to[out=180, in=90] (0.85, 0) to[out=-90, in=180] (1, -0.15) -- (0, -1) -- (0,1);
        \draw[thick, yellow] (1, 0) circle (0.15);
        \draw[thick, red] (1.3, -1) to[out=90, in=270] (1.3, 0) to[out=90, in=0] (1, 0.15) to[out=60, in=270] (1.3, 1);
        \draw[thick, blue] (0.4, -1) -- (0.4, -0.2) to[out=90, in=180] (1, 0.15) to[out=45, in=180] (2, 0.6);
        \draw[thick, blue] (0, 0.6) to[out=0, in=270] (0.4, 1);
        \draw[thick, green] (0.7, -1) to[out=90, in=270] (0.7, 0) to[out=90, in=180] (1, 0.15) to[out=120, in=270] (0.7, 1);
        \draw[thick, orange] (0, 0.15) -- (1, 0.15) -- (2, 0.15);
        \draw[thick, ->] (0,1) -- (0.9,1);
        \draw[thick, ->] (0.85,1) -- (1.2,1);
        \draw[thick] (1.2,1) -- (2,1);
        \draw[thick, ->] (0,-1) -- (0.9,-1);
        \draw[thick, ->] (0.85,-1) -- (1.2,-1);
        \draw[thick] (1.2,-1) -- (2,-1);
        \draw[thick, ->] (0,-1) -- (0,0);
        \draw[thick] (0,-1) -- (0,1);
        \draw[thick, ->] (2,-1) -- (2,0);
        \draw[thick] (2,-1) -- (2,1);
        \node[draw, circle, inner sep=0pt, minimum size=3pt, fill=white] at (1, 0.15) {};
    \end{tikzpicture}}
    \caption{A quasitriangulation of $T^2 \setminus D^2$}
    \label{fig:quasitriangulation}
\end{figure}
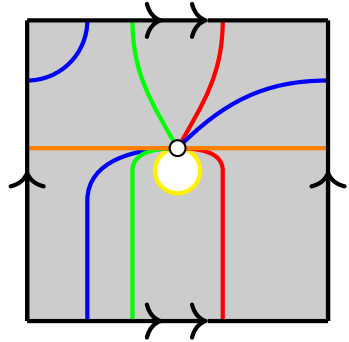
Note that as $T^2 \setminus D^2$ does not contain any punctures, $\mathcal{E}$ is a full triangulation of $T^2 \setminus D^2$. We will correspond our variables, $x_i$, to edges of $\mathcal{E}$ in the following way.
\begin{center}
    \begin{tikzpicture}
        \draw[draw=none, fill=gray!40] (0,1) -- (2,1) -- (2,-1) -- (0,-1) -- (1, -0.15) to[out=0, in=-90] (1.15, 0) to[out=90, in=0] (1, 0.15) to[out=180, in=90] (0.85, 0) to[out=-90, in=180] (1, -0.15) -- (0, -1) -- (0,1);
        \draw[thick, orange] (0, 0.15) -- (1, 0.15) -- (2, 0.15);
        \draw[thick, ->] (0,1) -- (0.9,1);
        \draw[thick, ->] (0.85,1) -- (1.2,1);
        \draw[thick] (1.2,1) -- (2,1);
        \draw[thick, ->] (0,-1) -- (0.9,-1);
        \draw[thick, ->] (0.85,-1) -- (1.2,-1);
        \draw[thick] (1.2,-1) -- (2,-1);
        \draw[thick, ->] (0,-1) -- (0,0);
        \draw[thick] (0,-1) -- (0,1);
        \draw[thick, ->] (2,-1) -- (2,0);
        \draw[thick] (2,-1) -- (2,1);
        \draw[thick, black] (1, 0) circle (0.15);
        \node[draw, circle, inner sep=0pt, minimum size=3pt, fill=white] at (1, 0.15) {};
        \node at (1,-1.5) {$x_1$};
        \draw[draw=none, fill=gray!40] (4,1) -- (6,1) -- (6,-1) -- (4,-1) -- (5, -0.15) to[out=0, in=-90] (5.15, 0) to[out=90, in=0] (5, 0.15) to[out=180, in=90] (4.85, 0) to[out=-90, in=180] (5, -0.15) -- (4, -1) -- (4,1);
        \draw[thick, green] (4.7, -1) to[out=90, in=270] (4.7, 0) to[out=90, in=180] (5, 0.15) to[out=120, in=270] (4.7, 1);
        \draw[thick, ->] (4,1) -- (4.9,1);
        \draw[thick, ->] (4.85,1) -- (5.2,1);
        \draw[thick] (5.2,1) -- (6,1);
        \draw[thick, ->] (4,-1) -- (4.9,-1);
        \draw[thick, ->] (4.85,-1) -- (5.2,-1);
        \draw[thick] (5.2,-1) -- (6,-1);
        \draw[thick, ->] (4,-1) -- (4,0);
        \draw[thick] (4,-1) -- (4,1);
        \draw[thick, ->] (6,-1) -- (6,0);
        \draw[thick] (6,-1) -- (6,1);
        \draw[thick, black] (5, 0) circle (0.15);
        \node[draw, circle, inner sep=0pt, minimum size=3pt, fill=white] at (5, 0.15) {};
        \node at (5,-1.5) {$x_2$};
        \draw[draw=none, fill=gray!40] (8,1) -- (10,1) -- (10,-1) -- (8,-1) -- (9, -0.15) to[out=0, in=-90] (9.15, 0) to[out=90, in=0] (9, 0.15) to[out=180, in=90] (8.85, 0) to[out=-90, in=180] (9, -0.15) -- (8, -1) -- (8,1);
        \draw[thick, blue] (8.4, -1) -- (8.4, -0.2) to[out=90, in=180] (9, 0.15) to[out=45, in=180] (10, 0.6);
        \draw[thick, blue] (8, 0.6) to[out=0, in=270] (8.4, 1);
        \draw[thick, ->] (8,1) -- (8.9,1);
        \draw[thick, ->] (8.85,1) -- (9.2,1);
        \draw[thick] (9.2,1) -- (10,1);
        \draw[thick, ->] (8,-1) -- (8.9,-1);
        \draw[thick, ->] (8.85,-1) -- (9.2,-1);
        \draw[thick] (9.2,-1) -- (10,-1);
        \draw[thick, ->] (8,-1) -- (8,0);
        \draw[thick] (8,-1) -- (8,1);
        \draw[thick, ->] (10,-1) -- (10,0);
        \draw[thick] (10,-1) -- (10,1);
        \draw[thick, black] (9, 0) circle (0.15);
        \node[draw, circle, inner sep=0pt, minimum size=3pt, fill=white] at (9, 0.15) {};
        \node at (9,-1.5) {$x_3$};
    \end{tikzpicture}
    \begin{tikzpicture}
        \draw[draw=none, fill=gray!40] (0,1) -- (2,1) -- (2,-1) -- (0,-1) -- (1, -0.15) to[out=0, in=-90] (1.15, 0) to[out=90, in=0] (1, 0.15) to[out=180, in=90] (0.85, 0) to[out=-90, in=180] (1, -0.15) -- (0, -1) -- (0,1);
        \draw[thick, red] (1.3, -1) to[out=90, in=270] (1.3, 0) to[out=90, in=0] (1, 0.15) to[out=60, in=270] (1.3, 1);
        \draw[thick, ->] (0,1) -- (0.9,1);
        \draw[thick, ->] (0.85,1) -- (1.2,1);
        \draw[thick] (1.2,1) -- (2,1);
        \draw[thick, ->] (0,-1) -- (0.9,-1);
        \draw[thick, ->] (0.85,-1) -- (1.2,-1);
        \draw[thick] (1.2,-1) -- (2,-1);
        \draw[thick, ->] (0,-1) -- (0,0);
        \draw[thick] (0,-1) -- (0,1);
        \draw[thick, ->] (2,-1) -- (2,0);
        \draw[thick] (2,-1) -- (2,1);
        \draw[thick, black] (1, 0) circle (0.15);
        \node[draw, circle, inner sep=0pt, minimum size=3pt, fill=white] at (1, 0.15) {};
        \node at (1,-1.5) {$x_4$};
        \draw[draw=none, fill=gray!40] (4,1) -- (6,1) -- (6,-1) -- (4,-1) -- (5, -0.15) to[out=0, in=-90] (5.15, 0) to[out=90, in=0] (5, 0.15) to[out=180, in=90] (4.85, 0) to[out=-90, in=180] (5, -0.15) -- (4, -1) -- (4,1);
        \draw[thick, ->] (4,1) -- (4.9,1);
        \draw[thick, ->] (4.85,1) -- (5.2,1);
        \draw[thick] (5.2,1) -- (6,1);
        \draw[thick, ->] (4,-1) -- (4.9,-1);
        \draw[thick, ->] (4.85,-1) -- (5.2,-1);
        \draw[thick] (5.2,-1) -- (6,-1);
        \draw[thick, ->] (4,-1) -- (4,0);
        \draw[thick] (4,-1) -- (4,1);
        \draw[thick, ->] (6,-1) -- (6,0);
        \draw[thick] (6,-1) -- (6,1);
        \draw[thick, yellow] (5, 0) circle (0.15);
        \node[draw, circle, inner sep=0pt, minimum size=3pt, fill=white] at (5, 0.15) {};
        \node at (5,-1.5) {$x_5$};
        \draw[draw=none, fill=gray!40] (8,1) -- (10,1) -- (10,-1) -- (8,-1) -- (9, -0.15) to[out=0, in=-90] (9.15, 0) to[out=90, in=0] (9, 0.15) to[out=180, in=90] (8.85, 0) to[out=-90, in=180] (9, -0.15) -- (8, -1) -- (8,1);
        \draw[thick, ->] (8,1) -- (8.9,1);
        \draw[thick, ->] (8.85,1) -- (9.2,1);
        \draw[thick] (9.2,1) -- (10,1);
        \draw[thick, ->] (8,-1) -- (8.9,-1);
        \draw[thick, ->] (8.85,-1) -- (9.2,-1);
        \draw[thick] (9.2,-1) -- (10,-1);
        \draw[thick, ->] (8,-1) -- (8,0);
        \draw[thick] (8,-1) -- (8,1);
        \draw[thick, ->] (10,-1) -- (10,0);
        \draw[thick] (10,-1) -- (10,1);
        \draw[thick, yellow] (9, 0) circle (0.15);
        \node[draw, circle, inner sep=0pt, minimum size=3pt, fill=white] at (9, 0.15) {};
        \node at (9,-1.5) {$x_6$};
    \end{tikzpicture}
\end{center}
Here $x_5$ corresponds to the single edge in $\mathcal{E}_\partial$ and $x_6$ corresponds to our extra copy of $x_5$ in $\hat{\mathcal{E}}_\partial$.
Using the anti-symmetric function, we find our $Q$ to be
$$Q =
{\begin{pmatrix}
    0 & 2 & 2 & -2 & 0 & -4\\
    -2 & 0 & -2 & -4 & 0 & -4\\
    -2 & 2 & 0 & -2 & 0 & -4\\
    2 & 4 & 2 & 0 & 0 & -4\\
    0 & 0 & 0 & 0 & 0 & 0\\
    4 & 4 & 4 & 4 & 0 & 0\\
\end{pmatrix}}.$$

Therefore, we are embedding our stated skein algebra, $\mathscr{S}\left( T^2 \setminus D^2 \right)$, into a quantum $6$-torus. This quantum torus is not simple as it has nontrivial center (see proposition $1.3$ in \cite{MR0949080}) when viewed as a complex twisted group algebra over the free abelian group of rank $6$. However, we still need to consider the entire quantum $6$-torus as $\mathscr{S}(T^2 \setminus D^2)$ has a GK-dimension of $6$.

Let $y_1, y_2, y_3 \in \ms{S}(T^2 \setminus D^2)$ be the diagrams corresponding to the meridian, longitude, and $(1,1)$-curve as closed curves, respectively.
Then we can use this quasitriangulation, $\mathcal{E}$, to find the image of $y_1$ in $\varphi_{\mathcal{E}}$.
Note the I'm not expressing the states in these diagrams as all of the states for these calculations will be positive.
$$
\psi_{\mathcal{E}} (x_2 x_4 x_3) \cdot y_1
= \left( q^{-3/2}
\begin{tikzpicture}[baseline=-1]
    \MarkedTorusBackground
    \draw[thick] (0.7, -1) to[out=90, in=270] (0.7, 0) to[out=90, in=180] (1, 0.15) to[out=120, in=270] (0.7, 1);
    \node[draw, circle, inner sep=0pt, minimum size=3pt, fill=white] at (1, 0.15) {};
\end{tikzpicture}
\begin{tikzpicture}[baseline=-1]
    \MarkedTorusBackground
    \draw[thick] (1.3, -1) to[out=90, in=270] (1.3, 0) to[out=90, in=0] (1, 0.15) to[out=60, in=270] (1.3, 1);
    \node[draw, circle, inner sep=0pt, minimum size=3pt, fill=white] at (1, 0.15) {};
\end{tikzpicture}
\begin{tikzpicture}[baseline=-1]
    \MarkedTorusBackground
    \draw[thick] (0.4, -1) -- (0.4, -0.2) to[out=90, in=180] (1, 0.15) to[out=45, in=180] (2, 0.5);
    \draw[thick] (0, 0.5) to[out=0, in=270] (0.4, 1);
    \node[draw, circle, inner sep=0pt, minimum size=3pt, fill=white] at (1, 0.15) {};
\end{tikzpicture}
\right)
\begin{tikzpicture}[baseline=-1]
    \MarkedTorusBackground
    \draw[thick] (0, 0.7) -- (2, 0.7);
    \node[draw, circle, inner sep=0pt, minimum size=3pt, fill=white] at (1, 0.15) {};
\end{tikzpicture}$$
$$= \left( q^{-3/2}
\begin{tikzpicture}[baseline=-1]
    \MarkedTorusBackground
    \draw[thick] (0.7, -1) to[out=90, in=270] (0.7, 0) to[out=90, in=180] (1, 0.15) to[out=120, in=270] (0.7, 1);
    \node[draw, circle, inner sep=0pt, minimum size=3pt, fill=white] at (1, 0.15) {};
\end{tikzpicture}
\begin{tikzpicture}[baseline=-1]
    \MarkedTorusBackground
    \draw[thick] (1.3, -1) to[out=90, in=270] (1.3, 0) to[out=90, in=0] (1, 0.15) to[out=60, in=270] (1.3, 1);
    \node[draw, circle, inner sep=0pt, minimum size=3pt, fill=white] at (1, 0.15) {};
\end{tikzpicture}
\right)
\begin{tikzpicture}[baseline=-1]
    \MarkedTorusBackground
    \draw[thick] (0, 0.7) -- (2, 0.7);
    \draw[thick] (0.4, -1) -- (0.4, -0.2) to[out=90, in=180] (1, 0.15) to[out=45, in=180] (2, 0.5);
    \draw[line width=3mm, gray!40] (0, 0.5) to[out=0, in=270] (0.4, 1);
    \draw[thick] (0, 0.5) to[out=0, in=270] (0.4, 1);
    \node[draw, circle, inner sep=0pt, minimum size=3pt, fill=white] at (1, 0.15) {};
\end{tikzpicture}$$
$$= \left( q^{-3/2}
\begin{tikzpicture}[baseline=-1]
    \MarkedTorusBackground
    \draw[thick] (0.7, -1) to[out=90, in=270] (0.7, 0) to[out=90, in=180] (1, 0.15) to[out=120, in=270] (0.7, 1);
    \node[draw, circle, inner sep=0pt, minimum size=3pt, fill=white] at (1, 0.15) {};
\end{tikzpicture}
\begin{tikzpicture}[baseline=-1]
    \MarkedTorusBackground
    \draw[thick] (1.3, -1) to[out=90, in=270] (1.3, 0) to[out=90, in=0] (1, 0.15) to[out=60, in=270] (1.3, 1);
    \node[draw, circle, inner sep=0pt, minimum size=3pt, fill=white] at (1, 0.15) {};
\end{tikzpicture}
\right)\left(q 
\begin{tikzpicture}[baseline=-1]
    \MarkedTorusBackground
    \draw[thick] (0.7, -1) to[out=90, in=270] (0.7, 0) to[out=90, in=180] (1, 0.15) to[out=120, in=270] (0.7, 1);
    \node[draw, circle, inner sep=0pt, minimum size=3pt, fill=white] at (1, 0.15) {};
\end{tikzpicture}
+ q^{-1}
\begin{tikzpicture}[baseline=-1]
    \MarkedTorusBackground
    \draw[thick] (0, 0.7) to[out=0, in=270] (0.4, 1);
    \draw[thick] (0.4, -1) -- (0.4, -0.2) to[out=90, in=180] (1, 0.15) to[out=45, in=180] (2, 0.5);
    \draw[thick] (0, 0.5) to[out=0, in=180] (2, 0.7);
    \node[draw, circle, inner sep=0pt, minimum size=3pt, fill=white] at (1, 0.15) {};
\end{tikzpicture}
\right)$$
$$= \left( q^{-3/2}
\begin{tikzpicture}[baseline=-1]
    \MarkedTorusBackground
    \draw[thick] (0.7, -1) to[out=90, in=270] (0.7, 0) to[out=90, in=180] (1, 0.15) to[out=120, in=270] (0.7, 1);
    \node[draw, circle, inner sep=0pt, minimum size=3pt, fill=white] at (1, 0.15) {};
\end{tikzpicture}
\right)\left(q
\begin{tikzpicture}[baseline=-1]
    \MarkedTorusBackground
    \draw[thick] (1.3, -1) to[out=90, in=270] (1.3, 0) to[out=90, in=0] (1, 0.15) to[out=60, in=270] (1.3, 1);
    \draw[thick] (0.7, -1) to[out=90, in=270] (0.7, 0) to[out=90, in=180] (1, 0.15) to[out=120, in=270] (0.7, 1);
    \node[draw, circle, inner sep=0pt, minimum size=3pt, fill=white] at (1, 0.15) {};
\end{tikzpicture}
+ q^{-1}
\begin{tikzpicture}[baseline=-1]
    \MarkedTorusBackground
    \draw[thick] (0, 0.5) to[out=0, in=180] (2, 0.7);
    \draw[line width=3mm, gray!40] (1.3, -1) to[out=90, in=270] (1.3, 0) to[out=90, in=0] (1, 0.15) to[out=60, in=270] (1.3, 1);
    \draw[thick] (1.3, -1) to[out=90, in=270] (1.3, 0) to[out=90, in=0] (1, 0.15) to[out=60, in=270] (1.3, 1);
    \draw[thick] (0, 0.7) to[out=0, in=270] (0.4, 1);
    \draw[thick] (0.4, -1) -- (0.4, -0.2) to[out=90, in=180] (1, 0.15) to[out=45, in=180] (2, 0.5);
    \draw[thick, black, fill=white] (1, 0) circle (0.15);
    \node[draw, circle, inner sep=0pt, minimum size=3pt, fill=white] at (1, 0.15) {};
\end{tikzpicture}
\right)$$
$$= \left( q^{-3/2}
\begin{tikzpicture}[baseline=-1]
    \MarkedTorusBackground
    \draw[thick] (0.7, -1) to[out=90, in=270] (0.7, 0) to[out=90, in=180] (1, 0.15) to[out=120, in=270] (0.7, 1);
    \node[draw, circle, inner sep=0pt, minimum size=3pt, fill=white] at (1, 0.15) {};
\end{tikzpicture}
\right)\left( q
\begin{tikzpicture}[baseline=-1]
    \MarkedTorusBackground
    \draw[thick] (1.3, -1) to[out=90, in=270] (1.3, 0) to[out=90, in=0] (1, 0.15) to[out=60, in=270] (1.3, 1);
    \draw[thick] (0.7, -1) to[out=90, in=270] (0.7, 0) to[out=90, in=180] (1, 0.15) to[out=120, in=270] (0.7, 1);
    \node[draw, circle, inner sep=0pt, minimum size=3pt, fill=white] at (1, 0.15) {};
\end{tikzpicture}
+
\begin{tikzpicture}[baseline=-1]
    \MarkedTorusBackground
    \draw[thick] (0, 0.15) -- (1, 0.15) -- (2, 0.15);
    \draw[thick] (0, 0.5) to[out=0, in=120] (1, 0.15) to[out=60, in=180] (2, 0.5);
    \node[draw, circle, inner sep=0pt, minimum size=3pt, fill=white] at (1, 0.15) {};
\end{tikzpicture}
+ q^{-2}
\begin{tikzpicture}[baseline=-1]
    \MarkedTorusBackground
    \draw[thick] (0, 0.5) to[out=0, in=270] (0.4, 1);
    \draw[thick] (0.4, -1) to[out=90, in=270] (1.4, -0.1) to[out=90, in=0] (1, 0.15) to[out=45, in=180] (2, 0.5);
    \draw[thick] (0, -1) to[out=45, in=180] (1, 0.15) to[out=60, in=225] (2, 1);
    \node[draw, circle, inner sep=0pt, minimum size=3pt, fill=white] at (1, 0.15) {};
\end{tikzpicture}
\right)$$
$$= q^{-3/2}
\left( q
\begin{tikzpicture}[baseline=-1]
    \MarkedTorusBackground
    \draw[thick] (1.3, -1) to[out=90, in=270] (1.3, 0) to[out=90, in=0] (1, 0.15) to[out=60, in=270] (1.3, 1);
    \draw[thick] (0.7, -1) to[out=90, in=270] (0.7, 0) to[out=90, in=180] (1, 0.15) to[out=120, in=270] (0.7, 1);
    \draw[thick] (0.4, -1) to[out=90, in=270] (0.4, 0) to[out=90, in=180] (1, 0.15) to[out=120, in=270] (0.4, 1);
    \node[draw, circle, inner sep=0pt, minimum size=3pt, fill=white] at (1, 0.15) {};
\end{tikzpicture}
+
\begin{tikzpicture}[baseline=-1]
    \MarkedTorusBackground
    \draw[thick] (0, 0.15) -- (1, 0.15) -- (2, 0.15);
    \draw[thick] (0, 0.5) to[out=0, in=120] (1, 0.15) to[out=60, in=180] (2, 0.5);
    \draw[thick] (0.7, -1) to[out=90, in=270] (0.7, 0) to[out=90, in=180] (1, 0.15) to[out=120, in=270] (0.7, 1);
    \node[draw, circle, inner sep=0pt, minimum size=3pt, fill=white] at (1, 0.15) {};
\end{tikzpicture}
+ q^{-2}
\begin{tikzpicture}[baseline=-1]
    \MarkedTorusBackground
    \draw[thick] (0, 0.5) to[out=0, in=270] (0.4, 1);
    \draw[thick] (0.4, -1) to[out=90, in=270] (1.4, -0.1) to[out=90, in=0] (1, 0.15) to[out=45, in=180] (2, 0.5);
    \draw[thick] (0, -1) to[out=45, in=180] (1, 0.15) to[out=60, in=225] (2, 1);
    \draw[line width=3mm, gray!40] (0.7, -0.8) -- (0.7, -0.5);
    \draw[thick] (0.7, -1) to[out=90, in=270] (0.7, -0.3) to[out=90, in=180] (1, 0.15) to[out=120, in=270] (0.7, 1);
    \draw[thick, black, fill=white] (1, 0) circle (0.15);
    \node[draw, circle, inner sep=0pt, minimum size=3pt, fill=white] at (1, 0.15) {};
\end{tikzpicture}
\right)$$
$$= q^{-3/2}
\left( q
\begin{tikzpicture}[baseline=-1]
    \MarkedTorusBackground
    \draw[thick] (1.3, -1) to[out=90, in=270] (1.3, 0) to[out=90, in=0] (1, 0.15) to[out=60, in=270] (1.3, 1);
    \draw[thick] (0.7, -1) to[out=90, in=270] (0.7, 0) to[out=90, in=180] (1, 0.15) to[out=120, in=270] (0.7, 1);
    \draw[thick] (0.4, -1) to[out=90, in=270] (0.4, 0) to[out=90, in=180] (1, 0.15) to[out=120, in=270] (0.4, 1);
    \node[draw, circle, inner sep=0pt, minimum size=3pt, fill=white] at (1, 0.15) {};
\end{tikzpicture}
+
\begin{tikzpicture}[baseline=-1]
    \MarkedTorusBackground
    \draw[thick] (0, 0.15) -- (1, 0.15) -- (2, 0.15);
    \draw[thick] (0, 0.5) to[out=0, in=120] (1, 0.15) to[out=60, in=180] (2, 0.5);
    \draw[thick] (0.7, -1) to[out=90, in=270] (0.7, 0) to[out=90, in=180] (1, 0.15) to[out=120, in=270] (0.7, 1);
    \node[draw, circle, inner sep=0pt, minimum size=3pt, fill=white] at (1, 0.15) {};
\end{tikzpicture}
+ q^{-1}
\begin{tikzpicture}[baseline=-1]
    \MarkedTorusBackground
    \draw[thick] (0, 0.15) -- (1, 0.15) -- (2, 0.15);
    \draw[thick] (0.4, -1) -- (0.4, -0.2) to[out=90, in=180] (1, 0.15) to[out=45, in=180] (2, 0.7);
    \draw[thick] (0, 0.7) to[out=0, in=270] (0.4, 1);
    \draw[thick] (1, 0.15) to[out=0, in=90] (1.3, -0.2) to[out=270, in=0] (1, -0.5) to[out=180, in=270] (0.7, -0.2) to[out=90, in=180] (1, 0.15);
    \node[draw, circle, inner sep=0pt, minimum size=3pt, fill=white] at (1, 0.15) {};
\end{tikzpicture}
+q^{-3}
\begin{tikzpicture}[baseline=-1]
    \MarkedTorusBackground
    \draw[thick] (1.3, -1) to[out=90, in=270] (1.3, 0) to[out=90, in=0] (1, 0.15) to[out=60, in=270] (1.3, 1);
    \draw[thick] (0, -1) to[out=45, in=180] (1, 0.15) to[out=60, in=225] (2, 1);
    \draw[thick] (0.7, -1) -- (0.7, -0.2) to[out=90, in=180] (1, 0.15) to[out=45, in=180] (2, 0.5);
    \draw[thick] (0, 0.5) to[out=0, in=270] (0.7, 1);
    \node[draw, circle, inner sep=0pt, minimum size=3pt, fill=white] at (1, 0.15) {};
\end{tikzpicture}
\right)$$
$$= q^{-3/2}
\left( q
\begin{tikzpicture}[baseline=-1]
    \MarkedTorusBackground
    \draw[thick] (1.3, -1) to[out=90, in=270] (1.3, 0) to[out=90, in=0] (1, 0.15) to[out=60, in=270] (1.3, 1);
    \draw[thick] (0.7, -1) to[out=90, in=270] (0.7, 0) to[out=90, in=180] (1, 0.15) to[out=120, in=270] (0.7, 1);
    \draw[thick] (0.4, -1) to[out=90, in=270] (0.4, 0) to[out=90, in=180] (1, 0.15) to[out=120, in=270] (0.4, 1);
    \node[draw, circle, inner sep=0pt, minimum size=3pt, fill=white] at (1, 0.15) {};
\end{tikzpicture}
+ q
\begin{tikzpicture}[baseline=-1]
    \MarkedTorusBackground
    \draw[thick] (0, 0.15) -- (1, 0.15) -- (2, 0.15);
    \draw[thick] (0, 0.5) to[out=0, in=120] (1, 0.15) to[out=60, in=180] (2, 0.5);
    \draw[thick] (0.7, -1) to[out=90, in=270] (0.7, 0) to[out=90, in=180] (1, 0.15) to[out=120, in=270] (0.7, 1);
    \node[draw, circle, inner sep=0pt, minimum size=3pt, fill=white] at (1, 0.15) {};
\end{tikzpicture}
+ q^{-4}
\begin{tikzpicture}[baseline=-1]
    \MarkedTorusBackground
    \draw[thick] (0, 0.15) -- (1, 0.15) -- (2, 0.15);
    \draw[thick] (0.4, -1) -- (0.4, -0.2) to[out=90, in=180] (1, 0.15) to[out=45, in=180] (2, 0.7);
    \draw[thick] (0, 0.7) to[out=0, in=270] (0.4, 1);
    \draw[thick] (1, 0.15) to[out=0, in=90] (1.3, -0.2) to[out=270, in=0] (1, -0.5) to[out=180, in=270] (0.7, -0.2) to[out=90, in=180] (1, 0.15);
    \node[draw, circle, inner sep=0pt, minimum size=3pt, fill=white] at (1, 0.15) {};
\end{tikzpicture}
+q^{-3}
\begin{tikzpicture}[baseline=-1]
    \MarkedTorusBackground
    \draw[thick] (1.3, -1) to[out=90, in=270] (1.3, 0) to[out=90, in=0] (1, 0.15) to[out=60, in=270] (1.3, 1);
    \draw[thick] (0, -1) to[out=45, in=180] (1, 0.15) to[out=60, in=225] (2, 1);
    \draw[thick] (0.7, -1) -- (0.7, -0.2) to[out=90, in=180] (1, 0.15) to[out=45, in=180] (2, 0.5);
    \draw[thick] (0, 0.5) to[out=0, in=270] (0.7, 1);
    \node[draw, circle, inner sep=0pt, minimum size=3pt, fill=white] at (1, 0.15) {};
\end{tikzpicture}
\right)$$
where the last equality comes from the height exchange relation, $(R_6)$.
Thus,
$$\varphi_{\mathcal{E}}(y_1)
= (x_2 x_4 x_3)^{-1} (x_2 x_4 x_3) \varphi_{\mathcal{E}}(y_1)
= (x_2 x_4 x_3)^{-1} \varphi_{\mathcal{E}}(\psi_{\mathcal{E}}(x_2 x_4 x_3)y_1)$$
$$= qx_3^{-1}x_4^{-1}x_2^{-1}x_2x_4x_2 + qx_3^{-1}x_4^{-1}x_2^{-1} x_2x_1^{2} + q^{-4}x_3^{-1}x_4^{-1}x_2^{-1} x_1x_3x_5 + q^{-3}x_3^{-1}x_4^{-1}x_2^{-1} x_3^{2}x_4$$
$$= q^{-1}x_2x_3^{-1} + q^{-1}x_2^{-1}x_3 + qx_1^{2}x_3^{-1}x_4^{-1} + q^{2}x_1x_2^{-1}x_4^{-1}x_5.$$
One can similarly calculate where $y_2$, $y_3$, and the boundary curve get mapped to under $\varphi_{\mathcal{E}}$ (see \ref{appendix:ImageOfT6} in the appendix).
Hence $\varphi_{\mathcal{E}}$ maps these diagrams as follows.
\begin{prop}
    Suppose $\varphi_{\mathcal{E}} : \mathscr{S}(T^2 \setminus D^2) \hookrightarrow \mathbb{T}^{n}$ is the algebra homomorphism defined as above. Let $y_1, y_2, y_3 \in \ms{S}(T^2 \setminus D^2)$ be the meridian, longitude, and $(1,1)$-curve respectively and $\partial$ the boundary curve
    $\begin{tikzpicture}[scale=0.7, baseline=-3]
        \MarkedTorusBackground
        \draw[thick] (1, 0) circle (0.4);
        \node[draw, circle, inner sep=0pt, minimum size=2pt, fill=white] at (1, 0.15) {};
    \end{tikzpicture}$. Then
    \begin{align*}
        y_1 &\mapsto q^{-1}x_2x_3^{-1} + q^{-1}x_2^{-1}x_3 + qx_1^{2}x_3^{-1}x_4^{-1} + q^{2}x_1x_2^{-1}x_4^{-1}x_5\\
        y_2 &\mapsto qx_1x_3^{-1} + qx_1^{-1}x_3 + q^{-1}x_1^{-1}x_2x_3^{-1}x_4\\
        y_3 &\mapsto q^{-1}x_1x_4^{-1} + q^{-1}x_1^{-1}x_4 + q^{-1}x_1^{-1}x_2^{-1}x_3^{2} + x_2^{-1}x_3x_4^{-1}x_5 \\
        \partial &\mapsto q^{-2}x_2^{-1}x_4 + q^{-2}x_2x_4^{-1} + qx_1^{-1}x_3x_4^{-1}x_5 + qx_1x_3^{-1}x_4^{-1}x_5 + q^{-3}x_1^{-1}x_3^{-1}x_4x_5 \\
        &\phantom{\mapsto} + q^{3}x_1x_2^{-1}x_3^{-1}x_5 + q^{-1}x_1^{-1}x_2x_3^{-1}x_5 + q^{-1}x_1^{-1}x_2^{-1}x_3x_5 + q^{2}x_2^{-1}x_4^{-1}x_5^{2}
    \end{align*}
    In particular, the images of $y_i$ satisfy the commutation relations
    $$\left(q^2 - q^{-2}\right)^{-1}[\varphi_{\mathcal{E}}(y_i), \varphi_{\mathcal{E}}(y_{i+1})]_q = \varphi_{\mathcal{E}}(y_{i+2})$$
    for $i\mod 3$ and
    $$\varphi_{\mathcal{E}}(\partial) = q \varphi_{\mathcal{E}}(y_1) \varphi_{\mathcal{E}}(y_2) \varphi_{\mathcal{E}}(y_3) - q^2 \varphi_{\mathcal{E}}(y_1)^2 - q^{-2} \varphi_{\mathcal{E}}(y_2)^2 - q^2 \varphi_{\mathcal{E}}(y_3)^2 + q^2 + q^{-2}.$$
\end{prop}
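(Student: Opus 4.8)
The plan is to verify the four displayed images and the two families of relations essentially by direct computation in the quantum torus $\mathbb{T}^6(Q)$, but organized so that the bulk of the work is reused. First I would record the commutation relations among the six generators $x_1,\dots,x_6$ dictated by the matrix $Q$; in particular $x_5$ is central (its row and column vanish) and the remaining relations are the $q^{Q_{ij}}$-commutations. These will be used repeatedly to move monomials into a normal form $q^{k/2}x_1^{a_1}\cdots x_6^{a_6}$.

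The image of $y_1$ is already computed in the excerpt via the trick $\varphi_{\mathcal E}(y_1) = m^{-1}\,\varphi_{\mathcal E}(\psi_{\mathcal E}(m)y_1)$ with $m = x_2x_4x_3$; the analogous computations for $y_2$, $y_3$, and the boundary curve $\partial$ are deferred to Appendix \ref{appendix:ImageOfT6}, so for the first part of the proof I would simply cite those computations (choosing suitable monomials $m$ for each, e.g. one that clears denominators after pushing the relevant closed curve into $T^2\setminus D^2$ and resolving crossings with $(R_1)$ and heights with $(R_6)$, exactly as done for $y_1$). This establishes the four formulas for $\varphi_{\mathcal E}(y_1),\varphi_{\mathcal E}(y_2),\varphi_{\mathcal E}(y_3),\varphi_{\mathcal E}(\partial)$.

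For the commutation relations, there are two routes and I would take the cheaper one: since $\varphi_{\mathcal E}$ is an injective algebra homomorphism, the identity $(q^2-q^{-2})^{-1}[y_i,y_{i+1}]_q = y_{i+2}$ holds in $\mathscr S(T^2\setminus D^2)$ by Lemma \ref{lemma:IntersectionDehnTwist} (the meridian, longitude, and $(1,1)$-curve pairwise have geometric intersection number $1$, and the $q$-commutator resolves to the third curve up to the stated scalar), and applying $\varphi_{\mathcal E}$ to both sides gives the claimed relation among the images for free. Likewise the identity $\partial = qY_1Y_2Y_3 - q^2Y_1^2 - q^{-2}Y_2^2 - q^2Y_3^2 + q^2 + q^{-2}$ is already recorded in the excerpt (Chapter 3, with the computation in Appendix \ref{appendix:ParaTangCalc}), so applying $\varphi_{\mathcal E}$ yields the stated polynomial identity among $\varphi_{\mathcal E}(\partial)$ and the $\varphi_{\mathcal E}(y_i)$. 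Thus the "algebraic" half of the proposition reduces to functoriality of $\varphi_{\mathcal E}$ plus facts already proven.

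The main obstacle, therefore, is purely bookkeeping: carrying out the curve-pushing and crossing-resolution computations for $y_2$, $y_3$, and especially $\partial$ (which produces nine monomials) without sign or $q$-power errors, and then normal-ordering everything consistently against $Q$. As a sanity check that I would include, one can independently verify the commutation relations directly in $\mathbb{T}^6(Q)$ on the explicit Laurent-monomial expressions — this is a finite, mechanical check (and, as the excerpt notes, was partially automated) that serves to cross-validate the hand computations of the four images. No conceptual difficulty is expected; the risk is entirely in the arithmetic of half-integer powers of $q$ introduced by the $(R_3)$–$(R_6)$ relations and the $q^{\pm 1/2}$ normalizations built into $\psi_{\mathcal E}$.
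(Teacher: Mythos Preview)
Your proposal is correct and matches the paper's approach: the images of $y_1, y_2, y_3, \partial$ are obtained by the same monomial-clearing trick $\varphi_{\mathcal E}(\alpha)=m^{-1}\varphi_{\mathcal E}(\psi_{\mathcal E}(m)\alpha)$ (with $y_1$ done in the text and $y_2, y_3, \partial$ relegated to Appendix~\ref{appendix:ImageOfT6}), and the ``in particular'' relations are not re-proved but follow exactly as you say from the skein identities $(q^2-q^{-2})^{-1}[Y_i,Y_{i+1}]_q=Y_{i+2}$ and the boundary-curve formula already established in Chapter~3, transported through the algebra homomorphism $\varphi_{\mathcal E}$. Your suggested cross-check in $\mathbb T^6(Q)$ is likewise what the paper does via the Python code in Appendix~\ref{appendix:T6Operators}.
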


\section{A Module of Laurent Polynomials}

\begin{prop}
    Let $\mathbb{T}^n(Q)$ be the quantum torus of $n$ generators, $\frac{\mathbb{K} \langle x_1^{\pm 1}, \cdots, x_n^{\pm 1} \rangle }{x_i x_j = q^{Q_{i,j}} x_j x_i}$, where $Q$ is the corresponding skew-symmetric integral matrix.
    If $k$ is the number of non-central generators of $\mathbb{T}^n$ and $q^{\frac{Q_{i,j}}{2}} \in \mathbb{K}$ for all $i,j$, then the commutative ring $\mathbb{K}[y_1^{\pm 1}, \cdots, y_{k-1}^{\pm 1}]$ has a well defined $\mathbb{T}^n$-module structure.
    In particular, if the first $k$ generators, $\{ x_1, \cdots, x_k \}$, are our non-commutative generators, then for each $i \in \{1, \cdots, k-1\}$ and $m \in \{ k+1, \cdots, n \}$, the operators
    $$x_i \cdot f(y_1, y_2, \cdots, y_{k-1}) := y_i f(q^{Q_{i, 1}/2}y_1, q^{Q_{i, 2}/2}y_2, \cdots, q^{Q_{i, {k-1}}/2}y_{k-1})$$
    $$x_{k} \cdot f(y_1, y_2, \cdots, y_{k-1}) := f(q^{Q_{k, 1}}y_1, q^{Q_{k, 2}}y_2, \cdots, q^{Q_{k, {k-1}}}y_{k-1})$$
    $$x_m \cdot f(y_1, y_2, \cdots, y_{k-1}) := f(y_1, y_2, \cdots, y_{k-1}).$$
    define a $\mathbb{T}^n$-module.
\end{prop}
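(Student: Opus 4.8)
The plan is to verify directly that the assignment $x_e \mapsto (\text{the stated operator})$ respects the defining relations $x_i x_j = q^{Q_{i,j}} x_j x_i$ of $\mathbb{T}^n(Q)$, and that each operator is invertible on $\mathbb{K}[y_1^{\pm 1},\dots,y_{k-1}^{\pm 1}]$ so that the relations $x_e x_e^{-1} = 1$ can also be met. Since $\mathbb{T}^n(Q)$ is presented by generators and relations, a well-defined $\mathbb{T}^n$-module structure is exactly an assignment of invertible $\mathbb{K}$-linear endomorphisms to the $x_e$ satisfying the $q$-commutation relations, so this is all that needs checking. I would organize the check according to which pair of generators is involved: (a) two non-central generators $x_i, x_j$ with $i,j \in \{1,\dots,k-1\}$; (b) $x_i$ with $i \in \{1,\dots,k-1\}$ and $x_k$; (c) $x_k$ with itself (trivial); (d) any non-central generator with a central generator $x_m$, $m > k$; and (e) two central generators.

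The computations in cases (a), (b), (d), (e) are all instances of the same elementary fact: if an operator $A$ acts by $A f(y_1,\dots,y_{k-1}) = (\text{monomial in }y)\cdot f(\lambda_1 y_1,\dots,\lambda_{k-1}y_{k-1})$ and $B$ acts by $B f = (\text{monomial})\cdot f(\mu_1 y_1,\dots,\mu_{k-1}y_{k-1})$ with scalars $\lambda_\ell,\mu_\ell \in \mathbb{K}^\times$, then $AB$ and $BA$ differ precisely by the scalar obtained from commuting the rescaling of one through the leading monomial of the other; a short bookkeeping computation shows this scalar is $q^{Q_{i,j}}$ for the relevant pair, using the stated exponents ($Q_{i,\ell}/2$ for the $x_i$ with $i<k$, $Q_{k,\ell}$ for $x_k$, and nothing for the $x_m$). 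For case (a), the key point is that the rescaling factor produced by passing $x_j$'s substitution $y_i \mapsto q^{Q_{j,i}/2}y_i$ through the monomial $y_i$ (the leading monomial of $x_i$), together with the symmetric term from $x_i$ acting on $x_j$'s monomial, combines via antisymmetry of $Q$ to give exactly $q^{Q_{i,j}}$; this is where the choice of half-exponents $Q_{i,\ell}/2$ (rather than $Q_{i,\ell}$) is forced, and is also where the hypothesis $q^{Q_{i,j}/2}\in\mathbb{K}$ is genuinely used. Case (b) is similar but asymmetric: $x_k$ carries no leading monomial, so only the rescaling contributes, and the exponent $Q_{k,\ell}$ (not halved) is what makes $x_i x_k = q^{Q_{i,k}} x_k x_i$ come out right. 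Cases (d) and (e) are immediate since $x_m$ acts as the identity and $Q_{m,j}=0$ for all central $m$. Invertibility of each operator is clear: $x_i^{-1}$ acts by $f \mapsto y_i^{-1} f(q^{-Q_{i,1}/2}y_1,\dots)$, $x_k^{-1}$ by rescaling with inverse scalars, and $x_m^{-1}=\operatorname{id}$.

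The main obstacle, modest as it is, is purely organizational: getting the exponent bookkeeping exactly right and confirming that the asymmetry between the $i<k$ generators (half-exponents, with a $y_i$ prefactor) and the generator $x_k$ (full exponents, no prefactor) is precisely what the relations demand — in particular that the "missing" generator $x_k$ is consistently the one with no monomial prefactor, and that $k$ being the number of non-central generators guarantees $\{x_{k+1},\dots,x_n\}$ are exactly the central ones so that their action as the identity is consistent with $Q_{m,\cdot}=0$. Once the commutation scalar is computed once in the general form described above, all sixteen-or-so relation checks reduce to substituting the appropriate entries of $Q$, so I would present the general lemma-style computation and then remark that the remaining cases follow identically.
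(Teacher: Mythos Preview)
Your proposal is correct and follows essentially the same approach as the paper: a direct case-by-case verification that the defining $q$-commutation relations $x_i x_j = q^{Q_{i,j}} x_j x_i$ are satisfied by the given operators, split according to whether the generators involved lie in $\{1,\dots,k-1\}$, equal $k$, or are central. The paper's proof is in fact terser than yours---it writes out the two nontrivial computations (cases (a) and (b)) and dismisses the central cases as clear, without explicitly addressing invertibility---so your outline is, if anything, more complete.
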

\begin{proof}
    Our relations $x_i x_j = q^{Q_{i,j}} x_j x_i$ hold as for all $i,j \in \{ 1, \cdots, k-1 \}$
    \begin{align*}
        x_i x_j \cdot f(y_1, \cdots, y_{k-1}) &= q^{\frac{Q_{i, j}}{2}} y_i y_j f(q^{\frac{Q_{j, 1} + Q_{i, 1}}{2}}y_1, \cdots, q^{\frac{Q_{j, k-1} + Q_{i, k-1}}{2}}y_{k-1})\\
        &= q^{Q_{i, j}} x_j x_i \cdot f(y_1, \cdots, y_{k-1})\\
        x_i x_k \cdot f(y_1, \cdots, y_{k-1}) &= y_i f(q^{\frac{Q_{i, 1}}{2} + Q_{k, 1}}y_1, \cdots, q^{\frac{Q_{i, k-1}}{2} + Q_{k, {k-1}}}y_{k-1})\\
        &= q^{Q_{i, k}} x_k x_i \cdot f(y_1, \cdots, y_{k-1}).
    \end{align*}
    Clearly, for any $m, m^{\prime} \in \{ k+1, \cdots, n \}$ we have $x_i x_m \cdot f(y_1, \cdots, y_{k-1}) = x_m x_i \cdot f(y_1, \cdots, y_{k-1})$ and $x_m x_{m^{\prime}} \cdot f(y_1, \cdots, y_{k-1}) = x_m x_{m^{\prime}} \cdot f(y_1, \cdots, y_{k-1})$.
\end{proof}

By this proposition, the ring of Laurent polynomials in $4$ variables is a module over our quantum $6$-torus, $\mathbb{T}^6$. Since $\mathscr{S}\left( T^2 \setminus D^2 \right) \hookrightarrow \mathbb{T}^6$, the following actions endow $\mathbb{C}[x^{\pm 1}, y^{\pm 1}, z^{\pm 1}, w^{\pm 1}]$ with the structure of a $\mathscr{S}\left( T^2 \setminus D^2 \right)$-module.
\begin{alignat*}{2}
    x_1 \cdot f(x,y,z,w) &= x f(x,qy,qz,q^{-1}w) &\qquad x_2 \cdot f(x,y,z,w) &= y f(q^{-1}x,y,q^{-1}z,q^{-2}w)\\
    x_3 \cdot f(x,y,z,w) &= z f(q^{-1}x,qy,z,q^{-1}w)  &\qquad x_4 \cdot f(x,y,z,w) &= w f(qx,q^{2}y,qz,w)\\
    x_5 \cdot f(x,y,z,w) &= f(x,y,z,w)  &\qquad x_6 \cdot f(x,y,z,w) &= f(q^{4}x,q^{4}y,q^{4}z,q^{4}w)
\end{alignat*}
Thus, the actions of $y_1$, $y_2$, $y_3$, and $\partial$ on this module are (and have been verified via Python as detailed in \ref{appendix:T6Operators})

\begin{align*}
    y_1 \cdot f(x,y,z,w) &= yz^{-1} f(x, q^{-1}y, q^{-1}z, q^{-1}w) + y^{-1}z f(x, qy, qz, qw)\\
    &\quad + x^2z^{-1}w^{-1} f(x, q^{-1}y, qz, q^{-1}w) + xy^{-1}w^{-1} f(x, q^{-1}y, qz, qw)\\
    y_2 \cdot f(x,y,z,w) &= \resizebox{0.95\width}{!}{$xz^{-1} f(qx, y, qz, w) + x^{-1}z f(q^{-1}x, y, q^{-1}z, w) + x^{-1}yz^{-1}w f(qx, y, q^{-1}z, w)$}\\
    y_3 \cdot f(x,y,z,w) &= xw^{-1} f(q^{-1}x, q^{-1}y, z, q^{-1}w) + x^{-1}w f(qx, qy, z, qw)\\
    &\quad + x^{-1}y^{-1}z^2 f(q^{-1}x, qy, z, qw) + y^{-1}zw^{-1} f(q^{-1}x, q^{-1}y, z, qw).\\
    \dee \cdot f(x,y,z,w) &= \frac{z f{\left(\frac{x}{q^{2}},\frac{y}{q^{2}},\frac{z}{q^{2}},w \right)}}{x w} + \frac{z f{\left(x,y, z, q^{2}w \right)}}{x y} + \frac{w f{\left(q^{2} x,q^{2} y,q^{2} z,q^{2} w \right)}}{y} \\
    &\quad + \frac{w f{\left(q^{2} x,y, z, q^{2}w \right)}}{z x} + \frac{y f{\left(\frac{x}{q^{2}},\frac{y}{q^{2}},\frac{z}{q^{2}},\frac{w}{q^{2}} \right)}}{w} + \frac{y f{\left(x,\frac{y}{q^{2}},\frac{z}{q^{2}},w \right)}}{z x} \\
    &\quad + \frac{x f{\left(x,\frac{y}{q^{2}},z,w \right)}}{w z} + \frac{x f{\left(q^{2} x,y,q^{2} z,q^{2} w \right)}}{y z} + \frac{f{\left(x,\frac{y}{q^{2}}, z, q^{2}w \right)}}{y w}
\end{align*}
where $\dee = q y_1 y_2 y_3 - q^2 y_1^2 - q^{-2}y_2^2 - q^2 y_3^2 + q^2 + q^{-2}$ is the boundary curve.

Unfortunately, $\partial$ does not have any eigenvalues due to grade shifts, however, it does have an invariant subspace, $\mathbb{C}\left[ \left(\frac{x}{yz}\right)^{\pm 1}, \left(\frac{y}{zx}\right)^{\pm 1}, \left(\frac{z}{xy}\right)^{\pm 1}, \left(\frac{w}{y}\right)^{\pm 1} \right]$.
To condense the notation a bit, let $\kappa = k_1 + k_2 + k_3 + k_4$ and $\mathbf{k} = (k_1,k_2,k_3,k_4)$.
Note that the following computation was done by hand and verified via python (once again see \ref{appendix:T6Operators}).

$$
\resizebox{0.93\width}{!}{$\partial \cdot \sum_{\substack{\kappa=-n \\ |k_i| \leq n }}^n c_{\mathbf{k}} \left(\frac{x}{yz}\right)^{k_1} \left(\frac{y}{zx}\right)^{k_2} \left(\frac{z}{xy}\right)^{k_3} \left(\frac{w}{y}\right)^{k_4}$}
$$
$$
= \resizebox{0.93\width}{!}{$\sum_{\substack{\kappa=-n \\ |k_i| \leq n }}^n c_{\mathbf{k}} \left[
q^{2\kappa}\left(\frac{x}{yz}\right)^{k_1} \left(\frac{y}{zx}\right)^{k_2} \left(\frac{z}{xy}\right)^{k_3+1} \left(\frac{w}{y}\right)^{k_4-1}
+ q^{2k_4}\left(\frac{x}{yz}\right)^{k_1} \left(\frac{y}{zx}\right)^{k_2} \left(\frac{z}{xy}\right)^{k_3+1} \left(\frac{w}{y}\right)^{k_4}\right.$}
$$
$$
\resizebox{0.93\width}{!}{$+ q^{2(\kappa - k_4)}\left(\frac{x}{yz}\right)^{k_1} \left(\frac{y}{zx}\right)^{k_2} \left(\frac{z}{xy}\right)^{k_3} \left(\frac{w}{y}\right)^{k_4-1}
+ q^{-2(\kappa - k_4)} \left(\frac{x}{yz}\right)^{k_1} \left(\frac{y}{zx}\right)^{k_2} \left(\frac{z}{xy}\right)^{k_3} \left(\frac{w}{y}\right)^{k_4+1}$}
$$
$$
\resizebox{0.93\width}{!}{$+ q^{2(\kappa - 2k_2 + k_4)} \left(\frac{x}{yz}\right)^{k_1+1} \left(\frac{y}{zx}\right)^{k_2} \left(\frac{z}{xy}\right)^{k_3+1} \left(\frac{w}{y}\right)^{k_4-1}
+ q^{2(\kappa - 2k_2)} \left(\frac{x}{yz}\right)^{k_1+1} \left(\frac{y}{zx}\right)^{k_2} \left(\frac{z}{xy}\right)^{k_3} \left(\frac{w}{y}\right)^{k_4-1}$}
$$
$$
\resizebox{0.93\width}{!}{$+ q^{2(k_4-2k_2)} \left(\frac{x}{yz}\right)^{k_1+1} \left(\frac{y}{zx}\right)^{k_2} \left(\frac{z}{xy}\right)^{k_3} \left(\frac{w}{y}\right)^{k_4}
+ q^{2(2k_1 + k_4)} \left(\frac{x}{yz}\right)^{k_1} \left(\frac{y}{zx}\right)^{k_2+1} \left(\frac{z}{xy}\right)^{k_3} \left(\frac{w}{y}\right)^{k_4}$}
$$
$$
\resizebox{0.93\width}{!}{$\left. + q^{2(k_1 - k_2 - k_3 + k_4)} \left(\frac{x}{yz}\right)^{k_1} \left(\frac{y}{zx}\right)^{k_2+1} \left(\frac{z}{xy}\right)^{k_3} \left(\frac{w}{y}\right)^{k_4+1} \right]
= \sum_{\substack{\kappa=-(n+1) \\ |k_i| \leq n+1 }}^{n+2} c_{\mathbf{k}}^\prime \left(\frac{x}{yz}\right)^{k_1} \left(\frac{y}{zx}\right)^{k_2} \left(\frac{z}{xy}\right)^{k_3} \left(\frac{w}{y}\right)^{k_4}$}$$

Furthermore, our operators $y_1$, $y_2$, and $y_3$ also have invariant subspaces. For example, $\mathbb{C}\left[ \left( \frac{x}{z} \right)^{\pm 1}, \left( \frac{yw}{xz}\right)^{\pm 1} \right]$ is an invariant subspace with respect to the action of $y_2$ on this module. To see this, we compute the following.
$$y_2 \cdot \sum_{\substack{k_1+k_2=-n \\ |k_i| \leq n}}^n c_{k_1, k_2} \left(\frac{x}{z}\right)^{k_1}\left(\frac{y w}{x z}\right)^{k_2}$$
$$= \sum_{\substack{k_1+k_2=-n \\ |k_i| \leq n}}^n c_{k_1, k_2} \left[ q^{2 k_2}\left(\frac{x}{z}\right)^{k_1-1}\left(\frac{y w}{x z}\right)^{k_2} + q^{-2 k_2} \left(\frac{x}{z}\right)^{k_1+1}\left(\frac{y w}{x z}\right)^{k_2}  + q^{2 k_1} \left(\frac{x}{z}\right)^{k_1}\left(\frac{y w}{x z}\right)^{k_2+1} \right]$$
$$= \sum_{\substack{k_1+k_2=-(n+1) \\ |k_i| \leq n+1}}^{n+1} c_{k_1, k_2}^{\prime} \left(\frac{x}{z}\right)^{k_1}\left(\frac{y w}{x z}\right)^{k_2}$$

It seems as though the algebra $\mathbb{C}\left[x^{\pm 1}, y^{\pm 1}, z^{\pm 1}, w^{\pm 1}\right]$ constitutes a minimal structure for this type of representation while still allowing for well-defined, nontrivial actions of the generators of $\mathbb{T}^6(Q)$. While it is straightforward to introduce additional variables into this algebra, it would be surprising if the number of variables could be reduced. For this reason, we give the following conjecture.

\begin{conj}
    Let $Q$ be the anti-symmetric matrix defined in section \ref{section:Quantum6Torus} and suppose $M$ is a Laurent polynomial algebra with a $\mathbb{T}^{6}(Q)$-module structure such that each $x_i$ acts nontrivially. Then the Gelfend-Kirillov dimension of $M$ is at least $4$.
\end{conj}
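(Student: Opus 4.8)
The plan is to extract from the matrix $Q$ exactly which rank-one monomials can act with bounded degree. First I would record the structural feature that makes the problem tractable: the fifth generator $x_5$ is central (row and column $5$ of $Q$ vanish), and the sixth generator $x_6$ satisfies $x_6 = $ (a scalar) $\cdot x_1 x_2 x_3 x_4$ up to central corrections — more precisely $Q_{6,j}$ equals the sum $Q_{1,j}+Q_{2,j}+Q_{3,j}+Q_{4,j}$ for $j\le 4$ (one checks this from the displayed $Q$). Consequently the image of $\mathbb{T}^6(Q)$ is controlled by the four ``core'' generators $x_1,x_2,x_3,x_4$ together with the central $x_5$; any $\mathbb{T}^6(Q)$-module on which each $x_i$ acts nontrivially restricts to a module over the rank-$5$ subalgebra $\langle x_1,x_2,x_3,x_4,x_5\rangle$ on which $x_1,\dots,x_4$ act nontrivially and $x_5$ acts invertibly and centrally.

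Next I would introduce a grading/filtration argument. On a Laurent polynomial algebra $M = \mathbb{K}[t_1^{\pm1},\dots,t_d^{\pm1}]$, pick any linear functional $\deg$ on the exponent lattice $\mathbb{Z}^d$ and let $M_{\le N}$ be the span of monomials of $\deg \le N$; these are finite-dimensional and their dimensions grow polynomially of degree $d$ (this is precisely how GK-dimension of $M$ equals $d$). The key claim is: if $x_1,x_2,x_3,x_4$ all act nontrivially, then for a suitable choice of $\deg$ each $x_i$ shifts $\deg$ by a bounded amount, so that $M_{\le N}$ is preserved up to a bounded shift by the subalgebra they generate; combined with the fact that $\langle x_1,\dots,x_4\rangle$ inside $\mathbb{T}^6(Q)$ has GK-dimension $4$ (it is a rank-$4$ quantum torus, since the $4\times 4$ principal submatrix of $Q$ on $\{1,2,3,4\}$ is nondegenerate — its determinant is $\pm1$, which I would verify directly), a faithful action forces $\dim M_{\le N} \gtrsim N^4$, hence $\mathrm{GKdim}(M)\ge 4$. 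The subtlety is that the action of $x_i$ on $M$ need not be ``monomial'' in an obvious way, so I would instead argue contrapositively: if $\mathrm{GKdim}(M)\le 3$, then $M$ is a module of GK-dimension $\le 3$ over a ring of GK-dimension $4$ which is a domain (quantum tori are domains), and I would use the Bernstein-type inequality / the fact that a faithful module over a quantum torus $\mathbb{T}^k$ has GK-dimension $\ge k$ when the action is by ``shift-like'' operators — more carefully, I would show that the nontriviality of each $x_i$ forces the four operators to generate, inside $\mathrm{End}(M)$, a copy of the rank-$4$ quantum torus, and then invoke that a faithful $\mathbb{T}^4$-module which embeds in $\mathrm{End}$ of a GK-dimension $\le 3$ space cannot exist by comparing growth rates.

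The main obstacle, and where I expect the real work to lie, is making precise the step ``nontrivial action of $x_1,\dots,x_4$ implies they generate a faithful copy of the rank-$4$ quantum torus acting with controlled growth.'' A priori ``$x_i$ acts nontrivially'' only says $x_i \ne c\cdot\mathrm{id}$, which is far weaker than what is needed; one must rule out, e.g., $x_i$ acting as a non-invertible or ``degenerate'' operator that collapses growth. I would address this by exploiting that in $\mathbb{T}^6(Q)$ each $x_i$ is a \emph{unit}, so its action on $M$ is an automorphism of the $\mathbb{K}$-vector space $M$, and that the $q$-commutation relations among $x_1,\dots,x_4$ (governed by the nondegenerate $4\times4$ block of $Q$, with $q$ not a root of unity) force the subalgebra they generate in $\mathrm{End}(M)$ to again be a rank-$4$ quantum torus — here I would use that a quotient of a quantum torus by a proper ideal, at generic $q$, has strictly smaller GK-dimension, so faithfulness is automatic once all four generators act as units. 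With faithfulness in hand, the growth comparison $\mathrm{GKdim}(M)\ge \mathrm{GKdim}(\mathbb{T}^4)=4$ follows from the standard fact that GK-dimension of a faithful module is at least that of any subalgebra of the acting ring that it is faithful over, which would complete the argument. A secondary technical point to handle carefully is the Laurent (as opposed to polynomial) hypothesis on $M$, but since the problem only asks for a lower bound on GK-dimension this works in our favor and requires no extra care.
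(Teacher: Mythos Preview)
The statement is labeled a \emph{conjecture} in the paper and no proof is given; the surrounding text explicitly presents it as an open question motivated by the example module $\mathbb{C}[x^{\pm1},y^{\pm1},z^{\pm1},w^{\pm1}]$. So there is no paper proof to compare against.

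Your attempted argument has genuine gaps. First, the arithmetic claims about $Q$ are wrong: $Q_{6,j}$ does not equal $\sum_{i=1}^{4}Q_{i,j}$ (for $j=1$ the left side is $4$ while the right side is $-2$), and the determinant of the $4\times4$ principal block on $\{1,2,3,4\}$ is $64$ (its Pfaffian is $2\cdot(-2)-2\cdot(-4)+(-2)\cdot(-2)=8$), not $\pm1$. These are minor but they already undermine the ``$x_6$ is redundant'' reduction you propose.

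The fatal gap is the ``standard fact'' that a faithful module over a rank-$k$ quantum torus has GK-dimension at least $k$. This is false. At generic $q$ a nondegenerate quantum torus is simple, so \emph{every} nonzero module is automatically faithful; yet $\mathbb{T}^2$ with $xy=q^2yx$ acts faithfully on $\mathbb{C}[t^{\pm1}]$ (GK-dimension $1$) via $x\cdot f(t)=tf(t)$ and $y\cdot f(t)=f(q^2t)$, and more generally a nondegenerate $\mathbb{T}^{2n}$ has faithful modules of GK-dimension $n$. Indeed the paper's own construction exhibits a $\mathbb{T}^6(Q)$-module of GK-dimension $4<6$. So faithfulness of the rank-$4$ subtorus inside $\mathrm{End}(M)$ gives no lower bound of $4$; you would need an entirely different mechanism to convert the hypothesis ``each $x_i$ acts nontrivially'' into a growth bound, and the proposal does not supply one. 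This is precisely why the statement remains a conjecture.
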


\chapter{Topologically Defined Representations}

\section{Knot Invariants}
There is another, more geometric, perspective to the story of skein modules that begins by looking for knot invariants.
Let $M$ be a $3$-manifold with boundary and consider the natural inclusion $\partial M \hookrightarrow M$.
This inclusion induces a map on fundamental groups, denoted by $\alpha: \pi_1(\partial M) \to \pi_1(M)$, known as the \textit{peripheral map}.
In \cite{Waldhausen_1968}, Waldhausen demonstrated that the peripheral map serves as a complete invariant for a significant class of 3-manifolds, namely \textit{sufficiently large} $3$-manifolds.
In particular, these kinds of manifolds include knot complements.
This result was further refined by Gordon and Luecke in \cite{MR0972070} to establish it as a knot invariant as well.

\newpage
Given a knot $K$, the \textit{knot complement} is defined as $M_K = S^3 \setminus N(K)$, where $N(K)$ is a tubular neighborhood of $K$.
Since the boundary of $M_K$ is a torus, the corresponding peripheral map is $\alpha: \pi_1(T^2) \to \pi_1(M_K)$.
Notably, $\pi_1(M_k)$ alone is not a complete knot invariant, as it cannot distinguish between mirrored images of chiral knots.
However, the additional information regarding how the torus embeds into the knot complement allows for this distinction.

We can study these groups by instead examining the induced maps on their representations.
For any reductive group, $G$, the set of all $G$-representations, denoted $\operatorname{Rep}(M,G) := \operatorname{Hom}(\pi_1(M), G)$, inherits a natural variety structure from the algebraic variety structure of $G$.
Moreover, the induced map $\operatorname{Rep}(M, G) \to \operatorname{Rep}(\partial M, G)$ descends to a map on $G$-character varieties, $\chi(M, G) \to \chi(\partial M,G)$.
Here, $\chi(M,G)$ is defined as $\operatorname{Rep}(M, G) \sslash G$, the closed algebraic quotient of the natural conjugation action of $G$ on the representation variety.
Finally, we can instead induce the map onto the coordinate rings of these character varieties: $\mathcal{O} \left( \chi(\partial M, G) \right) \to \mathcal{O} \left( \chi(M, G) \right)$.
Equivalently, we can also consider $\mathcal{O}\left( \operatorname{Rep}(\partial M, G) \right)^G \to \mathcal{O}\left( \operatorname{Rep}(M, G) \right)^G$, the corresponding map between the algebras of $G$-invariant regular functions on $\operatorname{Rep}(\partial M,G)$ and $\operatorname{Rep}(M,G)$.
Below is a more visual depiction of the process just described.
\begin{center}
    \begin{tikzcd}
        \pi_1(\partial M) \arrow[r, "\alpha"] & \pi_1(M)\\
        {\text{Rep}(\partial M,G)} \arrow[d, two heads] & {\text{Rep}(M,G)} \arrow[l, "\alpha^*"'] \arrow[d, two heads]\\
        {\chi(\partial M,G)} & {\chi(M, G)} \arrow[l, "\hat{\alpha}"']\\
        {\mathcal{O}\left(\chi(\partial M, G)\right)} \arrow[r, "\alpha_*"] & {\mathcal{O}\left(\chi(M, G)\right)}
    \end{tikzcd}
\end{center}

\newpage
So we have taken a map between not necessarily abelian groups and turned it into a map between commutative algebras, which offers several advantages.
At this point, it is natural to ask whether or not we still have the same precision of knot invariants at this level.
Initially the answer is no. However, it turns out that if we quantize these coordinate rings, then it is possible to extract this data.

Before we do this, let's take a moment to figure out what exactly is $\chi(\partial M, G)$.
As we're only considering cases when $\partial M$ is the two torus, $T^2 = S^1 \times S^1$, for any reductive group, $G$, $\chi(\partial M, G) = \operatorname{Hom}(\mathbb{Z}^2, G) \sslash G$.
Let $\mathcal{T} \subset G$ be a maximal toral subgroup of $G$, and let $W = N(\mathcal{T})/\mathcal{T}$ be the corresponding Weyl group, where $N(\mathcal{T})$ is the normalizer of $\mathcal{T}$ in $G$.
Clearly, $\operatorname{Rep}(T^2, \mathcal{T}) \cong \mathcal{T} \times \mathcal{T}$ since the fundamental group of the torus is $\mathbb{Z}^2$.
As $\mathcal{T}$ is a subgroup of $G$, there is a natural inclusion of these representation varieties, $\operatorname{Rep}(T^2, \mathcal{T}) \hookrightarrow \operatorname{Rep}(T^2, G)$.

We can surject $\operatorname{Rep}(T^2, G)$ onto its character variety and do the same for $\mathcal{T} \times \mathcal{T}$ by factoring it through the diagonal action of $W$.
It's fairly straightforward to see that this induces a map, $\Psi$, on these character varieties.
\begin{center}
    \begin{tikzcd}
        \text{Rep}(T^2, \mathcal{T}) \arrow[r, equal] & \mathcal{T} \times \mathcal{T} \arrow[r, hook] \arrow[d, two heads] & {\text{Rep}(T^2, G)} \arrow[d, two heads] \\
        & \mathcal{T} \times \mathcal{T} \sslash W \arrow[r, "\Psi"] & {\chi(T^2, G)} \\
    \end{tikzcd}
\end{center}

It's natural to expect and hope that $\Psi$ is an isomorphism; however, this is not true in general.
Nonetheless, a chain of results supports us in our specific situation.
Thaddeus proved in \cite{MR1862614} that when $G$ is a connected complex reductive algebraic group, the quotient $\mathcal{T} \times \mathcal{T} \sslash W$ is bijective with the connected component containing the trivial character, denoted $\chi_0(T^2, G)$.
Moreover, Richardson's work in \cite{MR535074} implies that if additionally $G$ is simply-connected, then the character variety $\chi(T^2, G)$ is irreducible as a moduli space, and thus $\chi(T^2, G) = \chi_0(T^2, G)$.
However, similar to normalization maps for cuspidal curves, the bijectivity of these spaces does not necessarily imply that they are isomorphic as varieties.

Nevertheless, combining these results, Sikora established in \cite{MR3205770} that whenever $G$ is a classical group, i.e. $G = GL_n(\mathbb{C})$, $SL_n(\mathbb{C})$, $SP_n(\mathbb{C})$, or $SO_n(\mathbb{C})$, $\Psi$ is indeed an isomorphism of varieties and hence the induced map on coordinate rings,
$$\Psi_* : \mathcal{O}\left(\mathcal{T} \times \mathcal{T}\right)^W \longrightarrow \mathcal{O}\left(\chi(T^2, G)\right)$$
is an isomorphism of commutative algebras.
As $\mathcal{O}\left(\mathcal{T} \times \mathcal{T}\right)^W$ has been fairly heavily studied in representation theory, it is beneficial to instead consider the map $\alpha_* : \mathcal{O}\left(\mathcal{T} \times \mathcal{T}\right)^W \to \mathcal{O}\left(\chi(M, G)\right)$.
Moreover, it has interesting non-commutative deformations: spherical DAHAs.

Since we're only working with $G = SL_2(\mathbb{C})$, the above results provide us with a more concrete way of understanding the character variety $\chi\left(T^2, SL_2(\mathbb{C})\right)$.
In particular, when $G = SL_2(\mathbb{C})$, a quick calculation shows the following.
\begin{align*}
    \mathcal{T} &= \left\{ \begin{pmatrix} a & 0\\ 0 & a^{-1}\\ \end{pmatrix} : a \in \mathbb{C}^\times \right\}\\
    N(\mathcal{T}) &= \left\{ \begin{pmatrix} a & 0\\ 0 & a^{-1}\\ \end{pmatrix}, \begin{pmatrix} 0 & -b\\ b & 0\\ \end{pmatrix} : a,b \in \mathbb{C}^\times \right\}
\end{align*}
and therefore
$$W = N(\mathcal{T}) / \mathcal{T} = \left\{ \begin{pmatrix} 1 & 0\\ 0 & 1\\ \end{pmatrix} \mathcal{T}, \begin{pmatrix} 0 & -1\\ 1 & 0\\ \end{pmatrix} \mathcal{T} \right\} \cong \mathbb{Z}/2\mathbb{Z}.$$

Since $\pi_1(T^2)$ is abelian and has two generators, we obtain the following isomorphism.
$$\mathcal{O}\left(\operatorname{Rep}(T^2, SL_2(\mathbb{C}))\right) \cong \frac{\mathbb{C}\langle X^{\pm 1}, Y^{\pm 1} \rangle}{XY - YX}$$
There is a natural $\mathbb{Z}/2\mathbb{Z}$-action, which corresponds to simultaneously inverting $X$ and $Y$. Consequently, the ring $\mathcal{O}\left( \chi(T^2, SL_2(\mathbb{C})) \right)$ is subalgebra of elements invariant under this involution.

Returning to the topic of knot invariants, we mentioned that we can recover this information through the quantization of these coordinate rings. Frohman and Gelca defined a particular quantization of $\mathcal{O}\left(\operatorname{Rep}(T^2, SL_2(\mathbb{C}))\right)$ in \cite{MR1675190}, which they call the noncommutative torus.
$$A_q := \mathbb{T}^2 \left( \begin{bmatrix} 0 & 2\\ -2 & 0\\ \end{bmatrix} \right) = \frac{\mathbb{C}\langle X^{\pm 1}, Y^{\pm 1} \rangle}{XY - q^2 YX}$$
We can apply the same $\mathbb{Z}/2\mathbb{Z}$-action on $A_q$ and look at the corresponding invariant subalgebra, denoted $A_q^{\mathbb{Z}/2\mathbb{Z}}$. They then showed that both $K_q(T^2)$ and $A_{q}^{\mathbb{Z}/2\mathbb{Z}}$ are isomorphic as algebras.

Recall that by Theorem \ref{theorem:KqT2Basis}, $\{ (r,s)_T \}_{r,s \in \mathbb{Z}} / \sim$ where $(r,s)_T \sim (-r, -s)_T$ forms a basis for $K_q(T^2)$.

\begin{theorem}[Theorem 4.3 in \cite{MR1675190}]
    The linear map $K_q(T^2) \to A_{q}^{\mathbb{Z}/2\mathbb{Z}}$ defined by $(r,s)_T \mapsto q^{-sr}\left( Y^{-s}X^{r} + Y^{s}X^{-r} \right)$ is an isomorphism of algebras.
\end{theorem}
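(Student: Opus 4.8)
The plan is to verify directly that the proposed linear map $\Phi: K_q(T^2) \to A_q^{\mathbb{Z}/2\mathbb{Z}}$ given by $(r,s)_T \mapsto q^{-sr}(Y^{-s}X^r + Y^sX^{-r})$ is well-defined, lands in the invariant subalgebra, is a bijection, and respects multiplication. Well-definedness under the relation $(r,s)_T \sim (-r,-s)_T$ is immediate since the defining formula is manifestly symmetric in the two summands: replacing $(r,s)$ by $(-r,-s)$ leaves $q^{-sr}(Y^{-s}X^r + Y^sX^{-r})$ unchanged. Landing in $A_q^{\mathbb{Z}/2\mathbb{Z}}$ is also immediate: the $\mathbb{Z}/2\mathbb{Z}$-action inverts $X$ and $Y$ simultaneously, and $q^{-sr}(Y^{-s}X^r + Y^sX^{-r})$ is visibly fixed by swapping the two monomials.

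The next step is the bijectivity. Here I would exhibit an explicit basis of $A_q^{\mathbb{Z}/2\mathbb{Z}}$: the monomials $Y^sX^r$ for $(r,s) \in \mathbb{Z}^2$ form a $\mathbb{C}$-basis of $A_q$, and the invariant subalgebra has basis $\{Y^sX^r + Y^{-s}X^{-r}\}$ as $(r,s)$ ranges over $\mathbb{Z}^2/(r,s)\sim(-r,-s)$ (with the element $(0,0)$ contributing $2\cdot 1$, and the usual care near the ``boundary'' of the quotient). Since $\{(r,s)_T\}_{r,s\in\mathbb{Z}}/\sim$ is a $\mathbb{C}$-basis of $K_q(T^2)$ by Theorem \ref{theorem:KqT2Basis}, and $\Phi$ sends this basis to (nonzero scalar multiples of) the basis of $A_q^{\mathbb{Z}/2\mathbb{Z}}$ via the indexing bijection $(r,s)\leftrightarrow(-r,s)$ or so, $\Phi$ is a linear isomorphism. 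I would be careful to match the index sets correctly and to treat the degenerate cases $r=0$ or $s=0$.

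The main work — and the main obstacle — is showing $\Phi$ is an algebra homomorphism. The efficient route is to reduce to a product formula for the Chebyshev-type basis elements $(r,s)_T$ in $K_q(T^2)$. Frohman--Gelca's ``product-to-sum'' formula states that in $K_q(T^2)$ one has $(p,q)_T \cdot (r,s)_T = q^{ps-qr}(p+r, q+s)_T + q^{-(ps-qr)}(p-r, q-s)_T$ (the celebrated product-to-sum identity, which one may take as known from the cited reference, or derive from the Kauffman bracket skein relations by resolving the crossings of two torus curves of the given slopes). On the quantum torus side, a direct computation with the relation $XY = q^2 YX$ shows that the images satisfy exactly the same quadratic relation: $q^{-qp}(Y^{-q}X^p + Y^qX^{-p})\cdot q^{-sr}(Y^{-s}X^r + Y^sX^{-r})$ expands into four monomial products, each of which is reordered using $X^aY^b = q^{2ab}Y^bX^a$, and after collecting the powers of $q$ the four terms pair up into $q^{ps-qr}\Phi((p+r,q+s)_T) + q^{qr-ps}\Phi((p-r,q-s)_T)$. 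Matching these two expansions term by term is the crux; it is a finite but bookkeeping-heavy exponent computation.

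Thus the skeleton is: (1) well-definedness and invariance (formal); (2) $\mathbb{C}$-linear isomorphism via the two compatible bases (using Theorem \ref{theorem:KqT2Basis}); (3) multiplicativity by comparing the product-to-sum formula in $K_q(T^2)$ with the direct quantum-torus computation of the product of the images, both sides being governed by the single exponent $ps - qr$. I expect step (3) to consume essentially all the effort, and within it the one genuinely delicate point is verifying that the four cross-terms recombine with precisely the right powers of $q$; everything else is routine. Once multiplicativity is checked on the spanning set $\{(r,s)_T\}$, it extends to all of $K_q(T^2)$ by bilinearity, completing the proof.
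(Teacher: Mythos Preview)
The paper does not supply its own proof of this statement; it is quoted verbatim as Theorem~4.3 of Frohman--Gelca and used as input. Your outline matches the argument in the original source: well-definedness and invariance are formal, bijectivity follows from the two bases, and multiplicativity is exactly the product-to-sum formula on the skein side matched against the direct monomial computation on the quantum-torus side.

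Two small comments. First, you use $q$ both for the quantum parameter and as an index in $(p,q)_T$; pick different letters. Second, the genuine content is the product-to-sum formula in $K_q(T^2)$ itself, which you propose to ``take as known from the cited reference, or derive from the Kauffman bracket skein relations.'' That derivation is not a throwaway computation: in Frohman--Gelca it is an induction on the intersection number and occupies most of the paper. If you intend your write-up to be self-contained, that is where the work lies; if you are happy to cite it, then the rest of your sketch is complete and correct.
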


Therefore, the meridian and longitude get mapped to $X + X^{-1}$ and $Y + Y^{-1}$, respectively. This theorem tells us that $K_q(T^2)$ can be understood as a quantum deformation of the ring of $SL_2(\mathbb{C})$-invariant regular functions over $\mathcal{O}\left(\operatorname{Rep}(T^2, SL_2(\mathbb{C}))\right)$.

Moreover, Przytycki and Sikora in \cite{MR1710996} (as well as Bullock in \cite{MR1600138}) showed that for any $3$-manifold, the specialization of $K_{q=-1}(M)$ is isomorphic to $\mathcal{O}\left( \chi(M, SL_2(\mathbb{C})) \right)$ as algebras. Therefore, $K_q(M)$ is a quantization of the commutative algebra $\mathcal{O}\left( \chi(M, SL_2(\mathbb{C})) \right)$. Furthermore, when $M$ is a surface, $K_q(M)$ is a deformation of \resizebox{0.94\width}{!}{$K_{q=-1}(M) \cong \mathcal{O}\left( \chi(M, SL_2(\mathbb{C})) \right)$} in the direction of Goldman's Poisson bracket.

Recall that the induced map $\alpha_{*} : \mathcal{O}\left(\chi(\partial M, G)\right) \to \mathcal{O}\left(\chi(M, G)\right)$ is a map between commutative algebras. Any time we have a map between algebras, we can make the codomain into a module over the domain using the map. It turns out that this module structure remains intact as we quantize both of these algebras.
Furthermore, given a knot, $K$, the colored Jones polynomial, $J_n(K, q)$, can be extracted from the $A_{q}^{\mathbb{Z}/2\mathbb{Z}}$-module structure of $K_q(M_K)$. The natural embedding $N(K) \sqcup M_K \hookrightarrow S^3$ induces the pairing
$$\langle-,-\rangle: K_q\left(S^1 \times D^2\right) \otimes_{K_q\left( T^2 \right)} K_q\left( M_K \right) \rightarrow K_q\left(S^3\right) \cong \mathbb{C}.$$
It's not too hard to see that $K_q(S^1 \times D^2) \cong \mathbb{C}[u]$ as vector spaces, and a theorem of Kirby and Melvin from \cite{MR1117149} shows
$$J_n(K, q) = \left\langle S_{n-1}(u), 1_K\right\rangle.$$
Here, $1_K$ denotes the empty link in $M_K$ and $S_{n}$ is the $n$th Chebyshev polynomials, but now with initial conditions $S_0(x) = 1$, $S_1(x) = x$, and $S_{n+1} = x S_{n} - S_{n-1}$.

As the $K_q(T^2)$-module structure of $K_q(M_K)$ comes from the peripheral map, we can geometrically interpret this as isotoping any curves in $K_q(M_K)$ away from a neighborhood of $\partial M_k$ and embedding $T^2 \times I$ into this neighborhood.
These $K_q(T^2)$-module structures, as well as their connection with DAHA representations, were examined by Gelca and Sain in \cite{MR1967240} and more thoroughly examined by Berest and Samuelson in \cite{MR3530443}. For example, when $K$ is the unknot, $M_k$ is the solid torus and so $K_q(M_k) \cong \mathbb{C}[u]1_K$. Let $Y_1$, $Y_2$, and $Y_3$ be the meridian, longitude, and $(1,1)$-curve respectively. Then the action of $K_q(T^2)$ on $K_q(M_K)$ is given by
\begin{align*}
    Y_1 \cdot f(u) 1_K &= u f(u) 1_K, \\
    Y_2 \cdot 1_K &= \left(-q^2-q^{-2}\right) 1_K, \\
    Y_3 \cdot 1_K &= -q^{-3} u 1_K.
\end{align*}

Although it is quite a bit more work, this module structure can been extended to the knot complement of any knot. In \cite{MR3530443}, Berest and Samuelson describe the natural action of the $A_1$ DAHA on these knot complements for the trefoil, the figure eight knot, all $(2, 2p+1)$-torus knots, and all $2$-bridge knots (when $q = \pm 1$).

\section{Knot Complements as $\mathscr{S}(T^2 \setminus D^2)$-Modules}\label{section:KnotCompMods}

Just as $K_q(M)$ can be understood as a quantization of $SL_2$-invariant regular functions on $\operatorname{Rep}(M, SL_2)$, a similar geometric approach applies to the theory of stated skein algebra, using the conventional model where $\Sigma$ is a punctured bordered surface, $\mathscr{S}^{pb}(\Sigma)$.

Let $\Sigma$ be an oriented surface with boundary, and fix a Riemannian metric on $\Sigma$. Denote $U\Sigma$ as the unit tangent bundle over $\Sigma$, and let $pr: U\Sigma \to \Sigma$ be the canonical projection. The preimage of any point in $\Sigma$ is a circle equipped with an orientation induced from that of $\Sigma$. For any boundary edge $e$, and any point $x \in e$, let $v \in T_x(\Sigma)$ be the unit tangent vector pointing in the direction of the orientation of $e$. Define $\theta_e$ to be the half-circle from $(x, v)$ to $(x, -v)$ in the positive direction of its orientation.\footnote{The choice of $x$ in $e$ does not actually make a difference here.}

Just as Kauffman bracket skein modules come from representation varieties over fundamental groups, stated skein algebras come from representation varieties over fundamental groupoids. Every immersion $\alpha : [0,1] \to \Sigma$ canonically lifts to the path $\left(\alpha(t), \frac{\alpha(t)}{\|\alpha(t)\|}\right)$ in $U\Sigma$ and thus any boundary edge, $e \subset \partial \Sigma$, canonically lifts to $\tilde{e} \subset \partial \left( U \Sigma \right)$. If
$$\widetilde{\partial \Sigma} := \bigcup_{e \subset \partial \Sigma} \tilde{e},$$
then $\pi_1(U\Sigma ; \widetilde{\partial \Sigma})$ is the fundamental groupoid corresponding to a lift of the fundamental groupoid, $\pi_1(\Sigma, \partial \Sigma)$.

\begin{definition}
    A \textit{flat twisted $\mathrm{SL}_2(\mathbb{C})$-bundle} on $\Sigma$ is a morphism $\rho: \pi_1(U \Sigma ; \widetilde{\partial \Sigma}) \rightarrow \mathrm{SL}_2(\mathbb{C})$ such that $\rho( \theta_{e} ) = \begin{pmatrix} 0 & -1 \\ 1 & 0 \end{pmatrix}$ for every boundary edge, $e$.
\end{definition}

Costantino and L\^{e} showed (Lemma 8.2 in \cite{costantino2022stated}) that the set of flat twisted $SL_2(\mathbb{C})$-bundles on $\Sigma$ forms an affine algebraic variety, similar to the Kauffman bracket case. In the following theorem, they established the classical limit of stated skein algebras.
\begin{theorem}[Theorem 8.12 in \cite{costantino2022stated}]
    When $q=1$, the stated skein algebra $\mathscr{S}(\Sigma)$ is naturally isomorphic to the coordinate ring of flat twisted $\mathrm{SL}_2(\mathbb{C})$-bundles on $\Sigma$.
\end{theorem}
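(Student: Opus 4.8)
The statement to prove is that at $q = 1$ the stated skein algebra $\mathscr{S}(\Sigma)$ is naturally isomorphic to the coordinate ring of the affine variety of flat twisted $\mathrm{SL}_2(\mathbb{C})$-bundles on $\Sigma$. The plan is to build the isomorphism by a holonomy/trace construction, exactly parallel to the classical Bullock--Przytycki--Sikora result $K_{-1}(M) \cong \mathcal{O}(\chi(M, \mathrm{SL}_2(\mathbb{C})))$ cited earlier in the excerpt, but now with the boundary data and states incorporated through the unit tangent bundle $U\Sigma$ and the fundamental groupoid $\pi_1(U\Sigma; \widetilde{\partial\Sigma})$. First I would fix the target: by Costantino--L\^e (Lemma 8.2 in \cite{costantino2022stated}) the set $\mathcal{R}$ of flat twisted bundles $\rho : \pi_1(U\Sigma; \widetilde{\partial\Sigma}) \to \mathrm{SL}_2(\mathbb{C})$ with $\rho(\theta_e) = \left(\begin{smallmatrix} 0 & -1 \\ 1 & 0 \end{smallmatrix}\right)$ is an affine algebraic variety, so $\mathcal{O}(\mathcal{R})$ makes sense.

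The construction of the map $\Phi : \mathscr{S}_{q=1}(\Sigma) \to \mathcal{O}(\mathcal{R})$ goes as follows. Each stated $\partial\Sigma$-tangle diagram $(\alpha, s)$ in generic position, after lifting each strand $\alpha_i$ to its canonical path in $U\Sigma$, is a disjoint union of closed loops and arcs with endpoints on $\widetilde{\partial\Sigma}$; to such a diagram I assign the function on $\mathcal{R}$ whose value at $\rho$ is obtained by taking, for each closed component, $-\operatorname{tr}(\rho(\text{loop}))$, and for each arc component the matrix coefficient $\rho(\text{arc})_{s(\text{initial}), s(\text{terminal})}$ of the holonomy along the arc, with the state map $s$ selecting the appropriate row/column index in the standard basis of $\mathbb{C}^2$ (and a sign/normalization built in so that the framing and height conventions are respected). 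Multiply these contributions over components and over the diagram. The heart of the argument is then to check that $\Phi$ is well-defined, i.e. that the five defining relations $(R_1)$--$(R_5)$ (equivalently $(R_1^{pb})$--$(R_5^{pb})$) are sent to genuine identities of functions on $\mathcal{R}$. At $q=1$, relation $(R_1)$ becomes the classical trace identity $\operatorname{tr}(AB) + \operatorname{tr}(AB^{-1}) = \operatorname{tr}(A)\operatorname{tr}(B)$ for $\mathrm{SL}_2$, and $(R_2)$ becomes $\operatorname{tr}(I) = 2 = -(-q^2 - q^{-2})|_{q=1}$; relations $(R_3)$--$(R_5)$ become statements about matrix coefficients of holonomies of contractible arcs near the boundary, and this is precisely where the normalization $\rho(\theta_e) = \left(\begin{smallmatrix} 0 & -1 \\ 1 & 0 \end{smallmatrix}\right)$ enters — a trivial arc hugging a boundary edge picks up exactly this matrix, reproducing the constants $C_\mu^\nu$ and the state-exchange coefficient at $q=1$. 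These are finite local checks using $2\times 2$ matrices over $\mathrm{SL}_2(\mathbb{C})$ and the Cayley--Hamilton identity $A + A^{-1} = \operatorname{tr}(A) I$.

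Having $\Phi$ well-defined and manifestly an algebra homomorphism (products of diagrams go to products of functions, since holonomy is multiplicative and disjoint-union contributions multiply), I would prove it is an isomorphism. For surjectivity: the matrix coefficients $\rho \mapsto \rho(\gamma)_{ij}$ over generating arcs $\gamma$ of the groupoid, together with traces of loops, generate $\mathcal{O}(\mathcal{R})$ — this follows from the fact that $\mathcal{R}$ is cut out of a product of copies of $\mathrm{SL}_2(\mathbb{C})$ (one per generator of $\pi_1(U\Sigma;\widetilde{\partial\Sigma})$) by the flatness relations and the boundary normalization, so its coordinate ring is generated by the entry functions, all of which are in the image of $\Phi$ on single-arc diagrams. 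For injectivity: I would use the basis theorem for $\mathscr{S}^{pb}(\Sigma)$, namely Theorem \ref{theorem:OrBasis}, which gives an explicit $\mathbb{C}$-basis $B(\mathfrak{o}, \Sigma)$ of increasingly stated $\mathfrak{o}$-ordered simple diagrams, and show the images of these basis elements are linearly independent functions on $\mathcal{R}$ — equivalently, that one can separate them by evaluating at suitable points $\rho \in \mathcal{R}$. This is the standard "detecting simple multicurves by holonomy" argument: for a surface of negative Euler characteristic one builds explicit representations (e.g. supported on Fuchsian-type data, adjusted to respect the twisted boundary condition) realizing prescribed traces and matrix coefficients, and a dimension count — $\dim \mathcal{R}$ equals the transcendence degree that matches the growth rate of $|B(\mathfrak{o},\Sigma)|$ — forces injectivity. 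Finally I would check naturality: given an embedding $\Sigma \hookrightarrow \Sigma'$ of punctured bordered surfaces preserving boundary orientations, the induced maps $\mathscr{S}(\Sigma) \to \mathscr{S}(\Sigma')$ and $\mathcal{O}(\mathcal{R}') \to \mathcal{O}(\mathcal{R})$ (from the induced groupoid morphism and restriction of bundles) intertwine the two copies of $\Phi$, which is immediate from the holonomy description.

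The main obstacle I expect is the well-definedness of $\Phi$ on the boundary-sensitive relations $(R_3)$--$(R_5)$, including getting every half-twist sign, every $q^{\pm 1/2}$-derived constant specialized at $q=1$, and the height-ordering convention to align correctly with the matrix $\left(\begin{smallmatrix} 0 & -1 \\ 1 & 0 \end{smallmatrix}\right)$ — a sign or transpose error here would quietly break compatibility with relation $(R_5)$ and the constants $C_\mu^\nu$. A secondary obstacle is the injectivity/separation step: constructing enough flat twisted bundles to separate the basis $B(\mathfrak{o},\Sigma)$ requires handling all topological types of $\Sigma$ uniformly (including low-complexity cases like the annulus or once-punctured disk, where the groupoid is small and one must be careful that $\mathcal{R}$ is still large enough), and the cleanest route is to reduce to the already-known Kauffman bracket case $K_{-1}$ on the closed-up surface plus an explicit analysis of the "bad arc" generators near the punctures. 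Throughout, I would lean on Proposition \ref{prop:IsoOfModels} to pass freely between $\mathscr{S}^{pb}(\Sigma)$ and $\mathscr{S}(\Sigma', \mathcal{P})$ so that the statement and its proof can be phrased in whichever model makes the local boundary checks most transparent.
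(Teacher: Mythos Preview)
The paper does not prove this theorem; it is quoted verbatim as Theorem 8.12 of Costantino--L\^{e} \cite{costantino2022stated} and used as background, so there is no ``paper's own proof'' to compare against. Your outline --- defining a twisted-holonomy map on stated diagrams, verifying the relations $(R_1)$--$(R_5)$ at $q=1$ via $\mathrm{SL}_2$ trace and matrix-coefficient identities (with the boundary normalization $\rho(\theta_e)=\left(\begin{smallmatrix}0&-1\\1&0\end{smallmatrix}\right)$ forcing the $C_\mu^\nu$ constants), and then using the basis theorem for bijectivity --- is the correct strategy and is essentially what Costantino--L\^{e} carry out in their Section 8, though they streamline the injectivity step by exploiting the splitting/excision machinery to reduce to triangles rather than building separating representations directly.
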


We will now extend the action of $K_q(T^2)$ on knot complements to an action of $\mathscr{S}(T^2 \setminus D^2)$ on corresponding stated skein modules with one marking.
Just as before, the action of $\mathscr{S}(T^2 \setminus D^2)$ on $\mathscr{S}(M_K)$ for some knot $K$ should be dictated by the peripheral map, i.e. $\mathscr{S}(T^2 \setminus D^2)$ should somehow be ``glued'' to the boundary of $M_K$. In order to incorporate the data of our marking, we'll need to add a marking to the knot complement and identify both markings in such a way that aligns with a module structure. From now on, we will assume that $M_K$ has a single marking in its boundary. Define $\mathcal{N}_K: [0,1] \hookrightarrow \partial M_K$ to be this marking and consider $\mathscr{S}(M_K, \mathcal{N}_K)$.

In order to define this module structure properly, we will need to consider the ``conventional model'' for stated skein algebras, $\mathscr{S}^{pb}(T^2 \setminus D^2)$, where we identify $T^2 \setminus D^2$ as a punctured bordered surface (see section \ref{section:ConventionalModel} for more information).
Let $\mathcal{P} \in \partial \left( T^2 \setminus D^2 \right)$ be our ideal point and denote $\Sigma^\prime := \left( T^2 \setminus D^2 \right) \setminus \mathcal{P}$ as the corresponding punctured bordered surface with clockwise orientation, $\mathfrak{o}$, on our boundary edge, $\partial \Sigma^\prime$. Identify $\partial \Sigma^\prime$ with the map $b : (0,1) \to \Sigma^\prime$ such that the induced orientation from $(0,1)$ is compatible with $\mathfrak{o}$.
\begin{center}
    \begin{tikzpicture}[scale=1.5]
        \draw[draw=none, fill=gray!40] (0, 1) -- (2, 1) -- (2, -1) -- (0, -1) -- (1, -0.2) to[out=0, in=-90] (1.2, 0) to[out=90, in=0] (1, 0.2) to[out=180, in=90] (0.8, 0) to[out=-90, in=180] (1, -0.2) -- (0, -1) -- (0, 1);
        \draw[thick, blue, ->] (0,1) -- (1,1);
        \draw[thick, blue] (0,1) -- (2,1);
        \draw[thick, blue, ->] (0,-1) -- (1,-1);
        \draw[thick, blue] (0,-1) -- (2,-1);
        \draw[thick, olive, ->] (0,-1) -- (0,0);
        \draw[thick, olive] (0,-1) -- (0,1);
        \draw[thick, olive, ->] (2,-1) -- (2,0);
        \draw[thick, olive] (2,-1) -- (2,1);
        \draw[thick, black] (1, 0) circle (0.2);
        \path[thick, tips, ->] (2, -0.2) -- (0.95, -0.2);
        \node[draw, circle, inner sep=0pt, minimum size=4pt, fill=white] at (1, 0.2) {};
        \node at (1.17, 0.4) {$\mathcal{P}$};
        \node at (0.75, -0.3) {$b$};
    \end{tikzpicture}
\end{center}

For now, we are understanding this picture as purely topological and are not yet considering the corresponding skein algebras.
Glue $T^2 \times [0,1]$ to the boundary of $M_K$ so that $T^2 \times \{0\}$ is identified with $\partial M_K$ via the peripheral embedding, $T^2 \times \{0\} \hookrightarrow \partial M_K$. At this point, it should be the case that and $T^2 \times (0,1] \bigcap M_K = \emptyset$ and $\mathcal{N}_K$ is on the interior of $\widetilde{M}_K := M_K \bigcup \left(T^2 \times [0,1]\right)$. We can compose embeddings of our thickened punctured bordered surface
$$\iota_K : \Sigma^\prime \hookrightarrow T^2 \hookrightarrow \widetilde{M}_K$$
such that $b([\epsilon, 1-\epsilon]) \times \{0\}$ is identified with $\mathcal{N}_K$ for some small $\epsilon > 0$. We will eventually identify $b([\epsilon, 1-\epsilon]) \times \{1\}$ as the new marking.

\newpage
Let $\alpha \in \mathscr{S}^{pb}(\Sigma^\prime)$ and $D \in \mathscr{S}(M_K)$. First isotope any closed curves in $D$ away from the boundary and isotope the endpoints of any stated $\partial M_K$-tangles of $D$ down the marking to $\mathcal{N}_K\left((0, \frac{1}{2})\right)$.
\begin{center}
    \begin{tikzpicture}
        \draw[thick] (0, 0) -- (1, 0);
        \draw[thick, orange] (1, 0) -- (2, 0);
        \node[circle, inner sep=0pt, minimum size=3pt, fill=black] at (0, 0) {};
        \node[circle, inner sep=0pt, minimum size=3pt, fill=orange] at (2, 0) {};
        \path[thick, tips, ->] (0, 0) -- (1.1, 0);
        \draw (0.75, 0) -- (0.75, 1);
        \draw (1.1, 0) -- (1.1, 1);
        \draw (1.45, 0) -- (1.45, 1);
        \draw (1.8, 0) -- (1.8, 1);
        \node at (0.2, 0.3) {$\scriptstyle{\mathcal{N}_K}$};
        \node at (3, 0.5) {$\rightsquigarrow$};
        \draw[thick] (4, 0) -- (5, 0);
        \draw[thick, orange] (5, 0) -- (6, 0);
        \node[circle, inner sep=0pt, minimum size=3pt, fill=black] at (4, 0) {};
        \node[circle, inner sep=0pt, minimum size=3pt, fill=orange] at (6, 0) {};
        \path[thick, tips, ->] (4, 0) -- (5.1, 0);
        \draw (4.1, 0) -- (4.1, 1);
        \draw (4.35, 0) -- (4.35, 1);
        \draw (4.6, 0) -- (4.6, 1);
        \draw (4.85, 0) -- (4.85, 1);
        \node at (5.85, 0.3) {$\scriptstyle{\mathcal{N}_K}$};
    \end{tikzpicture}
\end{center}
Similarly, isotope the endpoints of any stated $\partial \left(T^2 \setminus D^2\right)$-tangles in $\alpha$, remaining in generic position while doing so, up to $\mathcal{N}\left((\frac{1}{2}, 1-\epsilon)\right) \times [0,1]$.

We can extend any stated $\mathcal{N}_K$-tangles to end on $b \times \{1\}$ instead of $b \times \{0\}$. Specifically, let $(m,s)$ be a stated $\mathcal{N}_K$-tangle (using definition \ref{defn:StatedTangle}) and $e_1, e_2$ be the endpoints of $m$. In addition to lying on $\mathcal{N}_K$ in $M_K$, we can view $e_1$ and $e_2$ as lying in $b \times [0,1]$ in $\widetilde{M}_K$. Define $$\Tilde{m} = m \bigcup_{i = 1,2} \left( e_i \times [0,1] \right)$$ where $e_i \times [0,1] \subset b \times [0,1]$. Then $(\Tilde{m}, \Tilde{s})$ is a stated $\widetilde{\mathcal{N}}_K$-tangle where $\Tilde{s}(e_i \times \{1\}) = s(e_i)$ and $\widetilde{\mathcal{N}}_K := \iota_K( b([\epsilon, 1-\epsilon]) \times \{1\} )$.

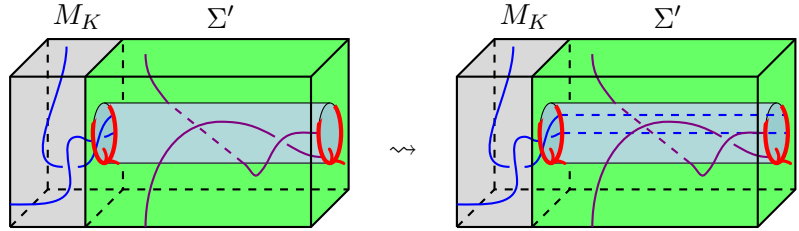
\begin{figure}[h]
    \centering
    \begin{tikzpicture}[baseline=-2]
        \path[fill=gray!30] (0, -1) -- (0, 1) -- (0.5, 1.5) -- (1.5, 1.5) -- (1, 1) -- (1, -1) -- (0, -1);
        \path[fill=green!60] (1, -1) -- (1, 1) -- (1.5, 1.5) -- (4.5, 1.5) -- (4.5, -0.5) -- (4, -1) -- (1, -1);
        \draw[thick] (0, -1) -- (0, 1) -- (0.5, 1.5) -- (4.5, 1.5) -- (4.5, -0.5) -- (4, -1) -- (0, -1);
        \draw[thick, dashed] (0, -1) -- (0.5, -0.5) -- (4.5, -0.5);
        \draw[thick, dashed] (0.5, -0.5) -- (0.5, 1.5);
        \draw[thick, dashed] (1, -1) -- (1.5, -0.5) -- (1.5, 1.5);
        \path[fill=teal!30] (1.25, 0.65) -- (4.25, 0.65) -- (4.25, -0.15) -- (1.25, -0.15) -- (1.25, 0.65);
        \draw[fill=teal!40] (1.25, 0.25) ellipse (0.15 and 0.4);
        \draw[fill=teal!40] (4.25, 0.25) ellipse (0.15 and 0.4);
        \draw[thick, blue] (1.37, 0.49) to[out=210, in=0] (0.9, -0.2);
        \draw[thick, blue] (0.7, -0.2) to[out=180, in=270] (0.75, 1.4);
        \draw[thick, blue] (1.4, 0.25) to[out=180, in=0] (1.25, 0.2);
        \draw[thick, blue] (1.1, 0.15) to[out=200, in=0] (0.9, 0.2) to[out=180, in=90] (0.8, -0.4) to[out=270, in=0] (0, -0.7);
        \draw (1.25, 0.65) -- (4.25, 0.65);
        \draw (1.25, -0.15) -- (4.25, -0.15);
        \draw[thick, violet] (4.16, -0.07) to[out=180, in=330] (3.68, 0.1);
        \draw[thick, violet] (3.52, 0.2) to[out=150, in=0] (2.8, 0.4) to[out=180, in=90] (1.8, -1);
        \draw[thick, violet] (4.1, 0.25) to[out=180, in=45] (3.6, 0.15) to[out=225, in=45] (3.3, -0.3) to[out=225, in=330] (3.1, -0.15);
        \draw[thick, dashed, violet] (3.1, -0.15) -- (2.1, 0.65);
        \draw[thick, violet] (2.1, 0.65) to[out=129, in=270] (1.8, 1.3);
        \begin{scope}
            \clip (4, 0.3) -- (4.5, 0.8) -- (4.5, -0.5) -- (4, -1) -- (4, 0.3);
            \draw[ultra thick, red] (4.25, 0.25) ellipse (0.15 and 0.4);
            \path[very thick, red, tips, ->] (4.5, 0.1) -- (4.25, -0.15);
        \end{scope}
        \begin{scope}
            \clip (1, 0.3) -- (1.5, 0.8) -- (1.5, -0.5) -- (1, -1) -- (1, 0.3);
            \draw[ultra thick, red] (1.25, 0.25) ellipse (0.15 and 0.4);
            \path[very thick, red, tips, ->] (1.5, 0.1) -- (1.25, -0.15);
        \end{scope}
        \draw[thick] (0, 1) -- (4, 1);
        \draw[thick] (1, -1) -- (1, 1) -- (1.5, 1.5);
        \draw[thick] (4, -1) -- (4, 1) -- (4.5, 1.5);
        \node at (0.9, 1.8) {$M_K$};
        \node at (2.8, 1.8) {$\Sigma^\prime$};
    \end{tikzpicture} $\quad\rightsquigarrow\quad$ 
    \begin{tikzpicture}[baseline=-2]
        \path[fill=gray!30] (0, -1) -- (0, 1) -- (0.5, 1.5) -- (1.5, 1.5) -- (1, 1) -- (1, -1) -- (0, -1);
        \path[fill=green!60] (1, -1) -- (1, 1) -- (1.5, 1.5) -- (4.5, 1.5) -- (4.5, -0.5) -- (4, -1) -- (1, -1);
        \draw[thick] (0, -1) -- (0, 1) -- (0.5, 1.5) -- (4.5, 1.5) -- (4.5, -0.5) -- (4, -1) -- (0, -1);
        \draw[thick, dashed] (0, -1) -- (0.5, -0.5) -- (4.5, -0.5);
        \draw[thick, dashed] (0.5, -0.5) -- (0.5, 1.5);
        \draw[thick, dashed] (1, -1) -- (1.5, -0.5) -- (1.5, 1.5);
        \path[fill=teal!30] (1.25, 0.65) -- (4.25, 0.65) -- (4.25, -0.15) -- (1.25, -0.15) -- (1.25, 0.65);
        \draw[fill=teal!40] (1.25, 0.25) ellipse (0.15 and 0.4);
        \draw[fill=teal!40] (4.25, 0.25) ellipse (0.15 and 0.4);
        \draw[thick, blue] (1.37, 0.49) to[out=210, in=0] (0.9, -0.2);
        \draw[thick, blue] (0.7, -0.2) to[out=180, in=270] (0.75, 1.4);
        \draw[thick, blue] (1.4, 0.25) to[out=180, in=0] (1.25, 0.2);
        \draw[thick, blue] (1.1, 0.15) to[out=200, in=0] (0.9, 0.2) to[out=180, in=90] (0.8, -0.4) to[out=270, in=0] (0, -0.7);
        \draw[thick, dashed, blue] (1.37, 0.49) -- (4.37, 0.49);
        \draw[thick, dashed, blue] (1.4, 0.25) -- (4.4, 0.25);
        \draw (1.25, 0.65) -- (4.25, 0.65);
        \draw (1.25, -0.15) -- (4.25, -0.15);
        \draw[thick, violet] (4.16, -0.07) to[out=180, in=330] (3.68, 0.1);
        \draw[thick, violet] (3.52, 0.2) to[out=150, in=0] (2.8, 0.4) to[out=180, in=90] (1.8, -1);
        \draw[thick, violet] (4.1, 0.25) to[out=180, in=45] (3.6, 0.15) to[out=225, in=45] (3.3, -0.3) to[out=225, in=330] (3.1, -0.15);
        \draw[thick, dashed, violet] (3.1, -0.15) -- (2.1, 0.65);
        \draw[thick, violet] (2.1, 0.65) to[out=129, in=270] (1.8, 1.3);
        \begin{scope}
            \clip (4, 0.3) -- (4.5, 0.8) -- (4.5, -0.5) -- (4, -1) -- (4, 0.3);
            \draw[ultra thick, red] (4.25, 0.25) ellipse (0.15 and 0.4);
            \path[very thick, red, tips, ->] (4.5, 0.1) -- (4.25, -0.15);
        \end{scope}
        \begin{scope}
            \clip (1, 0.3) -- (1.5, 0.8) -- (1.5, -0.5) -- (1, -1) -- (1, 0.3);
            \draw[ultra thick, red] (1.25, 0.25) ellipse (0.15 and 0.4);
            \path[very thick, red, tips, ->] (1.5, 0.1) -- (1.25, -0.15);
        \end{scope}
        \draw[thick] (0, 1) -- (4, 1);
        \draw[thick] (1, -1) -- (1, 1) -- (1.5, 1.5);
        \draw[thick] (4, -1) -- (4, 1) -- (4.5, 1.5);
        \node at (0.9, 1.8) {$M_K$};
        \node at (2.8, 1.8) {$\Sigma^\prime$};
    \end{tikzpicture}
    \caption[Extending stated endpoints in a knot complement]{Extending the endpoints of tangles (blue) in $M_K$ to end on the new boundary marking (blue and dashed).}
    \label{fig:ExtComplementTangles}
\end{figure}

In other words, we can identify the stated endpoints of $m$ on $\mathcal{N}_K$ as living on $b \times \{0\}$ and then ``pull'' these endpoints along the $[0,1]$ component of $\partial \Sigma^\prime \times [0,1]$ so that the endpoints are now on $b \times \{1\}$.

Define $\alpha \cdot D$ as the stated $\widetilde{\mathcal{N}}_K$-tangle diagram in $\mathscr{S}(\widetilde{M}_K, \widetilde{\mathcal{N}}_K)$ consisting of the union of $\Tilde{m}$ for each stated $\mathcal{N}_K$-tangle $m$ in $D$, along with the induced stated $\widetilde{\mathcal{N}}_K$-tangle diagram, $\iota_{K}^{\ast}(\alpha)$. Because $\alpha$ is in generic position and any of its endpoints lie on $b\left((\frac{1}{2}, 1-\epsilon)\right)$, there is no overlap in this union, ensuring that this is a well-defined element in $\mathscr{S}(\widetilde{M}_K, \widetilde{\mathcal{N}}_K)$.

It is clear that $\mathscr{S}\left(\widetilde{M}_K, \widetilde{\mathcal{N}}_K \right)$ is isomorphic to $\mathscr{S}(M_K, \mathcal{N}_K)$ as stated skein algebras. Consequently, $\alpha \cdot D$ induces a left $\mathscr{S}^{pb}(\Sigma^\prime)$-module structure on $\mathscr{S}(M_K)$, and therefore, a left $\mathscr{S}(T^2 \setminus D^2)$-module structure.

\section{The Unknot Module}

In this section, we will attempt to compute the $\mathscr{S}(T^2 \setminus D^2)$-module structure for $\mathscr{S}(M_K)$ when $K$ is the unknot. However, before we discuss this, we need to clarify a detail on how we understand $T^2 \setminus D^2$. Since we typically use the ``flat'' interpretation of the torus when performing calculations on $T^2 \setminus D^2$, we need to be careful how this interpretation is identified with the torus with boundary to ensure that the module action is well-defined. Figure \ref{fig:X3OnTorus} provides a visual aid for this identification process.

\begin{figure}[h]
    $$\begin{tikzpicture}[baseline=-3]
        \draw[draw=none] (0,1) -- (2,1) -- (2,-1) -- (0,-1) -- (1, -0.15) to[out=0, in=-90] (1.15, 0) to[out=90, in=0] (1, 0.15) to[out=180, in=90] (0.85, 0) to[out=-90, in=180] (1, -0.15) -- (0, -1) -- (0,1);
        \draw[thick, ->] (0,1) -- (0.9,1);
        \draw[thick, ->] (0.85,1) -- (1.2,1);
        \draw[thick] (1.2,1) -- (2,1);
        \draw[thick, ->] (0,-1) -- (0.9,-1);
        \draw[thick, ->] (0.85,-1) -- (1.2,-1);
        \draw[thick] (1.2,-1) -- (2,-1);
        \draw[thick, ->] (0,-1) -- (0,0);
        \draw[thick] (0,-1) -- (0,1);
        \draw[thick, ->] (2,-1) -- (2,0);
        \draw[thick] (2,-1) -- (2,1);
        \draw[thick, blue] (0, -1) to[out=45, in=180] (1, 0.15) to[out=60, in=225] (2, 1);
        \draw[thick, black] (1, 0) circle (0.15);
        \node[draw, circle, inner sep=0pt, minimum size=3pt, fill=white] at (1, 0.15) {};
        \node at (0.7, 0.2) {$\scriptstyle{\mu}$};
        \node at (1.28, 0.18) {$\scriptstyle{\nu}$};
    \end{tikzpicture}
    \quad\rightsquigarrow\quad
    \begin{tikzpicture}[baseline=-3]
        \draw[thick] (1, 0.9) ellipse (0.5 and 0.2);
        \begin{scope}
            \clip (0.5, -0.9) rectangle (1.5, -0.6);
            \draw[thick, dashed] (1, -0.9) ellipse (0.5 and 0.2);
        \end{scope}
        \begin{scope}
            \clip (0.5, -0.9) rectangle (1.5, -1.2);
            \draw[thick] (1, -0.9) ellipse (0.5 and 0.2);
        \end{scope}
        \draw[thick] (0.5, 0.9) -- (0.5, -0.9);
        \draw[thick] (1.5, 0.9) -- (1.5, -0.9);
        \path[thick, tips, ->] (0.5, 1.1) -- (0.9, 1.1);
        \path[thick, tips, ->] (0.5, 1.1) -- (1.2, 1.1);
        \path[thick, tips, ->] (0.5, -0.7) -- (0.9, -0.7);
        \path[thick, tips, ->] (0.5, -0.7) -- (1.2, -0.7);
        \draw[thick, dashed, blue] (0.5, -0.3) to[out=50, in=180] (1, 0.15) to[out=60, in=225] (1.5, 0.9);
        \draw[thick, blue] (0.5, -0.3) to[out=330, in=120] (1.5, -0.9);
        \draw[thick, dashed] (1, 0) circle (0.15);
        \node[draw, circle, inner sep=0pt, minimum size=3pt, fill=white] at (1, 0.15) {};
        \node at (0.73, 0.19) {$\scriptstyle{\mu}$};
        \node at (1.25, 0.22) {$\scriptstyle{\nu}$};
    \end{tikzpicture}
    \quad\rightsquigarrow\quad
    \begin{tikzpicture}[baseline=-3]
        \draw[thick] (1, 0) ellipse (1.2 and 1);
        \draw[thick] (0.7, 0.1) to[out=300, in=240] (1.3, 0.1);
        \draw[thick] (0.8, 0) to[out=60, in=120] (1.2, 0);
        \draw[thick, dashed, blue] (-0.1, -0.4) to[out=75, in=180] (0.3, 0.15) to[out=60, in=100] (1.2, 0);
        \draw[thick, blue] (-0.1, -0.4) to[out=345, in=250] (1.2, 0);
        \draw[thick, dashed] (0.3, 0) circle (0.15);
        \node[draw, circle, inner sep=0pt, minimum size=3pt, fill=white] at (0.3, 0.15) {};;
        \node at (0, 0.17) {$\scriptstyle{\mu}$};
        \node at (0.55, 0.14) {$\scriptstyle{\nu}$};
    \end{tikzpicture}
    \quad\rightsquigarrow\quad
    \begin{tikzpicture}[baseline=-3]
        \draw[thick] (1, 0) ellipse (1.2 and 1);
        \draw[thick] (0.7, 0.1) to[out=300, in=240] (1.3, 0.1);
        \draw[thick] (0.8, 0) to[out=60, in=120] (1.2, 0);
        \draw[thick, blue] (2.1, -0.4) to[out=105, in=0] (1.7, 0.15) to[out=120, in=80] (0.8, 0);
        \draw[thick, dashed, blue] (2.1, -0.4) to[out=195, in=290] (0.8, 0);
        \draw[thick] (1.7, 0) circle (0.15);
        \node[draw, circle, inner sep=0pt, minimum size=3pt, fill=white] at (1.7, 0.15) {};;
        \node at (2, 0.18) {$\scriptstyle{\mu}$};
        \node at (1.43, 0.14) {$\scriptstyle{\nu}$};
    \end{tikzpicture}$$
    \caption[Properly creating $T^2 \setminus D^2$ for knot complements]{Constructing $T^2 \setminus D^2$ from a square with a disk removed.}
    \label{fig:X3OnTorus}
\end{figure}
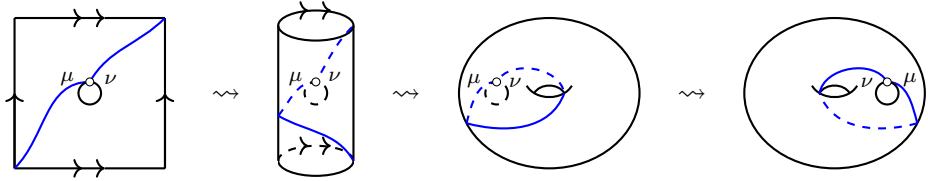
First, identify the vertical sides and fold them towards yourself to create a cylinder (with boundary). Then, identify the top and bottom boundary circles of the cylinder to form the torus, as shown in the third image. In figure \ref{fig:X3OnTorus}, the boundary and marking are on the side opposite to us, so in the final step, we flip the torus over so that the boundary faces us. Note that while the marking in the first picture has an orientation pointing towards the viewer, the orientation in the final picture points away from the viewer. Specifically, the orientation points inwards, towards the inside of the torus, rather than away from the torus. This distinction is important when identifying $T^2 \setminus D^2$ with the boundary of the knot complement.

Let $K \subset S^3$ be the unknot. Then $M_K$ is the solid torus, which is homeomorphic to the thickened annulus. Therefore, $\mathscr{S}(M_K)$ is also a $\mathbb{C}$-algebra and its stated skein algebra (with one marking) is isomorphic to the stated skein algebra of the once punctured monogon. This algebra is generated by the elements $\{ v_{\scriptscriptstyle{+},\scriptscriptstyle{+}}, v_{\scriptscriptstyle{+},\scriptscriptstyle{-}}, v_{\scriptscriptstyle{-},\scriptscriptstyle{+}}, v_{\scriptscriptstyle{-},\scriptscriptstyle{-}} \}$ where
\begin{align*}
    v_{\mu,\nu} = \begin{tikzpicture}[baseline=-4]
        \draw[thick, fill=gray!40] (1, 0) ellipse (1 and 0.8);
        \draw[thick] (0.7, 0.2) to[out=300, in=240] (1.3, 0.2);
        \draw[thick, fill=white] (0.8, 0.1) to[out=60, in=120] (1.2, 0.1);
        \draw[fill=white] (0.8, 0.1) to[out=330, in=210] (1.2, 0.1);
        \draw[->] (1, -0.73) -- (1, -0.1);
        \draw[thick] (1, -0.3) to[out=180, in=270] (0.8, -0.15) to[out=90, in=180] (1, -0.05) to[out=0, in=270] (1.6, 0.15) to[out=90, in=0] (1, 0.5) to[out=180, in=90] (0.4, 0.1) to[out=270, in=180] (1, -0.6);
        \node at (1.16, -0.3) {$\scriptstyle{\nu}$};
        \node at (1.16, -0.6) {$\scriptstyle{\mu}$};
    \end{tikzpicture}.
\end{align*}
There is a small subtlety in the diagram here: if the endpoints of $v_{\mu, \nu}$ were to leave the marking in opposite directions, then we would need to consider half-twists. Since we don't want to consider half-twists, we can isotope these tangles so that they always leave the marking in the same direction.

Using section \ref{section:KnotCompMods}, we see that
\begin{align*}
    X_{1,0}(\mu, \nu) \cdot f &= v_{\mu, \nu} \phantom{\cdot} f \\
    X_{2,0}(\mu, \nu) \cdot 1_K &= C_{\mu}^{\nu} \phantom{\cdot} 1_K \\
    X_{3,0}(\mu, \nu) \cdot 1_K &= -q^{-3} \phantom{\cdot} v_{\mu, \nu}
\end{align*}
where $1_K$ is the empty link (the identity) in $\mathscr{S}(M_K)$, $f \in \mathscr{S}(M_K)$, and $C_{\mu}^{\nu} =
\resizebox{0.6\width}{!}{\begin{tikzpicture}[baseline=-3]
    \draw[gray!40, thick, fill=gray!40, domain=-45:225] plot ({cos(\x)}, {sin(\x)}) to[out=45, in=130] (0.71, -0.71);
    \draw[thick] (-0.71, -0.71) to[out=45, in=135] (0.71, -0.71);
    \node[draw, circle, inner sep=0pt, minimum size=4pt, fill=white] (p1) at (0,-0.41) {};
    \draw[thick] (p1) to[out=45, in=-120] (0.31,0) to[out=60, in=0] (0,0.45) to[out=180, in=120] (-0.31,0) to[out=-60, in=150] (-0.15,-0.2);
    \node (s1) at (-0.6,-0.3) {$\mu$};
    \node (s2) at (0.6,-0.3) {$\nu$};
\end{tikzpicture}}$. Unlike in the Kauffman bracket case (see \cite{MR3530443}), the action of $X_{2,0}(\mu, \nu)$ is not diagonalizable. For example,
$$X_{2,0}(-,-) \cdot v_{+,-} = q^{-5/2}(q^2 - q^{-2}) v_{-,-}$$
and so the action is more complicated.

Once again, let $1_K$ be the empty link diagram in $\mathscr{S}(M_K)$ and $f \in \mathscr{S}(M_K)$. Denote $\partial_K$ as the closed curve, parallel to the boundary in $M_K$. Then we have
\begin{align*}
    Y_1 \cdot f &= \partial_K \phantom{\cdot} f \\
    Y_2 \cdot 1_K &= \left( -q^2-q^{-2} \right) 1_K \\
    Y_2 \cdot v_{\mu, \nu} &= \left( -q^{4} - q^{-4} \right) v_{\mu, \nu} \\
    Y_3 \cdot 1_K &= -q^{-3} \phantom{\cdot} \partial_K \\
    \partial \cdot 1_K &= \left( -q^2-q^{-2} \right) 1_K \\
    \partial \cdot v_{\mu, \nu} &= \left( -q^6 - q^{-6} \right) v_{\mu, \nu} - (q^2 - q^{-2})^2 C_{\mu}^{\nu} \partial_K
\end{align*}

It's important to note that since $\mathscr{S}(M_K)$ is only a module over $\mathscr{S}(T^2 \setminus D^2)$, this provides only a partial description of the module. Further work is needed in order to get a full description of this module structure.

\section{A Note on the Genus Two Surface}

We've discussed how skein modules of knot complements serve as modules over the torus. However, as alluded to at the beginning of this chapter, this is just a special instance of a more broader phenomenon. For any $3$-manifold with boundary, $M$, the associated Kauffman bracket skein module, $K_q(M)$, has a natural left module structure over the Kauffman bracket skein algebra of its boundary, $K_q(\partial M)$. This structure is induced by the homeomorphism
$$\partial M \times [0,1] \bigsqcup_{\partial M \times \{0\} \sim \partial M} M \cong M.$$

When viewed as a $3$-manifold, the boundary of $\left( T^2 \setminus D^2 \right) \times [0,1]$ is the genus two surface. Therefore, this suggests that $\mathcal{SH}_{q,t}$ can be makde into a module over some skein module of the genus $2$ surface. In \cite{MR3916085}, Arthamonov and Shakirov proposed a genus $2$ generalization for the $A_1$ spherical DAHA, defined in terms of its action on a particular space. It turns out that the Kauffman bracket skein algebra of the genus two surface seems to correspond to this algebra.

Let $s \in \mathbb{C}^\times$ and let $\mathbf{H}_2$ be the genus $2$ handlebody. From now on, we will denote $\Sigma_{g,n}$ as the genus $g$ surface with $n$ punctures and define $\Sigma_{g} := \Sigma_{g,0}$. Notice that $K_s(T^2 \setminus D^2) \cong K_s(\Sigma_{1,1})$ as the Kauffman bracket skein algebras is only defined with closed curves.

\begin{definition}
    A triple, $(a, b, c)$, is called \textit{admissible} if $a,b,c \geq 0$, $a+b+c$ is even, and $|a-b| \leq c \leq a+b$.
\end{definition}

To define their algebra, Arthamonov and Shakirov use six operators, denoted $\hat{\mathcal{O}}_{B_{12}}$, $\hat{\mathcal{O}}_{B_{13}}$, $\hat{\mathcal{O}}_{B_{23}}$, $\hat{\mathcal{O}}_{A_1}$, $\hat{\mathcal{O}}_{A_2}$, and $\hat{\mathcal{O}}_{A_3}$, which correspond to the following six closed curves.
\begin{center}
    $A_1 \mapsto$ \begin{tikzpicture}[baseline=-3]
        \begin{scope}
            \clip (-0.6, -1) rectangle (0, 1);
            \draw[thick] (0, 0) circle (0.5);
        \end{scope}
        \draw[thick] (0, 0.5) to[out=0, in=180] (0.75, 0.2) to[out=0, in=180] (1.5, 0.5);
        \draw[thick] (0, -0.5) to[out=0, in=180] (0.75, -0.2) to[out=0, in=180] (1.5, -0.5);
        \begin{scope}
            \clip (1.5, -1) rectangle (2.1, 1);
            \draw[thick] (1.5, 0) circle (0.5);
        \end{scope}
        \draw[thick] (-0.3, 0.05) to[out=300, in=240] (0.3, 0.05);
        \draw[thick] (-0.2, -0.05) to[out=60, in=120] (0.2, -0.05);
        \draw[thick] (1.2, 0.05) to[out=300, in=240] (1.8, 0.05);
        \draw[thick] (1.3, -0.05) to[out=60, in=120] (1.7, -0.05);
        \draw[blue] (0, -0.1) to[out=240, in=120] (0, -0.5);
        \draw[blue, dashed] (0, -0.1) to[out=300, in=60] (0, -0.5);
    \end{tikzpicture} \quad\quad 
    $B_{12} \mapsto$\begin{tikzpicture}[baseline=-3]
        \begin{scope}
            \clip (-0.6, -1) rectangle (0, 1);
            \draw[thick] (0, 0) circle (0.5);
        \end{scope}
        \draw[thick] (0, 0.5) to[out=0, in=180] (0.75, 0.2) to[out=0, in=180] (1.5, 0.5);
        \draw[thick] (0, -0.5) to[out=0, in=180] (0.75, -0.2) to[out=0, in=180] (1.5, -0.5);
        \begin{scope}
            \clip (1.5, -1) rectangle (2.1, 1);
            \draw[thick] (1.5, 0) circle (0.5);
        \end{scope}
        \draw[thick] (-0.3, 0.05) to[out=300, in=240] (0.3, 0.05);
        \draw[thick] (-0.2, -0.05) to[out=60, in=120] (0.2, -0.05);
        \draw[thick] (1.2, 0.05) to[out=300, in=240] (1.8, 0.05);
        \draw[thick] (1.3, -0.05) to[out=60, in=120] (1.7, -0.05);
        \draw[blue] (0, 0) ellipse (0.4 and 0.3);
    \end{tikzpicture} \\
    $A_2 \mapsto$ \begin{tikzpicture}[baseline=-3]
        \begin{scope}
            \clip (-0.6, -1) rectangle (0, 1);
            \draw[thick] (0, 0) circle (0.5);
        \end{scope}
        \draw[thick] (0, 0.5) to[out=0, in=180] (0.75, 0.2) to[out=0, in=180] (1.5, 0.5);
        \draw[thick] (0, -0.5) to[out=0, in=180] (0.75, -0.2) to[out=0, in=180] (1.5, -0.5);
        \begin{scope}
            \clip (1.5, -1) rectangle (2.1, 1);
            \draw[thick] (1.5, 0) circle (0.5);
        \end{scope}
        \draw[thick] (-0.3, 0.05) to[out=300, in=240] (0.3, 0.05);
        \draw[thick] (-0.2, -0.05) to[out=60, in=120] (0.2, -0.05);
        \draw[thick] (1.2, 0.05) to[out=300, in=240] (1.8, 0.05);
        \draw[thick] (1.3, -0.05) to[out=60, in=120] (1.7, -0.05);
        \draw[blue] (0.29, 0) to[out=30, in=150] (1.21, 0);
        \draw[blue, dashed] (0.27, -0.05) to[out=340, in=200] (1.23, -0.05);
    \end{tikzpicture} \quad\quad 
    $B_{23} \mapsto$\begin{tikzpicture}[baseline=-3]
        \begin{scope}
            \clip (-0.6, -1) rectangle (0, 1);
            \draw[thick] (0, 0) circle (0.5);
        \end{scope}
        \draw[thick] (0, 0.5) to[out=0, in=180] (0.75, 0.2) to[out=0, in=180] (1.5, 0.5);
        \draw[thick] (0, -0.5) to[out=0, in=180] (0.75, -0.2) to[out=0, in=180] (1.5, -0.5);
        \begin{scope}
            \clip (1.5, -1) rectangle (2.1, 1);
            \draw[thick] (1.5, 0) circle (0.5);
        \end{scope}
        \draw[thick] (-0.3, 0.05) to[out=300, in=240] (0.3, 0.05);
        \draw[thick] (-0.2, -0.05) to[out=60, in=120] (0.2, -0.05);
        \draw[thick] (1.2, 0.05) to[out=300, in=240] (1.8, 0.05);
        \draw[thick] (1.3, -0.05) to[out=60, in=120] (1.7, -0.05);
        \draw[blue] (1.5, 0) ellipse (0.4 and 0.3);
    \end{tikzpicture} \\
    $A_3 \mapsto$ \begin{tikzpicture}[baseline=-3]
        \begin{scope}
            \clip (-0.6, -1) rectangle (0, 1);
            \draw[thick] (0, 0) circle (0.5);
        \end{scope}
        \draw[thick] (0, 0.5) to[out=0, in=180] (0.75, 0.2) to[out=0, in=180] (1.5, 0.5);
        \draw[thick] (0, -0.5) to[out=0, in=180] (0.75, -0.2) to[out=0, in=180] (1.5, -0.5);
        \begin{scope}
            \clip (1.5, -1) rectangle (2.1, 1);
            \draw[thick] (1.5, 0) circle (0.5);
        \end{scope}
        \draw[thick] (-0.3, 0.05) to[out=300, in=240] (0.3, 0.05);
        \draw[thick] (-0.2, -0.05) to[out=60, in=120] (0.2, -0.05);
        \draw[thick] (1.2, 0.05) to[out=300, in=240] (1.8, 0.05);
        \draw[thick] (1.3, -0.05) to[out=60, in=120] (1.7, -0.05);
        \draw[blue] (1.5, -0.1) to[out=240, in=120] (1.5, -0.5);
        \draw[blue, dashed] (1.5, -0.1) to[out=300, in=60] (1.5, -0.5);
    \end{tikzpicture} \quad\quad 
    $B_{13} \mapsto$ \begin{tikzpicture}[baseline=-3]
        \begin{scope}
            \clip (-0.6, -1) rectangle (0, 1);
            \draw[thick] (0, 0) circle (0.5);
        \end{scope}
        \draw[thick] (0, 0.5) to[out=0, in=180] (0.75, 0.2) to[out=0, in=180] (1.5, 0.5);
        \draw[thick] (0, -0.5) to[out=0, in=180] (0.75, -0.2) to[out=0, in=180] (1.5, -0.5);
        \begin{scope}
            \clip (1.5, -1) rectangle (2.1, 1);
            \draw[thick] (1.5, 0) circle (0.5);
        \end{scope}
        \draw[thick] (-0.3, 0.05) to[out=300, in=240] (0.3, 0.05);
        \draw[thick] (-0.2, -0.05) to[out=60, in=120] (0.2, -0.05);
        \draw[thick] (1.2, 0.05) to[out=300, in=240] (1.8, 0.05);
        \draw[thick] (1.3, -0.05) to[out=60, in=120] (1.7, -0.05);
        \draw[blue] (-0.4, 0) to[out=90, in=180] (0, 0.3) to[out=0, in=180] (0.75, 0.05) to[out=0, in=180] (1.5, 0.3) to[out=0, in=90] (1.9, 0) to[out=270, in=0] (0.75, -0.05) to[out=180, in=0] (0, -0.3) to[out=180, in=270] (-0.4, 0);
    \end{tikzpicture}
\end{center}
Specifically, these operators act on $\mathbb{C}[x_{12}^{\pm 1}, x_{13}^{\pm 1}, x_{23}^{\pm 1}]^{\mathbb{Z}/2\mathbb{Z}^3}$, the space of Laurent polynomials in $3$ variables invariant under the $\left(\mathbb{Z}/2\mathbb{Z}\right)^3$-action that simultaneously inverts $x_{12}$, $x_{13}$, and $x_{23}$. Furthermore, this space has a basis given by a family of Laurent polynomials, denoted $\left\{\Psi_{i, j, k}\right\}$, where $(i, j, k)$ are admissible triples and $\Psi_{0,0,0} = 1$.

\begin{definition}
    Let $(i, j, k)$ be an admissible triple and $a, b \in\{-1,1\}$. The \textit{Arthamonov and Shakirov coefficients} are
    $$C_{a, b}(i, j, k)=a b \frac{\left[\frac{a i+b j+k}{2}, \frac{a+b+2}{2}\right]_{q, t}\left[\frac{a i+b j-k}{2}, \frac{a+b}{2}\right]_{q, t}[i-1,2]_{q, t}[j-1,2]_{q, t}}{\left[i, \frac{a+3}{2}\right]_{q, t}\left[i-1, \frac{a+3}{2}\right]_{q, t}\left[j, \frac{b+3}{2}\right]_{q, t}\left[j-1, \frac{b+3}{2}\right]_{q, t}}$$
    where
    $$[n, m]_{q, t}:=\frac{q^{\frac{n}{2}} t^{\frac{m}{2}}-q^{-\frac{n}{2}} t^{-\frac{m}{2}}}{q^{\frac{1}{2}}-q^{-\frac{1}{2}}}.$$
\end{definition}

\begin{definition}
    The \textit{Arthamonov-Shakirov, genus $2$ spherical DAHA} is the subalgebra of the endomorphism ring of $\mathbb{C}[x_{12}^{\pm 1}, x_{13}^{\pm 1}, x_{23}^{\pm 1}]^{\mathbb{Z}/2\mathbb{Z}^3}$, generated by $\hat{\mathcal{O}}_{A_1}$, $\hat{\mathcal{O}}_{A_2}$, $\hat{\mathcal{O}}_{A_3}$, $\hat{\mathcal{O}}_{B_{12}}$, $\hat{\mathcal{O}}_{B_{13}}$, and $\hat{\mathcal{O}}_{B_{23}}$, where
    \begin{align*}
        \hat{\mathcal{O}}_{A_1} \Psi_{i, j, k} &= \left(q^{i / 2} t^{1 / 2}+q^{-i / 2} t^{-1 / 2}\right) \Psi_{i, j, k} \\
        \hat{\mathcal{O}}_{A_2} \Psi_{i, j, k}&= \left(q^{j / 2} t^{1 / 2}+q^{-j / 2} t^{-1 / 2}\right) \Psi_{i, j, k} \\
        \hat{\mathcal{O}}_{A_3} \Psi_{i, j, k} &= \left(q^{k / 2} t^{1 / 2}+q^{-k / 2} t^{-1 / 2}\right) \Psi_{i, j, k} \\
        \hat{\mathcal{O}}_{B_{12}} \Psi_{i, j, k} &= \sum_{a, b \in\{-1,1\}} C_{a, b}(i, j, k) \Psi_{i+a, j+b, k} \\
        \hat{\mathcal{O}}_{B_{13}} \Psi_{i, j, k} &= \sum_{a, b \in\{-1,1\}} C_{a, b}(i, k, j) \Psi_{i+a, j, k+b} \\
        \hat{\mathcal{O}}_{B_{23}} \Psi_{i, j, k} &= \sum_{a, b \in\{-1,1\}} C_{a, b}(j, k, i) \Psi_{i, j+a, k+b}.
    \end{align*}
See \cite{MR3916085} for details.
\end{definition}

In particular, this genus $2$ spherical DAHA depends on two parameters, $q$ and $t$. Using the action of $K_s(\Sigma_{1,1})$ and $K_s(\Sigma_{0,4})$ on $K_s(\mathbf{H}_2)$ and the fact that the action of the skein algebra of a closed surface on the skein module of a handlebody is faithful [Le21], Cooke and Samuelson showed the following theorem.
\begin{theorem}[Corollary 5.11 in \cite{MR4368676}]
    The $t=q=s^4$ specialization of the Arthamonov-Shakirov algebra is isomorphic to the skein algebra $K_s(\Sigma_2)$.
\end{theorem}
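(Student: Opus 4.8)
The plan is to prove that the $t=q=s^4$ specialization of the Arthamonov–Shakirov algebra is isomorphic to $K_s(\Sigma_2)$ by realizing both as algebras acting faithfully on the skein module of the genus $2$ handlebody $\mathbf{H}_2$, and then matching generators. First I would recall that $\partial \mathbf{H}_2 = \Sigma_2$, so $K_s(\mathbf{H}_2)$ is naturally a module over $K_s(\Sigma_2)$; by the faithfulness result of L\^e (cited in the excerpt as [Le21]) this module structure realizes $K_s(\Sigma_2)$ as a subalgebra of $\operatorname{End}\!\left(K_s(\mathbf{H}_2)\right)$. Next I would identify $K_s(\mathbf{H}_2)$ with $\mathbb{C}[x_{12}^{\pm 1}, x_{13}^{\pm 1}, x_{23}^{\pm 1}]^{(\mathbb{Z}/2\mathbb{Z})^3}$: since $\mathbf{H}_2$ deformation retracts onto a wedge of two circles, $K_s(\mathbf{H}_2)$ is free on multicurves, and the standard basis of multicurves in the pair-of-pants decomposition of $\mathbf{H}_2$ corresponds exactly to admissible triples $(i,j,k)$, giving a basis $\{\Psi_{i,j,k}\}$ matching the one used by Arthamonov–Shakirov. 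This is where the two pieces of data $K_s(\Sigma_{1,1})$ and $K_s(\Sigma_{0,4})$ acting on $K_s(\mathbf{H}_2)$ enter: curves of type $A_i$ come from the $\Sigma_{1,1}$-factors and curves of type $B_{ij}$ come from the $\Sigma_{0,4}$ subsurface, and together they generate $K_s(\Sigma_2)$.

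With these identifications in place, the core computation is to show that the curves $A_1,A_2,A_3,B_{12},B_{13},B_{23}$ act on the basis $\{\Psi_{i,j,k}\}$ by exactly the formulas defining $\hat{\mathcal{O}}_{A_i}$ and $\hat{\mathcal{O}}_{B_{ij}}$ after setting $t=q=s^4$. For the $A_i$ this is an eigenvalue computation: an $A$-curve is isotopic into an annular neighborhood, and the skein-theoretic action of a simple closed curve on a Jones–Wenzl-colored strand of color $n$ multiplies by $-s^{2(n+1)} - s^{-2(n+1)}$ (a Chebyshev/quantum-dimension evaluation), which under $t=q=s^4$ matches $q^{n/2}t^{1/2} + q^{-n/2}t^{-1/2}$ up to the conventional reindexing. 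For the $B_{ij}$ one uses the fusion/recoupling formula: resolving the intersection of $B_{ij}$ with the three pants curves via the Kauffman bracket expansion and re-expanding in the Jones–Wenzl basis produces a sum over $a,b\in\{\pm 1\}$ with coefficients expressible through quantum $6j$-symbols (theta-symbols and tetrahedral coefficients), and one must verify these reduce to the Arthamonov–Shakirov coefficients $C_{a,b}(i,j,k)$ at the specialization. Finally, since both $K_s(\Sigma_2)$ and the Arthamonov–Shakirov algebra are, by definition, the subalgebras of $\operatorname{End}\!\left(K_s(\mathbf{H}_2)\right)$ generated by these six operators, and we have shown the six operators coincide, the two algebras are equal, hence isomorphic.

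The main obstacle will be the $B_{ij}$ recoupling computation: matching the tetrahedral $6j$-coefficients that arise naturally from Kauffman bracket fusion with the explicitly-given $C_{a,b}(i,j,k)$ requires care with normalization conventions (the choice of normalization for Jones–Wenzl idempotents, the bracket variable versus the $q$ of the DAHA, and the factor of $2$ in the reindexing $n \leftrightarrow n/2$). A secondary subtlety is checking that the admissibility condition $|a-b|\le c\le a+b$ with $a+b+c$ even is precisely the condition for a multicurve to appear with nonzero coefficient, i.e.\ that the basis $\{\Psi_{i,j,k}\}$ genuinely indexes a $\mathbb{C}$-basis of $K_s(\mathbf{H}_2)$ and no spurious or missing elements occur; this is standard but must be stated cleanly. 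One should also confirm that the $(\mathbb{Z}/2\mathbb{Z})^3$-invariance built into the Arthamonov–Shakirov space is automatically respected, which follows because the skein module of a handlebody only sees unoriented curves, so the simultaneous inversion symmetry is intrinsic.
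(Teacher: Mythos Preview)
Your proposal is correct and follows essentially the same approach that the paper attributes to Cooke and Samuelson: the paper does not supply its own proof but summarizes theirs as ``using the action of $K_s(\Sigma_{1,1})$ and $K_s(\Sigma_{0,4})$ on $K_s(\mathbf{H}_2)$ and the fact that the action of the skein algebra of a closed surface on the skein module of a handlebody is faithful,'' which is exactly the strategy you have laid out. Your additional details about the recoupling computation for the $B_{ij}$ operators and the normalization subtleties are accurate anticipations of where the real work lies.
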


More recently, Arthamonov proved in \cite{arthamonov2023classicallimitgenusdaha} that the one-parameter deformation, the Arthamonov-Shakirov algebra, of $K_q(\Sigma_2)$ is flat.

As mentioned at the beginning of this section, there is an inclusion
$$\Sigma_2 \hookrightarrow \partial \left( (T^2 \setminus D^2) \times [0,1] \right)$$
which induces a $K_q(\Sigma_2)$-module structure for $\mathcal{SH}_{q,t} \cong K_q(T^2 \setminus D^2)$. Roughly speaking, let $a$ be a closed curve on the genus of $\Sigma_2$ that gets glued to the boundary $(T^2 \setminus D^2) \times \{1\}$, $b$ be a closed curve that lives on the other genus, and let $\alpha$ and $\beta$ be the respective curves from the induced map.
Then to see how these act on some $x_1 x_2 \in K_q(T^2 \setminus D^2)$ we get
\begin{align*}
    a \cdot (x_1 x_2) &= \alpha x_1 x_2 \\
    b \cdot (x_1 x_2) &= x_1 x_2 \beta
\end{align*}

It's important to note that the algebra structure from $K_q(T^2 \setminus D^2)$ does not carry over and and this is only a module. Although it respects the algebra's associativity, $a \cdot (x_1 x_2) = (a \cdot x_1) x_2$, it is not true that $(a \cdot x_1) x_2 = x_1 (a \cdot x_2)$.

We can construct this structure a bit more explicitly in the following way. Consider the graph defined by two loops connected by an arc and embed it into $S^3$ in the following way.
\begin{center}
    \begin{tikzpicture}
        \draw[thick] (-1, 0) circle (0.5);
        \draw[thick] (-0.5, 0) -- (0.5, 0);
        \draw[thick] (1,0) circle (0.5);
        \node at (2.5,0) {$\longhookrightarrow$};
        \begin{scope}
            \clip (3.4, -0.1) rectangle (4.6, 0.6);
            \draw[thick] (4, 0) circle (0.5);
        \end{scope}
        \begin{scope}
            \clip (4, -0.1) rectangle (5.2, 0.6);
            \draw[line width=2mm, white] (4.6, 0) circle (0.5);
            \draw[thick] (4.6, 0) circle (0.5);
        \end{scope}
        \begin{scope}
            \clip (4, 0.1) rectangle (5.2, -0.6);
            \draw[thick] (4.6, 0) circle (0.5);
        \end{scope}
        \begin{scope}
            \clip (3.4, 0.1) rectangle (4.6, -0.6);
            \draw[line width=2mm, white] (4, 0) circle (0.5);
            \draw[thick] (4, 0) circle (0.5);
        \end{scope}
        \draw[thick] (4.5, 0) -- (5.1, 0);
    \end{tikzpicture}
\end{center}
Call this embedded graph $G$ and consider an open tubular neighborhood of $G$, $N(G)$. Notice that $N(G)$ is homeomorphic to the filled in genus two surface and $S^3 \setminus N(G)$ is homeomorphic to $T^2 \setminus D^2$. Therefore, the boundary of $T^2 \setminus D^2$ is $\Sigma_2$ and so $K_q(T^2 \setminus D^2)$ is a left $K_q(\Sigma_2)$-module. However, since one of these genera corresponds to the internal boundary, $\left( T^2 \setminus D^2 \right) \times \{0\}$, the action by any tangle solely living on this ``inner genus'' corresponds to right multiplication. This is because any curves or elements in the module $K_q(T^2 \setminus D^2)$ must lie in $\left( T^2 \setminus D^2 \right) \times (0,1)$, and therefore \emph{above} this boundary.
\begin{center}
    \begin{tikzpicture}
        \begin{scope}
            \clip (-0.6, -1) rectangle (0, 1);
            \draw[thick] (0, 0) circle (0.5);
        \end{scope}
        \draw[thick] (0, 0.5) to[out=0, in=180] (0.75, 0.2) to[out=0, in=180] (1.5, 0.5);
        \draw[thick] (0, -0.5) to[out=0, in=180] (0.75, -0.2) to[out=0, in=180] (1.5, -0.5);
        \begin{scope}
            \clip (1.5, -1) rectangle (2.1, 1);
            \draw[thick] (1.5, 0) circle (0.5);
        \end{scope}
        \draw[thick] (-0.3, 0.05) to[out=300, in=240] (0.3, 0.05);
        \draw[thick] (-0.2, -0.05) to[out=60, in=120] (0.2, -0.05);
        \draw[thick] (1.2, 0.05) to[out=300, in=240] (1.8, 0.05);
        \draw[thick] (1.3, -0.05) to[out=60, in=120] (1.7, -0.05);
        \draw[red] (0, -0.1) to[out=240, in=120] (0, -0.5);
        \draw[red, dashed] (0, -0.1) to[out=300, in=60] (0, -0.5);
        \node at (3.1,0) {$\longmapsto$};
        \draw[thick] (5, 0) ellipse (0.7 and 0.6);
        \draw[thick] (4.7, 0.1) to[out=300, in=240] (5.3, 0.1);
        \draw[thick] (4.8, 0) to[out=60, in=120] (5.2, 0);
        \draw[thick] (5, -0.33) circle (0.1);
        \draw[red] (4.8, 0) to[out=180, in=75] (4.4,-0.31);
        \draw[red, dashed] (4.8, 0) to[out=270, in=15] (4.4,-0.31);
        \node[xscale=-1] at (6.9,0) {$\longmapsto$};
        \begin{scope}
            \clip (7.9, -1) rectangle (8.5, 1);
            \draw[thick] (8.5, 0) circle (0.5);
        \end{scope}
        \draw[thick] (8.5, 0.5) to[out=0, in=180] (9.25, 0.2) to[out=0, in=180] (10, 0.5);
        \draw[thick] (8.5, -0.5) to[out=0, in=180] (9.25, -0.2) to[out=0, in=180] (10, -0.5);
        \begin{scope}
            \clip (10, -1) rectangle (10.6, 1);
            \draw[thick] (10, 0) circle (0.5);
        \end{scope}
        \draw[thick] (8.2, 0.05) to[out=300, in=240] (8.8, 0.05);
        \draw[thick] (8.3, -0.05) to[out=60, in=120] (8.7, -0.05);
        \draw[thick] (9.7, 0.05) to[out=300, in=240] (10.3, 0.05);
        \draw[thick] (9.8, -0.05) to[out=60, in=120] (10.2, -0.05);
        \draw[red] (10, 0) ellipse (0.4 and 0.3);
    \end{tikzpicture}\\
    \begin{tikzpicture}
        \begin{scope}
            \clip (-0.6, -1) rectangle (0, 1);
            \draw[thick] (0, 0) circle (0.5);
        \end{scope}
        \draw[thick] (0, 0.5) to[out=0, in=180] (0.75, 0.2) to[out=0, in=180] (1.5, 0.5);
        \draw[thick] (0, -0.5) to[out=0, in=180] (0.75, -0.2) to[out=0, in=180] (1.5, -0.5);
        \begin{scope}
            \clip (1.5, -1) rectangle (2.1, 1);
            \draw[thick] (1.5, 0) circle (0.5);
        \end{scope}
        \draw[thick] (-0.3, 0.05) to[out=300, in=240] (0.3, 0.05);
        \draw[thick] (-0.2, -0.05) to[out=60, in=120] (0.2, -0.05);
        \draw[thick] (1.2, 0.05) to[out=300, in=240] (1.8, 0.05);
        \draw[thick] (1.3, -0.05) to[out=60, in=120] (1.7, -0.05);
        \draw[blue] (1.5, -0.1) to[out=240, in=120] (1.5, -0.5);
        \draw[blue, dashed] (1.5, -0.1) to[out=300, in=60] (1.5, -0.5);
        \node at (3.1,0) {$\longmapsto$};
        \draw[thick] (5, 0) ellipse (0.7 and 0.6);
        \draw[thick] (4.7, 0.1) to[out=300, in=240] (5.3, 0.1);
        \draw[thick] (4.8, 0) to[out=60, in=120] (5.2, 0);
        \draw[thick] (5, -0.33) circle (0.1);
        \draw[blue] (5, 0.13) ellipse (0.4 and 0.3);
        \node[xscale=-1] at (6.9, 0) {$\longmapsto$};
        \begin{scope}
            \clip (7.9, -1) rectangle (8.5, 1);
            \draw[thick] (8.5, 0) circle (0.5);
        \end{scope}
        \draw[thick] (8.5, 0.5) to[out=0, in=180] (9.25, 0.2) to[out=0, in=180] (10, 0.5);
        \draw[thick] (8.5, -0.5) to[out=0, in=180] (9.25, -0.2) to[out=0, in=180] (10, -0.5);
        \begin{scope}
            \clip (10, -1) rectangle (10.6, 1);
            \draw[thick] (10, 0) circle (0.5);
        \end{scope}
        \draw[thick] (8.2, 0.05) to[out=300, in=240] (8.8, 0.05);
        \draw[thick] (8.3, -0.05) to[out=60, in=120] (8.7, -0.05);
        \draw[thick] (9.7, 0.05) to[out=300, in=240] (10.3, 0.05);
        \draw[thick] (9.8, -0.05) to[out=60, in=120] (10.2, -0.05);
        \draw[blue] (8.5, 0) ellipse (0.4 and 0.3);
    \end{tikzpicture}\\
    \begin{tikzpicture}
        \begin{scope}
            \clip (-0.6, -1) rectangle (0, 1);
            \draw[thick] (0, 0) circle (0.5);
        \end{scope}
        \draw[thick] (0, 0.5) to[out=0, in=180] (0.75, 0.2) to[out=0, in=180] (1.5, 0.5);
        \draw[thick] (0, -0.5) to[out=0, in=180] (0.75, -0.2) to[out=0, in=180] (1.5, -0.5);
        \begin{scope}
            \clip (1.5, -1) rectangle (2.1, 1);
            \draw[thick] (1.5, 0) circle (0.5);
        \end{scope}
        \draw[thick] (-0.3, 0.05) to[out=300, in=240] (0.3, 0.05);
        \draw[thick] (-0.2, -0.05) to[out=60, in=120] (0.2, -0.05);
        \draw[thick] (1.2, 0.05) to[out=300, in=240] (1.8, 0.05);
        \draw[thick] (1.3, -0.05) to[out=60, in=120] (1.7, -0.05);
        \draw[violet] (0.75, 0.2) to[out=240, in=120] (0.75, -0.2);
        \draw[violet, dashed] (0.75, 0.2) to[out=300, in=60] (0.75, -0.2);
        \node at (3.1,0) {$\longmapsto$};
        \draw[thick] (5, 0) ellipse (0.7 and 0.6);
        \draw[thick] (4.7, 0.1) to[out=300, in=240] (5.3, 0.1);
        \draw[thick] (4.8, 0) to[out=60, in=120] (5.2, 0);
        \draw[thick] (5, -0.33) circle (0.1);
        \draw[violet] (5, -0.33) circle (0.2);
    \end{tikzpicture}\\
    \begin{tikzpicture}
        \begin{scope}
            \clip (-0.6, -1) rectangle (0, 1);
            \draw[thick] (0, 0) circle (0.5);
        \end{scope}
        \draw[thick] (0, 0.5) to[out=0, in=180] (0.75, 0.2) to[out=0, in=180] (1.5, 0.5);
        \draw[thick] (0, -0.5) to[out=0, in=180] (0.75, -0.2) to[out=0, in=180] (1.5, -0.5);
        \begin{scope}
            \clip (1.5, -1) rectangle (2.1, 1);
            \draw[thick] (1.5, 0) circle (0.5);
        \end{scope}
        \draw[thick] (-0.3, 0.05) to[out=300, in=240] (0.3, 0.05);
        \draw[thick] (-0.2, -0.05) to[out=60, in=120] (0.2, -0.05);
        \draw[thick] (1.2, 0.05) to[out=300, in=240] (1.8, 0.05);
        \draw[thick] (1.3, -0.05) to[out=60, in=120] (1.7, -0.05);
        \draw[green] (-0.4, 0) to[out=90, in=180] (0, 0.3) to[out=0, in=180] (0.75, 0.05) to[out=0, in=180] (1.5, 0.3) to[out=0, in=90] (1.9, 0) to[out=270, in=0] (0.75, -0.05) to[out=180, in=0] (0, -0.3) to[out=180, in=270] (-0.4, 0);
        \node at (3.1,0) {$\longmapsto$};
        \draw[thick] (5, 0) ellipse (0.7 and 0.6);
        \draw[thick] (4.7, 0.1) to[out=300, in=240] (5.3, 0.1);
        \draw[thick] (4.8, 0) to[out=60, in=120] (5.2, 0);
        \draw[thick] (5, -0.33) circle (0.1);
        \draw[green] (4.95, -0.07) to[out=310, in=270] (5.5, 0.05) to[out=90, in=0] (5, 0.35) to[out=180, in=90] (4.5, 0.05) to[out=270, in=120] (4.7, -0.52);
        \draw[green, dashed] (4.88, -0.05) to[out=240, in=60] (4.75, -0.54);
    \end{tikzpicture}
\end{center}
You may have noticed that we have replaced $B_{13}$ with a different curve. This change corresponds to a different set of generators, but a set of generators nonetheless.

This module structure can alternatively be understood through the following figure.
\begin{center}
    \begin{tikzpicture}
        \draw[thick] (1, 0) ellipse (1 and 0.8);
        \draw[thick] (0.7, 0.05) to[out=300, in=240] (1.3, 0.05);
        \draw[thick] (0.8, -0.05) to[out=60, in=120] (1.2, -0.05);
        \draw[red, dashed] (1, 0) ellipse (0.7 and 0.5);
        \draw[blue, domain=-5:308] plot ({0.4 + 0.55*cos(\x)}, {sin(\x)});
        \draw[violet] (0.07, 0.8) -- (0.44, 0.3);
    \end{tikzpicture}
\end{center}
The dashed red line corresponds to the inside boundary, $\left( T^2 \setminus D^2 \right) \times \{0\}$ (the ``core'' of our torus). The blue line will correspond to the outside boundary, $\left( T^2 \setminus D^2 \right) \times \{1\}$. The purple line is the connecting arc that corresponds to the removed disk.

We can factor through the module structure as follows. Take the disjoint union of two tori with boundary and embed them into the genus two surface. Then, embed the genus two surface into the boundary of $T^2 \setminus D^2$. These embeddings induce maps on corresponding Kauffman brackets skein algebras.
$$
\begin{tikzpicture}[baseline=-3]
    \begin{scope}
        \clip (-0.6, -1) rectangle (0, 1);
        \draw[thick] (0, 0) circle (0.5);
    \end{scope}
    \draw[thick] (0, 0.5) to[out=0, in=180] (0.75, 0.2);
    \draw[thick] (0, -0.5) to[out=0, in=180] (0.75, -0.2);
    \draw[thick] (0.75, 0) ellipse (0.1 and 0.2);
    \draw[thick] (-0.3, 0.05) to[out=300, in=240] (0.3, 0.05);
    \draw[thick] (-0.2, -0.05) to[out=60, in=120] (0.2, -0.05);
\end{tikzpicture} \phantom{\cdot} \bigsqcup \phantom{\cdot}
\begin{tikzpicture}[baseline=-3]
    \begin{scope}
        \clip (1.5, -1) rectangle (2.1, 1);
        \draw[thick] (1.5, 0) circle (0.5);
    \end{scope}
    \draw[thick] (0.75, 0.2) to[out=0, in=180] (1.5, 0.5);
    \draw[thick] (0.75, -0.2) to[out=0, in=180] (1.5, -0.5);
    \draw[thick] (0.75, 0) ellipse (0.1 and 0.2);
    \draw[thick] (1.2, 0.05) to[out=300, in=240] (1.8, 0.05);
    \draw[thick] (1.3, -0.05) to[out=60, in=120] (1.7, -0.05);
\end{tikzpicture} \phantom{\cdot} \hookrightarrow \phantom{\cdot}
\begin{tikzpicture}[baseline=-3]
    \begin{scope}
        \clip (-0.6, -1) rectangle (0, 1);
        \draw[thick] (0, 0) circle (0.5);
    \end{scope}
    \draw[thick] (0, 0.5) to[out=0, in=180] (0.75, 0.2) to[out=0, in=180] (1.5, 0.5);
    \draw[thick] (0, -0.5) to[out=0, in=180] (0.75, -0.2) to[out=0, in=180] (1.5, -0.5);
    \begin{scope}
        \clip (1.5, -1) rectangle (2.1, 1);
        \draw[thick] (1.5, 0) circle (0.5);
    \end{scope}
    \draw[thick] (-0.3, 0.05) to[out=300, in=240] (0.3, 0.05);
    \draw[thick] (-0.2, -0.05) to[out=60, in=120] (0.2, -0.05);
    \draw[thick] (1.2, 0.05) to[out=300, in=240] (1.8, 0.05);
    \draw[thick] (1.3, -0.05) to[out=60, in=120] (1.7, -0.05);
\end{tikzpicture} \phantom{\cdot} \hookrightarrow \phantom{\cdot}
\partial \left(\begin{tikzpicture}[baseline=-3]
    \begin{scope}
        \clip (-0.6, -0.7) rectangle (0, 0.7);
        \draw[thick] (0, 0) circle (0.5);
    \end{scope}
    \draw[thick] (0, 0.5) to[out=0, in=180] (0.75, 0.2);
    \draw[thick] (0, -0.5) to[out=0, in=180] (0.75, -0.2);
    \draw[thick] (0.75, 0) ellipse (0.1 and 0.2);
    \draw[thick] (-0.3, 0.05) to[out=300, in=240] (0.3, 0.05);
    \draw[thick] (-0.2, -0.05) to[out=60, in=120] (0.2, -0.05);
\end{tikzpicture} \times I \right)
$$
$$\mathscr{S}(T^2 \setminus D^2) \otimes \mathscr{S}(T^2 \setminus D^2) \longrightarrow \mathscr{S}(\Sigma_2) \longrightarrow \mathscr{S}(T^2 \setminus D^2)$$
Let $\alpha, \beta, \gamma \in K_q(T^2 \setminus D^2)$. If we view $\alpha$ and $\beta$ as each lying on a genus of $\Sigma_2$, then we have the left action
$$\left( \alpha \otimes \beta \right) \cdot \gamma = \alpha \gamma \beta.$$
Let $\delta$ be the loop around the boundary. It is central in $K_q(T^2 \setminus D^2)$ and it's left module action on $K_q(T^2 \setminus D^2)$ (when viewed on $\Sigma_2$) can be viewed as left or right multiplication in $K_q(T^2 \setminus D^2)$.

If we attempt to extend this action of $K_q(\Sigma_2)$ to $\mathscr{S}(T^2 \setminus D^2)$, we quickly run into a problem. By the same logic, the underlying topological structure suggests that the action of $\delta$ should correspond to multiplication by a central element. However, this is clearly not the case as $\delta$ is no longer central in $\mathscr{S}(T^2 \setminus D^2)$.
\begin{center}
    \begin{tikzpicture}[baseline=-3]
        \MarkedTorusBackground
        \draw[thick] (1, 0) circle (0.5);
        \draw[line width=2mm, gray!40] (0.2, 0.15) -- (0.8, 0.15);
        \draw[line width=2mm, gray!40] (1.2, 0.15) -- (1.8, 0.15);
        \draw[thick] (0, 0.15) -- (2, 0.15);
        \node[draw, circle, inner sep=0pt, minimum size=3pt, fill=white] at (1, 0.15) {};
    \end{tikzpicture} $\neq$
    \begin{tikzpicture}[baseline=-3]
        \MarkedTorusBackground
        \draw[thick] (0, 0.15) -- (2, 0.15);
        \draw[line width=2mm, gray!40] (1, 0) circle (0.5);
        \draw[thick] (1, 0) circle (0.5);
        \node[draw, circle, inner sep=0pt, minimum size=3pt, fill=white] at (1, 0.15) {};
    \end{tikzpicture}
\end{center}

Therefore, any module structure here would need to be extended in a different way. One possible remedy could be to introduce a boundary component to $\Sigma_2$, and shifting our focus to $\mathscr{S}(\Sigma_2 \setminus D^2)$ instead. However, there is currently no reason to believe that this corresponds to a double affine Hecke algebra.

\nocite{*}
\bibliographystyle{plain}
\bibliography{bibfile}

\begin{thebibliography}{10}

\bibitem{arthamonov2023classicallimitgenusdaha}
Semeon Arthamonov.
\newblock Classical limit of genus two daha, 2023.

\bibitem{MR3916085}
Semeon Arthamonov and Shamil Shakirov.
\newblock Genus two generalization of {$A_1$} spherical {DAHA}.
\newblock {\em Selecta Math. (N.S.)}, 25(2):Paper No. 17, 29, 2019.

\bibitem{MR1670233}
John~W. Barrett.
\newblock Skein spaces and spin structures.
\newblock {\em Math. Proc. Cambridge Philos. Soc.}, 126(2):267--275, 1999.

\bibitem{MR3847209}
David Ben-Zvi, Adrien Brochier, and David Jordan.
\newblock Integrating quantum groups over surfaces.
\newblock {\em J. Topol.}, 11(4):874--917, 2018.

\bibitem{MR3530443}
Yuri Berest and Peter Samuelson.
\newblock Double affine {H}ecke algebras and generalized {J}ones polynomials.
\newblock {\em Compos. Math.}, 152(7):1333--1384, 2016.

\bibitem{Birman_Series_1984}
Joan~S. Birman and Caroline Series.
\newblock An algorithm for simple curves on surfaces.
\newblock {\em Journal of the London Mathematical Society}, s2-29(2):331–342, 1984.

\bibitem{MR4405678}
Wade Bloomquist and Thang T.~Q. L\^{e}.
\newblock The {C}hebyshev-{F}robenius homomorphism for stated skein modules of 3-manifolds.
\newblock {\em Math. Z.}, 301(1):1063--1105, 2022.

\bibitem{MR2851072}
Francis Bonahon and Helen Wong.
\newblock Quantum traces for representations of surface groups in {${\rm SL}_2(\mathbb{C})$}.
\newblock {\em Geom. Topol.}, 15(3):1569--1615, 2011.

\bibitem{MR3659493}
Adrien Brochier and David Jordan.
\newblock Fourier transform for quantum {$D$}-modules via the punctured torus mapping class group.
\newblock {\em Quantum Topol.}, 8(2):361--379, 2017.

\bibitem{MR1600138}
Doug Bullock.
\newblock Rings of {${\rm SL}_2({\bf C})$}-characters and the {K}auffman bracket skein module.
\newblock {\em Comment. Math. Helv.}, 72(4):521--542, 1997.

\bibitem{MR1300632}
Vyjayanthi Chari and Andrew Pressley.
\newblock {\em A guide to quantum groups}.
\newblock Cambridge University Press, Cambridge, 1994.

\bibitem{MR4662169}
Bang-Yen Chen.
\newblock Geometry and topology of maximal antipodal sets and related topics.
\newblock {\em Rom. J. Math. Comput. Sci.}, 13(2):6--25, 2023.

\bibitem{MR1314036}
Ivan Cherednik.
\newblock Double affine {H}ecke algebras and {M}acdonald's conjectures.
\newblock {\em Ann. of Math. (2)}, 141(1):191--216, 1995.

\bibitem{MR2133033}
Ivan Cherednik.
\newblock {\em Double affine {H}ecke algebras}, volume 319 of {\em London Mathematical Society Lecture Note Series}.
\newblock Cambridge University Press, Cambridge, 2005.

\bibitem{Cohen_Metzler_Zimmermann_1981}
Marshall Cohen, Wolfgang Metzler, and Albert Zimmermann.
\newblock What does a basis of f(a, b) look like?
\newblock {\em Mathematische Annalen}, 257(4):435–445, 1981.

\bibitem{MR4536120}
Juliet Cooke.
\newblock Excision of skein categories and factorisation homology.
\newblock {\em Adv. Math.}, 414:Paper No. 108848, 51, 2023.

\bibitem{MR4368676}
Juliet Cooke and Peter Samuelson.
\newblock On the genus two skein algebra.
\newblock {\em J. Lond. Math. Soc. (2)}, 104(5):2260--2298, 2021.

\bibitem{MR4094715}
Benjamin Cooper and Peter Samuelson.
\newblock The {H}all algebras of surfaces {I}.
\newblock {\em J. Inst. Math. Jussieu}, 19(3):971--1028, 2020.

\bibitem{costantino2022stated}
Francesco Costantino and Thang T.~Q. L\^{e}.
\newblock Stated skein algebras of surfaces.
\newblock {\em Journal of the European Mathematical Society}, 2022.

\bibitem{10106311703773}
Freeman~J. Dyson.
\newblock {Statistical Theory of the Energy Levels of Complex Systems. I}.
\newblock {\em Journal of Mathematical Physics}, 3(1):140--156, 01 1962.

\bibitem{MR3242743}
Pavel Etingof, Shlomo Gelaki, Dmitri Nikshych, and Victor Ostrik.
\newblock {\em Tensor categories}, volume 205 of {\em Mathematical Surveys and Monographs}.
\newblock American Mathematical Society, Providence, RI, 2015.

\bibitem{faitg2022holonomy}
Matthieu Faitg.
\newblock Holonomy and (stated) skein algebras in combinatorial quantization, 2022.

\bibitem{MR2850125}
Benson Farb and Dan Margalit.
\newblock {\em A primer on mapping class groups}, volume~49 of {\em Princeton Mathematical Series}.
\newblock Princeton University Press, Princeton, NJ, 2012.

\bibitem{MR1446615}
Igor~B. Frenkel and Mikhail~G. Khovanov.
\newblock Canonical bases in tensor products and graphical calculus for {$U_q(\mathfrak{sl}_2)$}.
\newblock {\em Duke Math. J.}, 87(3):409--480, 1997.

\bibitem{MR2876932}
Igor~B. Frenkel and Hyun~Kyu Kim.
\newblock Quantum {T}eichm\"uller space from the quantum plane.
\newblock {\em Duke Math. J.}, 161(2):305--366, 2012.

\bibitem{MR1675190}
Charles Frohman and R\u{a}zvan Gelca.
\newblock Skein modules and the noncommutative torus.
\newblock {\em Trans. Amer. Math. Soc.}, 352(10):4877--4888, 2000.

\bibitem{MR1967240}
R\u{a}zvan Gelca and Jeremy Sain.
\newblock The noncommutative {A}-ideal of a {$(2,2p+1)$}-torus knot determines its {J}ones polynomial.
\newblock {\em J. Knot Theory Ramifications}, 12(2):187--201, 2003.

\bibitem{MR0972070}
Cameron~McA. Gordon and John Luecke.
\newblock Knots are determined by their complements.
\newblock {\em Bull. Amer. Math. Soc. (N.S.)}, 20(1):83--87, 1989.

\bibitem{MR4647282}
Sergei Gukov, Peter Koroteev, Satoshi Nawata, Du~Pei, and Ingmar Saberi.
\newblock {\em Branes and {DAHA} representations}, volume~48 of {\em SpringerBriefs in Mathematical Physics}.
\newblock Springer, Cham, 2023.

\bibitem{MR4557403}
Sam Gunningham, David Jordan, and Pavel Safronov.
\newblock The finiteness conjecture for skein modules.
\newblock {\em Invent. Math.}, 232(1):301--363, 2023.

\bibitem{MR4437512}
Benjamin Ha\"ioun.
\newblock Relating stated skein algebras and internal skein algebras.
\newblock {\em SIGMA Symmetry Integrability Geom. Methods Appl.}, 18:Paper No. 042, 39, 2022.

\bibitem{MR1359532}
Jens~Carsten Jantzen.
\newblock {\em Lectures on quantum groups}, volume~6 of {\em Graduate Studies in Mathematics}.
\newblock American Mathematical Society, Providence, RI, 1996.

\bibitem{MR0841713}
Michio Jimbo.
\newblock A {$q$}-analogue of {$U(\mathfrak{gl}(N+1))$}, {H}ecke algebra, and the {Y}ang-{B}axter equation.
\newblock {\em Lett. Math. Phys.}, 11(3):247--252, 1986.

\bibitem{MR1321145}
Christian Kassel.
\newblock {\em Quantum groups}, volume 155 of {\em Graduate Texts in Mathematics}.
\newblock Springer-Verlag, New York, 1995.

\bibitem{MR1117149}
Robion Kirby and Paul Melvin.
\newblock The {$3$}-manifold invariants of {W}itten and {R}eshetikhin-{T}uraev for {${\rm sl}(2,{\bf C})$}.
\newblock {\em Invent. Math.}, 105(3):473--545, 1991.

\bibitem{MR4598807}
Julien Korinman.
\newblock Finite presentations for stated skein algebras and lattice gauge field theory.
\newblock {\em Algebr. Geom. Topol.}, 23(3):1249--1302, 2023.

\bibitem{MR1403861}
Greg Kuperberg.
\newblock Spiders for rank {$2$} {L}ie algebras.
\newblock {\em Comm. Math. Phys.}, 180(1):109--151, 1996.

\bibitem{MR3827810}
Thang T.~Q. L\^{e}.
\newblock Triangular decomposition of skein algebras.
\newblock {\em Quantum Topol.}, 9(3):591--632, 2018.

\bibitem{MR3915288}
Thang T.~Q. L\^{e}.
\newblock Quantum {T}eichm\"{u}ller spaces and quantum trace map.
\newblock {\em J. Inst. Math. Jussieu}, 18(2):249--291, 2019.

\bibitem{arx220100045}
Thang T.~Q. L\^{e} and Adam~S. Sikora.
\newblock Stated {SL}(n)-skein modules and algebras, 2024.

\bibitem{MR4264235}
Thang T.~Q. L\^{e} and Tao Yu.
\newblock Stated skein modules of marked 3-manifolds/surfaces, a survey.
\newblock {\em Acta Math. Vietnam.}, 46(2):265--287, 2021.

\bibitem{MR4431131}
Thang T.~Q. L\^{e} and Tao Yu.
\newblock Quantum traces and embeddings of stated skein algebras into quantum tori.
\newblock {\em Selecta Math. (N.S.)}, 28(4):Paper No. 66, 48, 2022.

\bibitem{MR674768}
Ian~G. Macdonald.
\newblock Some conjectures for root systems.
\newblock {\em SIAM J. Math. Anal.}, 13(6):988--1007, 1982.

\bibitem{MR1381692}
Shahn Majid.
\newblock {\em Foundations of quantum group theory}.
\newblock Cambridge University Press, Cambridge, 1995.

\bibitem{MR0949080}
John~C. McConnell and J.~J. Pettit.
\newblock Crossed products and multiplicative analogues of {W}eyl algebras.
\newblock {\em J. London Math. Soc. (2)}, 38(1):47--55, 1988.

\bibitem{MR4283999}
Hugh~R. Morton and Peter Samuelson.
\newblock D{AHA}s and skein theory.
\newblock {\em Comm. Math. Phys.}, 385(3):1655--1693, 2021.

\bibitem{MR3551171}
Greg Muller.
\newblock Skein and cluster algebras of marked surfaces.
\newblock {\em Quantum Topol.}, 7(3):435--503, 2016.

\bibitem{MR2037756}
Alexei Oblomkov.
\newblock Double affine {H}ecke algebras of rank 1 and affine cubic surfaces.
\newblock {\em Int. Math. Res. Not.}, 2004(18):877--912, 2004.

\bibitem{MR4297592}
Alexander~T. Pokorny.
\newblock {\em Dubrovnik {S}kein {T}heory and {P}ower {S}um {E}lements}.
\newblock ProQuest LLC, Ann Arbor, MI, 2021.
\newblock Thesis (Ph.D.)--University of California, Riverside.

\bibitem{MR1194712}
J\'ozef~H. Przytycki.
\newblock Skein modules of {$3$}-manifolds.
\newblock {\em Bull. Polish Acad. Sci. Math.}, 39(1-2):91--100, 1991.

\bibitem{MR1710996}
J\'{o}zef~H. Przytycki and Adam~S. Sikora.
\newblock On skein algebras and {${\rm SL}_2({\mathbb{C}})$}-character varieties.
\newblock {\em Topology}, 39(1):115--148, 2000.

\bibitem{MR1036112}
Nicolai~Y. Reshetikhin and Vladimir~G. Turaev.
\newblock Ribbon graphs and their invariants derived from quantum groups.
\newblock {\em Comm. Math. Phys.}, 127(1):1--26, 1990.

\bibitem{MR535074}
Roger~W. Richardson.
\newblock Commuting varieties of semisimple {L}ie algebras and algebraic groups.
\newblock {\em Compositio Math.}, 38(3):311--327, 1979.

\bibitem{MR3947640}
Peter Samuelson.
\newblock Iterated torus knots and double affine {H}ecke algebras.
\newblock {\em Int. Math. Res. Not. IMRN}, 2019(9):2848--2893, 2017.

\bibitem{Santharoubane_2022}
Ramanujan Santharoubane.
\newblock Algebraic generators of the skein algebra of a surface.
\newblock 2018.

\bibitem{MR2893651}
Nikolai Saveliev.
\newblock {\em Lectures on the topology of 3-manifolds}.
\newblock De Gruyter Textbook. Walter de Gruyter \& Co., Berlin, revised edition, 2012.
\newblock An introduction to the Casson invariant.

\bibitem{MR3205770}
Adam~S. Sikora.
\newblock Character varieties of abelian groups.
\newblock {\em Math. Z.}, 277(1-2):241--256, 2014.

\bibitem{MR1862614}
Michael Thaddeus.
\newblock Mirror symmetry, {L}anglands duality, and commuting elements of {L}ie groups.
\newblock {\em Internat. Math. Res. Notices}, 2001(22):1169--1193, 2001.

\bibitem{MR964255}
Vladimir~G. Turaev.
\newblock The {C}onway and {K}auffman modules of a solid torus.
\newblock {\em Zap. Nauchn. Sem. Leningrad. Otdel. Mat. Inst. Steklov. (LOMI)}, 167:79--89, 190, 1988.

\bibitem{MR2654259}
Vladimir~G. Turaev.
\newblock {\em Quantum invariants of knots and 3-manifolds}, volume~18 of {\em De Gruyter Studies in Mathematics}.
\newblock Walter de Gruyter \& Co., Berlin, revised edition, 2010.

\bibitem{Waldhausen_1968}
Friedhelm Waldhausen.
\newblock On irreducible 3-manifolds which are sufficiently large.
\newblock {\em The Annals of Mathematics}, 87(1):56, January 1968.

\bibitem{wang2024stated}
Zhihao Wang.
\newblock Stated $sl_n$-skein modules, roots of unity, and tqft, 2024.

\end{thebibliography}

\newpage
\appendix

\chapter{Diagrammatic Calculations}
Throughout all of these calculation, we use positive integers placed at the bottom of the diagrams to indicate the relative height ordering of the tangle endpoints, where larger values correspond to lower heights. For each diagram, starting from the leftmost endpoint moving clockwise with respect to our marked point, we assign these integers to the endpoints. The integers are read from left to right at the bottom, corresponding to this clockwise order. For example, $X_{1,0}(-,-)X_{2,0}(+,+) = \begin{tikzpicture}[baseline=-1]
    \MarkedTorusBackground[4][2][][][3][1]
    \draw[thick] (0, 0.15) -- (1, 0.15) -- (2, 0.15);
    \draw[thick] (0.7, -1) to[out=90, in=270] (0.7, 0) to[out=90, in=180] (1, 0.15) to[out=120, in=270] (0.7, 1);
    \node[draw, circle, inner sep=0pt, minimum size=3pt, fill=white] at (1, 0.15) {};
    \node at (0.83, -0.25) {\footnotesize{$+$}};
    \node at (0.7, 0.28) {\footnotesize{$-$}};
    \node at (1.05, 0.4) {\footnotesize{$+$}};
    \node at (1.3, 0) {\footnotesize{$-$}};
\end{tikzpicture}$ where our heights correspond to
\begin{tikzpicture}[baseline=-1]
    \draw[gray!40, thick, fill=gray!40, domain=-45:225] plot ({cos(\x)}, {sin(\x)}) to[out=45, in=130] (0.71, -0.71);
    \draw[thick] (-0.71, -0.71) to[out=45, in=135] (0.71, -0.71);
    \node[draw, circle, inner sep=0pt, minimum size=4pt, fill=white] (p1) at (0,-0.41) {};
    \draw[thick] (p1) -- (-0.93, -0.37);
    \draw[thick] (p1) -- (-0.91, 0.41);
    \draw[thick] (p1) -- (-0.2, 0.98);
    \draw[thick] (p1) -- (0.88,0.48);
    \node at (-0.7, -0.2) {\footnotesize{4}};
    \node at (-0.5, 0.3) {\footnotesize{2}};
    \node at (0, 0.4) {\footnotesize{3}};
    \node at (0.6, -0.1) {\footnotesize{1}};
\end{tikzpicture}. We will also use our previous notation of $\widetilde{X}_{3,0}(\mu, \nu)$ corresponding to the the $(1,-1)$-tangle with $0$ twists and $\widetilde{Y}_3$ corresponding to the $(1,-1)$-curve.

\section{Commuting Relation for $X_{1,0}(-,-)$ and $X_{2,0}(+,+)$}\label{section:CommRelCalc}
\vspace*{-\baselineskip}
\begin{align*}
    & X_{1,0}(-,-)X_{2,0}(+,+) = \begin{tikzpicture}[baseline=-1]
        \MarkedTorusBackground[4][2][][][3][1]
        \draw[thick] (0, 0.15) -- (1, 0.15) -- (2, 0.15);
        \draw[thick] (0.7, -1) to[out=90, in=270] (0.7, 0) to[out=90, in=180] (1, 0.15) to[out=120, in=270] (0.7, 1);
        \node[draw, circle, inner sep=0pt, minimum size=3pt, fill=white] at (1, 0.15) {};
        \node at (0.83, -0.25) {\footnotesize{$+$}};
        \node at (0.7, 0.28) {\footnotesize{$-$}};
        \node at (1.05, 0.4) {\footnotesize{$+$}};
        \node at (1.3, 0) {\footnotesize{$-$}};
    \end{tikzpicture}
    = q \begin{tikzpicture}[baseline=-1]
        \MarkedTorusBackground[4][3][][][2][1]
        \draw[thick] (0, 0.15) -- (1, 0.15) -- (2, 0.15);
        \draw[thick] (0.7, -1) to[out=90, in=270] (0.7, 0) to[out=90, in=180] (1, 0.15) to[out=120, in=270] (0.7, 1);
        \node[draw, circle, inner sep=0pt, minimum size=3pt, fill=white] at (1, 0.15) {};
        \node at (0.83, -0.25) {\footnotesize{$+$}};
        \node at (0.7, 0.28) {\footnotesize{$-$}};
        \node at (1.05, 0.4) {\footnotesize{$+$}};
        \node at (1.3, 0) {\footnotesize{$-$}};
    \end{tikzpicture}\\
    &= q^{4} \begin{tikzpicture}[baseline=-1]
        \MarkedTorusBackground[3][4][][][2][1]
        \draw[thick] (0, 0.15) -- (1, 0.15) -- (2, 0.15);
        \draw[thick] (0.7, -1) to[out=90, in=270] (0.7, 0) to[out=90, in=180] (1, 0.15) to[out=120, in=270] (0.7, 1);
        \node[draw, circle, inner sep=0pt, minimum size=3pt, fill=white] at (1, 0.15) {};
        \node at (0.83, -0.25) {\footnotesize{$+$}};
        \node at (0.7, 0.28) {\footnotesize{$-$}};
        \node at (1.05, 0.4) {\footnotesize{$+$}};
        \node at (1.3, 0) {\footnotesize{$-$}};
    \end{tikzpicture} - q^{5/2}(q^2 - q^{-2}) \widetilde{X}_{3,0}(+,-)\\
    &= q^{7} \begin{tikzpicture}[baseline=-1]
        \MarkedTorusBackground[3][4][][][1][2]
        \draw[thick] (0, 0.15) -- (1, 0.15) -- (2, 0.15);
        \draw[thick] (0.7, -1) to[out=90, in=270] (0.7, 0) to[out=90, in=180] (1, 0.15) to[out=120, in=270] (0.7, 1);
        \node[draw, circle, inner sep=0pt, minimum size=3pt, fill=white] at (1, 0.15) {};
        \node at (0.83, -0.25) {\footnotesize{$+$}};
        \node at (0.7, 0.28) {\footnotesize{$-$}};
        \node at (1.05, 0.4) {\footnotesize{$+$}};
        \node at (1.3, 0) {\footnotesize{$-$}};
    \end{tikzpicture} - q^{11/2}(q^2 - q^{-2})
    \begin{tikzpicture}[baseline=-1]
        \MarkedTorusBackground[1][2]
        \draw[thick] (0.7, 1) to[out=270, in=180] (2, 0.15);
        \draw[thick] (0, 0.15) -- (1, 0.15);
        \draw[thick] (1, 0.15) to[out=180, in=90] (0.7, 0) to[in=270, out=90] (0.7, -1);
        \node[draw, circle, inner sep=0pt, minimum size=3pt, fill=white] at (1, 0.15) {};
        \node at (0.83, -0.25) {\footnotesize{$+$}};
        \node at (0.75, 0.28) {\footnotesize{$-$}};
    \end{tikzpicture} - q^{5/2}(q^2 - q^{-2}) \widetilde{X}_{3,0}(+,-)\\
    &= q^{10} X_{2,0}(+,+) X_{1,0}(-,-) - q^{17/2}(q^2 - q^{-2}) \begin{tikzpicture}[baseline=-1]
        \MarkedTorusBackground[1][2]
        \draw[thick] (0, 0.15) -- (1, 0.15);
        \draw[line width=3mm, gray!40] (0.6, -0.8) to[out=270, in=90] (0.6, 0.2);
        \draw[thick] (2, 0.15) to[out=180, in=0] (1.2, 0.5) to[out=180, in=90] (0.6, 0) to[in=270, out=90] (0.6, -1);
        \draw[line width=3mm, gray!40] (0.6, 0.9) to[out=270, in=90] (1, 0.2);
        \draw[thick] (0.7, 1) to[out=270, in=90] (1, 0.15);
        \node[draw, circle, inner sep=0pt, minimum size=3pt, fill=white] at (1, 0.15) {};
        \node at (0.8, -0.25) {\footnotesize{$-$}};
        \node at (1.2, 0.25) {\footnotesize{$+$}};
    \end{tikzpicture} - q^{11/2}(q^2 - q^{-2})
    \begin{tikzpicture}[baseline=-1]
        \MarkedTorusBackground[1][2]
        \draw[thick] (0.7, 1) to[out=270, in=180] (2, 0.15);
        \draw[thick] (0, 0.15) -- (1, 0.15);
        \draw[thick] (1, 0.15) to[out=180, in=90] (0.7, 0) to[in=270, out=90] (0.7, -1);
        \node[draw, circle, inner sep=0pt, minimum size=3pt, fill=white] at (1, 0.15) {};
        \node at (0.83, -0.25) {\footnotesize{$+$}};
        \node at (0.75, 0.28) {\footnotesize{$-$}};
    \end{tikzpicture}\\
    &\phantom{=} - q^{5/2}(q^2 - q^{-2}) \widetilde{X}_{3,0}(+,-)\\
    &= q^{10} X_{2,0}(+,+) X_{1,0}(-,-) - q^{17/2}(q^2 - q^{-2}) \left( q^{3/2} \widetilde{Y}_{3,0} + \widetilde{X}_{3,-\frac{1}{2}}(-,+) + q^{2}X_{3,0}(-,+) \right)\\
    &\phantom{=} - q^{11/2}(q^2 - q^{-2}) \left( q^{-3} \widetilde{X}_{3,-\frac{1}{2}}(+,-) + q^{-3/2}\widetilde{Y}_{3,0} \right) - q^{5/2}(q^2 - q^{-2}) \widetilde{X}_{3,0}(+,-)\\
    &= q^{10} X_{2,0}(+,+) X_{1,0}(-,-) - q^{7}(q^2 - q^{-2})(q^3 + q^{-3}) \widetilde{Y}_{3,0} - q^{5/2}(q^2 - q^{-2}) \widetilde{X}_{3,0}(+,-)\\
    &\phantom{=} - (q^2 - q^{-2}) \left(q^{17/2} \widetilde{X}_{3,-\frac{1}{2}}(-,+) + q^{21/2} X_{3,0}(-,+) + q^{5/2} \widetilde{X}_{3,-\frac{1}{2}}(+,-)\right)\\
    &= q^{10} X_{2,0}(+,+) X_{1,0}(-,-) - q^{21/2} (q^2 - q^{-2}) X_{3,0}(-,+) - q^{5/2}(q^2 - q^{-2}) \widetilde{X}_{3,0}(+,-)\\
    &\phantom{=} - q^{11/2}(q^2 - q^{-2}) \left( q^3 \widetilde{X}_{3,-\frac{1}{2}}(-,+) + q^{-3} \widetilde{X}_{3,-\frac{1}{2}}(+,-)\right) - q^{7}(q^2 - q^{-2})(q^3 + q^{-3}) \widetilde{Y}_{3,0}\\
    &= q^{10} X_{2,0}(+,+) X_{1,0}(-,-) - q^{13/2} (q^2 - q^{-2})\left( q^{4} X_{3,0}(-,+) + q^{-4} \widetilde{X}_{3,0}(+,-) \right)\\
    &\phantom{=} - q^{11/2}(q^2 - q^{-2}) \left( q^3 \widetilde{X}_{3,-\frac{1}{2}}(-,+) + q^{-3} \widetilde{X}_{3,-\frac{1}{2}}(+,-)\right) - q^{7}(q^2 - q^{-2})(q^3 + q^{-3}) \widetilde{Y}_{3,0}
\end{align*}

\section{Image of $\varphi_{\mathcal{E}}$}\label{appendix:ImageOfT6}
Following the calculations of $\varphi_{\mathcal{E}}(y_1)$, we can similarly determine the images of other tangles as well.
In particular, we left-multiply each diagram, $\alpha$, by the image of an appropriate monomial from $\mathbb{T}_{+}^6$ under $\psi_{\mathcal{E}}$, ensuring that the resulting product is expressed solely in terms of the images of elements from $\mathbb{T}_{+}^6$.
Since the composition of injections yields the identity map on $\mathbb{T}_{+}^{6}$, we then left-multiply by the inverse monomial in $T^6$ to explicitly find $\varphi_{\mathcal{E}}(\alpha)$.
Unless stated otherwise, every tangle is assumed to have positive states.

\subsection{Longitude}

\begin{align*}
    \psi_{\mathcal{E}}\left( x_1 x_3 \right) y_2 &= q^{-1}
\right)^2 \\
\end{align*}

$$= q^{1/2}x_1^{-1}x_5^2$$
$$\resizebox{0.9\width}{!}{$+ q^{-7/2}x_1^{-1}x_4^{-1}x_5
\left( q^{-1/2}x_3^{-1}x_4x_5 + q^{11/2}x_1^{2}x_2^{-1}x_3^{-1}x_5 + q^{1/2}x_1x_2^{-1}x_4 \right)
\left( q^{1/2}x_2x_3^{-1}x_4 + q^{5/2}x_1^{2}x_3^{-1} \right)$}$$
$$\resizebox{0.9\width}{!}{$+ q^{-11/2}x_1^{-1}x_4^{-1}x_5
\left( q^{-1/2}x_3^{-1}x_4x_5 + q^{11/2}x_1^{2}x_2^{-1}x_3^{-1}x_5 + q^{1/2}x_1x_2^{-1}x_4 \right)
\left( q^{3/2}x_1x_2^{-1}x_5 + q^{-3/2}x_2^{-1}x_3x_4 \right)$}$$
$$\resizebox{0.9\width}{!}{$+ q^{-9/2}x_1^{-1}
\left( q^{-1/2}x_3^{-1}x_4x_5 + q^{11/2}x_1^{2}x_2^{-1}x_3^{-1}x_5 + q^{1/2}x_1x_2^{-1}x_4 \right)^2$}$$

$$= q^{1/2}x_1^{-1}x_5^2 + q^{-7/2}x_1^{-1}x_4^{-1}x_5 \left(x_2 x_3^{-2}x_4^2 x_5+q^{8} x_1^2 x_3^{-2} x_4 x_5 + q^4 x_1^2 x_3^{-2} x_4 x_5 + q^{16}\right.$$
$$\resizebox{0.9\width}{!}{$\left.x_1^4 x_2^{-1} x_3^{-2} x_5 + q^3 x_1 x_3^{-1} x_4^2 + q^{9} x_1^3 x_2^{-1} x_3^{-1} x_4\right) + q^{-11/2}x_1^{-1}x_4^{-1}x_5\left(q^3 x_1 x_2^{-1} x_3^{-1} x_4 x_5^2 + q^{-2} x_2^{-1} x_4^2 x_5 \right.$}$$
$$\left.+q^{13} x_1^3 x_2^{-2} x_3^{-1} x_5^2 + q^6 x_1^2 x_2^{-2} x_4 x_5 + q^{2} x_1^2 x_2^{-2} x_4 x_5 + q^{-3} x_1 x_2^{-2} x_3 x_4^2\right)$$
$$\resizebox{0.9\width}{!}{$+ q^{-9/2}x_1^{-1} \left(q^{-3} x_3^{-2} x_4^2 x_5^2 + q^{9} x_1^2 x_2^{-1} x_3^{-2} x_4 x_5^2 + q^{2} x_1 x_2^{-1} x_3^{-1} x_4^2 x_5 + q^5 x_1^2 x_2^{-1} x_3^{-2} x_4 x_5^2\right.$}$$
$$\resizebox{0.9\width}{!}{$\left.+ q^{21} x_1^4 x_2^{-2} x_3^{-2} x_5^2 + q^{12} x_1^3 x_2^{-2} x_3^{-1} x_4 x_5
+ q^{-2} x_1 x_2^{-1} x_3^{-1} x_4^2 x_5 + q^{8} x_1^3 x_2^{-2} x_3^{-1} x_4 x_5 + q x_1^2 x_2^{-2} x_4^2\right)$}$$

$$\resizebox{0.9\width}{!}{$= q^{1/2} x_1^{-1} x_5^2 + q^{-7/2} x_1^{-1} x_2 x_3^{-2} x_4 x_5^{2} + \left( q^{9/2} + q^{1/2}\right) x_1 x_3^{-2} x_5^2 + q^{25/2} x_1^3 x_2^{-1} x_3^{-2} x_4^{-1} x_5^2$}$$
$$\resizebox{0.9\width}{!}{$ + q^{-1/2} x_3^{-1} x_4 x_5 + q^{11/2} x_1^2 x_2^{-1} x_3^{-1} x_5 + q^{3/2} x_2^{-1} x_3^{-1} x_5^3 + q^{-7/2} x_1^{-1} x_2^{-1} x_4 x_5^2 + q^{23/2} x_1^2 x_2^{-2} x_3^{-1} x_4^{-1} x_5^3$}$$
$$\resizebox{0.9\width}{!}{$+\left(q^{1/2} + q^{-7/2}\right) x_1 x_2^{-2} x_5^2 + q^{-9/2} x_2^{-2} x_3 x_4 x_5 + q^{-15/2} x_1^{-1} x_3^{-2} x_4^{2} x_5^2 + \left(q^{9/2} + q^{1/2}\right) x_1 x_2^{-1} x_3^{-2} x_4 x_5^2$}$$
$$\resizebox{0.9\width}{!}{$+ \left(q^{-5/2} + q^{-13/2}\right) x_2^{-1} x_3^{-1} x_4^2 x_5 + q^{33/2} x_1^3 x_2^{-2} x_3^{-2} x_5^2 + \left(q^{15/2} + q^{7/2}\right) x_1^2 x_2^{-2} x_3^{-1} x_4 x_5 + q^{-3/2} x_1 x_2^{-2} x_4^2.$}$$

\vfill
\subsection{$X_{1,-\frac{1}{2}}$}
\begin{align*}
    \psi_{\mathcal{E}}(x_4) \cdot
    \begin{tikzpicture}[baseline=-1]
        \MarkedTorusBackground[2][1]
        \draw[thick] (0, 0.15) -- (1, 0.15) to[out=180, in=120] (0.7, -0.3) to[out=300, in=180] (1,-0.45) to[out=0, in=180] (2, 0.15);
        \node[draw, circle, inner sep=0pt, minimum size=3pt, fill=white] at (1, 0.15) {};
    \end{tikzpicture}
    &= q^{-1/2}
    \begin{tikzpicture}[baseline=-1]
        \MarkedTorusBackground[2][1]
        \draw[thick] (0, 0.15) -- (1, 0.15) to[out=180, in=120] (0.7, -0.3) to[out=300, in=180] (1,-0.45) to[out=0, in=180] (2, 0.15);
        \draw[line width=2mm, gray!40] (1.3, -0.5) -- (1.3, 0);
        \draw[thick] (1.3, -1) to[out=90, in=270] (1.3, 0) to[out=90, in=0] (1, 0.15) to[out=60, in=270] (1.3, 1);
        \node[draw, circle, inner sep=0pt, minimum size=3pt, fill=white] at (1, 0.15) {};
    \end{tikzpicture} \\
    &= q^{1/2}
    \begin{tikzpicture}[baseline=-1]
        \MarkedTorusBackground[4][3][][][2][1]
        \draw[thick] (0.7, -1) to[out=90, in=270] (0.7, 0) to[out=90, in=180] (1, 0.15) to[out=120, in=270] (0.7, 1);
        \draw[thick] (0, 0.15) -- (2, 0.15);
        \node[draw, circle, inner sep=0pt, minimum size=3pt, fill=white] at (1, 0.15) {};
    \end{tikzpicture} + q^{-3/2}
    \begin{tikzpicture}[baseline=-1]
        \MarkedTorusBackground[4][3][][][2][1]
        \draw[thick] (1, 0.15) to[out=0, in=90] (1.3, -0.2) to[out=270, in=0] (1, -0.5) to[out=180, in=270] (0.7, -0.2) to[out=90, in=180] (1, 0.15);
        \draw[thick] (0.5, -1) -- (0.5, -0.2) to[out=90, in=180] (1, 0.15) to[out=45, in=180] (2, 0.5);
        \draw[thick] (0, 0.5) to[out=0, in=270] (0.5, 1);
        \node[draw, circle, inner sep=0pt, minimum size=3pt, fill=white] at (1, 0.15) {};
    \end{tikzpicture} \\
    &= q^{3/2}
    \begin{tikzpicture}[baseline=-1]
        \MarkedTorusBackground[4][2][][][3][1]
        \draw[thick] (0.7, -1) to[out=90, in=270] (0.7, 0) to[out=90, in=180] (1, 0.15) to[out=120, in=270] (0.7, 1);
        \draw[thick] (0, 0.15) -- (2, 0.15);
        \node[draw, circle, inner sep=0pt, minimum size=3pt, fill=white] at (1, 0.15) {};
    \end{tikzpicture} + q^{1/2}
    \begin{tikzpicture}[baseline=-1]
        \MarkedTorusBackground[4][2][][][1][3]
        \draw[thick] (1, 0.15) to[out=0, in=90] (1.3, -0.2) to[out=270, in=0] (1, -0.5) to[out=180, in=270] (0.7, -0.2) to[out=90, in=180] (1, 0.15);
        \draw[thick] (0.5, -1) -- (0.5, -0.2) to[out=90, in=180] (1, 0.15) to[out=45, in=180] (2, 0.5);
        \draw[thick] (0, 0.5) to[out=0, in=270] (0.5, 1);
        \node[draw, circle, inner sep=0pt, minimum size=3pt, fill=white] at (1, 0.15) {};
    \end{tikzpicture} \\
    \Rightarrow \varphi_{\mathcal{E}}(X_{1,-\frac{1}{2}}(+,+))
    &= x_4^{-1}\left( q^{5/2} x_1 x_2 + q^{3/2} x_3 x_5 \right) \\
    &= q^{-7/2} x_1 x_2 x_4^{-1} + q^{-1/2} x_3 x_4^{-1} x_5
\end{align*}

\subsection{Boundary Curve}\label{appendix:BoundaryCurveCalc}
\begin{align*}
    \psi_{\mathcal{E}}(x_1) \cdot \partial
    &= q^{-1/2}
    \begin{tikzpicture}[baseline=-1]
        \MarkedTorusBackground[2][1]
        \draw[thick] (1, 0) circle (0.4);
        \draw[line width=2mm, gray!40] (0.5, 0.15) -- (0.7, 0.15);
        \draw[line width=2mm, gray!40] (1.3, 0.15) -- (1.5, 0.15);
        \draw[thick] (0, 0.15) -- (2, 0.15);
        \node[draw, circle, inner sep=0pt, minimum size=3pt, fill=white] at (1, 0.15) {};
    \end{tikzpicture} \\
    &= q^{3/2}
    \begin{tikzpicture}[baseline=-1]
        \MarkedTorusBackground[2][1]
        \draw[thick] (0, 0.15) -- (1, 0.15) to[out=180, in=120] (0.7, -0.3) to[out=300, in=180] (1,-0.45) to[out=0, in=180] (2, 0.15);
        \node[draw, circle, inner sep=0pt, minimum size=3pt, fill=white] at (1, 0.15) {};
    \end{tikzpicture} + q^{-1/2}
    \begin{tikzpicture}[baseline=-1]
        \MarkedTorusBackground[2][1]
        \draw (0, 0.5) -- (2, 0.5);
        \draw[thick] (1, 0.15) to[out=0, in=90] (1.3, -0.2) to[out=270, in=0] (1, -0.5) to[out=180, in=270] (0.7, -0.2) to[out=90, in=180] (1, 0.15);
        \node[draw, circle, inner sep=0pt, minimum size=3pt, fill=white] at (1, 0.15) {};
    \end{tikzpicture} + q^{-5/2}
    \begin{tikzpicture}[baseline=-1]
        \MarkedTorusBackground[2][1]
        \draw[thick] (2, 0.15) -- (1, 0.15) to[out=0, in=60] (1.3, -0.3) to[out=240, in=0] (1,-0.45) to[out=180, in=0] (0, 0.15);
        \node[draw, circle, inner sep=0pt, minimum size=3pt, fill=white] at (1, 0.15) {};
    \end{tikzpicture} \\
    \Rightarrow \psi_{\mathcal{E}}(\partial)
    &= \resizebox{0.95\width}{!}{$x_{1}^{-1} \left[ q^{3/2}\left( q^{-7/2} x_1 x_2 x_4^{-1} + q^{-1/2} x_3 x_4^{-1} x_5 \right) + \left( q^{-1}x_2x_3^{-1} + q^{-1}x_2^{-1}x_3 + qx_1^{2}x_3^{-1}x_4^{-1}\right.\right. $}\\
    &\phantom{=} \resizebox{0.9\width}{!}{$\left.\left. + q^{2}x_1x_2^{-1}x_4^{-1}x_5 \right)x_5 + q^{-5/2} \left( q^{-1/2}x_3^{-1}x_4x_5 + q^{11/2}x_1^{2}x_2^{-1}x_3^{-1}x_5 + q^{1/2}x_1x_2^{-1}x_4 \right) \right] $}\\
    &= q^{-2}x_2^{-1}x_4 + q^{-2}x_2x_4^{-1} + qx_1^{-1}x_3x_4^{-1}x_5 + qx_1x_3^{-1}x_4^{-1}x_5 + q^{-3}x_1^{-1}x_3^{-1}x_4x_5 \\
    &\phantom{ =} + q^{3}x_1x_2^{-1}x_3^{-1}x_5 + q^{-1}x_1^{-1}x_2x_3^{-1}x_5 + q^{-1}x_1^{-1}x_2^{-1}x_3x_5 + q^{2}x_2^{-1}x_4^{-1}x_5^{2}.
\end{align*}

\section{Parallel Tangle}\label{appendix:ParaTangCalc}
There are various ways to compute the parallel tangle corresponding to $x_5$ with states $\mu$ and $\nu$ in $\mathscr{S}(T^2 \setminus D^2)$.
$$\begin{tikzpicture}[baseline=-1]
    \MarkedTorusBackground
    \draw[thick] (1, 0.15) to[out=0, in=90] (1.3, -0.2) to[out=270, in=0] (1, -0.5) to[out=180, in=270] (0.7, -0.2) to[out=90, in=200] (0.9, 0.15);
    \node[draw, circle, inner sep=0pt, minimum size=3pt, fill=white] at (1, 0.15) {};
    \node at (0.75, 0.25) {$\mu$};
    \node at (1.25, 0.25) {$\nu$};
\end{tikzpicture}$$
Below, we present three analogous equations corresponding to calculating the parallel boundary closed curve in $K_q(T^2 \setminus D^2)$, denoted $\partial$.
When computing $\partial$ in $K_q(T^2 \setminus D^2)$, one would rewrite $Y_1Y_2Y_3$ without any crossings and rearrange the terms to obtain an explicit formula for $\partial$.
The method for our parallel tangle is similar, however, with only two points in the resulting tangle touching the marking in the boundary, we have the option to designate exactly one of the meridian, longitude, or $(1,1)$-curve to be a tangle instead (i.e. we substitute a $X_{i,k} (\mu, \nu)$ for one of the $Y_i$s) and get a slightly different equality back.

\begin{align*}
    X_{1,0}(\mu, \nu) Y_2 Y_3 &= 
    \begin{tikzpicture}[baseline=-1]
        \MarkedTorusBackground[2][1]
        \draw[thick] (0.3, -1) -- (0.3, 0) to[out=90, in=180] (1, 0.7) -- (2, 0.7);
        \draw[thick] (0, 0.7) to[out=0, in=270] (0.3, 1);
        \draw[line width=3mm, gray!40] (0.7, -1) -- (0.7, 1);
        \draw[thick] (0.7, -1) -- (0.7, 1);
        \draw[line width=3mm, gray!40] (0.2, 0.15) -- (0.8, 0.15);
        \draw[thick] (0, 0.15) -- (1, 0.15) -- (2, 0.15);
        \node[draw, circle, inner sep=0pt, minimum size=3pt, fill=white] at (1, 0.15) {};
    \end{tikzpicture}
    = q \begin{tikzpicture}[baseline=-1]
        \MarkedTorusBackground[2][1]
        \draw[thick] (0, 0.5) -- (2, 0.5);
        \draw[thick] (0, 0.15) -- (1, 0.15) -- (2, 0.15);
        \node[draw, circle, inner sep=0pt, minimum size=3pt, fill=white] at (1, 0.15) {};
    \end{tikzpicture}
    + q^{-1} \begin{tikzpicture}[baseline=-1]
        \MarkedTorusBackground[2][1]
        \draw[thick] (0.7, 1) to[out=270, in=90] (0.3, 0) -- (0.3, -1);
        \draw[thick] (0, 0.7) to[out=0, in=270] (0.3, 1);
        \draw[thick] (0.7, -1) -- (0.7, 0) to[out=90, in=180] (1, 0.7) -- (2, 0.7);
        \draw[line width=3mm, gray!40] (0.2, 0.15) -- (0.8, 0.15);
        \draw[thick] (0, 0.15) -- (1, 0.15) -- (2, 0.15);
        \node[draw, circle, inner sep=0pt, minimum size=3pt, fill=white] at (1, 0.15) {};
    \end{tikzpicture}\\
    &= q \begin{tikzpicture}[baseline=-1]
        \MarkedTorusBackground[2][1]
        \draw[thick] (0, 0.5) -- (2, 0.5);
        \draw[thick] (0, 0.15) -- (1, 0.15) -- (2, 0.15);
        \node[draw, circle, inner sep=0pt, minimum size=3pt, fill=white] at (1, 0.15) {};
    \end{tikzpicture}
    + \begin{tikzpicture}[baseline=-1]
        \MarkedTorusBackground[2][1]
        \draw[thick] (0.7, 1) to[out=270, in=0] (0, 0.15);
        \draw[thick] (0, 0.7) to[out=0, in=270] (0.3, 1);
        \draw[thick] (0.7, -1) -- (0.7, 0) to[out=90, in=180] (1, 0.7) -- (2, 0.7);
        \draw[line width=3mm, gray!40] (0.3, -1) to[out=90, in=180] (0.8, 0.05);
        \draw[thick] (0.3, -1) to[out=90, in=180] (1, 0.15) -- (2, 0.15);
        \node[draw, circle, inner sep=0pt, minimum size=3pt, fill=white] at (1, 0.15) {};
    \end{tikzpicture}
    + q^{-2} \begin{tikzpicture}[baseline=-1]
        \MarkedTorusBackground[2][1]
        \draw[thick] (0.7, -1) -- (0.7, 0) to[out=90, in=270] (1.3, 0.7) -- (1.3, 1);
        \draw[line width=3mm, gray!40] (0.7, 0.5) to[out=270, in=180] (0.88, 0.27);
        \draw[thick] (0.7, 1) to[out=270, in=180] (1, 0.15) to[out=0, in=90] (1.3, -0.7) -- (1.3, -1);
        \node[draw, circle, inner sep=0pt, minimum size=3pt, fill=white] at (1, 0.15) {};
    \end{tikzpicture}\\
    &= q \begin{tikzpicture}[baseline=-1]
        \MarkedTorusBackground[2][1]
        \draw[thick] (0, 0.5) -- (2, 0.5);
        \draw[thick] (0, 0.15) -- (1, 0.15) -- (2, 0.15);
        \node[draw, circle, inner sep=0pt, minimum size=3pt, fill=white] at (1, 0.15) {};
    \end{tikzpicture}
    + q \begin{tikzpicture}[baseline=-1]
        \MarkedTorusBackground[2][1]
        \draw[thick] (0.5, -1) -- (0.5, -0.2) to[out=90, in=180] (1, 0.15) to[out=45, in=180] (2, 0.5);
        \draw[thick] (0, 0.5) to[out=0, in=270] (0.5, 1);
        \draw[thick] (0.3, -1) -- (0.3, -0.2) to[out=90, in=180] (1.2, 0.7) -- (2, 0.7);
        \draw[thick] (0, 0.7) to[out=0, in=270] (0.3, 1);
        \node[draw, circle, inner sep=0pt, minimum size=3pt, fill=white] at (1, 0.15) {};
    \end{tikzpicture}
    + q^{-1} \begin{tikzpicture}[baseline=-1]
        \MarkedTorusBackground[2][1]
        \draw[thick] (1, 0.15) to[out=60, in=270] (1.2, 0.45) to[out=90, in=0] (1, 0.65) to[out=180, in=90] (0.8, 0.45) to[out=270, in=120] (1, 0.15);
        \node[draw, circle, inner sep=0pt, minimum size=3pt, fill=white] at (1, 0.15) {};
    \end{tikzpicture} \\
    &\phantom{=} + q^{-1}
    \begin{tikzpicture}[baseline=-1]
        \MarkedTorusBackground[2][1]
        \draw[thick] (1, 0.15) to[out=0, in=90] (1.3, -0.2) to[out=270, in=0] (1, -0.5) to[out=180, in=270] (0.7, -0.2) to[out=90, in=180] (1, 0.15);
        \node[draw, circle, inner sep=0pt, minimum size=3pt, fill=white] at (1, 0.15) {};
    \end{tikzpicture}
    + q^{-3} \begin{tikzpicture}[baseline=-1]
        \MarkedTorusBackground[2][1]
        \draw[thick] (1.3, -1) to[out=90, in=270] (1.3, 0) to[out=90, in=0] (1, 0.15) to[out=60, in=270] (1.3, 1);
        \draw[thick] (0.7, -1) -- (0.7, 1);
        \node[draw, circle, inner sep=0pt, minimum size=3pt, fill=white] at (1, 0.15) {};
    \end{tikzpicture}\\
    &= \resizebox{0.95\width}{!}{$q X_1(\mu, \nu) Y_1 + q X_3(\mu, \nu) Y_3 + q^{-1} C_{\mu}^{\nu} + q^{-1}$} \begin{tikzpicture}[baseline=-1]
        \MarkedTorusBackground[2][1]
        \draw[thick] (1, 0.15) to[out=0, in=90] (1.3, -0.2) to[out=270, in=0] (1, -0.5) to[out=180, in=270] (0.7, -0.2) to[out=90, in=180] (1, 0.15);
        \node[draw, circle, inner sep=0pt, minimum size=3pt, fill=white] at (1, 0.15) {};
    \end{tikzpicture} \resizebox{0.95\width}{!}{$+ q^{-3} X_{2, 1/2}(\mu, \nu)Y_2$}\\
    \Rightarrow \begin{tikzpicture}[baseline=-1]
        \MarkedTorusBackground[2][1]
        \draw[thick] (1, 0.15) to[out=0, in=90] (1.3, -0.2) to[out=270, in=0] (1, -0.5) to[out=180, in=270] (0.7, -0.2) to[out=90, in=180] (1, 0.15);
        \node[draw, circle, inner sep=0pt, minimum size=3pt, fill=white] at (1, 0.15) {};
    \end{tikzpicture}
    &= \resizebox{0.95\width}{!}{$qX_{1,0}(\mu, \nu)Y_2Y_3 - q^2 X_{1,0}(\mu, \nu) Y_1 - q^{-2}X_{2, \frac{1}{2}}(\mu, \nu)Y_2 - q^2 X_{3,0}(\mu, \nu) Y_3 - C_{\mu}^{\nu}$}\\
\end{align*}

\begin{align*}
    Y_1 X_{2,0}(\mu, \nu) Y_3 &= \begin{tikzpicture}[baseline=-1]
        \MarkedTorusBackground[2][1]
        \draw[thick] (0.3, -1) -- (0.3, 0) to[out=90, in=180] (1, 0.7) -- (2, 0.7);
        \draw[thick] (0, 0.7) to[out=0, in=270] (0.3, 1);
        \draw[line width=3mm, gray!40] (1, 0.3) to[out=120, in=270] (0.7, 0.85);
        \draw[thick] (0.7, -1) to[out=90, in=270] (0.7, 0) to[out=90, in=180] (1, 0.15) to[out=120, in=270] (0.7, 1);
        \draw[line width=3mm, gray!40] (0.2, 0.35) -- (1.8, 0.35);
        \draw[thick] (0, 0.35) -- (2, 0.35);
        \draw[thick] (1, 0.15) -- (0.9, 0.25);
        \node[draw, circle, inner sep=0pt, minimum size=3pt, fill=white] at (1, 0.15) {};
    \end{tikzpicture}\\
    &= q \begin{tikzpicture}[baseline=-1]
        \MarkedTorusBackground[2][1]
        \draw[thick] (0, 0.8) to[out=0, in=270] (0.3, 1);
        \draw[thick] (0, 0.5) to[out=0, in=180] (1, 0.8) -- (2, 0.8);
        \draw[line width=3mm, gray!40] (1, 0.3) -- (1, 0.85);
        \draw[thick] (1, -1) to[out=90, in=270] (0.7, 0) to[out=90, in=180] (1, 0.15) -- (1, 1);
        \draw[line width=3mm, gray!40] (0.3, -0.7) to[out=90, in=180] (1.1, 0.5);
        \draw[thick] (0.3, -1) -- (0.3, -0.7) to[out=90, in=180] (1, 0.5) -- (2, 0.5);
        \node[draw, circle, inner sep=0pt, minimum size=3pt, fill=white] at (1, 0.15) {};
    \end{tikzpicture}
    + q^{-1} \begin{tikzpicture}[baseline=-1]
        \MarkedTorusBackground[2][1]
        \draw[thick] (0, 0.65) to[out=0, in=270] (0.5, 1);
        \draw[thick] (0, 0.35) to[out=0, in=90] (0.5, -1);
        \draw[thick] (2, 0.65) -- (1.3, 0.65) to[out=180, in=90] (0.8, 0.5);
        \draw[line width=2mm, gray!40] (1, 0.3) -- (1, 0.85);
        \draw[thick] (1, -1) to[out=90, in=270] (0.7, 0) to[out=90, in=180] (1, 0.15) -- (1, 1);
        \draw[line width=1.7mm, gray!40] (0.8, 0.5) to[out=270, in=180] (1.3, 0.35) -- (2, 0.35);
        \draw[thick] (0.8, 0.5) to[out=270, in=180] (1.3, 0.35) -- (2, 0.35);
        \node[draw, circle, inner sep=0pt, minimum size=3pt, fill=white] at (1, 0.15) {};
    \end{tikzpicture}\\
    &= q^2 \begin{tikzpicture}[baseline=-1]
        \MarkedTorusBackground[2][1]
        \draw[thick] (0, 0.7) to[out=0, in=270] (0.3, 1);
        \draw[thick] (1, 1) to[out=270, in=180] (2, 0.7);
        \draw[thick] (1, -1) to[out=90, in=270] (0.7, 0) to[out=90, in=180] (1, 0.15) to[out=90, in=0] (0, 0.5);
        \draw[line width=3mm, gray!40] (0.3, -0.7) to[out=90, in=180] (1.1, 0.5);
        \draw[thick] (0.3, -1) -- (0.3, -0.7) to[out=90, in=180] (1, 0.5) -- (2, 0.5);
        \node[draw, circle, inner sep=0pt, minimum size=3pt, fill=white] at (1, 0.15) {};
    \end{tikzpicture}
    + \begin{tikzpicture}[baseline=-1]
        \MarkedTorusBackground[2][1]
        \draw[thick] (0, 0.7) to[out=0, in=270] (0.3, 1);
        \draw[thick] (0, 0.5) to[out=0, in=270] (0.5, 1);
        \draw[thick] (0.5, -1) -- (0.5, 0) to[out=90, in=180] (1, 0.15) to[out=90, in=180] (2, 0.7);
        \draw[line width=2mm, gray!40] (1, 0.5) -- (1.4, 0.5);
        \draw[thick] (0.3, -1) -- (0.3, -0.7) to[out=90, in=180] (1, 0.5) -- (2, 0.5);
        \node[draw, circle, inner sep=0pt, minimum size=3pt, fill=white] at (1, 0.15) {};
    \end{tikzpicture}
    - q^{3}\begin{tikzpicture}[baseline=-1]
        \MarkedTorusBackground[2][1]
        \draw[thick] (0, 0.6) to[out=0, in=270] (0.4, 1);
        \draw[thick] (0, 0.15) to[out=0, in=90] (0.4, -1);
        \draw[thick] (2, 0.6) to[out=180, in=270] (1, 1);
        \draw[thick] (1, -1) to[out=90, in=270] (0.7, 0) to[out=90, in=180] (1, 0.15) -- (2, 0.15);
        \node[draw, circle, inner sep=0pt, minimum size=3pt, fill=white] at (1, 0.15) {};
    \end{tikzpicture}
    + q^{-2}\begin{tikzpicture}[baseline=-1]
        \MarkedTorusBackground[2][1]
        \draw[thick] (0, 0.6) to[out=0, in=270] (0.4, 1);
        \draw[thick] (0, 0.15) to[out=0, in=90] (0.4, -1);
        \draw[thick] (1, -1) to[out=90, in=270] (0.7, 0) to[out=90, in=180] (1, 0.15) to[out=60, in=180] (2, 0.6);
        \draw[line width=3mm, gray!40] (1.8, 0.25) to[out=180, in=270] (1.1, 0.8);
        \draw[thick] (2, 0.15) to[out=180, in=270] (1, 1);
        \node[draw, circle, inner sep=0pt, minimum size=3pt, fill=white] at (1, 0.15) {};
    \end{tikzpicture}\\
    &= q \begin{tikzpicture}[baseline=-1]
        \MarkedTorusBackground[2][1]
        \draw[thick] (0, 0.5) -- (2, 0.5);
        \draw[thick] (0, 0.15) -- (1, 0.15) to[out=180, in=120] (0.7, -0.3) to[out=300, in=180] (1,-0.45) to[out=0, in=180] (2, 0.15);
        \node[draw, circle, inner sep=0pt, minimum size=3pt, fill=white] at (1, 0.15) {};
    \end{tikzpicture}
    + q \begin{tikzpicture}[baseline=-1]
        \MarkedTorusBackground[2][1]
        \draw[thick] (0.5, -1) -- (0.5, -0.2) to[out=90, in=180] (1, 0.15) to[out=45, in=180] (2, 0.5);
        \draw[thick] (0, 0.5) to[out=0, in=270] (0.5, 1);
        \draw[thick] (0.3, -1) -- (0.3, -0.2) to[out=90, in=180] (1.2, 0.7) -- (2, 0.7);
        \draw[thick] (0, 0.7) to[out=0, in=270] (0.3, 1);
        \node[draw, circle, inner sep=0pt, minimum size=3pt, fill=white] at (1, 0.15) {};
    \end{tikzpicture}
    + q^{-1}\begin{tikzpicture}[baseline=-1]
        \MarkedTorusBackground[2][1]
        \draw[thick] (1, 0.15) to[out=60, in=270] (1.2, 0.45) to[out=90, in=0] (1, 0.65) to[out=180, in=90] (0.8, 0.45) to[out=270, in=120] (1, 0.15);
        \node[draw, circle, inner sep=0pt, minimum size=3pt, fill=white] at (1, 0.15) {};
    \end{tikzpicture} \\
    &\phantom{=} + q^{-1} \begin{tikzpicture}[baseline=-1]
        \MarkedTorusBackground[2][1]
        \draw[thick] (1, 0.15) to[out=0, in=90] (1.3, -0.2) to[out=270, in=0] (1, -0.5) to[out=180, in=270] (0.7, -0.2) to[out=90, in=180] (1, 0.15);
        \node[draw, circle, inner sep=0pt, minimum size=3pt, fill=white] at (1, 0.15) {};
    \end{tikzpicture}
    + q^{-3}\begin{tikzpicture}[baseline=-1]
        \MarkedTorusBackground[2][1]
        \draw[thick] (0.4, -1) -- (0.4, 1);
        \draw[thick] (1, -1) to[out=90, in=270] (0.7, 0) to[out=90, in=180] (1, 0.15) -- (1, 1);
        \node[draw, circle, inner sep=0pt, minimum size=3pt, fill=white] at (1, 0.15) {};
    \end{tikzpicture}\\\\
    \Rightarrow \begin{tikzpicture}[baseline=-1]
        \MarkedTorusBackground[2][1]
        \draw[thick] (1, 0.15) to[out=0, in=90] (1.3, -0.2) to[out=270, in=0] (1, -0.5) to[out=180, in=270] (0.7, -0.2) to[out=90, in=180] (1, 0.15);
        \node[draw, circle, inner sep=0pt, minimum size=3pt, fill=white] at (1, 0.15) {};
    \end{tikzpicture}
    &= \resizebox{0.95\width}{!}{$qY_{1}X_{2,0}(\mu, \nu)Y_3 - q^2 X_{1, -\frac{1}{2}}(\mu, \nu) Y_1 - q^{-2}X_{2,0}(\mu, \nu)Y_2 - q^2 X_{3,0}(\mu, \nu) Y_3 - C_{\mu}^{\nu}$}\\
\end{align*}

\begin{align*}
    Y_1 Y_2 X_{3,0}(\mu, \nu) &= \begin{tikzpicture}[baseline=-1]
        \MarkedTorusBackground[2][1]
        \draw[thick] (0.3, -1) -- (0.3, -0.2) to[out=90, in=180] (1, 0.15) to[out=45, in=180] (2, 0.7);
        \draw[thick] (0, 0.7) to[out=0, in=270] (0.3, 1);
        \draw[line width=2.5mm, gray!40] (0.5, -0.8) -- (0.5, 0.8);
        \draw[thick] (0.5, -1) -- (0.5, 1);
        \draw[line width=2.5mm, gray!40] (0.2, 0.5) -- (1.8, 0.5);
        \draw[thick] (0, 0.5) -- (2, 0.5);
        \node[draw, circle, inner sep=0pt, minimum size=3pt, fill=white] at (1, 0.15) {};
    \end{tikzpicture}\\
    &= q \begin{tikzpicture}[baseline=-1]
        \MarkedTorusBackground[2][1]
        \draw[thick] (0.5, -1) -- (0.5, -0.2) to[out=90, in=180] (1, 0.15) to[out=45, in=180] (2, 0.5);
        \draw[thick] (0, 0.5) to[out=0, in=270] (0.5, 1);
        \draw[thick] (0.3, -1) -- (0.3, -0.2) to[out=90, in=180] (1.2, 0.7) -- (2, 0.7);
        \draw[thick] (0, 0.7) to[out=0, in=270] (0.3, 1);
        \node[draw, circle, inner sep=0pt, minimum size=3pt, fill=white] at (1, 0.15) {};
    \end{tikzpicture}
    + q^{-1} \begin{tikzpicture}[baseline=-1]
        \MarkedTorusBackground[2][1]
        \draw[thick] (0.3, -1) -- (0.3, -0.2) to[out=90, in=180] (1, 0.15) to[out=45, in=180] (2, 0.7);
        \draw[thick] (0, 0.7) to[out=0, in=270] (0.3, 1);
        \draw[line width=2.5mm, gray!40] (0.2, 0.2) to[out=0, in=90] (0.6, -0.8);
        \draw[line width=2.5mm, gray!40] (0.8, 0.8) to[out=270, in=180] (1.8, 0.35);
        \draw[thick] (0, 0.3) to[out=0, in=90] (0.7, -1);
        \draw[thick] (0.7, 1) to[out=270, in=180] (2, 0.3);
        \node[draw, circle, inner sep=0pt, minimum size=3pt, fill=white] at (1, 0.15) {};
    \end{tikzpicture}\\
    &= q \begin{tikzpicture}[baseline=-1]
        \MarkedTorusBackground[2][1]
        \draw[thick] (0.5, -1) -- (0.5, -0.2) to[out=90, in=180] (1, 0.15) to[out=45, in=180] (2, 0.5);
        \draw[thick] (0, 0.5) to[out=0, in=270] (0.5, 1);
        \draw[thick] (0.3, -1) -- (0.3, -0.2) to[out=90, in=180] (1.2, 0.7) -- (2, 0.7);
        \draw[thick] (0, 0.7) to[out=0, in=270] (0.3, 1);
        \node[draw, circle, inner sep=0pt, minimum size=3pt, fill=white] at (1, 0.15) {};
    \end{tikzpicture}
    + \begin{tikzpicture}[baseline=-1]
        \MarkedTorusBackground[2][1]
        \draw[thick] (0, 0.6) to[out=0, in=270] (0.5, 1);
        \draw[thick] (0.3, -1) -- (0.3, -0.2) to[out=90, in=180] (1, 0.15) -- (2, 0.15);
        \draw[thick] (0.7, 1) to[out=270, in=180] (2, 0.6);
        \draw[line width=2.5mm, gray!40] (0.2, 0.2) to[out=0, in=90] (0.8, -0.8);
        \draw[thick] (0, 0.3) to[out=0, in=90] (0.7, -1);
        \node[draw, circle, inner sep=0pt, minimum size=3pt, fill=white] at (1, 0.15) {};
    \end{tikzpicture}
    + q^{-2} \begin{tikzpicture}[baseline=-1]
        \MarkedTorusBackground[2][1]
        \draw[thick] (0.5, -1) -- (0.5, -0.2) to[out=90, in=180] (1, 0.15) -- (1, 1);
        \draw[line width=2mm, gray!40] (0.5, 0.5) to[out=270, in=90] (1, -0.7);
        \draw[thick] (0.5, 1) -- (0.5, 0.5) to[out=270, in=90] (1, -0.7) -- (1, -1);
        \node[draw, circle, inner sep=0pt, minimum size=3pt, fill=white] at (1, 0.15) {};
    \end{tikzpicture}\\
    &= q \begin{tikzpicture}[baseline=-1]
        \MarkedTorusBackground[2][1]
        \draw[thick] (0.5, -1) -- (0.5, -0.2) to[out=90, in=180] (1, 0.15) to[out=45, in=180] (2, 0.5);
        \draw[thick] (0, 0.5) to[out=0, in=270] (0.5, 1);
        \draw[thick] (0.3, -1) -- (0.3, -0.2) to[out=90, in=180] (1.2, 0.7) -- (2, 0.7);
        \draw[thick] (0, 0.7) to[out=0, in=270] (0.3, 1);
        \node[draw, circle, inner sep=0pt, minimum size=3pt, fill=white] at (1, 0.15) {};
    \end{tikzpicture}
    + q \begin{tikzpicture}[baseline=-1]
        \MarkedTorusBackground[2][1]
        \draw[thick] (0, 0.15) -- (1, 0.15) -- (2, 0.15);
        \draw[thick] (0, 0.5) -- (2, 0.5);
        \node[draw, circle, inner sep=0pt, minimum size=3pt, fill=white] at (1, 0.15) {};
    \end{tikzpicture}
    + q^{-1} \begin{tikzpicture}[baseline=-1]
        \MarkedTorusBackground[2][1]
        \draw[thick] (1, 0.15) to[out=0, in=90] (1.3, -0.2) to[out=270, in=0] (1, -0.5) to[out=180, in=270] (0.7, -0.2) to[out=90, in=180] (1, 0.15);
        \node[draw, circle, inner sep=0pt, minimum size=3pt, fill=white] at (1, 0.15) {};
    \end{tikzpicture} \\
    &\phantom{=} + q^{-1} \begin{tikzpicture}[baseline=-1]
        \MarkedTorusBackground[2][1]
        \draw[thick] (1, 0.15) to[out=60, in=270] (1.2, 0.45) to[out=90, in=0] (1, 0.65) to[out=180, in=90] (0.8, 0.45) to[out=270, in=120] (1, 0.15);
        \node[draw, circle, inner sep=0pt, minimum size=3pt, fill=white] at (1, 0.15) {};
    \end{tikzpicture}
    + q^{-3}\begin{tikzpicture}[baseline=-1]
        \MarkedTorusBackground[2][1]
        \draw[thick] (0.4, -1) -- (0.4, 1);
        \draw[thick] (1, -1) to[out=90, in=270] (0.7, 0) to[out=90, in=180] (1, 0.15) -- (1, 1);
        \node[draw, circle, inner sep=0pt, minimum size=3pt, fill=white] at (1, 0.15) {};
    \end{tikzpicture}\\\\
    \Rightarrow \begin{tikzpicture}[baseline=-1]
        \MarkedTorusBackground[2][1]
        \draw[thick] (1, 0.15) to[out=0, in=90] (1.3, -0.2) to[out=270, in=0] (1, -0.5) to[out=180, in=270] (0.7, -0.2) to[out=90, in=180] (1, 0.15);
        \node[draw, circle, inner sep=0pt, minimum size=3pt, fill=white] at (1, 0.15) {};
    \end{tikzpicture}
    &= \resizebox{0.95\width}{!}{$qY_{1}Y_{2}X_{3,0}(\mu, \nu) - q^2 X_{1, 0}(\mu, \nu) Y_1 - q^{-2}X_{2,0}(\mu, \nu)Y_2 - q^2 X_{3,0}(\mu, \nu) Y_3 - C_{\mu}^{\nu}$}\\
\end{align*}

\newpage

When resolving crossings in these calculations, the process is conducted locally and away from the boundary. As a result, it is possible to initiate the calculation with $Y_1 Y_2 X_{3,k}(\mu, \nu)$ (likewise $X_{1,k}(\mu, \nu) Y_2 Y_3$ or $Y_1 X_{2,k}(\mu, \nu) Y_3$) for any $k \in \frac{1}{2}\mathbb{Z}$ and substitute each stated tangle in the diagrams with $k$ twists (or their relative twists) around the boundary.
Since $\begin{tikzpicture}[baseline=-1]
    \MarkedTorusBackground
    \draw[thick] (1, 0.15) to[out=0, in=90] (1.3, -0.2) to[out=270, in=0] (1, -0.5) to[out=180, in=270] (0.7, -0.2) to[out=90, in=210] (0.87, 0.15);
    \node[draw, circle, inner sep=0pt, minimum size=3pt, fill=white] at (1, 0.15) {};
    \node at (0.6, 0.2) {$\mu$};
    \node at (1.4, 0.2) {$\nu$};
\end{tikzpicture}$ and
$C_{\mu}^{\nu} = \begin{tikzpicture}[baseline=-1]
    \MarkedTorusBackground
    \draw[thick] (1, 0.15) to[out=60, in=270] (1.2, 0.45) to[out=90, in=0] (1, 0.65) to[out=180, in=90] (0.8, 0.45) to[out=270, in=120] (1, 0.15);
    \node[draw, circle, inner sep=0pt, minimum size=3pt, fill=white] at (1, 0.15) {};
    \node at (0.6, 0.2) {$\mu$};
    \node at (1.4, 0.2) {$\nu$};
\end{tikzpicture}$ correspond to half twists of each other in either direction, and their coefficients are identical, interchanging their diagrams during the calculations does not affect the equations. Therefore, we get slightly more general formulas for our parallel tangle.
\vfill

Measuring a half twist by using the different equations for
$X_5(\mu, \nu) = \begin{tikzpicture}[baseline=-1, scale=0.97]
    \MarkedTorusBackground
    \draw[thick] (1, 0.15) to[out=0, in=90] (1.3, -0.2) to[out=270, in=0] (1, -0.5) to[out=180, in=270] (0.7, -0.2) to[out=90, in=210] (0.87, 0.15);
    \node[draw, circle, inner sep=0pt, minimum size=3pt, fill=white] at (1, 0.15) {};
    \node at (0.7, 0.2) {$\mu$};
    \node at (1.3, 0.2) {$\nu$};
\end{tikzpicture}$:
\begin{align*}
    X_5(\mu, \nu) &= qX_{1,k}(\mu, \nu)Y_2Y_3 - q^2 X_{1,k}(\mu, \nu) Y_1 - q^{-2}X_{2, k + \frac{1}{2}}(\mu, \nu)Y_2 - q^2 X_{3,k}(\mu, \nu) Y_3 - C_{\mu}^{\nu}\\
    &= qY_{1}X_{2,k}(\mu, \nu)Y_3 - q^2 X_{1, k-\frac{1}{2}}(\mu, \nu) Y_1 - q^{-2}X_{2,k}(\mu, \nu)Y_2 - q^2 X_{3,k}(\mu, \nu) Y_3 - C_{\mu}^{\nu}\\
    &= qY_{1}Y_{2}X_{3,k}(\mu, \nu) - q^2 X_{1, k}(\mu, \nu) Y_1 - q^{-2}X_{2,k}(\mu, \nu)Y_2 - q^2 X_{3,k}(\mu, \nu) Y_3 - C_{\mu}^{\nu}
\end{align*}
$$\Rightarrow qY_1 \left( X_{2,k} Y_3 - Y_2 X_{3,k} \right) = q^2 Y_1 \left( X_{1, k-\frac{1}{2}} - X_{1,k} \right) - q^{-2} Y_2\left( X_{2,k} - X_{2,k+\frac{1}{2}} \right)$$
\begin{align*}
    \left( X_{1,k} - X_{1,k-\frac{1}{2}} \right) Y_1 &= q^{-1} \left( Y_1 Y_2 X_{3, k} - Y_1 X_{2,k} Y_3 \right)\\
    \left( X_{2,k+\frac{1}{2}} - X_{2,k} \right) Y_2 &= q^{3} \left( Y_1 Y_2 X_{3, k} - X_{1,k} Y_2 Y_3 \right)\\
\end{align*}
\vfill

\chapter{Python Code}
\section{Quantum Commuting Relations}\label{appendix:qCommRel}
\lstinputlisting[language=Python]{code/qCommRel_DissVer.py}

\section{Quantum $6$-Torus Operators}\label{appendix:T6Operators}
\lstinputlisting[language=Python]{code/T6Operators_DissVer.py}

\end{document}